\newtheorem{thm}{Theorem}[section]
\newtheorem{prop}[thm]{Proposition}
\newtheorem{lem}[thm]{Lemma}
\newtheorem{cor}[thm]{Corollary}
\newtheorem{conj}[thm]{Conjecture}
\newtheorem{ques}[thm]{Question}
\newtheorem*{claim}{Claim}
\theoremstyle{definition}
\newtheorem{exm}[thm]{Example}
\newtheorem{defn}[thm]{Definition}
\theoremstyle{remark}
\newtheorem{remk}[thm]{Remark}
\newtheorem{remks}[thm]{Remarks}
\newtheorem{exms}[thm]{Examples}
\newtheorem{notat}[thm]{Notation}
\newtheorem*{ack}{Acknowledgments}
\numberwithin{equation}{section}
\newcommand{\thmref}{Theorem~\ref}
\newcommand{\propref}{Proposition~\ref}
\newcommand{\defref}{Definition~\ref}
\newcommand{\lemref}{Lemma~\ref}
\newcommand{\conjref}{Conjecture~\ref}
\newcommand{\sB}{{\mathcal B}}
\newcommand{\sI}{{\mathcal I}}
\newcommand{\sO}{{\mathcal O}}
\newcommand{\sW}{{\mathcal W}}
\newcommand{\sZ}{{\mathcal Z}}
\newcommand{\wt}{\widetilde}
\newcommand{\wh}{\widehat}
\newcommand{\A}{{\mathbb A}}
\newcommand{\G}{{\mathbb G}}
\renewcommand{\P}{{\mathbb P}}
\newcommand{\W}{{\mathbb W}}
\newcommand{\Z}{{\mathbb Z}}
\newcommand{\CH}{{\rm CH}}
\newcommand{\inj}{\hookrightarrow}
\newcommand{\codim}{{\rm codim}}
\newcommand{\Spec}{{\rm Spec \,}}
\newcommand{\id}{{\operatorname{id}}}
\newcommand{\Sch}{{\operatorname{\mathbf{Sch}}}}
\renewcommand{\max}{{\operatorname{\rm max}}}
\newcommand{\Sm}{{\mathbf{Sm}}}
\newcommand{\SmProj}{{\mathbf{SmProj}}}
\newcommand{\Ab}{{\mathbf{Ab}}}
\newcommand{\ds}{{/\kern-3pt/}}
\renewcommand{\log}{{\operatorname{log}}}
\newcommand{\Proj}{{\operatorname{Proj}}}
\newcommand{\sgn}{{\operatorname{\rm sgn}}}
\newcommand{\TZ}{{\operatorname{Tz}}}
\renewcommand{\TH}{{\operatorname{TCH}}}
\renewcommand{\CH}{{\operatorname{CCH}}}
\newcommand{\un}{\underline}
\newcommand{\ov}{\overline}
\newcommand{\dgn}{{\operatorname{degn}}}
\renewcommand{\dim}{\text{dim}}
\newcommand{\tuborg}{\left\{\begin{array}{ll}}
\newcommand{\sluttuborg}{\end{array}\right.}
\begin{document}
\title{Moving lemma for additive Chow groups and applications}
\author{Amalendu Krishna, Jinhyun Park}
\address{School of Mathematics, Tata Institute of Fundamental Research,  
Homi Bhabha Road, Colaba, Mumbai, India}
\email{amal@math.tifr.res.in}
\address{Department of Mathematical Sciences, KAIST,  
Daejeon, 305-701, Republic of Korea (South)}
\email{jinhyun@mathsci.kaist.ac.kr; jinhyun@kaist.edu}

\baselineskip=10pt 
  
\keywords{Chow group, algebraic cycle, moving lemma, CDGA}        

\subjclass[2000]{Primary 14C25; Secondary 19E15}
\begin{abstract}
We study additive higher Chow groups with several modulus conditions.
Apart from exhibiting the validity of all known results for the
additive Chow groups with these
modulus conditions, we prove the moving lemma for them:
for a smooth projective variety $X$ and a finite collection $\mathcal{W}$ of 
its locally closed algebraic subsets, every additive higher Chow cycle is 
congruent to an admissible cycle intersecting properly all members of 
$\mathcal{W}$ times faces. This is the additive analogue of the moving lemma 
for the higher Chow groups studied by S. Bloch and M. Levine. 

As applications, we show that any map from a quasi-projective variety
to a smooth projective variety induces a pull-back map of additive higher
Chow groups. Using the moving lemma, we also establish the structure of
graded-commutative differential graded algebra (CDGA) on the additive
higher Chow groups.
\end{abstract}

\maketitle

\tableofcontents

\section{Introduction}
Working with algebraic cycles, formal finite sums of closed subvarieties of a 
variety, often requires some forms of moving results, as differential geometry
often requires Sard's lemma. A classical example is Chow's moving lemma in 
\cite{Chow} that moves algebraic cycles under rational equivalence. A modern 
one for higher Chow groups shows in \cite{Bl2,Le} that, for a smooth 
quasi-projective variety $X$ and a finite set of locally closed subvarieties 
of $X$, one can move (modulo boundaries) admissible cycles to other 
admissible cycles that intersect a given finite set of subvarieties 
in the right codimensions. Any such results on moving of cycles is generally 
referred to as moving lemmas. Such moving results have played a very crucial
role in the development and applications of the theory of higher Chow groups.
The primary goal of this paper is to prove this 
latter kind of moving lemma for the additive higher Chow groups of a smooth 
and projective variety and to study some very important applications on the 
structural properties of additive higher Chow groups.   

The additive Chow groups of zero cycles on a field were first introduced by 
Bloch and Esnault in \cite{BE1} in an attempt to describe the $K$-theory 
and motivic cohomology of the ring of dual numbers via algebraic cycles.
Bloch and Esnault \cite{BE2} later defined these groups by putting a 
{\sl modulus} condition on the additive Chow cycles in the hope of describing 
the $K$-groups of any given truncated polynomial ring over a field. The 
additive higher Chow groups of any given variety were defined in the most 
general form by Park in \cite{P1} and were later studied in more detail in 
\cite{KL}, where many of the expected properties of these groups were also 
established. 

The most crucial part of the existing definition(s) of the additive higher 
Chow groups which makes them distinct from the higher Chow groups, is the
{\sl modulus} condition on the admissible additive cycles. This condition
also brings the extra subtlety which does not persist with the higher Chow
groups. As conjectured in \cite{KL,P1}, the additive higher Chow groups are
expected to complement higher Chow groups for non-reduced schemes so as to 
obtain the right motivic cohomology groups. In particular, for a smooth
projective variety $X$, one expects a Atiyah-Hirzebruch spectral sequence
\begin{equation}\label{eqn:SS}
{\TH}^{-q}(X, -p-q; m) \Rightarrow K^{\rm nil}_{-p-q}(X),
\end{equation}
where $K^{\rm nil}$ is the homotopy fiber of the restriction map
$K(X \times {\rm Spec}(k[t])) \to K(X \times {\rm Spec}(k[t]/{t^{m+1}}))$.

Since these beliefs are still conjectural, it is not clear if the
modulus conditions used to study the additive higher Chow groups of varieties
in the literature are the right ones which would give the correct motivic 
cohomology, {\sl e.g.}, the one which would satisfy ~\eqref{eqn:SS}. 
One aspect of this paper is to exhibit that the modulus condition (which we
call $M_{sup}$ in this paper) used in \cite{KL} may not be the best 
possible one, if one expects the theory of additive higher Chow groups to 
yield the correct motivic cohomology and the motives of non-reduced schemes.  

We study the theory of additive Chow groups based on two other modulus 
conditions in this paper: $M = M_{sum}$ is based on the modulus condition
used by Bloch-Esnault-R{\"u}lling in \cite{BE2,R}, and $M = M_{ssup}$ is
a new modulus condition introduced in this paper. Although this new
modulus condition $M_{ssup}$ may appear mildly stronger than
that used in \cite{KL, P1}, it turns out that the resulting 
additive Chow groups have all the properties known for the additive Chow groups of \cite{BE2}, \cite{KL}, and \cite{P1}. In addition, we prove many other crucial structural 
properties of the additive higher Chow groups based on the modulus conditions
$M_{sum}$ and $M_{ssup}$. Although it may seem surprising, the techniques
used in proving the results of this paper make one believe that such results
may not be possible for the additive higher Chow groups based on the modulus 
condition $M_{sup}$ of \cite{KL, P1}, if Conjecture~\ref{conj:QI} turns out to be false.  

We now outline the structure of this paper and elaborate on our main results.
We define our basic objects, the additive higher Chow groups with various
modulus conditions, in Section~\ref{section:AHCG}. We also prove some
preliminary results which are used repeatedly in the paper.
In Section~\ref{section:BHCG}, we prove the basic properties of these
additive Chow groups. In particular, we demonstrate all those results for
the additive higher Chow groups based on the modulus condition $M_{ssup}$,
which are known for the additive higher Chow groups of \cite{BE2}, \cite{KL} 
and \cite{P1}. Section~\ref{section:ML} gives the proofs of further 
preliminary results needed to prove our moving lemma for the additive higher
Chow groups. 

The subsequent Sections~\ref{section:MLP} and 
~\ref{section:moving for varieties} are devoted to our first main result,
the moving lemma for additive higher Chow groups.   
As in the case of higher Chow groups, any theory of additive motivic 
cohomology which would compute the $K$-theory as in ~\eqref{eqn:SS} is expected 
to have a form of moving lemma to make them more amenable to deeper study.
This was one of the primary motivation for working on this paper.
We show in Theorem~\ref{thm:ML} that for a smooth projective variety $X$ and 
a finite collection $\mathcal{W}$ of its locally closed algebraic subsets, 
every additive higher Chow cycle is congruent to an admissible cycle 
intersecting properly all members of $\mathcal{W}$ times faces. This is the 
additive analogue of the moving lemma for the higher Chow groups studied by 
S. Bloch and M. Levine. 

While lack of this result for general quasi-projective varieties may seem 
disappointing, one would rather not expect this to be the case. For instance, 
${\A}^1$-homotopy invariance and localization sequences fail for 
additive higher Chow groups, but these are indirectly implied if the moving 
lemma is assumed for all quasi-projective varieties such as 
$X \times {\A}^1$. A concrete quasi-projective example, where the standard 
arguments fail, is given in Example~\ref{exm:first example}.

Our proof of the above result is broadly speaking based on the techniques of
\cite{Bl1} and \cite{Le} that prove the analogous result for the higher Chow
groups. The main difficulty with the techniques of higher Chow groups which
does not immediately allow them to be adapted into the additive world is that these
arguments are mostly intersection theoretic and are not equipped to
handle the most delicate modulus condition of additive Chow cycles. We show 
one such phenomenon in the last section of this paper. However, in the case of 
projective varieties, we carefully modify the arguments at every step so that we
can keep track of the modulus condition whenever we encounter new cycles
in the process, especially in the construction of the chain homotopy 
variety. This is achieved using our new {\sl containment} sort of
argument and some results of \cite{KL}. This essentially proves the above
theorem. On the log-additive higher Chow groups, one can prove the moving 
lemma for any general smooth quasi-projective varieties using our main theorem.

In Section~\ref{section:CF}, we give our first application of the moving 
lemma. We establish the contravariant functoriality property 
of the additive higher Chow groups in the most general form 
by showing in Theorem~\ref{thm:funct} that for a morphism
$f: X \to Y$ of quasi-projective varieties over a field $k$, where
$Y$ is smooth and projective, there is a pull-back map
$f^*: {\TH}^q(Y, n; m) \to {\TH}^q(X, n; m)$, and this satisfies the
expected composition law.

If $X$ is also smooth and projective, the pull-back map on the additive Chow
groups was constructed in \cite{KL} using the action of higher Chow groups on
the additive ones. However, the contravariance functoriality in this 
general form as above (even if $Y$ is smooth) is new and is based on a 
crucial use of Theorem~\ref{thm:ML} as is the case of general pull-back maps
of higher Chow groups ({\sl cf.} \cite[Theorem~4.1]{Bl1}), and another
use of our containment argument. Even in the special case of $X$ being smooth
and projective, our proof is different and more direct than the one in
\cite{KL}.

Our final set of main results of this paper are motivated by the question of 
what are the important and necessary properties one would expect the additive 
higher Chow groups to satisfy, if they are the right motivic cohomology to 
compute the nil $K$-theory of the infinitesimal deformations of smooth schemes.
In particular, one could ask what are the necessary implications on the
structural properties of the additive higher Chow groups if there is indeed
a spectral sequence as in ~\eqref{eqn:SS}. The reader would recall in this
context that the $K$-theory of the infinitesimal deformations of the base
field $k$ is expressed in terms of the modules of absolute K{\"a}hler
differentials and the absolute de Rham-Witt complex of Hesselholt-Madsen,
as shown for example in \cite{Hesselholt}. For general
smooth projective varieties over $k$, one expects these $K$-groups to be
given by the cohomology of the absolute de Rham-Witt complex. 
It is well known that these de Rham-Witt complexes have the structure
of a graded-commutative differential graded algebra (CDGA) and they
are initial objects in the category of so called Witt complexes over a
base scheme ({\sl cf.} \cite{HeMa, R}). This makes it imperative that 
the additive higher Chow groups posses such structures. Our next set of
results together implies that this is indeed the case. 

In Section~\ref{section:WD}, we show in Theorem~\ref{thm:wedge-product*}
that the additive higher Chow groups indeed have a wedge product
which makes the direct sum of all additive higher Chow groups a graded-commutative
algebra. This product structure has all the functoriality properties and
satisfies the projection formula. Furthermore, this is compatible with the
module structure on the additive higher Chow groups over the Chow ring
of the variety. We show in Theorem~\ref{thm:preDGA} that these are 
equipped with a differential operator, too. 

For the modulus condition $M_{sum}$, we further show in 
Theorem~\ref{thm:DGAsum} that these differential 
operators and the wedge product turn the resulting additive higher Chow groups
into a CDGA. As a very important
ingredient needed to achieve this, we introduce the normalized version
of additive cycle complex and additive higher Chow groups in 
Section~\ref{section:NACom}. We show that this normalized additive
cycle complex is in fact quasi-isomorphic to the additive cycle complex
if one uses the modulus condition $M_{sum}$. This is an additive analog
of a similar result of S. Bloch \cite{Bl3} for higher Chow groups. 
This is the last result we need to complete the program of CDGA structure on 
additive higher Chow groups.

Although we do not
go into this in order not to further increase the length of this seemingly long 
paper, the reader can easily see using our techniques that
the push-forward and pull-back maps given by the power map $a \mapsto a^r$ on $\G_m$ (which is finite and flat) induce on the additive higher Chow groups of smooth projective varieties two operators, so called the {\sl Frobenius} and the {\sl Verschiebung} 
operators. Together with the above CDGA structure, this turns them into a 
Witt complex.   

One hopes that this very general abstract structure of a Witt complex
will help us in making a significant progress towards the eventual goal 
of showing that the additive higher Chow groups are the right motivic 
cohomology of the infinitesimal deformations of smooth schemes. This goal was 
in some sense the starting point of the theory of additive higher 
groups. 

In the last section, we append some calculations of the additive higher Chow 
groups the authors found in the process of working on the problem. This 
suggests some kind of ``pseudo"-${\A}^1$-homotopy properties of additive 
higher Chow groups. 

We finally remark that the only reason for not including the modulus condition
$M_{ssup}$ in our Theorem~\ref{thm:DGAsum} is the lack of an
affirmative answer of Question~\ref{ques:QI} in this case.  
We strongly believe the answer to be indeed positive and hope that a proof 
will be available soon.    
 
Throughout this paper, a {\sl $k$-scheme}, or a {\sl scheme over $k$}, is 
always a separated scheme of finite type over a perfect field $k$. A \emph{$k$-variety} is an integral $k$-scheme. The ground field $k$ will be fixed throughout this paper.

\section{Additive higher Chow groups}\label{section:AHCG} 
 In this section, we define additive higher Chow groups from a unifying 
perspective than those in the literature by Bloch-Esnault, 
R\"ulling, Krishna-Levine, and Park, treating the modulus conditions as 
``variables''. We also prove some elementary results that are needed to
study and compare the additive Chow groups based on various modulus 
conditions. 

We begin by fixing some notations which will be used throughout this
paper. We write $\Sch/k$, $\Sm/k$ and $\SmProj/k$ for the
categories of $k$-schemes, smooth quasi-projective varieties, and smooth
projective varieties, respectively. $D^{-}(\Ab)$ is the derived category
of bounded above complexes of abelian groups.
Recall from \cite{KL,P1} that for a normal variety $X$ over $k$, and
a finite set of Weil divisors $\{Y_1, \cdots , Y_s\}$ on $X$, the 
supremum of these divisors, denoted by $\sup_{1 \le i \le s} Y_i$,
is the Weil divisor defined to be 
\begin{equation}\label{eqn:sup}
\sup_{1 \le i \le s} Y_i = \sum_{Y \in Pdiv(X)}
(\underset{1 \le i \le s}{\max} \  {\rm ord}_Y (Y_i))[Y],
\end{equation}
where $Pdiv(X)$ is the set of all prime Weil divisors of $X$.  
One observes that the set of all Cartier divisors on a normal scheme
$X$ is contained in the set of all Weil divisors, and the supremum of
a collection of Cartier divisors may not remain a Cartier divisor in general,
unless $X$ is factorial. We shall need some elementary results about
Cartier and Weil divisors on normal varieties.

Here are some basic facts about divisors on normal varieties:

\begin{lem}\label{lem:rest}
Let $X$ be a normal variety and let $D_1$ and $D_2$ be effective Cartier
divisors on $X$ such that $D_1 \ge D_2$ as Weil divisors. Let $Y \subset X$ 
be a closed subset which intersects $D_1$ and $D_2$ properly. Let 
$f: Y^N \to X$ be the composite of the inclusion and the normalization of 
$Y_{\rm red}$. Then $f^*(D_1) \ge f^*(D_2)$.
\end{lem}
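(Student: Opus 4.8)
The plan is to reduce the statement to the single divisor $E := D_1 - D_2$ and invoke additivity of pullback for Cartier divisors. First I would note that $E$, being a difference of two Cartier divisors, is itself a Cartier divisor on $X$, and that the hypothesis $D_1 \ge D_2$ says exactly that $E \ge 0$ as a Weil divisor. The first key point is that \emph{on a normal variety, a Cartier divisor which is effective as a Weil divisor is an effective Cartier divisor}: locally $E = {\rm div}(g)$ for some $g \in k(X)^{\times}$, and $E \ge 0$ means ${\rm ord}_P(g) \ge 0$ for every prime divisor $P$; since $X$ is normal, $\sO_X$ is the intersection of the local rings $\sO_{X,P}$ at the codimension-one points $P$, so $g$ is regular, i.e.\ the local equation of $E$ is a genuine function. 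Hence $E$ is an effective Cartier divisor and $D_1 = D_2 + E$ with all three effective Cartier divisors.

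Next I would check that $f^*D_1$, $f^*D_2$ and $f^*E$ are all defined on $Y^N$. Since $D_2 \ge 0$ we have $0 \le E \le D_1$, so $\supp(E) \subseteq \supp(D_1)$; because $Y$ meets $D_1$ properly, no irreducible component of $Y$ is contained in $\supp(D_1)$, hence none is contained in $\supp(E)$, and similarly none lies in $\supp(D_2)$. Passing to $Y_{\rm red}$ and then to its normalization does not alter the generic points of the components, so $f$ sends no component of $Y^N$ into any of these three supports; consequently the pullbacks $f^*D_1$, $f^*D_2$ and $f^*E$ are well-defined Cartier divisors on the normal scheme $Y^N$. (On a component of $Y$ of dimension zero the intersections with $D_1$, $D_2$ are empty and all three pullbacks vanish there, so that case is trivial.)

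Then I would apply additivity of the pullback of Cartier divisors along $f$: from $D_1 = D_2 + E$ we get $f^*D_1 = f^*D_2 + f^*E$. The pullback $f^*E$ of an effective Cartier divisor is again an effective Cartier divisor, in particular an effective Weil divisor on $Y^N$; therefore $f^*D_1 - f^*D_2 = f^*E \ge 0$, which is the claimed inequality.

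The argument is short and I foresee no genuine obstacle; the only two points requiring care are the ones isolated above — that normality of $X$ promotes ``effective Weil divisor'' to ``effective Cartier divisor'', and that the containment $\supp(E) \subseteq \supp(D_1)$ transfers the proper-intersection hypothesis from $D_1$ to $E$ so that $f^*E$ is legitimately defined. If one prefers to avoid additivity of pullback, the same conclusion follows from a local computation at each codimension-one point $y \in Y^N$: pick local equations $g_1, g_2$ of $D_1, D_2$ near $f(y)$, use normality of $X$ to see $g_1/g_2 \in \sO_{X, f(y)}$, and apply $f^\sharp$ to obtain ${\rm ord}_y(f^*D_1) - {\rm ord}_y(f^*D_2) = {\rm ord}_y\!\big(f^\sharp(g_1/g_2)\big) \ge 0$.
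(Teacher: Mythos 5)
Your proof is correct and follows essentially the same route as the paper: both arguments hinge on using normality of $X$ to upgrade the effective Weil divisor $D_1-D_2$ to an effective Cartier divisor (the paper phrases this as $\sI_{D_1}\subset\sI_{D_2}$, you phrase it as $E\ge 0$ being Cartier-effective), and then pull back. Your explicit check that $\supp(E)\subseteq\supp(D_1)$ so that $f^*E$ is defined is a small point the paper leaves implicit, but the substance is identical.
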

\begin{proof} For any effective Cartier divisor $D$ on $X$, let ${\sI}_{D}$
denote the sheaf of ideals defining $D$ as a locally principal closed
subscheme of $X$. We first claim that $D_1 \ge D_2$ if and only if 
${\sI}_{D_1} \subset {\sI}_{D_2}$. We only need to show the
only if part as the other implication is obvious. Now, $D_1 \ge D_2$ 
implies that $D = D_1 - D_2$ is effective as a Cartier divisor since the 
group of Cartier divisors forms a subgroup of Weil divisors on a normal scheme.
Since ${\sI}_{D_1} \subset {\sI}_{D_2}$ is a local question, we can assume
that $X = {\rm Spec}(A)$ is local normal integral scheme and 
${\sI}_{D_i} = (a_i)$. Put $a = {a_1}/{a_2}$ as an element of the function
field of $X$. We need to show that $a \in A$. Since $A$ is normal, it
suffices to show that $a \in A_{\mathfrak p}$ for every height one prime ideal
$\mathfrak p$ of $A$. But this is precisely the meaning of $D_1 \ge D_2$.
This proves the claim.

Since $D_i$ intersect $Y$ properly, we see that $f^*(D_i)$ is a locally
principal closed subscheme of $Y^N$ for $i =1, 2$. The lemma now follows
directly from the above claim.
\end{proof}  
The following is a refinement of \cite[Lemma~3.2]{KL}:
\begin{lem}\label{lem:surjm}
Let $f : Y \to X$ be a surjective map of normal integral $k$-schemes. Let 
$D$ be a Cartier divisor on $X$ such that $f^*(D) \ge 0$ on $Y$. Then 
$D \ge 0$ on $X$.
\end{lem}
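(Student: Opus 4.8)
The plan is to reduce to the case of a discrete valuation ring by localizing at height-one primes, and then use the going-down property of surjective maps between normal schemes. First I would recall that a Cartier divisor $D$ on the normal integral scheme $X$ satisfies $D \ge 0$ if and only if $\mathrm{ord}_Z(D) \ge 0$ for every prime Weil divisor $Z \subset X$, i.e.\ for every height-one point $z$ of $X$; this is just the fact that $D$, viewed as a Weil divisor, is effective precisely when all its coefficients are nonnegative. So fix a height-one prime $z$ of $X$ and let $A = \sO_{X,z}$, a discrete valuation ring with valuation $v$; I must show $v(D) \ge 0$, where $D$ is given near $z$ by a rational function $g$ in the fraction field $K(X)$ and $v(D) = v(g)$.

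The key point is to lift $z$ to $Y$. Since $f: Y \to X$ is a surjective (hence dominant) morphism of integral schemes and both are normal, I would use that the image of $f$ contains a nonempty open set and, more to the point, invoke the fact (standard for morphisms of normal schemes, or one can argue directly with valuation rings) that there is a point $y \in Y$ with $f(y) = z$ such that $\sO_{X,z} \to \sO_{Y,y}$ is a local homomorphism and $\sO_{Y,y}$ dominates $\sO_{X,z}$; in fact I can arrange $y$ to be a height-one point of $Y$ lying over $z$. Concretely: choose any generic point $\eta$ of $f^{-1}(z)$ — nonempty by surjectivity — then $\sO_{Y,\eta}$ is a local ring with a local map from $A = \sO_{X,z}$, and localizing $\sO_{Y,\eta}$ at a minimal prime over $\fm_A \sO_{Y,\eta}$ (which is proper since the map is local) gives a localization whose closure is a codimension-one subvariety $W$ of $Y$; the local ring $B = \sO_{Y,W}$ is then a DVR with valuation $w$, and the composite $A \to B$ is a local injection of DVRs, so $w|_{K(X)} = e \cdot v$ for some positive integer $e$ (the ramification index).

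Now I combine this with the hypothesis. The pullback $f^*(D)$ is effective on $Y$ by assumption, so in particular $\mathrm{ord}_W(f^*D) \ge 0$. But $f^*D$ near $W$ is cut out by the same rational function $g$ (pulled back along $K(X) \hookrightarrow K(Y)$), so $\mathrm{ord}_W(f^*D) = w(g) = e \cdot v(g)$. Since $e > 0$, this forces $v(g) \ge 0$, i.e.\ $v(D) \ge 0$. As $z$ was an arbitrary height-one point of $X$, we conclude $D \ge 0$ on $X$.

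The main obstacle is the existence of the codimension-one point $W$ of $Y$ lying over a prescribed codimension-one point $z$ of $X$ with a local inclusion of local rings. Surjectivity gives \emph{some} point over $z$, but one must promote this to a height-one point with the right local behavior; here normality of $Y$ (so that all relevant local rings embed in their fraction fields and localizations at height-one primes are DVRs) together with the Krull principal ideal theorem applied to a minimal prime over $\fm_A \sO_{Y,\eta}$ does the job. An alternative, perhaps cleaner, route avoiding the dimension bookkeeping is the valuative criterion: take the DVR $A = \sO_{X,z}$, note $\Spec A \to X$ together with surjectivity of $f$ lets one find (after passing to the generic point of a fiber and extending the valuation) a valuation ring $B$ of $K(Y)$ dominating $A$ with a map $\Spec B \to Y$ over $X$; then effectivity of $f^*D$ along $B$ gives $v_B(g) \ge 0$, and since $B$ dominates $A$ with $K(X) \subset \mathrm{Frac}(B)$ this again yields $v(g) \ge 0$. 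Either way the heart of the matter is producing a valuation on $Y$-side that restricts compatibly to the chosen discrete valuation on $X$, which is exactly where surjectivity is used.
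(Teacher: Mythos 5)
Your proof is correct, and it shares the paper's first reduction: localize $X$ at a height-one point $z$ of ${\rm Supp}(D)$ so that $A=\sO_{X,z}$ is a DVR with local equation $g$. Where you diverge is in how surjectivity is then exploited. You manufacture a height-one point $W$ of $Y$ over $z$ (the generic point $\eta$ of the fiber, with Krull's principal ideal theorem applied to $\pi\sO_{Y,\eta}$ giving ${\rm ht}(\fm_\eta)=1$), so that $\sO_{Y,W}$ is a DVR dominating $A$ with ramification index $e>0$, and conclude from $e\cdot v(g)={\rm ord}_W(f^*D)\ge 0$. The paper instead takes an \emph{arbitrary} point $y$ of the fiber over the closed point of $\Spec A$, writes $g=u\pi^n$ with $u\in A^{\times}$, and notes that $f^*(D)\ge 0$ on the normal scheme $Y$ forces $g\in\sO_{Y,y}$ while $\pi$ lands in $\fm_y\setminus\{0\}$ and $u$ in $\sO_{Y,y}^{\times}$, whence $n\ge 0$. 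The paper's variant thus sidesteps the one delicate point of your argument --- producing a codimension-one point of $Y$ with the right local behavior --- at the cost of using the observation (the claim inside Lemma~\ref{lem:rest}) that effectivity of a Cartier divisor as a Weil divisor on a normal scheme means its local equations are regular functions. Your construction of $W$ does go through as written: since $\eta$ is generic in the fiber, the minimal prime over $\fm_A\sO_{Y,\eta}$ is $\fm_\eta$ itself, and it is nonzero because the generic point of $Y$ maps to the generic point of $X$ rather than to $z$; so both mechanisms are sound, with the paper's being marginally more economical.
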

\begin{proof} As is implicit in the proof of the \lemref{lem:rest}, we can localize
at the generic points of ${\rm Supp}(D)$ and assume that $X = {\rm Spec}(A)$,
where $A$ is a dvr which is essentially of finite type over $k$. The
divisor $D$ is then given by a rational function $a \in K$, where $K$ is the
field of fractions of $A$. Choosing a uniformizing parameter $\pi$ of $A$,
we can write $a$ uniquely as $a = u {\pi}^{n}$, where $u \in A^{\times}$
and $n \in \Z$. 

Since $f$ is surjective, there is a closed point $y \in Y$ such that $f(y)$
is the closed point of $X$. Since $Y$ is integral, the surjectivity of $f$
also implies that the generic point of $Y$ (which is also the generic point of
${\rm Spec}({\sO}_{Y,y})$) must go to the generic point of $X$ under $f$.
Hence the map ${\rm Spec}({\sO}_{Y,y}) \to X$ is surjective. 
This implies in particular that the image of $\pi$ in 
${\sO}_{Y,y}$ is a non-zero element of the maximal ideal $\mathfrak m$ of the 
local ring ${\sO}_{Y,y}$. On the other hand, $f^*(D) \ge 0$ implies that
as a rational function on $Y$, $a$ actually lies in ${\sO}_{Y,y}$. Since 
$u \in {\sO}^{\times}_{Y,y}$ and $\pi \in \mathfrak m$, this can happen
only when $n \ge 0$. That is, $D$ is effective.
\end{proof}    

We assume a $k$-scheme $X$ is equi-dimensional in this paper to define the
additive Chow groups although one can 
easily remove this condition by writing the additive Chow cycles in terms of
their dimensions rather than their codimensions. Throughout this paper, for any such scheme $X$, 
we shall denote the normalization of $X_{\rm red}$ by $X^N$. 
Thus $X^N$ is the disjoint union of the normalizations of all the 
irreducible components of $X_{\rm red}$.

Set $\A^1:=\Spec k[t]$,  
$\G_m:=\Spec k[t,t^{-1}]$, $\P^1:=\Proj\, k[Y_0,Y_1]$ 
and let $y:=Y_1/Y_0$ be the standard
coordinate function on $\P^1$. We set $\square^n:=(\P^1\setminus\{1\})^n$. 
For $n \ge 1$, let $B_n = {\G}_m \times \square^{n-1}$,
${\wt B}_n = \A^1 \times \square^{n-1}$, 
$\ov{B}_n = \A^1 \times ({\P}^{1})^{n-1}\supset {\wt B}_n$ and
$\widehat{B}_n = {\P}^{1} \times ({\P}^{1})^{n-1} \supset \ov{B}_n$. 
We use the coordinate system 
$(t, y_1, \cdots , y_{n-1})$ on $\widehat{B}_n$, with $y_i:=y\circ q_i$, where $
q_i: {\widehat B}_n\to \P^1$ is the projection onto the $i$-th $\P^1$.

Let $F^1_{n,i}$, for $i=1,\ldots, n-1$, be the Cartier divisor
on $\widehat{B}_n$ defined by $\{y_i=1\}$ and $F_{n,0}\subset \widehat{B}_n$
the Cartier divisor defined by $\{t=0\}$. Notice that the divisor $F_{n,0}$ is
in fact contained in ${\ov B}_n \subset {\widehat B}_n$. Let $F^1_n$ denote
the Cartier divisor $\sum_{i=1} ^{n-1} F^1_{n,i}$ on
$\widehat{B}_n$. 
 
 A {\em face} of $B_n$ is a subscheme $F$ defined by equations of the form
 \[
 y_{i_1}=\epsilon_1, \ldots,  y_{i_s}=\epsilon_s;\ \epsilon_j\in\{0,\infty\}.
 \]

For $\epsilon=0, \infty$,  and $i = 1, \cdots, n-1$, let 
\[
\iota_{n,i,\epsilon}: B_{n-1}\to B_n
\]
be the inclusion
\begin{equation}\label{eqn:face}
\iota_{n,i,\epsilon}(t,y_1,\ldots, y_{n-2})=(t,y_1,\ldots, y_{i-1}, 
\epsilon,y_i,\ldots, y_{n-2}).
\end{equation}
We now define the modulus conditions that we shall consider for defining our
additive higher Chow groups.
\subsection{Modulus conditions}\label{subsection:MD}
\begin{defn}\label{defn:M5}
Let $X$ be a $k$-scheme as above and let $V$ be an integral closed subscheme 
of $X \times B_n$. Let $\ov V$ denote the closure of $V$ in $X \times
{\wh B}_n$ and let $\nu : {\ov V}^N \to  X \times {\widehat B}_n$ denote
the induced map from the normalization of $\ov V$. We fix an integer $m \ge 1$. 
\begin{enumerate}
\item
We say that $V$ satisfies the modulus $m$ condition $M_{sum}$ (or the 
\emph{sum}-modulus condition) on $X \times B_n$ if as Weil divisors on 
${\ov V}^N$,
\[
(m+1)[{\nu}^*(F_{n,0})] \le [\nu^*(F^1_n)].
\] 
This condition was used by Bloch-Esnault and R\"ulling in \cite{Bl1,R} to
study additive Chow groups of zero cycles on fields.
\item 
We say that $V$ satisfies the modulus $m$ condition $M_{sup}$ (or the 
\emph{sup}-modulus condition) on $X \times B_n$ if as Weil divisors on 
${\ov V}^N$,
\[
(m+1)[{\nu}^*(F_{n,0})] \le \sup_{1 \leq i \leq n-1} [\nu^*(F^1_{n,i})].
\]
This condition was used by Park and Krishna-Levine in \cite{KL,P1} to define
their additive higher Chow groups. 
\item  
We say that $V$ satisfies the modulus $m$ condition $M_{ssup}$ (or the 
\emph{strong sup}-modulus condition) on $X \times B_n$ if there exists an 
integer $1 \le i \le n-1$ such that 
\[
(m+1)[{\nu}^*(F_{n,0})] \le [{\nu}^*(F^1_{n,i})]
\]
as Weil divisors on ${\ov V}^N$.
\end{enumerate}
\end{defn}
Since the modulus conditions are defined for a given fixed integer $m$, we shall
often simply say that $V$ satisfies a modulus condition $M$ without
mentioning the integer $m$. Notice that since $V$ is contained in 
$X \times B_n$, its closure $\ov V$ intersects all the Cartier divisors 
$F_{n,0}$ and $F^1_{n,i}$ ($1 \le i \le n-1$) properly in $X \times {\wh B}_n$.
In particular, their pull-backs of $F_{n,0}$ and $F^1 _{n,i}$ are all effective Cartier divisors on
${\ov V}^N$. Notice also that 
\begin{equation}\label{eqn:order}
M_{ssup} \Rightarrow  M_{sup} \Rightarrow M_{sum}.
\end{equation} 

The following restriction property of the modulus conditions $M_{sum}$ and 
$M_{ssup}$ will be used repeatedly in this paper. 

\begin{prop}[Containment lemma]\label{prop:restp}
Let $X$ be a $k$-scheme and let $Y \subset X
\times B_n$ be a closed subvariety such that its closure ${\ov Y} \subset
X \times {\widehat B}_n$ intersects the Cartier divisors $X \times F_{n,0}$ 
and $X \times F^1_n$ properly. 
Let $V$ be an irreducible closed subvariety of $X \times B_n$ such that $V$ 
satisfies the modulus condition $M_{sum}$ or $M_{ssup}$ on $X \times B_n$. Let $\ov V$ be its closure in $X \times \widehat{B}_n$.
Let $V_Y$ be an irreducible component of
$V \cap Y$ and let ${\widehat V_Y}^N$ denote the normalization of the closure 
of $V_Y$ in $\ov Y$. Let ${\nu}_Y: {\widehat V_Y}^N \to 
X \times {\widehat B}_n$ denote the 
natural map. \\
$(1)$ If $V$ satisfies the modulus condition $M_{ssup}$, then there is an 
$1 \le i \le n-1$ such that
$(m+1)[{\nu}^*_Y(F_{n,0})] \le [{\nu}^*_Y(F^1_{n,i})]$, that is, $V_Y$ also satisfies $M_{ssup}$. \\
$(2)$ If $V$ satisfies the modulus condition $M_{sum}$, then
$(m+1)[{\nu}^*_Y(F_{n,0})] \le [{\nu}^*_Y(F^1_{n})]$, that is, $V_Y$ also satisfies $M_{sum}$.
\end{prop}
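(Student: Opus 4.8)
The plan is to reduce the statement to a purely local question about discrete valuation rings and then compare the relevant orders of vanishing, exploiting the fact that $V_Y$ is cut out inside $\ov V$ by the pullback of $Y$. First I would set up the diagram: write $\ov{V_Y}$ for the closure of $V_Y$ in $\ov Y$ (equivalently in $X\times\wh B_n$), note that $\ov{V_Y}\subset \ov V$, and let $\pi\colon \wh{V_Y}^N \to \ov V^N$ be the map induced on normalizations by this closed immersion (it exists because $\wh{V_Y}^N$ is normal and dominates $\ov{V_Y}\subset \ov V$, so it factors through $\ov V^N$). Then $\nu_Y = \nu\circ\pi$ where $\nu\colon \ov V^N\to X\times\wh B_n$. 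The key compatibility is that for any Cartier divisor $D$ on $X\times\wh B_n$ whose support meets $\ov{V_Y}$ properly, one has $\nu_Y^*(D) = \pi^*(\nu^*(D))$ as Cartier divisors on $\wh{V_Y}^N$; this is just functoriality of pullback of Cartier divisors along $\pi$, valid because the hypotheses guarantee $\nu^*(D)$ is an honest effective Cartier divisor and its support does not contain the image of $\pi$.

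Granting this, the proof of both parts is immediate. For part (1): by hypothesis there is an index $i$ with $(m+1)[\nu^*(F_{n,0})] \le [\nu^*(F^1_{n,i})]$ as Weil divisors on $\ov V^N$; since these are effective Cartier divisors, \lemref{lem:rest} (applied with $X$ replaced by $\ov V^N$, the divisors $D_1 = \nu^*(F^1_{n,i})$, $D_2 = (m+1)\nu^*(F_{n,0})$, and $Y$ the image of $\wh{V_Y}^N$ — or more directly, the local ideal-containment characterization established inside the proof of \lemref{lem:rest}) shows that pulling back along $\pi$ preserves the inequality, giving $(m+1)[\nu_Y^*(F_{n,0})] = (m+1)[\pi^*\nu^*(F_{n,0})] \le [\pi^*\nu^*(F^1_{n,i})] = [\nu_Y^*(F^1_{n,i})]$. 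The same $i$ works, so $V_Y$ satisfies $M_{ssup}$. Part (2) is identical with $F^1_{n,i}$ replaced by the total divisor $F^1_n = \sum_{i} F^1_{n,i}$: pulling back the inequality $(m+1)[\nu^*(F_{n,0})] \le [\nu^*(F^1_n)]$ along $\pi$ gives the sum-modulus inequality on $\wh{V_Y}^N$.

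The one genuinely delicate point — and the step I expect to be the main obstacle — is justifying the pullback-compatibility $\nu_Y^*(D) = \pi^*\nu^*(D)$ and, more precisely, checking that $\nu_Y^*(F_{n,0})$ and $\nu_Y^*(F^1_{n,i})$ are well-defined \emph{effective} Cartier divisors on $\wh{V_Y}^N$, i.e.\ that $\ov{V_Y}$ meets each of $F_{n,0}$ and the $F^1_{n,i}$ properly. This requires knowing that $V_Y$ (hence $\ov{V_Y}$) is not contained in any of these divisors. Since $V_Y\subset X\times B_n$ and $B_n = \G_m\times\square^{n-1}$ is disjoint from $\{t=0\}$ and from each $\{y_i=1\}$, the subvariety $V_Y$ meets none of these divisors, so $\ov{V_Y}\cap F_{n,0}$ and $\ov{V_Y}\cap F^1_{n,i}$ are proper intersections; this is exactly the observation already recorded after \defref{defn:M5} for $V$ itself, and it applies verbatim to $V_Y$. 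With properness in hand, $\pi$ does not map the generic point of $\wh{V_Y}^N$ into $\Supp(\nu^*(D))$, the local principal-ideal description of $\nu^*(D)$ pulls back to a local principal-ideal description of $\nu_Y^*(D)$, and the identity $\nu_Y^*(D)=\pi^*\nu^*(D)$ follows; the inequality of Weil divisors is then preserved because $\pi$ is dominant between normal integral schemes, so order functions transform as in the proof of \lemref{lem:rest}. (One should remark that, by contrast, no analogous pullback statement is claimed for $M_{sup}$, since $\sup$ of Cartier divisors need not commute with pullback — this is precisely why the proposition is stated only for $M_{sum}$ and $M_{ssup}$.)
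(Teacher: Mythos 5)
The overall strategy---pull the divisor inequality back from ${\ov V}^N$ and note that the intersections stay proper because $V_Y\subset X\times B_n$---is the right one, and your treatment of the properness point is correct. But there is a genuine gap at exactly the step you flag as delicate: the map $\pi\colon {\widehat V_Y}^N\to {\ov V}^N$ need not exist. The universal property of normalization produces such a factorization only for \emph{dominant} maps from normal integral schemes, and ${\widehat V_Y}^N\to \ov{V_Y}\hookrightarrow \ov V$ is not dominant unless $V\subset Y$. Concretely, if $\ov{V_Y}$ sits inside the non-normal locus of $\ov V$, its preimage in ${\ov V}^N$ can be a single irreducible variety mapping onto $\ov{V_Y}$ with degree greater than one (for instance, glue a smooth surface to itself along a fixed-point-free involution of a curve $D$: the normalization is $2{:}1$ over the image of $D$ with irreducible preimage, so there is no section and hence no lift). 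Since no normality is assumed on $\ov V$, you cannot write $\nu_Y=\nu\circ\pi$, and the direct pullback argument collapses.

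The paper's proof is built precisely to avoid this. It forms $Z=\ov Y\times_{X\times{\wh B}_n}\ov V$ and $Z_1=Z\times_{\ov V}{\ov V}^N$, picks a component ${\wt V}^N_Y$ of $Z_1^N$ lying over ${\widehat V_Y}^N$ (such a component exists because the projection is finite and surjective), pulls the inequality back to ${\wt V}^N_Y$ through ${\ov V}^N$ using \lemref{lem:rest}, and then \emph{descends} to ${\widehat V_Y}^N$ along the finite surjective map $f_Y\colon {\wt V}^N_Y\to{\widehat V_Y}^N$ of normal varieties by invoking \lemref{lem:surjm}. That descent lemma is the missing ingredient in your argument: once you replace the nonexistent $\pi$ by this ``go up to a common cover, then come back down'' step, the remainder of your proof (the same index $i$ for $M_{ssup}$, the total divisor $F^1_n$ for $M_{sum}$, and your closing remark about why $M_{sup}$ is excluded) goes through as written.
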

\begin{proof} If $V \cap Y = \emptyset$, then there is nothing to prove. Hence, we assume
that $V \cap Y$ is nonempty, so there is at least one nonempty irreducible component $V_Y$.
We consider the following commutative diagram:
\begin{equation}\label{eqn:restp0}
\xymatrix{
{\wt V}^N_Y \ar@{^{(}->}[r] \ar[d]_{f_Y} & {Z}^N _1 \ar[d]_{f^N} 
\ar[r]^{\ov g} & 
{Z_1} \ar[d]_{f} \ar[r]^{\ov p} & 
{\ov V}^N \ar[d] \ar@/^1pc/[dd]^{\nu} \\
{\widehat V}^N_Y \ar@{^{(}->}[r] \ar[drr]_{{\nu}_Y} & Z^N \ar[r]^{g} \ar[dr]^{h} & 
Z \ar[r]^{p} \ar[d] & {\ov V} \ar@{^{(}->}[d] \\
& & {\ov Y}  \ar@{^{(}->}[r]_{j} & X \times {\widehat B}_n.}
\end{equation}
Here $Z$ and $Z_1$ are defined so that  both the upper and the lower
squares on the right are Cartesian. It is then easy to check that
$Z \cap Y = V \cap Y$ and hence $\ov {V \cap Y}$ is a union of irreducible 
components of $Z$. In particular, ${\widehat V_Y}^N$ is one of the disjoint
components of $Z^N$. Since $f^N$ is
finite and surjective, there is a component ${\wt V}^N_Y$ of ${Z}^N _1$ 
lying over ${\widehat V}^N_Y$, and the restriction $f_Y$ of $f^N$ also is a finite and surjective map.
Since $V \cap F_{n, 0}  = \emptyset$ and $V_Y \neq \emptyset$, we see
that $F_{n, 0}$ and $F^1_{n, i}$ all intersect $Z$ properly.
Now if we use $M_{ssup}$, then the modulus condition for $V$ and Lemma~\ref{lem:rest} imply that
there is an integer $1 \le i \le n-1$ such that
${\ov g}^* \circ {\ov p}^* [ {\nu}^* (F^1_{n,i} - (m+1)F_{n,0})]
\ge 0$ on ${Z}^N _1$ and hence on ${\widetilde V_Y}^N$. 
In particular, by commutativity, we get ${f_Y}^* [{{\nu}_Y}^*
(F^1_{n,i} - (m+1)F_{n,0}) ]\ge 0$ on ${\widetilde V_Y}^N$.
Since $f_Y$ is finite and surjective map of normal varieties, 
the proposition now follows from \lemref{lem:surjm} for the modulus condition
$M_{ssup}$. The case of $M_{sum}$ follows exactly the same way using $F^1 _n$ instead of $F_{n, i} ^1$, and the fact that $F^1_n$ is also an effective
Cartier divisor.
\end{proof}
As one can see from the above proposition, although the modulus condition $M_{sup}$ lies between the other two 
modulus conditions $M_{sum}$ and $M_{ssup}$, it turns out that the additive higher Chow groups
based on the latter modulus conditions have better structural properties.

In this paper, we study the additive higher Chow groups based on
the modulus conditions $M_{sum}$ and $M_{ssup}$. We shall show in the next 
section that the additive Chow groups based on our new modulus condition 
$M_{ssup}$ satisfy all the properties known to be satisfied by the 
additive higher Chow groups of Krishna-Levine, Park, Bloch-Esnault and 
R{\"u}lling. 

The following lemma is not used in the paper, but we decided to keep it for it might be useful for some follow-up works.
\begin{lem}\label{lem:rest*}
Let $X$ be a normal variety and let $D_1$ and $D_2$ be effective Cartier
divisors on $X$ such that $D_1 \ge D_2$ as Weil divisors. Let $X'$ denote
the normalization of the blow-up $Bl_X(Z)$ of $X$ along a closed subscheme
$Z \subset X$ of codimension at least two. Let $f : X' \to X$ be the natural
map. Then $f^*(D_1) \ge f^*(D_2)$.
\end{lem}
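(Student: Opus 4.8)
**Proof strategy for Lemma~\ref{lem:rest*}.**

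The plan is to reduce the statement, exactly as in the proof of Lemma~\ref{lem:rest}, to the local claim that for effective Cartier divisors $D_1 \ge D_2$ on a normal variety $X$, one has ${\sI}_{D_1} \subset {\sI}_{D_2}$, and then to observe that this inclusion of ideal sheaves is preserved under pull-back by any dominant morphism. First I would note that since $D_1 \ge D_2$ as Weil divisors and both are Cartier on the normal variety $X$, the difference $D = D_1 - D_2$ is an effective Cartier divisor (Cartier divisors form a subgroup of Weil divisors on a normal scheme), so it suffices to show $f^*(D) \ge 0$ on $X'$. Now the key point is that $f \colon X' \to X$ is dominant (indeed birational): the blow-up $Bl_X(Z) \to X$ is an isomorphism over the dense open $X \setminus Z$, and normalization does not change generic points, so $f$ restricts to an isomorphism over a dense open subset of $X$. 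In particular, $f$ pulls back the ideal sheaf ${\sI}_D$ to the ideal sheaf of an effective Cartier divisor $f^*(D)$ on $X'$ — this is automatic because $f$ does not send any component of $X'$ into $\supp(D)$, so the local equation of $D$ does not become a zero-divisor or a unit after pull-back.

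More concretely, working locally as in Lemma~\ref{lem:rest}, I may assume $X = \Spec(A)$ with $A$ a local normal integral domain and ${\sI}_D = (a)$ for some $a \in A$ (with $a$ a non-unit since $D$ is effective and, after further localizing at a height-one prime in $\supp(D)$, a DVR argument makes this transparent). Since $X'$ is integral with function field equal to that of $X$ (birationality), the pull-back $f^*(D)$ is the Cartier divisor on $X'$ cut out locally by the image of $a$, which is a regular element. Its effectivity on $X'$ is equivalent to $a$ lying in $\sO_{X',x'}$ for every point $x'$ of $X'$; but $a \in A \subset \sO_{X',x'}$ since $f$ is a morphism, so $f^*(D) \ge 0$. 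Translating back, $f^*(D_1) - f^*(D_2) = f^*(D) \ge 0$, which is the assertion. Alternatively, one can invoke Lemma~\ref{lem:surjm} directly: $f$ is surjective (blow-ups are proper and surjective, normalization is surjective), $D$ is a Cartier divisor on $X$, and $f^*(D) \ge 0$ is exactly what the local computation gives after noting $f^*(D)$ is genuinely a Cartier divisor because $f$ is dominant; then Lemma~\ref{lem:surjm} is not even needed here since we want the conclusion on $X'$, not on $X$ — so the first route is the cleaner one.

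The only subtlety, and hence the step I would be most careful about, is checking that $f^*(D)$ makes sense as an effective Cartier divisor on $X'$ at all — i.e. that no irreducible component of $X'$ maps into the support of $D$, so that the local equation of $D$ remains a non-zero-divisor after pull-back. This is where the hypotheses ``$Z$ has codimension at least two'' and ``$f$ is (the normalization of) a blow-up'' enter: they guarantee $f$ is birational, so $X'$ is irreducible with the same function field as $X$ and the generic point of $X'$ dominates that of $X$, forcing the local equation of $D$ to stay regular. Once this is in place, effectivity is immediate from $A \subset \sO_{X',x'}$, and the proof concludes. (One should also remark that the support of $D$ has codimension one in $X$, hence is not contained in $Z$, so $f$ restricts to an isomorphism near the generic points of $\supp D$, making the DVR reduction legitimate.)
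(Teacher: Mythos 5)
Your argument is correct, but it proceeds differently from the paper's. The paper's proof is a one-line geometric observation: writing $D_i=\sum_j n_{ij}V_{ij}$, it notes that because $Z$ has codimension at least two, $f^*(D_i)$ decomposes as the proper transforms $\sum_j n_{ij}V'_{ij}$ plus a nonnegative combination $\sum_l n_l E_l$ of exceptional components, and reads off the inequality from this decomposition (applied, implicitly, to the difference $D=D_1-D_2$ so that the exceptional multiplicities are controlled). You instead bypass exceptional divisors entirely: you reduce to showing $f^*(D)\ge 0$ for the effective Cartier divisor $D=D_1-D_2$ (using the subgroup claim from Lemma~\ref{lem:rest}), check that $f$ is dominant/birational so that the local equation $a$ of $D$ remains a nonzerodivisor on $X'$, and conclude effectivity from the trivial containment $a\in A\subset\sO_{X',x'}$. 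Your route is more elementary and strictly more general — it shows the pull-back of an effective Cartier divisor is effective under any dominant morphism from an integral scheme, with the blow-up structure and the codimension hypothesis on $Z$ entering only to guarantee birationality (and even that is more than is needed; one only needs that no component of $X'$ lands in $\supp D$). The paper's version buys a slightly finer piece of information (the explicit shape of $f^*(D_i)$ in terms of proper transforms and exceptional components), but for the stated inequality your argument is cleaner and avoids the need to compare exceptional multiplicities of $f^*(D_1)$ and $f^*(D_2)$ separately.
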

\begin{proof} Let $D_i = {\sum} n_{ij}V_{ij}$ for $i = 1, 2$, where
$V_{ij}$ are prime divisors on $X$. Since $Z$ is of codimension at least
two, we see that for each $i$, $f^*(D_i) = {\sum} n_{ij} V'_{ij} + 
{\sum} n_{l} E_l$, where $E_l$ are the components of the exceptional 
divisor, $ V'_{ij}$ is the proper transform of $ V_{ij}$ and
$n_l \ge 0$. The lemma now immediately follows.
\end{proof}

\subsection{Additive cycle complex}\label{subsection:ACC}
We define the additive cycle complex based on the above
modulus conditions. 
\begin{defn}\label{defn:AdditiveComplex}
Let $M$ be the modulus condition $M_{sum}$ or $M_{ssup}$.
Let $X$ be a $k$-scheme, and let $r,m$ be integers with $m\ge1$. \\
\\
(0) $\un{\TZ}_r(X, 1; m)_M$  is the free abelian group 
on integral closed subschemes $Z$ of $X \times {\G}_m$ of dimension $r$.\\
\\
For $n>1$,  $\un{\TZ}_r(X, n; m)_M$ is the free abelian group 
on integral closed subschemes $Z$ of $X \times B_n$ of dimension $r+n-1$ 
such that: \\ \\
$(1)$ (Good position) For each face $F$ of $B_n$, $Z$ intersects  $X \times F$
properly:
\[
\dim (Z\cap (X\times F)) \le r+\dim (F)-1, {\rm and} 
\] 
$(2)$ (Modulus condition) $Z$ satisfies the modulus $m$ condition $M$
on $X \times B_n$.   
\end{defn}
As our scheme $X$ is equi-dimensional of dimension $d$ over $k$, we write for $q \ge 0$
\[
\un{\TZ}^q(X, n; m)_M = \un{\TZ}_{d+1-q}(X, n; m)_M.
\]
We now observe that the good position condition on $Z$ implies that the cycle 
$(\id_X\times\iota_{n,i,\epsilon})^*(Z)$
is well-defined and each component satisfies the good position condition.
Moreover, letting $Y = X \times F$ for $F = {\iota}_{n,i,\epsilon}(B_{n-1})$
in \propref{prop:restp}, we first of all see that $\ov Y$ intersects $X \times F_{n,0}$
and $X \times F^1_n$ properly in $X \times {\wh B}_n$, and each component of $(\id_X\times\iota_{n,i,\epsilon})^*(Z)$ satisfies the
modulus condition $M$ on $X \times B_{n-1}$. We thus conclude that if 
$Z \subset X \times B_n$ satisfies the above conditions $(1)$ and $(2)$, then 
every component of ${\iota_{n,i,\epsilon}}^*(Z)$ also satisfies these 
conditions on $X \times B_{n-1}$. In particular, we have the cubical abelian 
group $\un{n}\mapsto \un{\TZ}^{q}(X, n; m)_M$.
\begin{defn}
The {\em additive cycle complex} $\TZ^q(X, \bullet ; m)_M$ of $X$ in codimension $q$ and with modulus $m$ condition $M$ is the non-degenerate complex associated to the cubical abelian group 
$n\mapsto \un{\TZ}^{q}(X, n; m)_M$, 
i.e.,
\[
\TZ^q(X, n; m)_M: = \frac{\un{\TZ}^q(X, n; m)_M}
{\un{\TZ}^q(X, n; m)_{M, \dgn}}.
\]
The boundary map of this complex at level $n$ is given by
$\partial = \sum_{i=1} ^{n-1} (-1)^i (\partial^{\infty}_i
- \partial^0_i)$, which satisfies $\partial ^2 = 0$. 
The homology
\[
\TH^q(X, n; m)_M: = H_n (\TZ^q(X, \bullet ; m)_M); \ n \ge 1
\]
is the 
{\it additive higher Chow group}  of $X$ with modulus $m$ condition $M$. 
\end{defn}
From now on, we shall drop the subscript $M$ from the notations
and it will be understood that the additive cycle complex or the additive
higher Chow group in question is based on the modulus condition $M$,
where $M$ could be either $M_{sum}$ or $M_{ssup}$. The reader should
however always bear in mind that these two are different objects.

There are a few comments in order. We could also have defined our additive
cycle complex by taking $\un{\TZ}_r(X, n; m)$ to be the free abelian group
generated by integral closed subschemes of $X \times {\wt B}_n$ which
have good intersection property with respect to the faces of ${\wt B}_n$,
and which satisfy the modulus condition on $X \times {\ov B}_n$
({\sl cf.} \cite{KL, P1}). However, the following easy consequence
of the modulus condition shows that this does not change the cycle complex.
\begin{lem}\label{lem:M5*}
Let $M$ be a modulus condition in \defref{defn:M5}. Then, there is a canonical
bijective correspondence between the set of irreducible closed subvarieties 
$V \subset X \times B_n$ satisfying the modulus $m$ condition $M$ and the set 
of irreducible closed subvarieties $W \subset X \times {\wt B}_n$, whose 
Zariski closure in $X \times {\ov B}_n$ satisfies the modulus $m$ condition 
$M$. Here, the correspondence is actually given by the identity map.
\end{lem}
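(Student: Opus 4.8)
\textbf{Proof proposal for Lemma~\ref{lem:M5*}.}
The plan is to show that the set-theoretic identity map on the ambient spaces already implements the claimed bijection, so that the only real content is the equivalence of the two modulus conditions. First I would observe that $X \times {\wt B}_n = X \times \A^1 \times \square^{n-1}$ and $X \times B_n = X \times \G_m \times \square^{n-1}$ differ only in the first coordinate, where $\A^1 \supset \G_m$ is an open dense immersion with complement the divisor $\{t = 0\}$. Given an integral closed subvariety $V \subset X \times B_n$, its closure $\ov V$ in $X \times {\wh B}_n$ has a well-defined intersection with the coordinate hyperplane $t = 0$, and (as already noted in the text after Definition~\ref{defn:M5}) this intersection is proper since $V$ lies in $X \times B_n$; in particular $\nu^*(F_{n,0})$ is an effective Cartier divisor on $\ov V^N$, never the whole of $\ov V^N$. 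Conversely, given an integral closed $W \subset X \times {\wt B}_n$, either $W$ is contained in the divisor $t = 0$ — in which case its closure in $X \times {\ov B}_n$ does \emph{not} meet $F_{n,0}$ properly, so such $W$ never satisfy the modulus condition and are irrelevant — or $W \cap \{t \ne 0\}$ is a dense open subvariety of $W$, hence $V := W \cap (X \times B_n)$ is an integral closed subvariety of $X \times B_n$ with the \emph{same} Zariski closure in $X \times {\wh B}_n$ as $W$ has. So the assignment $V \mapsto \ov V \cap (X \times {\wt B}_n)$ and $W \mapsto W \cap (X \times B_n)$ are mutually inverse, both literally restrictions of the identity map, and both preserve the common closure $\ov V = \ov W$ in $X \times {\wh B}_n$.

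Next I would note that, because the closure of $V$ in $X \times {\ov B}_n$ and its closure in $X \times {\wh B}_n$ sit inside the open inclusion $X \times {\ov B}_n \hookrightarrow X \times {\wh B}_n$, and since all of $F_{n,0}$ and the $F^1_{n,i}$ pull back compatibly along this open inclusion, the normalization $\ov V^N$ computes the same pullback divisors whether one starts from the $X \times {\ov B}_n$-closure or the $X \times {\wh B}_n$-closure. Concretely: the generic point of each component of $\nu^*(F_{n,0})$ and of $\nu^*(F^1_{n,i})$ on $\ov V^N$ lies over a point of $X \times {\ov B}_n$ (here one uses that $F_{n,0}$ is already contained in $\ov B_n$, as remarked in the text, and that $V \subset X \times B_n$ forces $\nu^*(F^1_{n,i})$ to be supported away from $\{t = 0\}$ to the extent needed for the comparison), so the local rings at these generic points, and the orders of vanishing, agree. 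Hence the inequality of Weil divisors defining $M_{sum}$ (resp.\ $M_{ssup}$) on $\ov V^N$ — namely $(m+1)[\nu^*(F_{n,0})] \le [\nu^*(F^1_n)]$ (resp.\ $\le [\nu^*(F^1_{n,i})]$ for some $i$) — is literally the same statement whether $V$ is viewed as a subvariety of $X \times B_n$ with closure in $X \times {\wh B}_n$ or as (the restriction of) $W \subset X \times {\wt B}_n$ with closure in $X \times {\ov B}_n$. Therefore $V$ satisfies $M$ on $X \times B_n$ if and only if the corresponding $W$ has closure satisfying $M$ on $X \times {\ov B}_n$.

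I would present the two paragraphs above as the proof, closing with the remark that dimension and the good-position condition with respect to faces are manifestly preserved by the identity-map correspondence (a face of $B_n$ is the restriction of a face of ${\wt B}_n$ involving only the $\square$-coordinates, so properness of intersection is unchanged), which is why the statement can be phrased purely at the level of the modulus condition. The main obstacle — and it is a mild one — is the bookkeeping in the previous paragraph verifying that passing between the two compactifications $X \times {\ov B}_n$ and $X \times {\wh B}_n$ does not alter the relevant pullback divisors on the normalization; one must be slightly careful that no new divisorial component of $\nu^*(F^1_{n,i})$ appears "at infinity in the $t$-direction", but this is exactly ruled out by $V \subset X \times B_n$ together with $F_{n,0} \subset \ov B_n$. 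Everything else is formal.
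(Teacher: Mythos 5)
Your construction pairs $V$ with its closure in $X\times\wt B_n$ and $W$ with $W\cap(X\times B_n)$, and your comparison of the two compactifications $X\times\ov B_n\subset X\times\wh B_n$ is essentially the right one (the Weil-divisor inequality only has to be tested at the primes occurring in $\nu^*(F_{n,0})$, all of which lie over $X\times\ov B_n$ because $F_{n,0}\subset\ov B_n$; your parenthetical about $\nu^*(F^1_{n,i})$ being "supported away from $\{t=0\}$" is neither true nor needed). But this does not yet prove the lemma as stated. The content of the lemma is its last sentence: the correspondence \emph{is the identity}, i.e.\ a $V$ satisfying $M$ is already closed in $X\times\wt B_n$, and a $W$ whose closure satisfies $M$ is already contained in $X\times B_n$. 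You assert that your two assignments are "literally restrictions of the identity map", but closure and restriction are the identity only if nothing is gained or lost along $X\times\{t=0\}\times\square^{n-1}$, and you never rule this out. Without that, the two descriptions of the cycle complex would merely be abstractly isomorphic, not generated by the same subvarieties, which is what the lemma is used for later (e.g.\ in Section~5).

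The missing step is where the modulus condition is actually used. From $(m+1)[\nu^*(F_{n,0})]\le[\nu^*(F^1_{n,i})]$ (or $\le[\nu^*(F^1_n)]$) one gets an inclusion of supports $\supp\nu^*(F_{n,0})\subset\supp\nu^*(F^1_{n,i})\subset\nu^{-1}\bigl(X\times\{y_i=1\}\bigr)$, since every prime divisor occurring in the smaller effective divisor occurs in the larger one. Applying the surjection $\nu:\ov V^N\to\ov V$ gives $\ov V\cap(X\times\{t=0\})\subset X\times\bigcup_i\{y_i=1\}$, a set disjoint from $X\times\wt B_n$. Hence the closure of $V$ in $X\times\wt B_n$ picks up no points with $t=0$ and equals $V$; conversely, a $W\subset X\times\wt B_n$ whose closure satisfies $M$ can meet $\{t=0\}$ only outside $\wt B_n$, i.e.\ not at all, so $W\subset X\times B_n$. (Your dismissal of the case $W\subset\{t=0\}$ is correct, but the case $\emptyset\ne W\cap\{t=0\}\subsetneq W$ is the one that needs this argument.) This is exactly the second half of the paper's two-line proof; your write-up reproduces only the first half.
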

\begin{proof} First of all, since for any integral closed subscheme $V$
of $X \times {\wh B}_n$, the pull-back ${\nu}^*(F_{n, 0})$ on $V^N$
is contained in the open subset ${\nu}^{-1}(X \times {\ov B}_n)$, we can
replace $\wh B_n$ by $\ov B_n$ in the definition of the modulus conditions.

Now, if $\Sigma$ and $\wt {\Sigma}$ are the two sets in the statement, then
the modulus condition forces that if $V \in \Sigma$, then $V$ is same as
its closure in $X \times {\wt B}_n$. Conversely, if $V \in \wt {\Sigma}$,
then the modulus condition again forces $V$ to be contained in
$X \times B_n$.
\end{proof}   
Let ${\TZ^q(X, \bullet ; m)}_{sup}$ be the additive cycle complex as defined
in \cite{KL, P1}. This complex is based on the modulus condition $M_{sup}$ 
above. It follows from ~\eqref{eqn:order} that there are natural inclusions of 
cycle complexes
\begin{equation}\label{eqn:order1}
\TZ^q(X, \bullet ; m)_{ssup} \inj {\TZ^q(X, \bullet ; m)}_{sup} \inj
{\TZ^q(X, \bullet ; m)}_{sum}
\end{equation}
and hence there are natural maps
\begin{equation}\label{eqn:order2}
{\TH}^q(X, \bullet ; m)_{ssup} \to {{\TH}^q(X, \bullet ; m)}_{sup} \to
{{\TH}^q(X, \bullet ; m)}_{sum}.
\end{equation}

One drawback of the cycle complex based on $M_{sup}$ is that the underlying
modulus condition for a cycle is not necessarily preserved when it is 
restricted to a face of $B_n$. This forces one to put an extra 
{\sl induction} condition in the definition of 
${\TZ^q(X, \bullet ; m)}_{sup}$ that requires for cycles to be admissible, not 
only the cycles themselves to satisfy $M_{sup}$ on $X \times B_n$, but also 
all their intersections with various faces to satisfy $M_{sup}$. In particular,
as $n$ gets large, this condition gets more serious, and  it might be
a very tedious job to find admissible cycles. On the other hand, the 
definition of our cycle complexes shows that this extra {\sl induction} 
condition is superfluous for the cycle complexes based on $M_{sum}$ or 
$M_{ssup}$. Based on this discussion and all the results of this 
paper, one is led to believe that even though the modulus condition 
$M_{ssup}$ may appear mildly stronger and $M_{sum}$ weaker than the 
modulus condition $M_{sup}$, the following conjecture should be true. 
\begin{conj}\label{conj:QI}
For a smooth projective variety $X$ over $k$, the natural inclusions of cycle 
complexes $\TZ^q(X, \bullet ; m)_{ssup} \inj {\TZ^q(X, \bullet ; m)}_{sup}
\inj \TZ^q(X, \bullet ; m)_{sum}$ are quasi-isomorphisms.
\end{conj}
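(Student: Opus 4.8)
The plan is to reduce the conjecture to a moving lemma of the kind already proved in this paper: it suffices to construct, for each $n$, a procedure that moves an $M_{sum}$-admissible cycle, modulo boundaries in $\TZ^q(X,\bullet;m)_{sum}$, to a homologous cycle satisfying the stronger condition $M_{ssup}$, together with an explicit chain homotopy realizing the congruence. For $n\le 2$ there is nothing to do, since $F^1_n=F^1_{n,1}$ is a single divisor and all three modulus conditions coincide; the content is entirely in the range $n\ge 3$, where the bound $(m+1)[\nu^*F_{n,0}]\le[\nu^*F^1_n]=\sum_{i=1}^{n-1}[\nu^*F^1_{n,i}]$ may be ``distributed'' among several of the divisors $\{y_i=1\}$ and one wishes to redistribute it onto a single one. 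Because $M_{ssup}\Rightarrow M_{sup}\Rightarrow M_{sum}$ by \eqref{eqn:order}, the inclusion $\TZ^q_{ssup}\inj\TZ^q_{sum}$ factors through $\TZ^q_{sup}$; if the moving procedure can be arranged so that, applied to a cycle already satisfying $M_{sup}$, all chains appearing in the output and in the homotopy still satisfy $M_{sup}$, then the same argument proves $\TZ^q_{ssup}\inj\TZ^q_{sup}$ is a quasi-isomorphism, and the remaining inclusion $\TZ^q_{sup}\inj\TZ^q_{sum}$ follows from the two-out-of-three property for quasi-isomorphisms.

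For the moving procedure itself I would imitate the chain-homotopy-variety method of Bloch \cite{Bl1} and Levine \cite{Le}, adapted in this paper to prove Theorem~\ref{thm:ML}. One chooses a one-parameter family of correspondences $\Gamma_s\subset B_n\times B_n$, with $s$ running over $\square^1$ (or an affine line), such that $\Gamma_{s_0}$ is the diagonal $\Delta$, each $\Gamma_s$ is algebraically equivalent to $\Delta$ so that the whole family is homologous to the identity, and the generic member moves the excess of the modulus divisor of an admissible cycle onto the single coordinate $\{y_{n-1}=1\}$; the model to keep in mind is the ``multiplication'' map $(y_1,\dots,y_{n-1})\mapsto\bigl(y_1,\dots,y_{n-2},\,1-\prod_{i=1}^{n-1}(1-y_i)\bigr)$, which replaces $\sum_i\{y_i=1\}$ by a single hyperplane in the pull-back, though one must engineer an actual family of correspondences and not merely the graph of this rational self-map. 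Sweeping the images of a fixed cycle $Z$ under the $\Gamma_s$ over the parameter produces the homotopy chain, whose endpoints give a cycle homologous to $Z$ and satisfying $M_{ssup}$. Along the way one must check, in the now-familiar style of Bloch--Levine: (a) the good-position condition with respect to all faces of $B_n\times\square^1$ survives after a generic shrinking of the family, by platification and dimension counts; (b) the degeneracy subcomplex is respected, so that the construction descends to $\TZ^q(X,\bullet;m)$; and (c), the crucial point, that the modulus condition $M_{sum}$ — and, for already-good cycles, $M_{sup}$ or $M_{ssup}$ — is inherited by every new component produced in $\Gamma_s\cdot Z$ and in the homotopy variety. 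For (c) the tools at hand are the Containment Lemma (Proposition~\ref{prop:restp}), Lemmas~\ref{lem:rest} and \ref{lem:surjm}, and the modulus estimates of \cite{KL} for proper intersections and for pull-backs along finite surjective maps of normal varieties.

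I expect step (c) to be the genuine obstacle, and it is exactly the phenomenon the rest of the paper is organized around. The Containment Lemma propagates a modulus condition from $V$ to a component of $V\cap Y$ only when $Y$ is a \emph{fixed} subvariety whose closure already meets $X\times F_{n,0}$ and $X\times F^1_n$ properly; here the relevant $Y$ is the graph of the moving correspondence $\Gamma_s$, which varies with the homotopy parameter, is not a face of $B_n$, and — most seriously — the multiplication-type map does not pull $F^1_n$ back to an effective Cartier divisor dominated in the required way without a careful analysis near $y_i=\infty$ and along the indeterminacy locus. Propagating the inequality $(m+1)F_{n,0}\le F^1_n$ through a blow-up and normalization that resolves this indeterminacy is precisely the computation that breaks down for $M_{sup}$, which is why the paper can only record this as Conjecture~\ref{conj:QI}. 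A successful proof would therefore hinge on finding a family $\Gamma_s$ whose total space is mild enough — ideally keeping the excess divisor a single component of a normal-crossing configuration throughout the degeneration — that the inequalities of Lemmas~\ref{lem:rest} and \ref{lem:surjm} can be carried from $s=s_0$ to $s=s_1$; the normalized-complex comparison of Section~\ref{section:NACom}, while not touching the core difficulty, would at least allow one to work with fewer degenerate chains while doing so.
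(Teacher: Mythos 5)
The statement you are trying to prove is stated in the paper as Conjecture~\ref{conj:QI} and is \emph{not proved} there: the authors offer no argument beyond the heuristic discussion following ~\eqref{eqn:order1} and the single piece of evidence in Corollary~\ref{cor:0-cycle*}, where the maps are shown to be isomorphisms for $\TH^n(k,n;m)$ by routing through R\"ulling's identification with ${\W}_m\Omega^{n-1}_k$. So there is no proof in the paper to compare yours against, and your proposal, by your own admission, does not close the argument either. What you have written is a strategy, and the place where it stops being a proof is exactly the place you flag as step (c): you never construct the family of correspondences $\Gamma_s$, and without it nothing is proved. The model map $(y_1,\dots,y_{n-1})\mapsto(y_1,\dots,y_{n-2},1-\prod_i(1-y_i))$ does have the right formal feature — the single divisor $\{y_{n-1}=1\}$ pulls back to $\sum_i\{y_i=1\}=F^1_n$, so Lemma~\ref{lem:surjm} would convert an $M_{sum}$ bound upstairs into an $M_{ssup}$ bound on the image — but it is only a rational map with indeterminacy along $\{y_i=\infty\}$, it destroys the good-position condition with respect to faces, and the image of $Z$ under it is not a priori homologous to $Z$ in the additive complex. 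Producing the homotopy chain that restores the congruence is precisely where the modulus condition is known to break (the paper's Example~\ref{exm:first example} illustrates the same failure mode for the Bloch--Levine homotopy), and the Containment Lemma cannot rescue it because, as you correctly observe, the graph of $\Gamma_s$ is not a fixed subvariety meeting $F_{n,0}$ and $F^1_n$ properly.

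Two smaller points. First, your remark that ``for $n\le 2$ there is nothing to do'' is true of the cycle groups but not of the homology: $\TH^q(X,2;m)$ already involves boundaries coming from level $3$, where the three groups differ, so even the low-degree quasi-isomorphism statement requires the moving construction at level $3$. Second, your two-out-of-three reduction is logically fine, but it presupposes the harder of the two comparisons ($\TZ_{ssup}\inj\TZ_{sup}$), which is the one the paper's own Question~\ref{ques:DML} singles out as a ``deeper moving lemma'' that moves the modulus itself rather than the intersection behaviour. In short: your diagnosis of the obstruction is accurate and consistent with how the paper frames the problem, but the proposal establishes nothing beyond what the paper already records as open.
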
   

\section{Basic properties of ${\TH}^q(X, \bullet ; m)$}\label{section:BHCG}  
In this section, our aim is to demonstrate that the additive higher Chow
groups defined above for $M_{sum}$ and $M_{ssup}$ have all the properties (except 
Theorem~\ref{thm:1-cycle} which we do not know for $M_{sum}$)
which are known to be true for the additive Chow groups for $M_{sup}$ of \cite{KL, P1}.
Since most of the arguments in the proofs can be given either by quoting 
these references verbatim or by straight-forward modifications of the same, we
only give the sketches of the proofs with minimal explanations whenever
deemed necessary. We begin with the following structural properties of
our additive Chow groups.
\begin{thm}\label{thm:basic}
Let $f : Y \to X$ be a morphism of $k$-schemes.
\begin{enumerate}
\item
If $f$ is projective, there is a natural map of cycle complexes
$f_*: \TZ_r(Y, \bullet ; m) \to \TZ_r(X, \bullet ; m)$ which induces
the analogous push-forward map on the homology.
\item
If $f$ is flat, there is a natural map of cycle complexes
$f^*: \TZ_r(X, \bullet ; m) \to \TZ_r(Y, \bullet ; m)$ which induces
the analogous pull-back map on the homology. These pull-back and push-forward
maps satisfy the obvious functorial properties.
\item
If $X$ is smooth and projective, there is a product  
\[
\cap_X: \CH^r(X, p) \otimes \TH_s(X,q ; m)\to \TH_{s-r}(X, p+q ; m),
\] 
that is natural with respect to flat pull-back, and that satisfies the projection formula 
\[
f_*( f^*(a)\cap_Xb)=a\cap_Y f_*(b)
\]
for $f:X\to Y$ a morphism of smooth projective varieties. If $f$ is flat in addition, we have an additional projection formula
\[
f_*(a\cap_X f^*(b))=f_*(a)\cap_Y b.
\]
\item
If $X$ is smooth and quasi-projective, there is a product
\[
\cap_X:\CH^r(X) \otimes\TH_s(X,q ; m)  \to \TH_{s-r}(X, q ; m),
\] 
that is natural with respect to flat pull-back, and that satisfies the projection formula 
\[
f_*( f^*(a)\cap_Xb)=a\cap_Yf_*(b)
\]
for $f:X\to Y$ a projective morphism of smooth quasi-projective varieties. If $f$ is flat in addition, we have an additional projection formula
\[
f_*(a\cap_Xf^*(b))= f_*(a)\cap_Yb.
\]
\\
Furthermore, all products are associative.
\end{enumerate}
\end{thm}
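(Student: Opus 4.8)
The plan is to establish Theorem~\ref{thm:basic} by closely following the constructions in \cite{KL, P1}, modifying each step so that the relevant modulus condition ($M_{sum}$ or $M_{ssup}$) is preserved rather than $M_{sup}$. For parts (1) and (2), the key observations are the standard ones: for a projective morphism $f \colon Y \to X$, pushforward of cycles is defined on admissible cycles meeting faces properly, and the point to verify is that $M$ is preserved under this operation. Using the notation of \propref{prop:restp} and \lemref{lem:rest}, \lemref{lem:surjm}, one compares the pullbacks of $F_{n,0}$ and $F^1_{n,i}$ (resp.\ $F^1_n$) to the normalization of the closure of $f_*(Z)$ in $X\times \wh B_n$ via the finite surjective map induced from the closure of $Z$; the containment argument then transfers the inequality $(m+1)[\nu^*(F_{n,0})]\le[\nu^*(F^1_{n,i})]$ downstairs. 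For flat pullback one argues dually: flatness ensures good position with respect to faces is preserved, and since pullback along a dominant map of normal varieties preserves effectivity of differences of Cartier divisors (again \lemref{lem:rest}, \lemref{lem:surjm}), the modulus condition descends. Functoriality on complexes then follows since these operations commute with the face maps $\iota_{n,i,\epsilon}$ and with degeneracies, hence descend to the non-degenerate quotient, and functoriality on homology is automatic.

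For parts (3) and (4), I would construct the cap product exactly as in \cite[\S 4--5]{KL}, realizing it via a diagonal/intersection construction: given a higher Chow cycle $\alpha$ on $X\times\square^p$ and an additive cycle $\beta$ on $X\times B_q$, one forms the external product on $X\times X\times\square^p\times B_q$, intersects with the diagonal $\Delta_X$ (which is possible after a preliminary move of $\alpha$, using the Bloch--Levine moving lemma for higher Chow groups on the smooth projective $X$), and obtains a cycle on $X\times\square^p\times B_q\cong X\times B_{p+q}$ after identifying coordinates. The one subtlety relative to the higher Chow setting is verifying that the resulting cycle satisfies $M$: here I would invoke the Containment lemma \propref{prop:restp} with $V=X\times\square^p\times\beta$ (which trivially satisfies $M$ since the higher-Chow factor carries no $t$-coordinate and contributes nothing to the $F_{n,0}$ pullback) and $Y$ the inverse image of the diagonal, so that each component $V_Y$ of the intersection inherits $M$ automatically. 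Naturality with respect to flat pullback, the two projection formulas, and associativity are then formal consequences of the analogous properties for external products, diagonal intersections, and the already-established functoriality from parts (1)--(2), exactly as in \cite{KL}; compatibility with the $\CH^\bullet(X)$-module structure is built into the construction.

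The main obstacle is making the diagonal intersection rigorous while controlling the modulus: one must move the higher Chow cycle $\alpha$ into a position where its external product with $\beta$ meets $\Delta_X\times\square^p\times B_q$ properly along \emph{all} faces simultaneously, and simultaneously ensure that the closure of the intersection still has the right behavior at $t=0$. This is where the projectivity and smoothness of $X$ are essential (to apply the Bloch--Levine moving lemma), and where the Containment lemma does the real work: because $M$ for the product cycle is inherited from the $\beta$-factor, the move of $\alpha$ need only be an intersection-theoretic move with no modulus constraint, which is precisely what \cite{Bl2, Le} provides. Apart from this, the remaining verifications are routine bookkeeping of the kind already carried out in \cite{KL}, so I would present them in sketch form, quoting the references for the parts that transcribe verbatim.
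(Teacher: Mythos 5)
Your proposal is correct and follows essentially the same route as the paper: the paper's own proof simply observes that the constructions of \cite{KL} go through verbatim once one checks that the modulus condition ($M_{sum}$ or $M_{ssup}$) is preserved, the push-forward case being handled by Lemma~\ref{lem:surjm} and the intersection-theoretic steps (external product, diagonal pull-back after a Bloch--Levine move of the higher Chow factor) being modulus-free except where the Containment Lemma applies. Your reconstruction of the cap-product construction and the role of Proposition~\ref{prop:restp} matches what the cited lemmas of \cite{KL} actually do, so no further argument is needed.
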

\begin{proof}({\sl cf.} \cite{KL}) Granting the flat pull-back and the 
projective push-forward, the theorem is a direct consequence
of \cite[Lemmas~4.7, 4.9]{KL} whose proofs are independent of the choice
of the modulus conditions of \defref{defn:M5}, as the interested reader may 
verify. The proofs of the flat pull-back and projective push-forward maps
on the level of cycle complexes also follow in the same way as in 
{\sl loc. cit.} using our \lemref{lem:surjm}.
\end{proof}
\begin{thm}[Projective Bundle and Blow-up formulae]\label{thm:PBF}
Let $X$ be a smooth quasi-projective variety and 
let $E$ be a vector bundle on $X$ of rank $r+1$. Let 
$p:  {\P}(E) \to X$ be the associated projective bundle over 
$X$. Let ${\eta} \in CH^1(\P(E))$ be the class of the tautological 
line bundle $\sO(1)$.
Then for any $q , n \ge 1$ and $m \ge 2$, the map 
\[
\theta: \bigoplus_{i= 0}^r \TH^{q-i}(X, n; m)\to \TH^q(\P(E), n; m)
\]
 given by
\[
(a_0, \cdots , a_r) \mapsto \stackrel{r}{\underset {i =0} \sum} 
\eta^i \cap_{\P(E)} p^*(a_i)
\]
 is an isomorphism. 

If $i:Z\to X$ is a closed  immersion of smooth projective varieties and 
$\mu:X_Z\to X$ is the blow-up of $X$ along $Z$ with $i_E:E\to X_Z$ the 
exceptional divisor with morphism $q:E\to Z$. Then the sequence
\begin{multline*}
0\to \TH^s(X,n;m)\xrightarrow{(i^*,\mu^*)}  
\TH^s(Z,n;m)\oplus\TH^s(X_Z,n;m)\\\xrightarrow{q^*-i_E^*}
\TH^s(E,n;m)\to 0
\end{multline*}
is split exact.
\end{thm}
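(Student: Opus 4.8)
The plan is to deduce both the projective bundle formula and the blow-up formula from the classical statements for higher Chow groups by using the module structure of $\TH^*(-, \bullet; m)$ over $\CH^*(-)$ established in \thmref{thm:basic}, together with the functoriality (flat pull-back, projective push-forward, and their compatibility with the cap product and the projection formula) recorded there. The key observation is that both formulae are statements about free module decompositions, and the proofs of their analogues for ordinary higher Chow groups of Bloch (see \cite{Bl1}) are formal consequences of: (i) the existence of the cap product with the Chow ring, (ii) the projection formula, (iii) flat base change, and (iv) the fact that $p^*$ (resp. the relevant maps for the blow-up) are split injections witnessed by explicit correspondences. Since all of (i)--(iv) are available for $\TH^q(X, \bullet; m)$ by \thmref{thm:basic}, the same formal argument goes through verbatim.

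For the projective bundle formula, first I would recall that $p : \P(E) \to X$ is flat, so $p^* : \TH^{q-i}(X, n; m) \to \TH^{q-i}(\P(E), n; m)$ is defined and is a map of complexes, and that $\eta \in \CH^1(\P(E))$ acts by $\cap_{\P(E)}$. The inverse to $\theta$ is built out of the maps $a \mapsto p_*(\eta^{r-i} \cap_{\P(E)} a)$ (which requires $p$ projective — it is), combined with the standard Grassmann-graph/relations in $\CH^*(\P(E))$ expressing $\eta^{r+1}$ in terms of the Chern classes of $E$ pulled back from $X$. Using associativity of the products and the two projection formulae in \thmref{thm:basic}(3), one checks that $\theta$ and this candidate inverse compose to the identity on each side; the computation is identical to the one for higher Chow groups once one knows $p_* p^* = \id$ and $p_*(\eta^j \cap p^*(-)) = s_{j-r}(E) \cap (-)$ (Segre classes), which are statements purely in the $\CH^*(X)$-module structure. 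I would also note the hypothesis $m \ge 2$ is inherited from wherever the underlying cycle-level constructions in \cite{KL} require it.

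For the blow-up formula, I would first produce the maps: $i^*$ and $\mu^*$ are flat pull-backs only in special cases, so more precisely one uses the Gysin/pull-back maps which for smooth projective $X, Z, X_Z$ factor through the cap product with the appropriate (refined) fundamental classes in $\CH^*$; again all of this is supplied by \thmref{thm:basic}(3). Exactness and the splitting are then obtained by exhibiting an explicit retraction: the key input is that $\CH^*(X_Z) \cong \mu^*\CH^*(X) \oplus \big(\bigoplus_{j=1}^{c-1} i_{E*}q^* \CH^{*-j}(Z) \cdot \eta_E^{j-1}\big)$ where $c = \codim_X Z$, and that after tensoring (via $\cap$) this same decomposition holds for $\TH^*(X_Z, \bullet; m)$ by the projective bundle formula applied to $E = \P(N_{Z/X}) \to Z$ and the functoriality/projection-formula identities. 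One then checks $\ker(q^* - i_E^*) = \im(i^*, \mu^*)$ and constructs a section of $q^* - i_E^*$ exactly as in the higher Chow case (\cite{Bl1}), all manipulations taking place in the module structure over the Chow rings. I would spell out the splitting map $s : \TH^s(E, n; m) \to \TH^s(Z, n; m) \oplus \TH^s(X_Z, n; m)$ and verify $(q^* - i_E^*) \circ s = \id$ and that $s$ together with $(i^*, \mu^*)$ gives the direct sum decomposition.

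The main obstacle I anticipate is purely bookkeeping: making sure that every Gysin or refined pull-back map used in the classical blow-up argument has been legitimately constructed for additive higher Chow groups. \thmref{thm:basic} gives the cap product with $\CH^*(X)$ and $\CH^*(X, p)$ for smooth projective $X$, flat pull-back, and projective push-forward, but the blow-up sequence implicitly uses $i^* : \TH^s(X, n; m) \to \TH^s(Z, n; m)$ for a closed immersion $i$ which is \emph{not} flat; this must be interpreted as (and shown to agree with) $\cap_X$ against the class $[Z] \in \CH^c(X)$ followed by the canonical iso $\TH_{s-c}(Z,\dots) \cong \TH^s(Z,\dots)$, or constructed via deformation to the normal cone as in \cite{KL}. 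Once one fixes a consistent definition of these Gysin maps and verifies the standard compatibilities (naturality in flat maps, excess intersection, self-intersection $i^*i_* = \cap c_1(N)$), the rest is a formal diagram chase identical to the higher Chow case, and I would organize the write-up so that the first paragraph isolates exactly these compatibilities as lemmas quoted from \cite{KL}, and the remaining paragraphs carry out the two decomposition arguments.
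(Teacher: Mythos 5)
Your outline is essentially workable, but it takes a genuinely different and much longer route than the paper. The paper's entire proof is two sentences: Theorem~\ref{thm:basic} (flat pull-back, projective push-forward, the cap product with $\CH^*(X,p)$ and the projection formula for morphisms of smooth projective varieties) is exactly what is needed to let Chow correspondences act on additive higher Chow groups, i.e.\ to make $\TH^*(-,n;m)$ a functor of graded abelian groups on the category $\Mot_k$ of Chow motives; both the projective bundle formula and the blow-up formula are then obtained by applying this functor to the well-known decompositions $M(\P(E))\cong\bigoplus_{i=0}^r M(X)(i)[2i]$ and the split blow-up decomposition in $\Mot_k$. Your proposal instead re-runs Bloch's explicit computation (inverse built from $p_*(\eta^{r-i}\cap_{\P(E)}-)$, Segre-class identities, explicit retraction for the blow-up). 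What the motivic packaging buys is precisely the resolution of what you correctly identify as your main obstacle: the maps $i^*$, $\mu^*$, $i_E^*$ are pull-backs along non-flat morphisms, and at this stage of the paper (before the moving lemma and Theorem~\ref{thm:funct}) the only way to produce them is as actions of transposed graph correspondences on smooth projective varieties --- which is automatic once one works in $\Mot_k$. Your alternative description of $i^*$ as ``$\cap_X$ against $[Z]\in\CH^c(X)$ followed by a canonical iso'' does not literally make sense, since the cap product of Theorem~\ref{thm:basic} lands in $\TH_{*}(X,\cdot;m)$, not in groups supported on $Z$; the correct formulation is $i^*(\alpha)=pr_{Z*}\bigl([\Gamma_i]\cap_{X\times Z} pr_X^*(\alpha)\bigr)$ for the graph $\Gamma_i\subset Z\times X$, and once you adopt this you are in effect reconstructing the correspondence calculus that the paper simply quotes. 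I would therefore recommend stating the functoriality on $\Mot_k$ as the single key lemma and deducing both formulae from it, rather than verifying the Segre-class and excess-intersection compatibilities by hand.
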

\begin{proof} ({\sl cf.} \cite{KL}) The proof of both the formulae is a
consequence of the corresponding decomposition of motives in the 
triangulated category of Chow motives ${\rm Mot}_k$ together with the fact that the 
additive Chow groups can be defined as a functor of graded abelian groups
on ${\rm Mot}_k$. But this functoriality is a direct consequence of
Theorem~\ref{thm:basic}.
\end{proof}  
\begin{thm}\label{thm:Ext} Assume that $k$ admits resolution of 
singularities. Then the functor $\TZ_r(-;m):\SmProj/k\to D^-(\Ab)$ 
extends to a functor
\[
\TZ_r^\log(-;m):\Sch/k \to D^-(\Ab)
\]
together with a natural transformation of functors $\TZ_r^\log(-;m) \to
\TZ_r(-;m)$ satisfying:\\
\\
(1) Let $\mu:Y\to X$ be a proper morphism in $\Sch/k$, $i:Z\to X$ a closed 
immersion. Suppose that $\mu:\mu^{-1}(X\setminus Z)\to X\setminus Z$ is an 
isomorphism. Set $E:=\mu^{-1}(X\setminus Z)$ with maps $i_E:E\to Y$, 
$q:E\to Z$. There is a canonical extension of
the sequence in $D^-(\Ab)$
\[
\TZ_r^\log(E;m)\xrightarrow{(i_{E*},-q_*)}\TZ^\log_r(Y;m)\oplus 
\TZ^\log_r(Z;m)\xrightarrow{\mu_*+i_*} \TZ^\log_r(X;m)
\]
to a distinguished triangle in $D^-(\Ab)$.\\
\\
(2) Let $i:Z\to X$ be a closed immersion in $\Sch/k$, $j:U\to X$ the open 
complement. Then there is a canonical distinguished triangle in $D^-(\Ab)$:
\[
\TZ^\log_r(Z;m)\xrightarrow{i_*}\TZ^\log_r(X;m)
\xrightarrow{j^*}\TZ^\log_r(U;m)\to \TZ^\log_r(Z;m)[1],
\]
which is natural with respect to proper morphisms of pairs $(X,U)\to (X',U')$.
\\
(3) For any $X \in \Sch/k$, the natural map $\TH_r^\log(X, n ;m) \to
\TH_{r+p}^\log(X \times {\A}^p, n ;m)$ is an isomorphism.
\end{thm}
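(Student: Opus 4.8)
The plan is to construct $\TZ_r^\log(-;m)$ by the extension criterion of Guill\'en and Navarro Aznar, applied exactly as for the modulus condition $M_{sup}$ in \cite{KL}; the conditions $M_{sum}$ and $M_{ssup}$ will intervene only through Theorems~\ref{thm:basic} and ~\ref{thm:PBF}, whose proofs do not see the choice of modulus. Recall the shape of that criterion. A functor $F$ from $\SmProj/k$ to a descent category --- realized by complexes of abelian groups with quasi-isomorphisms and mapping cones, with homotopy category $D^-(\Ab)$ --- which is covariant for the (automatically proper) morphisms and satisfies (a) $F(\emptyset)=0$ and $F(X\sqcup Y)\xrightarrow{\sim}F(X)\oplus F(Y)$, and (b) for every closed immersion $i:Z\to X$ in $\SmProj/k$, with blow-up $\mu:X_Z\to X$, exceptional divisor $i_E:E\to X_Z$ and projection $q:E\to Z$, sends the associated square to a homotopy cartesian (equivalently acyclic) square, extends --- granting resolution of singularities over $k$ --- to a functor $F^\log$ on $\Sch/k$, covariant for proper morphisms, equipped with restriction maps along open immersions and natural on pairs, carrying a natural transformation $F^\log\to F$ that is a quasi-isomorphism on $\SmProj/k$, and turning abstract blow-up squares into distinguished triangles and closed--open decompositions into localization triangles.

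First I would take $F=\TZ_r(-;m)$, a functor to complexes by Theorem~\ref{thm:basic}(1). Condition (a) is immediate from Definition~\ref{defn:AdditiveComplex}, an admissible cycle on $(X\sqcup Y)\times B_n$ being just a pair of admissible cycles. Condition (b) is the blow-up formula of Theorem~\ref{thm:PBF}, which (applied to the square of pushforward maps) says degreewise on homology that the square is acyclic; hence its totalization is acyclic and the square is homotopy cartesian. The criterion then delivers $\TZ_r^\log(-;m):\Sch/k\to D^-(\Ab)$ together with the natural transformation to $\TZ_r(-;m)$, and statements (1) and (2) are exactly the abstract blow-up and localization triangles it produces. (Should one prefer not to invoke the compactly supported form of the criterion, statement (2) can instead be extracted from (1) and resolution of singularities by the standard induction on $\dim(X)$: compactify $U$ inside a blow-up of $X$ with normal-crossing boundary and apply (1) repeatedly.)

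Statement (3) is not formal --- $\A^1$-homotopy invariance is false for $\TZ_r$ itself and is rescued here only by localization --- so I would induct on $p$ and treat $p=1$. Applying the localization triangle (2) to the closed immersion $X\cong X\times\{0\}\hookrightarrow X\times\P^1$, with open complement $X\times\A^1$, gives
\[
\TZ^\log_{r+1}(X;m)\xrightarrow{i_*}\TZ^\log_{r+1}(X\times\P^1;m)\xrightarrow{j^*}\TZ^\log_{r+1}(X\times\A^1;m)\to\TZ^\log_{r+1}(X;m)[1],
\]
hence a long exact homology sequence. The projective bundle decomposition $\TH^\log_{r+1}(X\times\P^1,n;m)\cong\TH^\log_r(X,n;m)\oplus\TH^\log_{r+1}(X,n;m)$ --- which propagates from the smooth projective case (Theorem~\ref{thm:PBF}) to arbitrary $X\in\Sch/k$ by d\'evissage using the triangles (1) and (2) and resolution of singularities, or see \cite{KL} --- identifies $i_*$ with the inclusion of the second summand. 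Thus $i_*$ is split injective, the connecting maps vanish, and $\TH^\log_{r+1}(X\times\A^1,n;m)\cong\TH^\log_r(X,n;m)$, the isomorphism being flat pullback; this is (3) for $p=1$. Iterating over the factors of $\A^p$ completes the proof.

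Once the Guill\'en--Navarro Aznar machinery is granted and resolution of singularities assumed, I expect the only genuine obstacle to be the verification of the blow-up descent condition (b); but this has been arranged for in advance as Theorem~\ref{thm:PBF}, so the argument turns out to be purely formal and the delicate modulus conditions never resurface. Some care is needed to invoke the correct (homological, compactly supported) version of the extension criterion, so that the restriction maps $j^*$ and the naturality for pairs demanded in (2) come out as stated.
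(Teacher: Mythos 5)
Your proposal is correct and follows essentially the same route as the paper: the paper's proof likewise invokes the Guill\'en--Navarro Aznar extension criterion, with the blow-up descent hypothesis supplied by Theorem~\ref{thm:PBF} and the functoriality by Theorem~\ref{thm:basic}, deferring the remaining details (including the localization-plus-projective-bundle argument for part (3)) to \cite[Section~6]{KL}. Your write-up simply makes explicit the verifications that the paper leaves to that reference.
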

\begin{proof}({\sl cf.} \cite{KL}) This follows directly from 
\thmref{thm:basic} and \thmref{thm:PBF} above together with the main results of
Guill{\'e}n and Navarro Aznar \cite{GN}. We refer \cite[Section~6]{KL}
for details. The natural transformation of functors is an immediate
consequence of the constructions of Guill{\'e}n and Navarro Aznar using the
proper hyper cubical resolutions, and the proper push-forward property
of additive cycle complex.
\end{proof}  
Next we study the question of the existence of the regulator maps from our 
additive higher Chow groups to the modules of absolute K{\"a}hler 
differentials. First we prove the following result of Bloch-Esnault \cite{BE2}
and R{\"u}lling \cite{R} on $0$-cycles for the modulus condition $M_{ssup}$. 
\begin{thm}\label{thm:0-cycle}
Assume that char$(k) \neq 2$ and let ${\W}_m{\Omega}^{\bullet}_k$ denote
the generalized de Rham-Witt complex of Hesselholt-Madsen ({\sl cf.}
\cite{R}). Then there is a natural isomorphism
\[
R^n_{0, m} : \TH^n(k, n ;m) \to {\W}_m{\Omega}^{n-1}_k.
\]
\end{thm}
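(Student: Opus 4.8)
The plan is to reduce Theorem~\ref{thm:0-cycle} to the result of R\"ulling, who proved precisely this statement for the additive higher Chow groups based on the modulus condition $M_{sum}$ (after identifying his modulus with the sum-modulus, as in Definition~\ref{defn:M5}(1)). So the real content is to show that for the one-point scheme $X = \Spec(k)$, the inclusion of cycle complexes
\[
\TZ^n(k, \bullet; m)_{ssup} \inj \TZ^n(k, \bullet; m)_{sum}
\]
from ~\eqref{eqn:order1} is a quasi-isomorphism in this $0$-dimensional case, i.e.\ Conjecture~\ref{conj:QI} holds when $X=\Spec(k)$. Then $R^n_{0,m}$ is defined as the composite of the map on homology induced by this inclusion (which is an isomorphism by the above) with R\"ulling's isomorphism $\TH^n(k,n;m)_{sum} \xrightarrow{\sim} \W_m\Omega^{n-1}_k$, and naturality in $k$ follows from naturality of each ingredient.

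First I would recall that for $X = \Spec(k)$ an admissible cycle in $\un{\TZ}^n(k, n; m)$ is just a closed point $P$ of $B_n = \G_m \times \square^{n-1}$ (up to the degenerate cycles and faces), i.e.\ a closed point of $\A^1\times\square^{n-1}$ not meeting any face, with residue field $k(P)$ finite over $k$. Writing $\ov P$ for its closure in $\A^1\times(\P^1)^{n-1}$ and $\nu\colon \ov P^N \to \A^1\times(\P^1)^{n-1}$ for the normalization, the key observation is that since $P$ is a point, $\ov P^N = \Spec(k(P))$ is $0$-dimensional and all the divisors $\nu^*(F_{n,0})$, $\nu^*(F^1_{n,i})$ pull back to Weil divisors supported on a finite set of points of $\Spec(k(P))$. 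For such $0$-cycles the $M_{sum}$ condition $(m+1)\nu^*(F_{n,0}) \le \nu^*(F^1_n) = \sum_i \nu^*(F^1_{n,i})$ and the $M_{ssup}$ condition ($\exists i$ with $(m+1)\nu^*(F_{n,0}) \le \nu^*(F^1_{n,i})$) need not coincide in general; so I do \emph{not} expect the two complexes to agree on the nose. Instead, I would use the explicit computation. R\"ulling's theorem already identifies $\TH^n(k, n; m)_{sum}$ explicitly with $\W_m\Omega^{n-1}_k$ via an explicit cycle-theoretic formula (parametrized rational curves / the symbol map), and the point is that the cycles appearing in the generators of $\W_m\Omega^{n-1}_k$ under this identification — the cycles of the form associated to a Witt vector and a tuple of units, and their boundaries — can all be chosen to satisfy the stronger condition $M_{ssup}$. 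Concretely I would show: (a) every class in $\TH^n(k,n;m)_{sum}$ is represented by a cycle in $\TZ^n(k,\bullet;m)_{ssup}$ (surjectivity of the map in ~\eqref{eqn:order2} for $\Spec k$), by inspecting R\"ulling's generating cycles; and (b) if a cycle $Z \in \TZ^n(k,n;m)_{ssup}$ becomes a boundary in $\TZ^n(k,\bullet;m)_{sum}$, say $Z = \partial W$ with $W \in \TZ^n(k, n+1; m)_{sum}$, then one can modify $W$ to a cycle $W'$ satisfying $M_{ssup}$ with $\partial W' = Z$ still — using the Containment lemma (Proposition~\ref{prop:restp}) to control the faces. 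This gives injectivity of the map on homology.

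The main obstacle will be step (b), the injectivity: producing, from an $M_{sum}$-bounding in degree $n+1$, an $M_{ssup}$-bounding. The difficulty is exactly the one flagged in the introduction — the $M_{sup}$-type conditions are not well-behaved under the homotopies used in moving arguments, and that is why Conjecture~\ref{conj:QI} is left open in general. For $\Spec(k)$ I would get around this using dimension: every admissible cycle is supported on a $0$- or (in degree $n+1$) $1$-dimensional subscheme, so the normalizations are curves and the pullbacks of the $F^1_{n,i}$ are effective divisors on a normal curve, where one has much tighter control over the orders of vanishing; combined with R\"ulling's explicit description of $\TZ^n(k,\bullet;m)_{sum}$ in low degrees and the rational parametrizations he uses, one checks directly that the relevant homotopy cycles can be taken inside the $M_{ssup}$-subcomplex. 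An alternative, and perhaps cleaner, route would be to bypass Conjecture~\ref{conj:QI} altogether: define $R^n_{0,m}$ directly by restricting R\"ulling's regulator to the subgroup of $M_{ssup}$-admissible cycles, check it kills $M_{ssup}$-boundaries (automatic, since it kills all $M_{sum}$-boundaries), and then prove bijectivity by exhibiting an explicit inverse built from the same Witt-vector cycles used by Bloch-Esnault and R\"ulling, verifying once and for all that those cycles satisfy $M_{ssup}$ — which, since char$(k)\ne 2$, is exactly the hypothesis that makes R\"ulling's construction available. I expect this second route to be the one actually carried out, with the verification "R\"ulling's generating cycles satisfy $M_{ssup}$" being the one nontrivial computation.
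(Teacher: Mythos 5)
Your primary route --- proving that $\TZ^n(k,\bullet;m)_{ssup}\inj \TZ^n(k,\bullet;m)_{sum}$ is a quasi-isomorphism --- has a genuine gap at exactly the step you flag as step (b). Producing, from an $M_{sum}$-bounding curve $W$ in degree $n+1$, an $M_{ssup}$-bounding cycle with the same boundary is precisely (the zero-cycle case of) Conjecture~\ref{conj:QI}, which the paper leaves open; the suggestion that ``the normalizations are curves, so one has tighter control over orders of vanishing'' does not give a mechanism for converting a divisor inequality against $\sum_i \nu^*(F^1_{n,i})$ into one against a single $\nu^*(F^1_{n,i})$, nor for replacing $W$ by a different admissible curve. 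The paper deliberately avoids this comparison for general $n$ and $m$. (The only places it is used are where it is trivial: at the level of groups of cycles $\un\TZ^n(k,n;m)$ in top degree, and for $n=2$, where $M_{ssup}=M_{sup}=M_{sum}$ by definition since there is only one coordinate $y_1$.)

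Your alternative route is much closer in outline to what the paper does --- $R^n_{0,m}$ is indeed defined as the composite $\TH^n(k,n;m)_{ssup}\to\TH^n(k,n;m)_{sum}\xrightarrow{\theta}\W_m\Omega^{n-1}_k$, and bijectivity is proved by constructing a map in the opposite direction --- but the inverse is not obtained by checking that R\"ulling's generating cycles satisfy $M_{ssup}$ for all $n,m$. Only the case $m=1$ is handled that way, via the Bloch--Esnault presentation of $\Omega^{n-1}_k$. For $m\ge 2$ the paper instead verifies that $\{\TH^n(k,n;m)_{ssup}\}_{n,m}$ is a \emph{restricted Witt complex}: this requires the transfer maps, the isomorphism $R^1_{0,m}$ in degree $n=1$ (where injectivity uses the coincidence of the modulus conditions for bounding curves in $B_2$), the CDGA structure $(\wedge,\delta_{alt})$, and the Frobenius and Verschiebung operators with the identity $F_r\delta_{alt}V_r=\delta_{alt}$ --- this last identity being the only place char$(k)\neq 2$ enters, not (as you suggest) merely the availability of R\"ulling's construction. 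The universality of $\W_m\Omega^\bullet_k$ then furnishes a surjection $S^n_{0,m}:\W_m\Omega^{n-1}_k\to\TH^n(k,n;m)$ with $R^n_{0,m}\circ S^n_{0,m}=\id$, which forces both maps to be isomorphisms. Without this structural input (or a genuinely new argument for your step (b)), the proof is incomplete.
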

\begin{proof} This is already known for $M_{sum}$. For the modulus condition $M_{ssup}$, we first note that the map $R^n_{0, m}$ is the composite 
map $$\TH^n(k, n ;m)_{ssup} \to {\TH^n(k, n ;m)}_{sum} \xrightarrow{\theta}
{\W}_m{\Omega}^{n-1}_k,$$ where ${\theta}$ is constructed in \cite{R}
and this coincides with the regulator map of Bloch-Esnault for $m =1$.
Furthermore for $m = 1$, Bloch-Esnault define the inverse map ${\Omega}^{n-1}_k \to
{\TH^n(k, n; 1)}_{sum}$ using a presentation of ${\Omega}^{n-1}_k$. The 
reader can easily check from the proof of \cite[Proposition~6.3]{BE2} that
the inverse map is actually defined from ${\Omega}^{n-1}_k$
to $\TH^n(k, n; 1)_{ssup}$. This completes the proof when $m = 1$.

For $m \ge 2$, the proof of  K. R\"ulling for ${\TH^n(k, n; 1)}_{sum}$ has three 
main steps, namely: \\
$(1)$ The existence of map $R^n_{0, m}$, \\
$(2)$ The isomorphism of $R^1_{0, m}$, and \\
$(3)$ The existence of transfer maps on the additive higher Chow groups 
for finite extensions of fields. \\ 
$(4)$ Showing that pro-group ${\{\TH^n(k, n; m)\}}_{n, m \ge 1}$  
is an example of a restricted Witt complex (see {\sl loc. cit.}, Remark~4.22).

We have already shown $(1)$ for our $\TH^n (k,n;m)_{ssup}$. The proof of 
$(3)$ is a simple consequence of Theorem~\ref{thm:basic}. 
The surjectivity part of $(2)$ follows from the result of R\"ulling 
and the isomorphism $\un\TZ^n(k, n; m)_{ssup} = {\un\TZ^n(k, n; m)}_{sum}$.
To prove injectivity, we follow the proof of Corollary~4.6.1 of {\sl loc.
cit.} and observe that if there is a cycle $\zeta \in  \un\TZ^1(k, 1; m)$
such that $R^1_{0, m}(\zeta) = 0$, then $\zeta$ is the boundary of a curve
$C$ which is an admissible cycle with the modulus condition $M_{sum}$.
But then $C$ is admissible cycle also with the modulus condition $M_{ssup}$
since one has $M_{ssup} = M_{sup} = M_{sum}$ when $n = 2$ by definition.
This proves $(2)$. Note that this does not need any assumption on the
characteristic of the ground field.

We now sketch the proof of $(4)$ to complete the proof of the theorem.
We have seen in Remark~\ref{remk:delaltS} that 
$\left({\{\TH^n(k, n; m)\}}_{n, m \ge 1}, \wedge, \delta_{alt}\right)$ forms 
a graded-commutative differential graded algebra. It is also easy to see
that the push-forward and pull-back maps for the finite and flat
map $a \mapsto a^r$ on $\G_m$ induces the Frobenius and 
Verschiebung operators $F_r$ and $V_r$ on these additive higher Chow groups, 
and they satisfy $\delta_{alt} F_r = r F_r \delta_{alt}$ and
$r \delta_{alt} V_r = V_r \delta_{alt}$.
Moreover, the same proof as in \cite[Lemma~4.17]{R} shows that if
char$(k) \neq 2$, then $F_r \delta_{alt} V_r = \delta_{alt}$. This proves
$(4)$. 

As shown in {\sl loc. cit.}, the above four ingredients and the universality
of the de Rham-Witt complex imply that there is a map
${\W}_m{\Omega}^{n-1}_k \xrightarrow{S^n_{0,m}}\TH^n(k, n ;m)$ which is
surjective. On the other hand, one checks from the 
construction of the map $R^n_{0.m}$ in {\sl loc. cit.} that
$R^n_{0.m} \circ S^n_{0.m}$ is identity. 
\end{proof}
\begin{remk}\label{remk:ch2}
One would like to have the assumption char$(k) \neq 2$ removed from the
statement of Theorem~\ref{thm:0-cycle}. In this context, we remark
that the only place we used this assumption was to show 
the identity $F_r \delta_{alt} V_r = \delta_{alt}$. This is an imrovement
over the result of R{\"u}lling who needs this assumption even to get a
CDGA structure on the additive Chow groups. It is possible that
the identity $F_r \delta_{alt} V_r = \delta_{alt}$ holds in the additive
higher Chow groups for our choice of derivation even if char$(k) = 2$.
But we have not checked this.
\end{remk}

The following is an immediate consequence of the results of R\"ulling
and Theorem~\ref{thm:0-cycle}. This gives an evidence of \conjref{conj:QI}.
\begin{cor}\label{cor:0-cycle*}
For every $n, m \ge 1$, the natural maps
\[
\TH^{n}(k, n; m)_{ssup} \to {\TH^{n}(k, n; m)}_{sup} \to 
{\TH^{n}(k, n; m)}_{sum}
\]
are isomorphisms.
\end{cor}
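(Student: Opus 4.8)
The plan is to deduce Corollary~\ref{cor:0-cycle*} from the chain of maps in \eqref{eqn:order2} by showing the composite
\[
\TH^{n}(k, n; m)_{ssup} \to {\TH^{n}(k, n; m)}_{sum}
\]
is an isomorphism; then, since the two arrows factoring it are part of a composable pair, the usual two-out-of-three style bookkeeping does \emph{not} quite give that the middle arrow is an isomorphism on its own, so I would instead argue that all three groups are abstractly identified by maps that are mutually compatible. Concretely, by Theorem~\ref{thm:0-cycle} the outer terms ${\TH^{n}(k, n; m)}_{ssup}$ and (by the already-known case of R\"ulling) ${\TH^{n}(k, n; m)}_{sum}$ are both naturally isomorphic to ${\W}_m{\Omega}^{n-1}_k$ via regulator maps $R^n_{0,m}$, and the point is that these regulators are compatible with the natural inclusion-induced map \eqref{eqn:order2}: the $M_{ssup}$-regulator was \emph{defined} in the proof of Theorem~\ref{thm:0-cycle} as the composite $\TH^n(k, n ;m)_{ssup} \to {\TH^n(k, n ;m)}_{sum} \xrightarrow{\theta} {\W}_m{\Omega}^{n-1}_k$. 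Hence the triangle
\[
\xymatrix{
\TH^{n}(k, n; m)_{ssup} \ar[rr] \ar[dr]_{R^n_{0,m}} & & {\TH^{n}(k, n; m)}_{sum} \ar[dl]^{R^n_{0,m}}\\
& {\W}_m{\Omega}^{n-1}_k &
}
\]
commutes, and since both slanted arrows are isomorphisms, the horizontal arrow $\TH^{n}(k, n; m)_{ssup} \to {\TH^{n}(k, n; m)}_{sum}$ is an isomorphism too.

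It remains to insert the middle term $\TH^{n}(k, n; m)_{sup}$ of \eqref{eqn:order2}. The composite $\TH^{n}(k, n; m)_{ssup} \to \TH^{n}(k, n; m)_{sup}$ is then injective (being a factor of an injective map) and the composite $\TH^{n}(k, n; m)_{sup} \to \TH^{n}(k, n; m)_{sum}$ is surjective (being a factor of a surjective map). To upgrade these to isomorphisms I would use R\"ulling's splitting: by the proof of Theorem~\ref{thm:0-cycle} (ingredient $(4)$, the universality of the de Rham-Witt complex) there is a section $S^n_{0,m}: {\W}_m{\Omega}^{n-1}_k \to \TH^n(k, n; m)_{sum}$ with $R^n_{0,m}\circ S^n_{0,m} = \id$; composing this with R\"ulling's already-known section into $\TH^n(k,n;m)_{sum}$ and chasing through the diagram shows the image of $\TH^{n}(k, n; m)_{ssup} \to \TH^{n}(k, n; m)_{sum}$ already exhausts the whole group, which forces the first arrow to be surjective and the second to be injective.

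I expect the only genuinely delicate point to be verifying the commutativity of the triangle above — that is, that R\"ulling's regulator $\theta$ on $M_{sum}$-cycles restricts to the regulator used on $M_{ssup}$-cycles. But this is essentially built into the construction in the proof of Theorem~\ref{thm:0-cycle}, where $R^n_{0,m}$ for $M_{ssup}$ was defined precisely as that restriction; so the commutativity is by construction and there is nothing to check beyond unwinding definitions. The remaining steps are formal diagram chases using the factorizations in \eqref{eqn:order1}--\eqref{eqn:order2}, and no new geometric input about cycles or moduli is needed.
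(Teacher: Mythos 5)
Your first step is correct and is exactly what the paper intends: since the regulator $R^n_{0,m}$ on the $M_{ssup}$-group is \emph{defined} in the proof of Theorem~\ref{thm:0-cycle} as the composite $\TH^n(k,n;m)_{ssup}\to\TH^n(k,n;m)_{sum}\xrightarrow{\theta}{\W}_m{\Omega}^{n-1}_k$, and both slanted arrows of your triangle are isomorphisms (Theorem~\ref{thm:0-cycle} for $M_{ssup}$, R\"ulling for $M_{sum}$), the composite of the two maps in the corollary is an isomorphism. The problem is your treatment of the middle term. From ``$g\circ f$ is an isomorphism'' one gets that $f$ is split injective and $g$ is split surjective, but your claim that the surjectivity of the composite onto $\TH^n(k,n;m)_{sum}$ ``forces the first arrow to be surjective and the second to be injective'' is a non sequitur: for $A\xrightarrow{f}B\xrightarrow{g}C$ with $g\circ f$ bijective one can perfectly well have $B\cong A\oplus\ker(g)$ with $\ker(g)\neq 0$ (take $f$ the inclusion of the first summand and $g$ the projection). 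Composing sections of $R^n_{0,m}$ that land in the outer groups cannot detect what happens in $\TH^{n}(k,n;m)_{sup}$, because none of those sections is known to factor through it; no purely formal diagram chase will close this gap.

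The missing ingredient is an input about the $M_{sup}$-term itself, and it is elementary: in degree $n$ the three cycle groups coincide,
\[
\un{\TZ}^n(k,n;m)_{ssup}=\un{\TZ}^n(k,n;m)_{sup}=\un{\TZ}^n(k,n;m)_{sum},
\]
because an element of $\un{\TZ}^n(k,n;m)$ is a sum of closed points of $B_n$ whose closures in $\wh{B}_n$ do not meet $\{t=0\}$, so ${\nu}^*(F_{n,0})=0$ and every modulus condition holds trivially; this is the same observation the paper already uses inside the proof of Theorem~\ref{thm:0-cycle}. Moreover an admissible zero-cycle meets no face, so $\partial$ vanishes on this common group and each $\TH^n(k,n;m)_M$ is its quotient by the image of $\partial$ from level $n+1$; by \eqref{eqn:order1} these images are nested, so both maps in the corollary are surjective. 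Surjectivity of both maps together with injectivity of the composite forces the first map to be bijective, and then the second as well. With this one extra observation your argument closes up; without it, it does not.
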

$\hfill \square$

\enlargethispage*{50pt}
We finally turn to the regulator maps for 1-cycles as considered in \cite{P1}.
\begin{thm}\label{thm:1-cycle}
Suppose that $k$ is of characteristic zero and assume the modulus condition
to be $M_{ssup}$. Then there is a natural non-trivial regulator map
\begin{equation}\label{eqn:MCC}
R^n_{1, m} : \TH^{n-1}(k, n; m)_{ssup} \to {\Omega}^{n-3}_k.
\end{equation}
This map is surjective if $k$ is moreover algebraically closed.
\end{thm}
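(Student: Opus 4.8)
The plan is to obtain $R^n_{1,m}$ by factoring through the regulator already available for the modulus condition $M_{sup}$, in the spirit of the proof of \thmref{thm:0-cycle}. By \eqref{eqn:order1} there is a natural inclusion of complexes $\TZ^{n-1}(k, \bullet; m)_{ssup} \inj \TZ^{n-1}(k, \bullet; m)_{sup}$, which induces the natural map $\TH^{n-1}(k, n; m)_{ssup} \to \TH^{n-1}(k, n; m)_{sup}$ of \eqref{eqn:order2}; here one uses that an $M_{ssup}$-cycle, together with all of its face restrictions (again $M_{ssup}$, hence $M_{sup}$, by the Containment Lemma \propref{prop:restp}), really is admissible for the $M_{sup}$-complex of \cite{KL, P1} with its built-in induction condition. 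I would then define $R^n_{1,m}$ as the composite of this map with Park's regulator $\rho^n_{1,m} : \TH^{n-1}(k, n; m)_{sup} \to \Omega^{n-3}_k$ constructed in \cite{P1}, with naturality inherited from the two factors. The remaining content of the theorem is the non-triviality, and the surjectivity when $k = \ov{k}$.

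For surjectivity (and hence, when $n \ge 3$, non-triviality) over an algebraically closed field of characteristic zero, I would revisit the explicit cycles used in \cite{P1} to realise a prescribed element of $\Omega^{n-3}_k$ under $\rho^n_{1,m}$. These are explicit $1$-dimensional admissible cycles $C \subset \Spec k \times B_n$, parametrised by a single auxiliary variable, with prescribed order of tangency to $\{t=0\}$ and prescribed rational functions as the coordinates $y_i$, so that the closure $\ov{C} \subset \Spec k \times \wh{B}_n$ and its normalization $\ov{C}^N$ can be computed by hand. The one new point to check is that such $C$ satisfies the \emph{strong} sup condition $M_{ssup}$: that there is a single index $i$ with $(m+1)[\nu^*(F_{n,0})] \le [\nu^*(F^1_{n,i})]$ on $\ov{C}^N$, and not merely the sup inequality defining $M_{sup}$. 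This is the precise analogue of the observation, used in the proof of \thmref{thm:0-cycle}, that the inverse maps of Bloch--Esnault and R\"ulling already factor through the $M_{ssup}$-complex. Once the single-index inequality is verified for $C$, \propref{prop:restp} shows it is preserved under restriction to faces, so no induction condition intervenes, $[C]$ defines a class in $\TH^{n-1}(k, n; m)_{ssup}$, and $R^n_{1,m}([C]) = \rho^n_{1,m}([C])$; letting $C$ vary over the chosen family, these classes surject onto $\Omega^{n-3}_k$.

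For non-triviality over a general field $k$ of characteristic zero I would argue by base change: flat pull-back along $\Spec \ov{k} \to \Spec k$ should fit into a commutative square with $R^n_{1,m}$ and the natural map $\Omega^{n-3}_k \to \Omega^{n-3}_{\ov{k}}$, and the latter is injective since $\ov{k}/k$ is separable algebraic (this is where characteristic zero is used). Combined with the surjectivity over $\ov{k}$, this forces $R^n_{1,m}$ to be non-zero over $k$ whenever $\Omega^{n-3}_k \neq 0$.

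The hard part will be the modulus verification for the test cycles: the cycles of \cite{P1} are engineered against the sup modulus, so one must rerun the order-of-vanishing computation on $\ov{C}^N$ to confirm that the vanishing of $t$ along $F_{n,0}$ is dominated, after multiplication by $m+1$, by the vanishing of a single $y_i - 1$ rather than being spread across the whole divisor $F^1_n$. For the one-parameter curves above this should be a direct local computation; if resolving $\ov{C}$ requires a blow-up, \lemref{lem:rest*} ensures the inequality passes to the normalized blow-up, and \propref{prop:restp} then propagates it to faces. A secondary, purely bookkeeping, matter is that $\rho^n_{1,m}$ in \cite{P1} is defined on the $M_{sup}$-complex with its induction hypothesis --- which is why the factorization must route through that complex rather than being constructed directly on $M_{ssup}$ --- but this causes no difficulty since $M_{ssup} \Rightarrow M_{sup}$ by \eqref{eqn:order}.
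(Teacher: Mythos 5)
Your construction of the map itself --- factoring through the $M_{sup}$-complex via \eqref{eqn:order1} and composing with Park's regulator from \cite{P1} --- is exactly what the paper does, and your instinct that the explicit test cycles must be re-verified against the \emph{single-index} inequality of $M_{ssup}$ (rather than the sup of all indices) is also the right one; the paper disposes of this point by citing \cite[Lemmas~1.7, 1.9]{P2}. The two remaining claims, however, both have genuine gaps.

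For non-triviality, your base-change argument is a non sequitur: surjectivity of $R^n_{1,m}$ over $\ov{k}$ together with injectivity of $\Omega^{n-3}_k \to \Omega^{n-3}_{\ov{k}}$ tells you nothing about the map over $k$, because the regulator over $\ov{k}$ can be surjective while vanishing identically on the image of the pull-back $\TH^{n-1}(k,n;m) \to \TH^{n-1}(\ov{k},n;m)$, which is far from all of the target. The paper instead gets non-triviality directly from an explicit $1$-cycle $\Gamma$ of \cite{P2}, defined over $k$, each of whose components satisfies $M_{ssup}$ and whose regulator value is non-zero. For surjectivity you presuppose that \cite{P1} supplies a family of explicit cycles whose regulator values exhaust $\Omega^{n-3}_k$; no such family is constructed there --- what is available is essentially the single non-trivial cycle just mentioned. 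The actual surjectivity argument (the paper adapts \cite[Section~7]{KL}) is structural rather than by explicit cycles: one uses the $k^{\times}$-action $a*(x,t_1,\cdots,t_{n-1})=(x/a,t_1,\cdots,t_{n-1})$ together with the $k^{\times}$-equivariance of $R^3_{1,m}$ up to a scalar to deduce surjectivity onto $\Omega^0_k = k$ in the base case $n=3$ (this is where algebraic closedness and characteristic zero enter), and then the cap product $CH^n(k,n)\otimes_{\Z}\TH^2(k,3;m)\to \TH^{n+2}(k,n+3;m)$ of \thmref{thm:basic} to propagate surjectivity to all $n$, the Milnor $K$-theory factor supplying the $d\log$ terms of a general element of $\Omega^{n-3}_k$. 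Your proposal omits this mechanism entirely, and it is the real content of the surjectivity statement.
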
 
\begin{proof} The map $R^n_{1, m}$ is the composite map
$$\TH^n(k, n ;m)_{ssup} \to {\TH^n(k, n ;m)}_{sup} \xrightarrow{\theta}
{\Omega}^{n-3}_k,$$ where ${\theta}$ is constructed in \cite{P1}.
For the non-triviality of $R^n_{1, m}$, J. Park constructs a 1-cycle
$\Gamma$ (see \cite[Proposition~1.9]{P2}, \cite[7.11]{KL}) and shows 
(see \cite[Lemmas~1.7, 1.9]{P2}) that each component of $\Gamma$
in fact satisfies the modulus condition $M_{ssup}$. Hence $R^n_{1, m}$
is non-trivial. If $k = {\ov k}$, then the proof of the surjectivity
in \cite[Section~7]{KL} follows from the following: \\
$(1)$ An action of $k^{\times}$ on $\TH^n(k, n ;m)$, \\ 
$(2)$ Suitable $k^{\times}$-equivariance of $R^3_{1, m}$ up to a scalar, \\
$(3)$ The surjectivity of $R^3_{1, m}$, \\
$(4)$ The cap product $CH^n(k,n) {\otimes}_{\Z} \TH^2(k, 3 ;m) \to
\TH^{n+2}(k, n+3 ;m)$. \\
The action of $k^{\times}$ on our additive higher Chow groups is given
exactly as in \cite{KL} by
\begin{equation}\label{eqn:MCC**}
a * (x, t_1, \cdots, t_{n-1}) = (x/a,  t_1, \cdots, t_{n-1}).
\end{equation}
This action extends to an action of $k^{\times}$ on ${\wh B}_n$.
The proof of $(2)$ now follows from the $k^{\times}$-equivariance of the
natural map $\TZ_r(k, n;m)_{ssup} \to {\TZ_r(k, n;m)}_{sup}$ and the results of
\cite{KL}. The proof of $(3)$ is a direct consequence of $(1)$, $(2)$
and the fact that $k$ is algebraically closed field of characteristic zero.
Finally, $(4)$ is already shown in \thmref{thm:basic}.
\end{proof}
\section{Preliminaries for Moving lemma}\label{section:ML}
The underlying additive cycle complexes and additive higher
Chow groups in all the results in the rest of this paper will be based on the 
modulus condition $M_{sum}$ or $M_{ssup}$, unless one of these is 
specifically mentioned.   
Our next three sections will be devoted to proving our first main result
of this paper:
\begin{thm}\label{thm:ML}
Let $X$ be a smooth projective variety over a perfect field $k$. Let 
$\sW$ be a finite collection of locally closed subsets of $X$. Then, 
the inclusion of additive higher Chow cycle complexes 
(see below for definitions)
\[
\TZ^q_{\sW} (X, \bullet; m) \inj \TZ^q (X, \bullet; m)
\]
is a quasi-isomorphism. In other words, every admissible additive higher Chow 
cycle is congruent to another admissible cycle intersecting properly all 
given finitely many locally closed subsets of $X$ times faces.
\end{thm}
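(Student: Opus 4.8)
The plan is to follow the strategy of Bloch \cite{Bl1} and Levine \cite{Le} for the classical moving lemma, replacing every intersection‑theoretic step by one that simultaneously controls the modulus condition via the Containment Lemma (Proposition~\ref{prop:restp}). First I would set up the relevant sub‑complexes and reduce to a statement about individual cycles. Given a finite collection $\sW$ of locally closed subsets of $X$, enlarge it (closing under intersections, taking closures and the loci where the closures meet, so the collection is stratifying) and define $\un{\TZ}^q_{\sW}(X,n;m)$ to be the admissible cycles $Z\subset X\times B_n$ that in addition meet $W\times F$ properly for every $W\in\sW$ and every face $F$ of $B_n$; the Containment Lemma shows that this extra ``good position with respect to $\sW$'' is preserved under the face maps $\partial^\epsilon_i$, so $\TZ^q_{\sW}(X,\bullet;m)$ is a genuine sub‑complex. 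To prove the inclusion is a quasi‑isomorphism it suffices, by a standard argument, to produce for each admissible cycle $z$ a chain homotopy $h$ moving $z$ into the $\sW$‑admissible sub‑complex, i.e.\ a cycle $H$ on $X\times B_n\times\square^1$ (or $X\times B_{n+1}$ after re‑indexing) whose two faces are $z$ and a $\sW$‑admissible cycle, and whose intermediate faces are again $\sW$‑admissible. The point is that this reduction is formal once the chain homotopy operator exists.

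Next I would construct the chain homotopy variety. As in \cite{Le}, the idea is to choose a suitable algebraic group action, or a family of translations/linear projections, on the ambient space, parametrised by an extra coordinate, that generically moves an admissible cycle into general position with respect to the finitely many $W\times F$. Concretely one embeds $X$ in some $\P^N$, uses a linear system of projections (or, following \cite{Bl1}, the action of a big enough subgroup of $\GL$ acting on a cone), and forms the correspondence $\Phi$ whose fibre over a general parameter is the translated/moved cycle; the chain homotopy is then the pushforward to $X\times B_n\times\square^1$ of a generic such family. Two things must be checked: (i) for generic parameter the moved cycle meets every $W\times F$ in the correct dimension, which is the classical Kleiman‑Bertini type argument and works verbatim since it only involves dimensions; and (ii) every cycle appearing in $H$ and in $\partial H$ — in particular the chain‑homotopy variety itself and all its face restrictions — still satisfies the modulus condition $M$ (either $M_{sum}$ or $M_{ssup}$). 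This is exactly where the techniques of higher Chow groups do not transfer automatically, and it is the crux of the whole argument.

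The main obstacle, therefore, is step (ii): controlling the modulus under the moving construction. My plan here is to arrange the moving so that the new coordinate direction along $B_n$ (the ``$t$'' direction, along $F_{n,0}$) is untouched by the group action, while the action only affects the $X$‑ and the $y_i$‑coordinates (or only the $X$‑coordinates); then the chain‑homotopy variety $\Phi$ is, fibrewise over the moving parameter and over the $X\times\P^1$‑direction carrying $t$, built out of copies of the original cycle $z$ moved by an ambient isomorphism, so the divisors $F_{n,0}$ and $F^1_{n,i}$ pull back in a way that is directly comparable to their pull‑backs to $\ov z{}^N$. More precisely, whenever a new cycle $V_Y$ arises as a component of an intersection $V\cap Y$ with $Y$ of the form (graph of the moving map) $\times$(face), I would invoke Proposition~\ref{prop:restp} to conclude that $V_Y$ inherits the modulus condition of $V$, provided I first verify that the closure of $Y$ meets $X\times F_{n,0}$ and $X\times F^1_n$ properly — which holds because the moving map is chosen to be an isomorphism in a neighbourhood of these divisors, or is the identity there. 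Iterating the Containment Lemma along the commutative diagram of normalisations (as in the proof of Proposition~\ref{prop:restp}) then shows that all faces of $H$ keep the modulus.

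Finally I would assemble these pieces: the formal reduction gives that it is enough to chain‑homotope an arbitrary admissible $z$ into $\TZ^q_{\sW}$; the moving construction produces $H$ with $\partial H = z' - z + (\text{degenerate / already }\sW\text{-admissible terms})$; the dimension count of \cite{Bl1,Le} gives that $z'$ is $\sW$‑admissible; and the modulus bookkeeping via Proposition~\ref{prop:restp}, carried out at \emph{every} step where a component of an intersection is extracted, guarantees we never leave the cycle complex for $M$. Running this argument inductively over the strata of the (enlarged) collection $\sW$ and over $n$, exactly as in \cite[Theorem~4.1]{Bl1} and \cite[\S 3]{Le}, yields that $\TZ^q_{\sW}(X,\bullet;m)\inj\TZ^q(X,\bullet;m)$ is a quasi‑isomorphism, which is the assertion of Theorem~\ref{thm:ML}. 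The details of the explicit chain‑homotopy variety, and the verification that it meets $X\times F_{n,0}$ and $X\times F^1_n$ properly, are the technical heart and will occupy the next two sections.
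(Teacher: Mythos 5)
Your outline matches the paper's strategy in broad terms (Bloch--Levine homotopy plus the Containment Lemma for the modulus), but there is a concrete gap in your ``formal reduction''. No chain homotopy of the kind you describe exists over $k$ itself: the moving construction requires acting by a \emph{generic} element of $SL_{r+1}$, realized as a path $\phi:\square^1_K\to SL_{r+1,K}$ from the identity to the generic point, where $K=k(SL_{r+1})$. The resulting homotopy identity is $\partial H^*+H^*\partial=p_{K/k}^*(Z)-\eta(Z_K)$, which only shows that the base-change map $p_{K/k}^*$ kills the homology of the quotient complex $\TZ^q/\TZ^q_{\sW}$ \emph{after} extension of scalars to $K$. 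To conclude acyclicity over $k$ one must separately prove that $p_{K/k}^*$ is injective on the homology of this quotient — the Spreading Lemma (\propref{prop:SL}) — and in the additive setting this is not formal: one has to spread the bounding cycles out over an open subset of $\A^r_k$ while preserving the modulus condition (\lemref{lem:extn}) and then specialize at a $k$-rational point, using the Containment Lemma again for the specialization. Your proposal never addresses this descent step, and without it the argument does not close.

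A second, smaller omission: the transitive group action needed for the Kleiman--Bertini step exists only on $\P^r$, so the homotopy argument proves the theorem only for projective spaces. For general smooth projective $X$ the paper reduces to $\P^r$ via generic linear projections $\pi_{L}$, using the operator $\widetilde L=\pi_{L}^*\pi_{L*}-\mathrm{id}$ and Chow's moving lemma (\lemref{lem:Chowlemma}) to run a downward induction on the excess function $e:\sW\to\Z_{\ge 0}$ (showing $\widetilde L_{gen}$ carries $\TZ^q_{\sW,e}$ into $\TZ^q_{\sW_K,e-1}$), rather than an induction over strata of $\sW$ as you suggest. Your identification of the mechanism controlling the modulus of the homotopy variety — containment in $\P^r\times V\times\square^1$ with $V$ the admissible projective image, followed by \propref{prop:restp} — is essentially the paper's Claim (2) in \lemref{lem:HV} and is correct.
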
   
In this section, we set up our notations and machinery that are needed to
prove this theorem, and prove some preliminary steps. 
Let $X$ be a smooth projective variety over $k$ and we fix an
integer $m \ge 1$. Let $\sW$ be a finite collection of locally closed 
algebraic subsets of $X$. If a member of $\sW$ is not irreducible, we always 
replace it by all of its irreducible components so that we assume all members 
of $\sW$ are irreducible. For a locally closed subset $Y \subset X$, recall 
that the codimension ${\rm codim}_X Y$ is defined to be the minimum of 
${\rm codim}_X Z$ for all irreducible components $Z$ of $Y$.  
\begin{defn}\label{defn:moving0}
We define $\underline{\TZ}^{q}_{\sW}(X, n;m)$ to be the subgroup of 
$\underline{\TZ}^{q}(X, n;m)$ generated by integral closed subschemes
$Z \subset X \times B_n$ such that \\
$(1) \ Z$ is in $\underline{\TZ}^{q}(X, n;m)$ and \\
$(2) \ \codim _{W \times F} (Z \cap (W \times F)) \geq q$ for all 
$W \in \mathcal{W}$ and all faces $F$ of $B_n$.
\end{defn}
It is easy to see that $\underline{\TZ}^{q}_{\sW}(X, \bullet ;m)$ forms a
cubical subgroup of $\underline{\TZ}^{q}(X, \bullet ;m)$, giving us the
subcomplex 
\[
{\TZ}^{q}_{\sW}(X, \bullet ;m) =
{\frac{\underline{\TZ}^{q}_{\sW}(X, \bullet ;m)}
{{\underline{\TZ}^{q}_{\sW}(X, \bullet ;m)}_{\rm degn}}} \subset
{\TZ}^{q}(X, \bullet ;m).
\]
Let ${\TH}^q_{\sW}(X, \bullet ;m)$ denote the homology of the complex 
${\TZ}^{q}_{\sW}(X, \bullet ;m)$. Then the above inclusion induces a
natural map of homology 
\begin{equation}\label{eqn:incl0}
{\TH}^q_{\sW}(X, \bullet ;m) \to {\TH}^q(X, \bullet ;m).
\end{equation} 
More generally, if $e: \mathcal{W} \to {\Z}_{\geq 0}$ is a set-theoretic 
function,
then one can define subcomplexes $\underline{\TZ}^{q}_{\sW, e}(X, \bullet;m)$
replacing the condition $(2)$ above by \\
$(2e) \ \codim _{W \times F} (Z \cap (W \times F)) \geq q - e(W)$.
\\
In this generality, the subcomplex $\underline{\TZ}^{q}_{\sW}(X, \bullet;m)$
is same as $\underline{\TZ}^{q}_{\sW, 0}(X, \bullet;m)$.
\begin{remk}\label{remk:general}
Let $\Phi$ be the set of all set-theoretic functions 
$e: \mathcal{W} \to \mathbb{Z}_{\geq 0}$. Give a partial ordering on 
$\Phi$ by declaring $e' \geq e$ if $e' (W) \geq e (W)$ for all 
$W \in \mathcal{W}$. If two functions $e, e' \in \Phi$ satisfy $e' \geq e$, 
then for any irreducible admissible variety 
$Z \in Tz_{\mathcal{W}, e} ^q (X, n;m)$, we have
\begin{equation}
\codim_{W \times F} (Z \cap (W \times F)) \geq q - e (W) \geq q - e'(W)
\end{equation} 
for all $W \in \mathcal{W}$ and all faces $F \subset B_n$. Thus, we have 
\begin{equation}
{\TZ}_{\mathcal{W}, e} ^q (X, n; m) \subset 
{\TZ}_{\mathcal{W}, e'} ^q (X, n; m) \ \ \ \mbox{ for } e \leq e'. 
\end{equation}
Note that if $e \in \Phi$ satisfies $e \geq q$ where $q$ is considered as a 
constant function in $\Phi$, then automatically 
\begin{equation} 
{\TZ}^q _{\mathcal{W}, q}(X, n;m) = {\TZ}^q _{\mathcal{W}, e} (X, n; m) = 
{\TZ}^q (X, n;m).
\end{equation} 
Since $0 \leq e$ for all $e \in \Phi$, for each triple $e, e', e''$ such that 
$e \leq e' \leq q \leq e''$, we have
\begin{eqnarray*} 
{\TZ}_{\mathcal{W}} ^q (X, n; m) \subset {\TZ}_{\mathcal{W}, e} ^q (X, n;m) 
\subset {\TZ}_{\mathcal{W}, e'} ^q (X, n;m)  \\
\subset {\TZ}_{\mathcal{W}, q}^q  (X, n;m) = 
{\TZ}_{\mathcal{W}, e''} ^q (X, n;m) = {\TZ}^q (X, n;m).
\end{eqnarray*}
All these (in)equalities are equivariant with respect to the boundary maps.
\end{remk}
\begin{remk}\label{remk:chain}
The main theorem is equivalent to that the inclusion induces an isomorphism 
$\TH^q _{\mathcal{W}} (X, n;m) \simeq \TH ^q (X, n;m)$ for the given 
modulus conditions $M$. This is equivalent to that for each pair 
$e, e' \in \Phi$ with $e \leq e'$ the inclusion induces an isomorphism
\begin{equation}
\TH^q _{\mathcal{W}, e} (X, n;m) \simeq \TH^q _{\mathcal{W}, e'} (X, n;m).
\end{equation}
\end{remk}
Our remaining objective in this section is to prove the following additive 
analogue
of the spreading argument of Bloch-Levine. We begin with the following
results.
\begin{lem}\label{lem:extn}
Let $f : X \to Y$ be a projective and dominant map of integral 
normal varieties and let $\eta$ denote the generic point of $Y$. Consider the
fiber diagram
\begin{equation}\label{eqn:ext0}
\xymatrix{
X_{\eta} \ar[r]^{j_{\eta}} \ar[d] & X \ar[d]^f \\
{\{{\eta}\}} \ar[r] & Y.}
\end{equation}
Let $D$ be a Weil divisor on $X$ such that $j^*_{\eta} (D)$ is effective.
Then there is a non-empty open subset $U \subset Y$ such that if 
$j : f^{-1}(U) \to X$ is the open inclusion, then $j^*(D)$ is also effective.
\end{lem}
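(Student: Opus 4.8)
The plan is to reduce the statement to a question about the vanishing locus of a single rational function and then spread out that vanishing. First I would reduce to the case where $D$ is an \emph{effective} Cartier divisor minus an effective Cartier divisor, or better, work locally: since effectivity of a Weil divisor is checked at the codimension-one points of $X$, and $j^*_\eta(D)$ being effective controls only those codimension-one points of $X$ that dominate $\eta$ (equivalently, those that lie in $X_\eta$), the issue is exactly the codimension-one points $\xi$ of $X$ with $f(\xi)$ \emph{not} equal to $\eta$, i.e. the prime divisors $V$ of $X$ whose image $\overline{f(V)}$ is a proper closed subset of $Y$. Call these the ``vertical'' prime divisors. Write $D = \sum n_V V$; the hypothesis says $n_V \ge 0$ for every ``horizontal'' $V$ (those with $f(V)$ dense in $Y$), and we must find $U$ so that after restricting to $f^{-1}(U)$ the remaining divisors with possibly negative coefficient disappear.

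The key observation is that a vertical prime divisor $V$ has $\overline{f(V)} \subsetneq Y$, so $V \cap f^{-1}(Y \setminus \overline{f(V)}) = \emptyset$. Now, there are only finitely many prime divisors $V$ of $X$ that appear in $\supp(D)$, hence only finitely many vertical ones among them, say $V_1, \dots, V_r$, with images $Z_i = \overline{f(V_i)} \subsetneq Y$. The plan is then to set
\[
U = Y \setminus \bigl( Z_1 \cup \cdots \cup Z_r \bigr),
\]
which is a non-empty open subset of $Y$ (a finite union of proper closed subsets of an irreducible variety is not all of $Y$). With $j : f^{-1}(U) \to X$ the open inclusion, every vertical component $V_i$ of $\supp(D)$ satisfies $V_i \cap f^{-1}(U) = \emptyset$, so in the pullback $j^*(D)$ only the horizontal components of $D$ survive (together with possibly new components, but those all dominate $\eta$ because $f^{-1}(U) \to U$ is still dominant and any prime divisor of $f^{-1}(U)$ either dominates $U$ or is vertical, and the vertical ones in $\supp(D)$ have been removed). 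Hence $j^*(D) = \sum_{V \text{ horizontal}} n_V (V \cap f^{-1}(U))$ with all $n_V \ge 0$ by hypothesis; this is effective.

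The point that requires a little care, and is the main obstacle, is the bookkeeping identifying the coefficients of $j^*(D)$ for the horizontal components with the coefficients $n_V$ and checking that restriction to an open subset does not create negative coefficients: since $j$ is an open immersion, $j^*(D)$ is simply $D$ intersected with the open set $f^{-1}(U)$, so $\supp(j^*(D)) = \supp(D) \cap f^{-1}(U)$ and the multiplicities are unchanged. Thus no new components appear at all, and the only components of $\supp(D)$ meeting $f^{-1}(U)$ are the horizontal ones, which have non-negative coefficient. The one subtlety is to justify that a prime divisor $V$ of $X$ with $j^*_\eta$ restricting it nontrivially is exactly a horizontal one: indeed $V \cap X_\eta \ne \emptyset$ if and only if $f(V)$ meets every non-empty open of $Y$, i.e. $f(V)$ is dense, i.e. $V$ is horizontal; and for a horizontal $V$, the coefficient of $V \cap X_\eta$ in $j^*_\eta(D)$ equals $n_V$ (flatness or just generic flatness of $f$ over $\eta$, together with $X_\eta$ being a localization of $X$, makes this immediate). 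This is what lets us conclude $n_V \ge 0$ for horizontal $V$ from the hypothesis, and the argument is complete. One may also invoke Lemma~\ref{lem:surjm} in the guise that effectivity descends along the dominant map $f^{-1}(U) \to U$, but the direct argument above is self-contained.
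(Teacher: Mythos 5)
Your proposal is correct and follows essentially the same route as the paper's proof: both arguments observe that the hypothesis forces every prime component of $D$ with negative coefficient to miss $X_\eta$, hence to have image a proper closed subset of $Y$ (the paper uses projectivity to get closedness of $f(V)$, you take the closure, which works equally well), and then take $U$ to be the complement of the finitely many such images so that restriction to $f^{-1}(U)$ kills all negatively-weighted components. The only cosmetic difference is that you excise all vertical components of $\supp(D)$ rather than just those with negative coefficient, which changes nothing.
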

\begin{proof} Let $D = \sum n_iD_i$. Then $j^*_{\eta} (D)$ is effective
if and only if for every $i$ with $n_i < 0$, one has $D_i \cap X_{\eta}
= \emptyset$. Since $D$ is a finite sum, it suffices to show that if $D$ is
a prime divisor on $X$ such that $D \cap X_{\eta} = \emptyset$, then there is
a nonempty open subset $U \subset Y$ such that $D \cap f^{-1}(U) = \emptyset$.

Since $f$ is projective map, it is in particular closed. Hence $f(D)$ is
closed in $Y$. Moreover, our hypothesis implies that $f(D)$ is a proper
closed subset of $Y$. Thus $U = Y \backslash f(D)$ is the desired open subset 
of $Y$.
\end{proof} 
\begin{lem}\label{lem:pro} 
Let $X$ be a quasi-projective $k$-variety and let 
$\sW$ be a finite collection of locally closed subsets of $X$. Let $K$ be a 
finite field extension of $k$. Let $X_K$ be the base extension 
$X _K = X \times_{{\rm Spec} (k)} {{\rm Spec} (K)}$, and let $\mathcal{W}_K$ 
be the set of the base extensions of sets in $\mathcal{W}$. Then there are 
natural maps 
\[
p^*: \frac{{\TZ}^{q}(X, \bullet ;m)}
{{\TZ}^{q}_{\sW}(X, \bullet ;m)} \to
\frac{{\TZ}^{q}(X_K, \bullet ;m)}
{{\TZ}^{q}_{{\sW}_K}(X_K, \bullet ;m)}
\]
\[
p_*: \frac{{\TZ}^{q}(X_K, \bullet ;m)}
{{\TZ}^{q}_{{\sW}_K }(X_K, \bullet ;m)} \to
\frac{{\TZ}^{q}(X, \bullet ;m)}
{{\TZ}^{q}_{\sW}(X, \bullet ;m)}
\]
such that $p_* \circ p^* = [K:k] \cdot id$. 
\end{lem}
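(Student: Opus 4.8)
The plan is to construct the maps $p^*$ and $p_*$ at the level of cycle complexes, on the quotient, and then check the composition law on generators. The construction is entirely parallel to the standard base-change formalism for higher Chow groups, so the main content is verifying that the modulus condition (condition $(2)$ of \defref{defn:AdditiveComplex}) and the good-position-plus-$\sW$ condition of \defref{defn:moving0} are both compatible with base extension along the finite map $p : X_K \to X$. Throughout, write $\pi : \Spec(K) \to \Spec(k)$ for the structure map, which is finite flat of degree $[K:k]$ since $k$ is perfect (so $K/k$ is separable); hence $p = \id_X \times \pi$ is finite and flat of the same degree, and likewise $p \times \id_{B_n} : X_K \times B_n \to X \times B_n$.

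First I would define $p^*$. Given an integral closed $Z \subset X \times B_n$ that is admissible (and, for the $\sW$-version, with the extra codimension bounds), flat pull-back $(p\times\id)^*$ gives a well-defined cycle on $X_K \times B_n$ whose support is $Z \times_k K$; since $p\times\id$ is flat, dimensions are preserved, so the good position condition against faces $X_K \times F$ is inherited. For the modulus condition: the normalization of the closure $\overline{Z\times_k K}$ in $X_K\times\widehat B_n$ maps, componentwise, finitely and surjectively onto $\overline Z^N$ (again using that $K/k$ is separable, so $\overline Z\times_k K$ is reduced and its normalization is just $\overline Z^N\times_k K$), and the pulled-back divisors $F_{n,0}$, $F^1_{n,i}$ are the flat pull-backs of those on $\overline Z^N$. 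Pull-back of an inequality of effective Weil divisors along a flat (indeed any dominant) morphism preserves the inequality, so $M_{sum}$ or $M_{ssup}$ for $Z$ gives the same for each component of $p^*(Z)$. Thus $p^*$ sends admissible cycles to admissible cycles and $\sW$-admissible cycles to $\sW_K$-admissible cycles (note $\codim$ is preserved under base change to a field), hence descends to the quotient complexes and commutes with the cubical structure maps $\iota_{n,i,\epsilon}$, so it is a map of complexes.

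Next I would define $p_*$. For an integral closed $Z \subset X_K \times B_n$, since $p\times\id$ is finite, $(p\times\id)_*(Z)$ is $\deg(Z/p(Z))\cdot[p(Z)]$, a cycle on $X\times B_n$ of the same dimension; good position against faces is preserved because $p\times\id$ is finite, hence closed with finite fibers, so the image of a proper intersection is again of the expected dimension. For the modulus condition, $\overline{p(Z)}^N$ receives a finite surjection from (a component of) $\overline Z^N$, and the relevant divisors on the target pull back to the corresponding divisors on the source; then \lemref{lem:surjm} — exactly as it is used in the proof of \propref{prop:restp} — lets us descend the effectivity of $\nu_Z^*(F^1_{n,i} - (m+1)F_{n,0})$ (resp. with $F^1_n$) from $\overline Z^N$ down to $\overline{p(Z)}^N$. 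For the $\sW$-version one must check that if $Z$ fails the codimension bound over some $W_K \times F$ then $p(Z)$ fails it over $W\times F$; this follows because $p$ restricts to a finite surjection $Z\cap(W_K\times F)\to p(Z)\cap(W\times F)$, so dimensions and hence codimensions match. Therefore $p_*$ preserves the subcomplexes, descends to the quotients, and commutes with the boundary maps.

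Finally, the composition $p_*\circ p^*$ is computed on a generator $[Z]$ with $Z\subset X\times B_n$ integral: $p^*[Z] = [Z\times_k K]$ as a cycle (multiplicity one, since $K/k$ separable means $Z\times_k K$ is reduced), and then $p_*[Z\times_k K] = [K:k]\cdot[Z]$ by the standard degree computation for a finite flat pull-back followed by push-forward — the fiber of $Z\times_k K\to Z$ over the generic point is $\Spec(K\otimes_k k(Z))$, of length $[K:k]$ over $k(Z)$. This gives $p_*\circ p^* = [K:k]\cdot\id$ on the quotient complexes, as claimed. The step I expect to require the most care is the modulus verification for $p_*$, i.e.\ arranging the commutative square of normalizations so that \lemref{lem:surjm} applies with the divisors pulled back correctly; this is the additive-specific point, and it is precisely the mechanism already exploited in \propref{prop:restp}, so I would model the argument on that proof.
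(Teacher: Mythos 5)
Your proposal is correct and follows essentially the same route as the paper: the paper simply invokes Theorem~\ref{thm:basic} to get the flat pull-back and finite push-forward on both ${\TZ}^q_{\sW'}$ for $\sW' = \{X\}$ and $\sW' = \sW$, passes to the quotient complexes, and declares the degree formula $p_*\circ p^* = [K:k]\cdot\id$ obvious from the construction in \cite{KL}. You have merely unpacked the black box — verifying good position, the modulus condition via \lemref{lem:surjm}, and the $\sW$-codimension bounds by hand — which is exactly what the cited results encapsulate.
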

\begin{proof} By \thmref{thm:basic}, one as well has the flat pull-back and 
finite push-forward maps
${\TZ}^{q}_{\sW'}(X, \bullet ;m) \to 
{\TZ}^{q}_{{\sW'}_K}(X_K, \bullet ;m)$ and
${\TZ}^{q}_{{\sW'}_K}(X_K, \bullet ;m) \to 
{\TZ}^{q}_{\sW'}(X, \bullet ;m)$ for any $\sW'$. Taking for $\sW'$, the 
collection $ \{X\}$
and also $\sW$, and then taking the quotients of the two, we get the desired 
maps. The last property of the composite map is obvious from the construction 
of the pull-back and the push-forward maps on the additive cycle complexes 
({\sl cf.} \cite{KL}).
\end{proof} 
\begin{prop}[Spreading lemma]\label{prop:SL}
Let $k \subset K$ be a purely transcendental extension. For a smooth 
projective variety $X$ over $k$ and any finite collection $\sW$ of 
locally closed algebraic subsets of $X$, let $X_K$ and $\mathcal{W}_K$ be the 
base extensions as before. Let 
$p_K: X_K \to X_k$ be the natural map. Then, the pull-back map
\[
p_{K} ^* : \frac{{\TZ} ^q (X, \bullet ;m)}{{\TZ}^q _{\sW} (X, \bullet;m)} \to 
\frac{{\TZ}^q (X_K, \bullet;m)}{ {\TZ}^q _{{\sW}_K} (X_K, \bullet;m)}
\]
is injective on homology.
\end{prop}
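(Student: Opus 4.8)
The plan is to reduce the statement to the case of a transcendence degree one extension $K = k(t)$ and then to use a specialization argument to undo the spreading. More precisely, since a purely transcendental extension is a filtered colimit of finitely generated purely transcendental subextensions, and since the additive cycle complexes commute with such filtered colimits (any cycle and any chain homotopy witnessing a boundary relation is defined over a finitely generated subfield), it suffices to treat $K = k(t_1,\dots,t_r)$ with $r$ finite, and by an evident induction on $r$ it suffices to treat $K = k(t)$, the function field of $\mathbb{A}^1_k$. So from now on assume $K = k(t)$.

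Next I would set up the geometric picture: let $U \subseteq \mathbb{A}^1_k$ range over nonempty open subsets, and for each such $U$ consider $X_U = X \times_k U$ with the flat pull-back $p_U^*$. The key point is that $X_K$ is the limit of the $X_U$, so $\TZ^q(X_K,\bullet;m)/\TZ^q_{\sW_K}(X_K,\bullet;m) = \colim_U \TZ^q(X_U,\bullet;m)/\TZ^q_{\sW_U}(X_U,\bullet;m)$; here $\sW_U$ is the collection $\{W \times U\}$. Hence a homology class $\alpha$ on $X_k$ with $p_K^*(\alpha) = \partial(\beta_K)$ in the quotient complex over $K$ already satisfies $p_U^*(\alpha) = \partial(\beta_U) + (\text{a cycle in }\TZ^q_{\sW_U})$ for some open $U$ and some chain $\beta_U$ over $X_U$. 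The task is then to pull this relation back along a section, i.e.\ along a closed point $x \in U$ with residue field a finite extension of $k$, to recover a relation over $X_k$ (or over a finite extension, which is enough by Lemma~\ref{lem:pro} together with the fact that for a finite extension $k \subset k'$ one has $p_* p^* = [k':k]\cdot\mathrm{id}$ — after clearing this integer, injectivity on homology over $k$ follows from injectivity over $k'$, and $k'(t)/k'$ is again purely transcendental).

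The heart of the argument, and the step I expect to be the main obstacle, is showing that the specialization (pull-back along $x \hookrightarrow U$) of an admissible cycle over $X_U$ is again admissible over $X_k$ — in particular that the modulus condition $M$ is preserved. This is exactly the kind of delicacy the introduction warns about, and it is where the Containment lemma (Proposition~\ref{prop:restp}) enters. The idea: for a generic choice of $x$ (i.e.\ after shrinking $U$ further), the cycle $Z \subset X_U \times B_n$ and its closure in $X_U \times \widehat{B}_n$ meet the slice $X_{\{x\}} \times \widehat{B}_n$ properly, so one may take $Y = X_{\{x\}} \times \widehat{B}_n$ in Proposition~\ref{prop:restp} and conclude that each component of the specialized cycle still satisfies $M_{sum}$ (resp.\ $M_{ssup}$); similarly the good-position condition with respect to faces of $B_n$ and with respect to the members of $\sW_k$ times faces is preserved for generic $x$ by a standard dimension count (this uses Lemma~\ref{lem:extn} to propagate effectivity/properness statements from the generic fiber to an open set of the base). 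One must also check that degenerate cycles specialize to degenerate cycles and that the boundary map commutes with specialization, both of which are formal. Carrying this out for the chain $\beta_U$ as well as for $\alpha$, and noting $\alpha$ itself is defined over $k$ so specializes to itself, we obtain $\alpha = \partial(x^*\beta_U)$ modulo $\TZ^q_{\sW_k}(X_k,\bullet;m)$, i.e.\ $\alpha = 0$ in the quotient homology over $k$. This completes the proof.
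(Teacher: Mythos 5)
Your overall strategy is the one the paper itself uses: spread the chain $\beta_K$ and the correction cycle out over a dense open $U$ of affine space, use the specialization argument for Bloch's cycle complexes for the good-position conditions and \lemref{lem:extn} for the modulus condition of the spread, then restrict to a point of $U$ and invoke the containment lemma (\propref{prop:restp}) for the modulus condition of the specialized cycles. There is, however, one genuine gap, in the descent from a closed point whose residue field $k'$ is a proper finite extension of $k$. From $p_{k'/k,*}\circ p_{k'/k}^{*}=[k':k]\cdot\id$ you only obtain $[k':k]\cdot\alpha=0$ in the homology of the quotient complex over $k$; there is no way to ``clear this integer,'' since these homology groups are not known to be torsion-free or divisible. (Likewise, injectivity of $p^{*}_{k'(t)/k'}$ would only give $p^{*}_{k'/k}(\alpha)=0$, hence again $[k':k]\cdot\alpha=0$.) The transfer identity bounds the kernel of $p^{*}$ by $[k':k]$-torsion; it does not kill it. The paper resolves exactly this point at the outset: when $k$ is finite it passes to infinite pro-$l$ algebraic extensions for primes $l\neq \mathrm{char}\,k$ (using \lemref{lem:pro}; running this for two distinct primes removes the torsion ambiguity), so that one may assume $k$ infinite, and then the dense open $U$ contains an honest $k$-rational point $u\in U(k)$ at which to specialize, with no residue field extension at all. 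An alternative repair over a finite field would be to specialize at two closed points of coprime degrees.

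A smaller remark: the induction reducing to $K=k(t)$ is unnecessary --- the paper takes $U$ open in $\A^r_k$ and specializes there directly --- and it is slightly delicate, since the intermediate base fields $k(t_1,\dots,t_i)$ are imperfect in positive characteristic, whereas the paper's standing conventions (and some of the normalization arguments feeding into \propref{prop:restp}) assume a perfect ground field.
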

\begin{proof} If $k$ is a finite field, then for each prime $l$ different
from the characteristic of $k$, there are infinite pro-$l$ algebraic
extensions of $k$. Combining this with Lemma~\ref{lem:pro}, we can assume 
that $k$ is infinite. Furthermore, since the additive Chow groups of
$X_K$ is a projective limit of the additive Chow groups of $X_L$, where
$L (\subset K)$ is a purely transcendental extension of $k$ of finite 
transcendence degree over $k$, we can assume that the transcendence degree of 
$K$ over $k$ is finite.

Now let $Z \in {\TZ} ^q (X, n; m)$ be a cycle such that 
$\partial Z \in {\TZ}^q _{\sW} (X, n; m)$ where there are admissible cycles 
$B_K \in {\TZ}^q (X_K, n+1; m)$ and $V_K \in {\TZ}^q _{{\sW}_K} (X_K, n; m)$
satisfying $Z_K = \partial (B_K) + V_K$. 

We first consider the natural inclusion of complexes 
${\TZ}^q (X, \bullet; m) \inj z^q (X \times {\A }^1 _k, \bullet -1)$. Since 
$K$ is the function field of some affine space ${\A}^r _k$, we can use the 
specialization argument for Bloch's cycle complexes
({\sl cf.} \cite[Lemma~2.3]{Bl1}) 
to find an open subset $Y \subset {\A}^r _k$ and cycles
\[
B_Y \in z^q (X \times Y \times {\A}^1 _k, n), \ V_Y \in 
z^q _{\sW \times Y \times \mathbb{A}^1 _k} (X \times Y \times {\A}^1 _k, n-1)
\]
such that $B_K$ and $V_K$ are the restrictions of $B_Y$ and $V_Y$ 
respectively to the generic point of $Y$ and $Z \times Y = 
\partial (B_Y) + V_Y$. In particular,
all components of $B_Y$ and $V_Y$ intersect all faces of $X \times Y \times
B_{n+1}$ and $X \times Y \times B_n$ properly. To make $B_Y$ and $V_Y$ 
admissible additive cycles, we modify them using our Lemma~\ref{lem:extn}.
 
To check the modulus condition for our cycles, let $\eta$ denote the generic 
point ${\rm Spec}(K)$ of $Y$.  
Let ${\widehat B_Y}^N$ and ${\widehat V_Y}^N$ denote the normalizations of 
the closures of $B_Y$ and $V_Y$ in $X \times Y \times {\widehat B}_{n+1}$
and $X \times Y \times {\widehat B}_{n}$ respectively. 

We first prove the admissibility under the modulus condition $M_{ssup}$ which 
is a priori more difficult than $M_{sum}$.  
The admissibility of $B_K$ and $V_K$ implies that
there are integers $1 \le i \le n$ and $1 \le i' \le n-1$ such that 
in the Diagram~\eqref{eqn:ext0}, the Weil divisors $j^*_{\eta}(F^1_{n+1, i} -
{(m+1)}F_{n+1, 0})$ and $j^*_{\eta}(F^1_{n, i'} -
{(m+1)}F_{n, 0})$ are effective on ${\widehat B}_{Y, \eta}^N$ and 
${\widehat V}_{Y, \eta}^N$ respectively. Since $X$ and ${\widehat B}_n$ are
projective, the maps ${\widehat B_Y}^N, {\widehat V_Y}^N \to Y$ are
projective. These maps are dominant since $B_K$ and $V_K$ are non-zero
cycles. Thus we can apply Lemma~\ref{lem:extn} to
find an open subset $U \subset Y$ such that $j^*_{U}(F^1_{n+1, i} -
{(m+1)}F_{n+1, 0})$ and $j^*_{U}(F^1_{n, i'} -
{(m+1)}F_{n, 0})$ are also effective. The same argument applies for
the modulus condition $M_{sum}$ as well. We just have to replace the
Cartier divisors $F^1_{n+1,i}$ and $F^1_{n, i'}$ by $F^1_{n+1}$ and
$F^1_n$ respectively. Lemma~\ref{lem:extn} applies in this case, too. 

Replacing $Y$ by $U$, we see that
\begin{equation}\label{eqn:SL0}
B_U \in {\TZ}^q (X \times U, n+1; m), \ V_U \in 
{\TZ}^q _{\sW \times U} (X \times U , n; m), \ \
Z \times U = \partial (B_U) + V_U.
\end{equation}
Next, ~\eqref{eqn:SL0} implies that for a $k$-rational point $u \in U (k)$
such that the restrictions of $B_U$ and $V_U$ to $X \times \{u\}$
give well-defined cycles in $z^q (X \times {\A}^1, n)$ and 
$z^q _{\sW \times \mathbb{A}^1 _k} (X \times {\A}^1 _k, n-1)$, one has $Z = 
\partial \left(i^*_u(B_U)\right) + i^*_u( V_U)$, where 
$i_u: X \times \{ u \} \to X \times U$ is the closed immersion. We can assume 
that
$i^*_u(B_U)$ and $i^*_u(V_U)$ are not zero. 
We now only need to show that $i^*_u(B_U)$ and $i^*_u(V_U)$ satisfy
the modulus condition on $X \times \{u\}$. But this follows directly
from ~\eqref{eqn:SL0} and the containment lemma, 
Proposition~\ref{prop:restp}. This completes the proof of the proposition.
\end{proof}
\section{Moving lemma for projective spaces}\label{section:MLP}
We follow the strategy of S. Bloch to prove the moving lemma for the additive
higher Chow groups. This involves proving the moving lemma first for the
projective spaces and then deducing the same for general smooth projective 
varieties
using the techniques of linear projections. This section is devoted to the
proof of the moving lemma for the projective spaces.
We use the following technique a few times to prove the proper-intersection 
properties of moved cycles with the prescribed algebraic sets.
\begin{lem}[{{\sl cf.} \cite[Lemma~1.1]{Bl1}}]\label{lem:action} 
Let $X$ be an algebraic $k$-scheme, and $G$ a connected algebraic $k$-group 
acting on $X$. Let $A, B \subset X$ be closed subsets, and assume that the 
fibers of the map 
\[
G \times A \to X \ \ \ (g,a) \mapsto g\cdot a
\]
all have the same dimension, and that this map is dominant. Then, there 
exists a non-empty open subset $U \subset G$ such that for all $g \in U$, the 
intersection $g(A) \cap B$ is proper in $X$.
\end{lem}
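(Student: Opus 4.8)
This is a standard generic-smoothness / dimension-count argument, and the plan is to reduce the statement to the fact that a dominant morphism of varieties is ``generically flat'' with equidimensional fibres, plus Chevalley's theorem on the dimension of fibres. First I would set $\mu : G \times A \to X$, $(g,a) \mapsto g\cdot a$, which is dominant by hypothesis; after replacing $X$ by the closure of $\mu(G\times A)$ we may assume $\mu$ is dominant onto an irreducible variety, and after passing to irreducible components of $A$ (and of $G\times A$) we may assume the source is irreducible. Let $e$ be the common fibre dimension of $\mu$. The key observation is to consider instead the incidence variety $\Gamma = \{(g,a)\in G\times A : g\cdot a \in B\}$, i.e. $\Gamma = \mu^{-1}(B)$, together with its projection $\pi : \Gamma \to G$ to the first factor. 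Since all fibres of $\mu$ have dimension $e$ and $B$ is a proper closed subset of $X$ (we may assume $B\neq X$, otherwise there is nothing to prove), restricting $\mu$ over $B$ shows $\dim \Gamma \le \dim B + e$. Now the fibre of $\pi$ over a point $g\in G$ is canonically identified with $\{a\in A : g\cdot a\in B\} = A\cap g^{-1}(B)$, which (translating by $g$) has the same dimension as $g(A)\cap B$ as a subset of $X$.

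The heart of the argument is then a dimension comparison. The generic fibre of $\pi : \Gamma \to G$ has dimension $\dim\Gamma - \dim G$ over the image $\overline{\pi(\Gamma)}$; more precisely, by Chevalley's upper-semicontinuity of fibre dimension, the locus in $G$ where the fibre of $\pi$ has dimension $> \dim\Gamma - \dim G$ is a proper closed subset (here I use that $G$ is irreducible, being connected and a group, so smooth hence irreducible). Combining with $\dim \Gamma \le \dim B + e$ and the identity $\dim(G\times A) = \dim G + \dim A$ together with $\dim(G\times A) = \dim X + e$ (from equidimensionality of $\mu$'s fibres and dominance), one computes $\dim\Gamma - \dim G \le \dim B + e - \dim G = \dim B + \dim A - \dim X = (\dim A + \dim B) - \dim X$. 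Thus for $g$ in a non-empty open $U\subset G$, $\dim\big(g(A)\cap B\big) \le \dim A + \dim B - \dim X$, which is exactly the condition that $g(A)\cap B$ is proper in $X$ (intersection of codimensions adds up). One should also intersect $U$ over the finitely many irreducible components of $A$ to handle the reducible case, which is harmless since a finite intersection of non-empty opens in an irreducible $G$ is non-empty.

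The main obstacle — really the only subtle point — is making sure the dimension bookkeeping is correct when $\mu$ is merely dominant rather than surjective, and when $B$ or $A$ is not irreducible: one must be careful that ``proper intersection in $X$'' is a condition on each pair of components of $g(A)$ and $B$, and that the generic fibre dimension of $\pi$ over \emph{each} component of its image is controlled. The clean way around this is to argue component by component (fix a component $A_0$ of $A$ and a component $B_0$ of $B$, form the corresponding incidence variety, extract a non-empty open $U_{A_0,B_0}\subset G$), and then take $U = \bigcap U_{A_0,B_0}$ over the finitely many pairs. This is precisely the argument of \cite[Lemma~1.1]{Bl1}, and I would simply follow it, invoking generic flatness (or Chevalley's theorem) at the one place it is needed; no modulus-type complications enter here since the statement is purely about dimensions of intersections in $X$.
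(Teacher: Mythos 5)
Your proposal is correct and follows essentially the same route as the paper: the incidence variety $\Gamma=\mu^{-1}(B)$ you construct is exactly the fiber product $C=(G\times A)\times_X B$ in the paper's proof, and the open set $U$ is in both cases the locus in $G$ where the fiber of $C\to G$ has minimal dimension, with the same dimension count via equidimensionality of the fibers of $\mu$ and upper semicontinuity. Your write-up merely makes explicit the bookkeeping (and the reduction to irreducible components) that the paper leaves implicit.
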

\begin{proof}
Consider the fiber square 
\[
\xymatrix{
C \ar[r] \ar[d] & G \times A \ar[d] \\
B \ar[r] & X,}
\]
and take 
\[
U = \{ g \in G | \mbox{the fiber of } C \to G \times A \to 
G \mbox{ over } g \mbox{ has the smallest dimension.} \}.
\]
For such $g \in U$, we have the desired property.  
\end{proof}
\begin{prop}[Admissibility of projective image]\label{prop:P0}
Let $f : X \to Y$ be a projective morphism of quasi-projective varieties
over a field $k$. Let $Z \in \un{\TZ}^r(X, n; m)$ be an irreducible
admissible cycle and let $V = f(Z)$. Then $V \in \un{\TZ}^s(Y, n; m)$,
where $s$ is the codimension of $V$ in $Y \times B_n$.
\end{prop}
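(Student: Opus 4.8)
The plan is to verify the two defining conditions of $\un{\TZ}^s(Y,n;m)$ for $V=f(Z)$: good position with respect to faces, and the modulus condition. The good-position part is essentially formal and identical to the corresponding step for higher Chow groups, so I would dispose of it first; the modulus condition is where the real work lies, and I would handle it using the containment lemma (\propref{prop:restp}) together with \lemref{lem:surjm}.

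\emph{Good position.}
Since $f$ is projective, $f\times\id\colon X\times B_n\to Y\times B_n$ is projective, hence closed, so $V=f(Z)$ is an integral closed subvariety of $Y\times B_n$. For each face $F$ of $B_n$ we must check $\dim\bigl(V\cap(Y\times F)\bigr)\le s+\dim F-1$, equivalently that $V\cap(Y\times F)$ has codimension $\ge s$ in $Y\times F$. The key point is that $Z\cap(X\times F)$ surjects onto $V\cap(Y\times F)$ under $f\times\id$, because $f^{-1}\bigl(V\cap(Y\times F)\bigr)\supseteq Z\cap(X\times F)$ and $f(Z)=V$; more precisely, over the generic point of any component $W$ of $V\cap(Y\times F)$ the fiber of $Z\to V$ is nonempty, so some component of $Z\cap(X\times F)$ dominates $W$. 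Since projective (hence proper) maps do not raise dimension, $\dim W\le\dim\bigl(Z\cap(X\times F)\bigr)\le r+\dim F-1$. Combining this with the relation $r-\codim_{X\times B_n}Z=\dim X-\dim Y=\codim_{Y\times B_n}V-s$ — which is exactly how $s$ is defined — gives the required bound on $\dim W$, and hence on $\dim\bigl(V\cap(Y\times F)\bigr)$.

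\emph{Modulus condition.}
This is the step I expect to be the main obstacle, since, as the introduction emphasizes, pushing cycles forward is precisely where the modulus condition is delicate. Write $\ov Z\subset X\times\wh B_n$ and $\ov V\subset Y\times\wh B_n$ for the closures, and let $\nu_Z\colon\ov Z^N\to X\times\wh B_n$, $\nu_V\colon\ov V^N\to Y\times\wh B_n$ be the normalization maps. The induced morphism $\ov Z\to\ov V$ is projective and dominant, so it lifts to a projective dominant morphism $g\colon\ov Z^N\to\ov V^N$ of normal varieties fitting into $(f\times\id)\circ\nu_Z=\nu_V\circ g$. Pulling back the Cartier divisors $F_{n,0}$ and $F^1_{n,i}$ (resp.\ $F^1_n$) on $X\times\wh B_n$ along $f\times\id$ gives the corresponding divisors on $Y\times\wh B_n$, because these divisors are pulled back from $\wh B_n$ and $f\times\id$ commutes with the projection to $\wh B_n$; hence $g^*\nu_V^*(F_{n,0})=\nu_Z^*(F_{n,0})$ and similarly for the $F^1$-divisors. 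Now suppose $M=M_{ssup}$ and $Z$ satisfies it for the index $i$, so $\nu_Z^*\bigl(F^1_{n,i}-(m+1)F_{n,0}\bigr)\ge 0$ on $\ov Z^N$; then $g^*\nu_V^*\bigl(F^1_{n,i}-(m+1)F_{n,0}\bigr)\ge 0$ on $\ov Z^N$, and since $g$ is surjective between normal varieties, \lemref{lem:surjm} forces $\nu_V^*\bigl(F^1_{n,i}-(m+1)F_{n,0}\bigr)\ge 0$ on $\ov V^N$, i.e.\ $V$ satisfies $M_{ssup}$. The case $M=M_{sum}$ is identical with $F^1_n$ (an effective Cartier divisor) in place of $F^1_{n,i}$. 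The one technical subtlety to nail down is that $V\subset Y\times B_n$ — equivalently $V\cap(Y\times F_{n,0})=\emptyset$, so the pullbacks are genuinely effective Cartier divisors and the divisor arithmetic is legitimate — but this is immediate since $Z\subset X\times B_n$ already misses $X\times F_{n,0}$ and $f\times\id$ carries $X\times B_n$ into $Y\times B_n$.
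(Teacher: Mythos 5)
Your treatment of the modulus condition is essentially the paper's own argument: show that the closure of $V$ in $Y\times\wh B_n$ is the image of $\ov Z$, lift $\ov Z\to\ov V$ to a projective surjection of normalizations, note that $F_{n,0}$ and the $F^1$-divisors are pulled back from $\wh B_n$ so that $g^*\nu_V^*(D)=\nu_Z^*(D)$, and descend effectivity with Lemma~\ref{lem:surjm}. That part is correct and matches the paper.

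The gap is in the good-position step. The relation you invoke, $r-\codim_{X\times B_n}Z=\dim X-\dim Y=\codim_{Y\times B_n}V-s$, is false as written: by hypothesis $\codim_{X\times B_n}Z=r$ and by definition $\codim_{Y\times B_n}V=s$, so the two outer terms are $0$ while $\dim X-\dim Y$ need not be. More substantively, the estimate $\dim\bigl(V\cap(Y\times F)\bigr)\le\dim\bigl(Z\cap(X\times F)\bigr)\le\dim(X\times F)-r$ yields the required $\dim\bigl(V\cap(Y\times F)\bigr)\le\dim(Y\times F)-s$ only when $\dim V=\dim Z$, i.e.\ when $f|_Z$ is generically finite onto $V$; in general the two bounds differ by the generic fibre dimension $\dim Z-\dim V\ge0$ and your count is too weak (take $X=\P^1\times Y$, $f$ the projection, and $Z$ a surface mapping onto a curve $V$: your bound gives $\le 1$ where $0$ is needed). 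There are two ways to close this. The paper's route: for a codimension-one face, proper intersection of $V$ with $Y\times F$ is simply $V\not\subset Y\times F$, which holds because $V\subset Y\times F$ would force $Z\subset f^{-1}(Y\times F)=X\times F$, contradicting the admissibility of $Z$; lower-dimensional faces then follow by induction using $V\cap(Y\times F)=f\bigl(Z\cap(X\times F)\bigr)$ and the admissibility of the faces $\partial_i^\epsilon Z$. Alternatively, your dimension count can be repaired by upper semicontinuity of fibre dimension: every fibre of $f|_Z:Z\to V$ has dimension at least $\dim Z-\dim V$, and $Z\cap(X\times F)=(f|_Z)^{-1}\bigl(V\cap(Y\times F)\bigr)$, so $\dim\bigl(Z\cap(X\times F)\bigr)\ge\dim\bigl(V\cap(Y\times F)\bigr)+\dim Z-\dim V$, which supplies exactly the missing correction term.
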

\begin{proof}
We prove it in several steps. \\
{\bf Claim (1):} \emph{$V$ intersects all codimension one faces $F$ of $B_n$ 
properly in $B_n$. }\\

Consider $F = F_{n, i} ^{\epsilon} = {\iota}_{n,i, \epsilon}(B_{n-1})$ for 
some $i \in \{ 1, 2, \cdots, n-1 \}$, $\epsilon \in \{ 0 , \infty \}$, and 
consider the diagram 
$$\begin{CD}
X \times B_{n-1} @>{{\iota}_{n,i, \epsilon}}>> X \times B_n \\
@VV{f_{n-1}}V @VV{f_n}V \\
Y \times B_{n-1} @>{{\iota}_{n,i, \epsilon}}>> Y \times B_n.
\end{CD}$$ 
Since $F$ is a divisor in $B_n$, that $V$ intersects $Y \times F$ properly is 
equivalent to that $Y \times F\not \supset V$. Towards contradiction, suppose 
that $V \subset Y \times F$. Then, $$ Z \subset f_n ^{-1} (f_n (Z)) = 
f_n ^{-1} (V) \subset f_{n} ^{-1} (Y \times F) = {\iota}_{n,i, \epsilon} 
(f_{n-1} ^{-1} (Y \times B_{n-1}) = X \times F.$$ By assumption, $Z$ 
intersects $X \times F$ properly so that we must have 
$Z \not \subset X \times F$. 
This is a contradiction. This proves Claim (1). \\ 
{\bf Claim (2):} \emph{$V$ intersects all lower dimensional faces of $B_n$ 
properly. }\\

By the admissibility assumption, all cycles 
$\partial_i ^{\epsilon} (Z)= Z \cap (X \times F_{n, i}^{\epsilon})$ are 
admissible. Moreover, it is easy to see that $\partial_i ^{\epsilon} (V) = 
f_{n-1}(\partial_i ^{\epsilon} (Z))$. Thus we can replace $Z$ by 
$\partial_i ^{\epsilon} (Z)$ and
apply the same argument as above; inductively we see that $V$ has good 
intersection property. \\
{\bf Claim (3):} \emph{For each face $F$ of $B_n$, including the case 
$F= B_n$, the 
cycle $V \cap (Y \times F)$ has the modulus condition. }\\

For any face $F = \iota (B_i)\subset B_n$, where $\iota: B_i 
\hookrightarrow B_n$ is a face map, and for the projections 
$f_i : X \times B_i \to Y \times B_i$, note that $V \cap (Y \times F) = 
f_n (Z \cap (X \times F)) = f_i (Z |_{X \times F})$. But the admissibility
of $Z$ implies that $Z |_{X \times F}$ is also admissible ({\sl cf.}
\propref{prop:restp}). 
Hence, replacing $Z |_{X \times F}$ by $Z$, we only need to prove it for 
$F = B_n$, that is,  we just need to show that $V$ satisfies the modulus 
condition.
Consider the diagram
$$\begin{CD}
X\times B_n @>>> X \times \wh{B}_{n}\\
@VV{f_n = f}V @VV{\bar{f}_n = \bar{f}}V\\
Y \times B_n @>>> Y \times \wh{B}_n\end{CD}$$
\noindent
{\bf Subclaim:} Let $\ov{V}$ be the closure of $V$ in $Y \times \wh{B}_n$ 
and let $\ov{Z}$ be the closure of $Z$ in $X \times \wh{B}_n$. Then
$\ov{V} = \bar{f} (\ov{Z})$. \\
Since $Z \subset f^{-1} (V) \subset \bar{f}^{-1} (\ov{V})$ and $V$ is closed,
we have $\ov{Z} \subset \ov{f} ^{-1} (\ov{V})$. Hence, $\bar{f} (\ov{Z}) 
\subset \ov{V}$. For the other inclusion, note that $W = f(Z) \subset \bar{f}
(\ov{Z})$ and $\bar{f} (\ov{Z})$ is closed because $\bar{f}$ is projective. 
Hence $\ov{W} \subset \bar{f} (\ov{Z})$. This proves this subclaim.

To prove the modulus condition for $V$, we take the normalizations 
$\nu_{\ov{Z}} : {\ov Z}^N \to \ov{Z}$ and $\nu_{\ov{V}} : 
{\ov V}^N \to \ov{V}$ of $\ov{Z}$ and $\ov V$, and consider the following 
diagram
\[
\xymatrix{ 
{\ov Z}^N \ar[d]^{f^N_Z} \ar[r] ^{\nu_{\ov{Z}}} & 
\ov{Z} \ar[r]^{\iota_1} \ar[d] ^{f_Z =\bar{f} |_{\ov{Z}}} & 
X \times \wh{B}_n \ar[d] ^{\bar{f}} \\
{\ov V}^N \ar[r] ^{\nu_{\ov{V}}} & \ov{V} \ar[r]^{\iota_2} & 
Y \times \wh{B}_n,}
\]
where $\iota_1, \iota_2$ are the inclusions, and $f^N_Z$ is given by the 
universal property of the normalization $\nu_{\ov{V}}$ for dominant 
morphisms. Note that $f^N_Z$ is automatically projective and surjective 
because $f_Z$ is so. Let $q_{\ov{Z}}:= \iota_1 \circ \nu_{\ov{Z}}$ and 
$q_{\ov{V}} = \iota_2 \circ \nu_{\ov{V}}$. 

Suppose $Z$ satisfies the
modulus condition $M_{ssup}$ and consider on 
$\wh{B}_n$ the Cartier divisors $D_i :=  F^1_{n,i} - (m+1) F_{n,0}$ for 
$1 \leq i \leq n-1.$ That the cycle $Z$ has the modulus condition means 
that $[q_{\ov{Z}}^*\circ \bar{f} ^* (D_i)] \geq 0$ for an index $i$. By the 
commutativity of the above diagram, this means that the Cartier divisor
${f^N_Z}^* [ q_{\ov{V}} ^* (D_i) ]\geq 0$. By  Lemma~\ref{lem:surjm}, 
this implies that $[q_{\bar{V}}^* (D_i)] \geq 0$, 
which is the modulus condition for $V$. If $Z$ satisfies the modulus 
condition $M_{sum}$, we use the same argument by replacing $F^1_{n,i}$ with 
$F^1_n$. This finishes the proof of the proposition.
\end{proof}

\begin{remk}\label{remk:n=1}
In Proposition~\ref{prop:P0}, if $X$ is projective, $Y = {\rm Spec}(k)$ and 
$n=1$, then $V$ is always a single point. 
To see this, let $Z \subset X \times B_1= X \times {\G}_m$ be an admissible 
irreducible closed subvariety. Let $V = p(Z)$, where 
$p: X \times {\G}_m \to {\G}_m$ is the projection. 

Since $X$ is complete, $p$ is a closed map. Hence, $V=p(Z)$ is an irreducible 
closed subvariety of $\G_m$. But the only closed subvarieties of $\G_m$ are 
finite subsets or all of $\G_m$. On the other hand, if $\ov Z$ is the
closure of $Z$ in $X \times {\A}^1$, then the modulus condition implies
that $\ov Z \cap (X \times \{ t = 0 \}) = \emptyset$. This implies that
$V$ must be a proper subset and hence a finite subset. Since $V$ is 
irreducible, consequently $V$ must be a non-zero single point.

Hence $Z = W \times \{ * \}$ for a closed subvariety $W \subset X$, and a 
closed point $\{ * \} \in \G_m$. Conversely, any such variety is admissible. 
This classifies all admissible cycles $Z$ when $X$ is projective and $n=1$.

For $n > 1$, all we can say is that $Z$ is contained in $X \times V$, where 
$V$ is admissible in $\TZ_s (k, n;m)$ for a suitable $s$. 
\end{remk}
\subsection{Homotopy variety}\label{subsection:homotopy}
Now we want to construct the ``homotopy variety". First, we need the 
following simple result:
\begin{lem}\label{lem:SL_n}
Let $SL_{r+1, k}$ be the $(r+1) \times (r+1)$ special linear group over $k$, 
and let $\eta$ be the generic point of the $k$-variety $SL_{r+1, k}$. Let $K$ 
be its function field (this is a purely transcendental extension of $k$). Let 
$SL_{r+1, K} := SL_{r+1, k} \otimes _k K$ be the base change. Then, there is 
a morphism of $K$-varieties $\phi: \square ^1 _K \to SL_{r+1, K}$ such that 
$\phi (0)$ is the identity element, and $\phi (\infty)$ is the generic point 
$\eta$ considered as a $K$-rational point.
\end{lem}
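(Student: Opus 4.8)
The goal is to produce, over the function field $K = k(SL_{r+1,k})$, a map $\phi : \square^1_K \to SL_{r+1,K}$ sending $0$ to the identity matrix and $\infty$ to the generic point $\eta \in SL_{r+1,k}(K)$. The natural strategy is to connect the identity to $\eta$ by a rational curve in the group, using the fact that $SL_{r+1}$ is a rational variety (it is generated by elementary unipotent subgroups, each isomorphic to $\A^1$). First I would set up coordinates: let $A \in SL_{r+1}(K)$ denote the generic matrix, i.e. the tautological $K$-point of $SL_{r+1,k}$. I want a morphism from $\square^1_K = \P^1_K \setminus \{1\}$ that hits $I$ and $A$ at the two distinguished points $0$ and $\infty$.

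The cleanest construction uses one-parameter additive subgroups. Recall that any element of $SL_{r+1}$ can be written as a product of elementary matrices $E_{ij}(\lambda) = I + \lambda e_{ij}$ (for $i \ne j$), and each assignment $\lambda \mapsto E_{ij}(\lambda)$ is a morphism $\A^1 \to SL_{r+1}$ with $0 \mapsto I$. Over $K$, write the generic matrix $A$ as a word $A = E_{i_1 j_1}(\lambda_1) \cdots E_{i_N j_N}(\lambda_N)$ in elementary matrices with entries $\lambda_s \in K$ (such a decomposition exists over any field, or one may first argue over a dense open subscheme of $SL_{r+1}$ and then pass to the generic point). Then define, for a parameter $u$ on $\square^1$, the map
\[
\psi(u) = E_{i_1 j_1}(u\lambda_1)\, E_{i_2 j_2}(u\lambda_2) \cdots E_{i_N j_N}(u\lambda_N),
\]
which is a morphism $\A^1_K \to SL_{r+1,K}$ with $\psi(0) = I$ and $\psi(1) = A = \eta$. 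Precomposing with a suitable automorphism of $\P^1$ that swaps the role of $\{1\}$ and the other points — concretely, let $y$ be the coordinate on $\square^1 = \P^1 \setminus \{1\}$ and set $u = y/(y-1)$, which maps $y = 0 \mapsto u = 0$, $y = \infty \mapsto u = 1$, and sends $\{1\}$ to $\infty$ — we obtain $\phi(y) = \psi\!\left(\tfrac{y}{y-1}\right)$. Since $\psi$ is a polynomial map in $u$ (the entries of $E_{ij}(u\lambda)$ are linear in $u$), $\phi$ is a morphism on $\square^1_K = \P^1_K \setminus \{1\}$: the only potential pole of $u = y/(y-1)$ is at $y = 1$, which we have removed, and no pole appears at $y = \infty$ since $u \to 1$ there. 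Then $\phi(0) = \psi(0) = I$ and $\phi(\infty) = \psi(1) = \eta$, as required.

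The main obstacle is the existence and control of the elementary decomposition: one must be sure that the generic point $\eta$, as a $K$-point of $SL_{r+1,K}$, really is of the form $\psi(1)$ for a morphism $\psi$ defined over $K$ with $\psi(0) = I$. This is where one invokes that $SL_{r+1}$ over a field is generated by elementary matrices (equivalently, that $SL_{r+1}(K) = E_{r+1}(K)$ for any field $K$), so the word exists with coefficients $\lambda_s \in K$; the number $N$ of factors may depend on $\eta$ but that is harmless. An alternative, coordinate-free phrasing avoiding any bound on word length: since $SL_{r+1,k}$ is unirational (indeed rational) and smooth, a general line in an affine chart through a point lying over $I$ and one lying over $\eta$ already gives such a $\phi$ after reparametrizing $\P^1$; either way the substantive input is the rational connectedness of $SL_{r+1}$ together with the transitivity needed to route the curve through the two prescribed points, and the rest is the elementary change of coordinate on $\square^1$ described above.
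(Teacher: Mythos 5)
Your proof is correct and is essentially identical to the paper's argument: both decompose the generic point $\eta$ as a product of elementary matrices $E_{ij}(a)$, scale the parameters linearly to get a morphism $\psi$ from $\A^1_K$ with $\psi(0)=I$ and $\psi(1)=\eta$, and then precompose with the same coordinate change $y \mapsto y/(y-1)$ identifying $\square^1_K$ with $\A^1_K$. No substantive differences.
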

\begin{proof} By a general result on the special linear groups, every element 
of $SL_{r+1, K}$ is generated by the transvections $E_{ij} (a)$, $i \not = j$, 
$a \in K$, that are $(r+1) \times (r+1)$ matrices whose diagonal entries are 
$1$, the $(i,j)$-entry is $a$, and all other entries are zero. 

For each pair $(i,j)$, the collection $\{ E_{ij} (a) | a \in K \}$ forms a 
one-parameter subgroup of $SL_{r+1, K}$ isomorphic to $\mathbb{G}_{a, K}$. 
Thus, for each fixed $b \in K$, define $\phi_{ij} ^b: \mathbb{A}^1 _K \to 
SL_{r+1, K}$ by $\phi_{ij} ^b (y) := E_{ij} (by)$. 

Express the $K$-rational point $\eta$ of $SL_{r+1, K}$ as the (ordered) 
product
\[
\eta = \prod_{l=1} ^p E_{i_l j_l} (a_l),\ \ \ \mbox{ for some }  i_l, j_l \in 
\{ 1, 2, \cdots, r+1 \}, \ a_l \in K,
\]
and define $\phi' : \mathbb{A}^1 _K \to SL_{r+1, K}$ by 
$\phi'= \prod_{l=1} ^p \phi _{i_l j_l} ^{a_l}$. By definition, we have 
$\phi' (0)= {\rm Id}$ and $\phi' (1) = \eta$. Composing with the automorphism 
$\sigma: \mathbb{P}^1 _K \to \mathbb{P} ^1 _K$ given by 
$y \mapsto {y}/({y-1})$, that isomorphically maps $\square^1 _K$ to 
$\mathbb{A}^1 _K$, we obtain $\phi = \phi' \circ \sigma| : \square^1 _K \to 
SL_{r+1, K}$. This $\phi$ satisfies the desired properties.
\end{proof}
Recall that one consequence of \lemref{lem:M5*} is that the additive cycle
complex with modulus $m$ can also be defined as a complex whose level $n$
term is the free abelian group of integral closed subschemes $Z \subset
X \times {\wt B}_n$ which have good intersection property with all faces,
and which satisfy the appropriate modulus condition on $X \times {\wh B}_n$.
The following lemma uses this particular definition of the additive cycle
complex. 
\begin{lem}\label{lem:HV}
Let $K$ be the function field of $SL_{r+1, k}$, and $\phi: \square^1 _K \to 
SL_{r+1, K}$ be as in the previous lemma. Let $SL_{r+1, K}$ act on 
$\mathbb{P}^r _K$ naturally. Consider the composition 
$H_n=p_{K/k} \circ pr_K '\circ \mu_{\phi}$ of morphisms 
\[
\xymatrix{ 
\mathbb{P}^r \times \mathbb{A}^1 \times \square^{n} _K \ar[r]^{\mu_{\phi}} &  
\mathbb{P}^r \times \mathbb{A}^1 \times \square^{n} _K \ar[r] ^{{\rm pr_K}'} &
\mathbb{P}^r \times \mathbb{A}^1 \times \square^{n-1} _K \ar[r] ^{ p_{K/k}} &
\mathbb{P}^r \times \mathbb{A}^1 \times \square^{n-1}_k}
\]
where
\[
\left\{\begin{array}{lll}
\mu_\phi (x, t, y_1, \cdots, y_n) := (\phi (y_1) x, t, y_1, \cdots, 
y_n), \\  
{\rm pr}' _K(x, t, y_1, \cdots, y_{n-1}) := (x, t, y_2, \cdots, y_{n-1}), \\ 
p_{K/k} : \mbox{ the base change.}
\end{array}
\right.
\]
Then for any $Z \in {\TZ}^q (\mathbb{P}^r_k, n; m)$, the cycle 
$H^* _n (Z)=\mu_{\phi} ^* \circ {{\rm pr} '} ^* (Z_K)$ is admissible, hence 
it is in ${\TZ}^q (\mathbb{P}^r _K, n+1;m)$. Similarly, $H_n ^*$ carries 
${\TZ}^q _{\mathcal{W}} (\mathbb{P} ^r _k, n;m)$ to 
${\TZ}^q _{\mathcal{W}_K} (\mathbb{P} ^r _K, n+1; m)$.
\end{lem}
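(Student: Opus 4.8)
The plan is to chase through the three morphisms $\mu_\phi$, $\mathrm{pr}'_K$, $p_{K/k}$ one at a time, verifying at each stage that (a) good position with respect to faces is preserved, and (b) the modulus condition is preserved. The base change $p_{K/k}$ is the easiest: flat pull-back along a smooth morphism sends admissible cycles to admissible cycles by Theorem~\ref{thm:basic}(2), since the closure of $Z_K$ in $\mathbb{P}^r_K\times\mathbb{A}^1\times(\mathbb{P}^1)^{n-1}_K$ is just the base change of $\ov Z$ and the pulled-back divisors $F_{n,0}$, $F^1_{n,i}$ are base changes of the corresponding divisors downstairs, so their effectivity on the normalization is unchanged. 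The projection $\mathrm{pr}'_K$ drops the first cube coordinate $y_1$; I would first check good position: a face $F$ of $B_{n+1}$ obtained by fixing some $y_i=\epsilon$ with $i\ge 2$ corresponds, after removing $y_1$, to a face of $B_n$, and since $Z_K$ already meets $X\times F'$ properly, $\mathrm{pr}'^*(Z_K)=Z_K\times\square^1$ meets $X\times F$ properly of the right dimension; for the faces $y_1=0,\infty$ one uses that $\mathrm{pr}'^*(Z_K)$ contains those slices as $Z_K$ itself times a point, which again has the correct codimension. The modulus condition for $\mathrm{pr}'^*(Z_K)=Z_K\times\square^1$ is immediate: adding a free coordinate $y_1$ (which does not touch $t$) changes neither $F_{n,0}$ nor $F^1_{n,i}$ on the normalization of the closure, so the required inequality $(m+1)\nu^*(F_{\bullet,0})\le\nu^*(F^1_{\bullet,i})$ (or with $F^1_\bullet$) survives verbatim.

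The substantial step is the translation automorphism $\mu_\phi$, and the point is that $\mu_\phi$ acts only on the $\mathbb{P}^r$-factor via $\phi(y_1)\in SL_{r+1,K}$ and fixes $t$ and all the $y_j$. For good position: $\mu_\phi$ is an automorphism of $\mathbb{P}^r_K\times\mathbb{A}^1_K\times\square^n_K$ (invertible, with inverse $x\mapsto\phi(y_1)^{-1}x$ since the group element depends only on the fixed coordinate $y_1$), hence it preserves each face $X\times F$ set-theoretically when $F$ involves only the $y$-coordinates, and therefore preserves proper intersection and the right dimension count. For the modulus condition, the key observation is that $\mu_\phi$ extends to an automorphism $\ov\mu_\phi$ of $\mathbb{P}^r_K\times\A^1_K\times(\mathbb{P}^1)^n_K$ — the $SL_{r+1}$-action on $\mathbb{P}^r$ is everywhere defined — which fixes the divisor $F_{n+1,0}=\{t=0\}$ pointwise and fixes each $F^1_{n+1,i}=\{y_i=1\}$ as a set. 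Thus $\ov\mu_\phi^{-1}$ sends the closure of $\mathrm{pr}'^*(Z_K)$ to the closure of $\mu_\phi^*\mathrm{pr}'^*(Z_K)=H_n^*(Z)$, and it identifies, up to this isomorphism of normalizations, the pulled-back divisors $\nu^*(F_{n+1,0})$ and $\nu^*(F^1_{n+1,i})$ of the two cycles. Hence the modulus inequality for $Z_K\times\square^1$ transports to the modulus inequality for $H_n^*(Z)$ — and, as with $\mathrm{pr}'_K$, the same argument works uniformly for $M_{sum}$ (using $F^1_{n+1}=\sum_i F^1_{n+1,i}$, which $\ov\mu_\phi$ again preserves as a set) and for $M_{ssup}$.

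Finally, for the $\mathcal W$-statement: by Proposition~\ref{prop:P0}-type reasoning combined with the explicit shape of $H_n$, a cycle $Z\in\TZ^q_{\mathcal W}(\mathbb{P}^r_k,n;m)$ has $H_n^*(Z)$ meeting $W_K\times F$ in the right codimension for every $W\in\mathcal W$ and every face $F$ of $B_{n+1}$, because the base change and the free coordinate $y_1$ again do not affect codimension counts, and the translation $\mu_\phi$ is an automorphism that carries $W_K\times F$ to a variety of the same dimension (indeed to $W_K\times F$ itself, since $\mu_\phi$ fixes the face coordinates and $W_K$ is pulled back from $X$); one checks that the relevant intersection $H_n^*(Z)\cap(W_K\times F)$ is carried isomorphically to $(\mathrm{pr}'^* Z_K)\cap(W_K\times F)=\big(Z_K\cap(W_K\times F')\big)\times\square^1$, whose codimension in $W_K\times F$ is $\ge q$ by hypothesis on $Z$. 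The main obstacle I anticipate is purely bookkeeping: making sure that, in the definition of the cycle complex that allows closures to be taken in $X\times\wh B_n$ (Lemma~\ref{lem:M5*}), the automorphism $\ov\mu_\phi$ really is well-defined on the full $\wh B_{n+1}=\mathbb{P}^1\times(\mathbb{P}^1)^n$ compactification and fixes $F_{n+1,0}$; once that is nailed down, every assertion reduces to "an isomorphism of normalizations identifies the relevant divisors," which is routine.
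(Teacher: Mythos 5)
Your reductions for $p_{K/k}$ and ${\rm pr}'_K$ are fine, but the central step---transporting the modulus condition across $\mu_\phi$---rests on a false claim, namely that $\mu_\phi$ extends to an automorphism $\ov{\mu}_\phi$ of $\P^r_K\times\A^1_K\times(\P^1)^n_K$ identifying the divisors $\{y_i=1\}$ and $\{t=0\}$ on the two closures. The map $\phi:\square^1_K\to SL_{r+1,K}$ cannot extend to $\P^1_K$: a morphism from a proper curve to the affine variety $SL_{r+1}$ is constant, and $\phi$ is not constant ($\phi(0)={\rm Id}$, $\phi(\infty)=\eta$). Concretely, the matrix entries of $\phi(y_1)$ have poles at $y_1=1$, and after clearing denominators the limiting matrix at $y_1=1$ has determinant $0$, so the induced rational self-map of $\P^r\times(\P^1)^n$ has a nonempty indeterminacy locus lying over $F^1_{n+1,1}=\{y_1=1\}$---precisely the divisor you must control for the modulus inequality. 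Hence the closure of $H_n^*(Z)$ in $\P^r\times\wh{B}_{n+1,K}$ is not obtained from the closure of $Z_K\times\square^1$ by an isomorphism of normalizations, and what you dismiss as ``bookkeeping'' is the actual content of the lemma. The paper's proof avoids this entirely by a containment argument: with $V=p(Z)$ for the projection $p:\P^r\times\wt{B}_n\to\wt{B}_n$, one has $Z'\subset\mu_\phi^*(\P^r\times V\times\square^1_K)=\P^r\times V\times\square^1_K$, since $\mu_\phi$ only moves the $\P^r$-coordinate and that subvariety is stable under doing so; $V$ is admissible by \propref{prop:P0}, hence $\P^r\times V\times\square^1_K$ satisfies the modulus condition, and \propref{prop:restp} transfers it to the closed subvariety $Z'$.

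The $\sW$-statement also contains an error: $\mu_\phi$ does \emph{not} carry $W_K\times F$ to itself, since it translates the $\P^r$-coordinate by $\phi(y_1)$ and $W_K\subsetneq\P^r_K$; if it did, the construction would move nothing off $\sW$ and the moving lemma would be vacuous. Nor does ``same dimension'' suffice: knowing $\dim\mu_\phi(W_K\times F)=\dim(W_K\times F)$ says nothing about the excess of its intersection with ${\rm pr}'^*(Z_K)$. The correct argument is \lemref{lem:action} applied to $G=SL_{r+1}$, $A=W\times F$, $B={\rm pr}'^*(Z)\cap(\P^r\times F)$, which produces a dense open $U\subset SL_{r+1}$ of good translates; one then uses that the generic point $\eta^{-1}$ lies in $U_K$, treating separately the faces containing $\{y_1=0\}$, where $\phi(0)={\rm Id}$ makes the intersection coincide with that of $Z_K$ itself.
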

\begin{proof} It is enough to prove the second assertion that for any 
irreducible admissible $Z$ in ${\TZ} ^q _{\mathcal{W}} (\mathbb{P} ^r, n; m)$, 
the variety $Z':=H_n ^* (Z)$, that we informally call the ``homotopy variety" 
of $Z$, satisfies the admissibility conditions of 
Definition~\ref{defn:AdditiveComplex}.
\\
\\
{\bf  Claim (1):} \emph{The variety $Z'$ intersects $W \times F_K$ properly 
for all 
$W \in \mathcal{W}$ and for each face $F$ of $B_{n+1}$.} \\

This follows from the arguments of S. Bloch and M. Levine in \cite{Bl1,Le} 
without modification. We provide its proof for sake of completeness. We use 
Lemma~\ref{lem:action} for this purpose. We may assume that $\mathcal{W}$ 
contains only one non-empty algebraic set $W$. There are cases to consider. \\
\noindent \emph{Case 1.} 
Suppose $F_K$ is of the form $F = \mathbb{A} ^1 \times \{ 0 \} \times F' _K$ 
for some face $F' _K \subset \square ^{n-1} _K$. In this case, 
$Z' \cap (W \times F_K)$ is nothing but 
$Z_K \cap (W \times \mathbb{A} ^1 \times F' _K)$ because 
$\phi(0) = {\rm Id} \in SL_{r+1, K}$. So, proper-intersection is obvious in 
this case.

\noindent \emph{Case 2.} 
Suppose $F_K$ is any other form. It comes from some $F \subset B_{n+1}$. 
We apply Lemma~\ref{lem:action} with $G = SL_{r+1, k}$, 
$X = \mathbb{P} ^r \times F$, $A = W \times F$, 
$B= {{\rm pr_k}' }^* (Z) \cap (\mathbb{P} ^r \times F)$, where $G$ acts on 
$X$ by acting trivially on $F$ and acting naturally on $\mathbb{P} ^r$. By 
Lemma~\ref{lem:action}, there is a non-empty open subset $U \subset SL_{r+1}$ 
such that for all $g \in U$, the intersection $g(A) \cap B$ is proper. By 
shrinking $U$ if necessary, we may assume that $U$ is invariant under taking 
the multiplicative inverses. Take $g = \eta ^{-1} \in U$, the inverse of the 
generic point. Thus, after base extension to $K$, the intersection of 
${\eta^{-1}} (W_K \times F_K)$ with 
${{\rm pr}' } ^* (Z_K) \cap (\mathbb{P} ^r \times F_K)$ is proper, which 
means ${\eta} ({{\rm pr}'} ^* (Z_K) \cap (\mathbb{P} ^r \times F_K))$ 
intersects properly with $W_K \times F_K$. But the intersection 
${\rm pr'} ^* (Z_K ) \cap (\mathbb{P} ^r \times F_K)$ is proper, as $Z$ was 
admissible. Hence, ${\eta}  ({\rm pr'} ^* (Z_K))$ intersects with 
$W_K \times F_K$ properly and consequently $Z'$ intersects with 
$W_K \times F_K$ properly. This proves the claim and hence $Z'$
has good intersection property. Thus we only need to show the modulus 
condition for $Z'$ to complete the proof of the lemma.
\\
\\
{\bf Claim (2):} \emph{$Z'$ satisfies the modulus condition on
${\P}^r \times {\wt B}_{n+1, K}$. }\\

We prove this using our containment lemma. In the following, we casually drop 
the automorphism $\tau : \mathbb{P} ^r \times \mathbb{A}^1 \times \square ^n 
\to \mathbb{P} ^r \times \mathbb{A}^1 \times \square ^n $ that maps 
$(x, t, y_1, \cdots, y_n)$ to $(x, t, y_2, \cdots, y_n, y_1)$ from our 
notations for simplicity.

Take $V = p(Z)$, where $p: \mathbb{P} ^r \times {\wt B}_n \to {\wt B}_n$ is 
the projection. Because $Z \subset p^{-1}(p(Z)) = \mathbb{P}^r \times V$, we 
have 
\begin{equation}\label{eqn:SL0*}
Z' = \mu_{\phi} ^* (Z \times \square_K ^1) \subset  \mu_{\phi} ^* 
(\mathbb{P} ^r \times V \times \square^1 _K) = \mathbb{P} ^r \times V \times 
\square^1 _K =: Z_1, {\rm say}.
\end{equation} 

Now, \propref{prop:P0} implies that $V$ is an irreducible admissible closed 
subvariety of ${\wt B}_n$. The flat pull-back property in turn implies that
$p^*([V]) = {\P}^r \times V$ is an irreducible admissible closed 
subvariety of ${\P}^r \times {\wt B}_n$. In particular, the modulus condition
holds for ${\P}^r \times V$. If $\ov V$ is the closure of $V$ in 
${\wh B}_n$, then commutativity of the diagram
\[
\xymatrix{
{\ov Z}^N_1 = \mathbb{P} ^r \times {\ov V}^N \times {\P}^1_K \ar[r] 
\ar[d] &
 \mathbb{P} ^r \times {\wh B}_{n+1, K} \ar[r] \ar[d] & 
{\wh B}_{n+1, K} \ar[d] \\
\mathbb{P} ^r \times {\ov V}^N \ar[r] & \mathbb{P} ^r \times 
{\wh B}_{n} \ar[r] & {\wh B}_{n}}
\]
now implies that $Z_1$ satisfies the modulus condition on ${\P}^r \times 
{\wt B}_{n+1, K}$ even though it is a degenerate additive cycle.
Furthermore, the admissibility of $Z$ and the fact that ${\mu}_{\phi}$ is 
an automorphism, imply that ${\ov Z'}$ intersects the Cartier divisors
$F^1_{n+1}$ and $F_{n+1, 0}$ properly. Thus we can use \eqref{eqn:SL0*} and
apply \propref{prop:restp} (with $``X" = {\P}^r_K$, 
$``Y" = Z'$ and $``V" = Z_1$) to 
conclude that $Z'$ satisfies the modulus condition. This completes 
the proof of the lemma.
\end{proof}  
\begin{lem}\label{lem:Hrelation}
The collection $H_{\bullet} ^*: {\TZ}^q (\mathbb{P} ^r _k , \bullet; m) \to 
{\TZ} ^q  (\mathbb{P} ^r _K, \bullet +1; m)$ is a chain homotopy satisfying
$\partial H^* + H^* \partial = p_{K/k} ^* (Z) - {\eta} (Z_K)$. 
The same is true for ${\TZ}_{\mathcal{W}} ^q$.
\end{lem}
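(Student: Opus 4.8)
The plan is to show that the operators $H^*_n$ defined in Lemma~\ref{lem:HV} assemble into a chain homotopy between the two maps $p^*_{K/k}$ and $\eta(-)_K$ on additive cycle complexes. The key point is that $H^*_n$ is, up to the cosmetic coordinate flip $\tau$, the composite of three maps each of which is a (flat) pull-back or base change, so that computing $\partial H^* + H^* \partial$ reduces to bookkeeping with the cubical face operators $\partial^0_i,\partial^\infty_i$ and the definition of $\mu_\phi$, $\mathrm{pr}'_K$ and $p_{K/k}$.

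First I would recall that the boundary in the additive complex at level $n+1$ is $\partial = \sum_{i=1}^{n}(-1)^i(\partial^\infty_i - \partial^0_i)$, where $\partial^\epsilon_i$ is restriction along $y_i = \epsilon$; here $y_1$ is the "new" homotopy coordinate introduced by $\mu_\phi$ and $y_2,\dots,y_{n}$ are the old coordinates $y_1,\dots,y_{n-1}$ of $Z$ reindexed. So I would split $\partial H^*_n(Z) = -(\partial^\infty_1 - \partial^0_1)H^*_n(Z) + \sum_{i=2}^{n}(-1)^i(\partial^\infty_i - \partial^0_i)H^*_n(Z)$. For the $i\ge 2$ terms, since $\mu_\phi$ acts only on $\mathbb{P}^r$ and on the coordinate $y_1$, restriction along $y_i = \epsilon$ ($i\ge 2$) commutes with $\mu_\phi^*$ and with $\mathrm{pr}'^*$, so these contribute exactly $H^*_{n-1}(\partial Z)$ (with the sign shift built into reindexing $i \mapsto i-1$). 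This is the routine part and I would only indicate it.

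The heart of the computation is the two terms at $i=1$. Restricting along $y_1 = 0$: since $\phi(0) = \mathrm{Id}$, the map $\mu_\phi$ becomes the identity on that slice, and $\mathrm{pr}'_K$ followed by base change collapses $H^*_n(Z)$ to $p^*_{K/k}(Z)$; thus $\partial^0_1 H^*_n(Z) = p^*_{K/k}(Z)$. Restricting along $y_1 = \infty$: here $\phi(\infty) = \eta$, the generic point of $SL_{r+1,k}$ viewed as a $K$-rational point acting on $\mathbb{P}^r_K$, so $\mu_\phi$ on that slice is translation by $\eta$, and after projecting away $y_1$ we get $\eta((\mathrm{pr}'_K)^* Z_K)$ — but since $y_1$ is precisely the coordinate being discarded by $\mathrm{pr}'$, the slice is just $\eta(Z_K)$; thus $\partial^\infty_1 H^*_n(Z) = \eta(Z_K)$. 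Plugging into the sign $-(\partial^\infty_1 - \partial^0_1)$ gives $-\eta(Z_K) + p^*_{K/k}(Z)$, and combining with the $i \ge 2$ terms yields $\partial H^*_n(Z) + H^*_{n-1}(\partial Z) = p^*_{K/k}(Z) - \eta(Z_K)$, as claimed. The identical argument, using that $H^*_n$ carries $\TZ^q_{\mathcal W}$ into $\TZ^q_{\mathcal W_K}$ (Lemma~\ref{lem:HV}) and that all of $\mu_\phi^*$, $\mathrm{pr}'^*$, $p^*_{K/k}$ and the face restrictions respect the proper-intersection-with-$\mathcal W$ condition, gives the statement for $\TZ^q_{\mathcal W}$.

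The main obstacle I anticipate is purely one of care rather than depth: one must check that each of the intermediate cycles $\mu_\phi^*(\cdots)$, $(\mathrm{pr}'_K)^*(\cdots)$, and their face restrictions is \emph{well-defined} as an additive cycle — i.e. that the pull-backs along $\mu_\phi$ (an automorphism, hence harmless) and along $\mathrm{pr}'_K$ (flat) make sense and that restricting along $y_1 = 0,\infty$ meets all faces properly — so that the formal cubical identity $\partial H^* + H^*\partial = (\text{end terms})$ is an honest identity of cycles and not merely of "cycles defined up to correction." This is exactly what Lemma~\ref{lem:HV} was set up to supply (good position and modulus for $H^*_n(Z)$ and hence for its faces, via Proposition~\ref{prop:restp}), so modulo invoking that lemma the proof is the standard verification of the cubical homotopy formula, done one face at a time.
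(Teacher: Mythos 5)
Your proposal is correct and follows essentially the same route as the paper: commute the face maps $\partial^\epsilon_i$ for $i\ge 2$ past $\mu_\phi^*\circ{\rm pr}'^*\circ p_{K/k}^*$ (with the index shift $i\mapsto i+1$) so that these terms cancel against $H^*\partial$, leaving only the two $i=1$ faces, which are identified with $p_{K/k}^*(Z)$ and $\eta(Z_K)$ via $\phi(0)={\rm Id}$ and $\phi(\infty)=\eta$. The admissibility of all intermediate cycles is, as you note, exactly what Lemma~\ref{lem:HV} supplies, so nothing further is needed.
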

\begin{proof} It is enough to prove the second assertion. This is 
straightforward: let 
$Z \in {\TZ}^q _{\mathcal{W}} (\mathbb{P} _k ^r , n ; m)$. 
Then
\begin{eqnarray*}
H ^*\partial (Z ) &=& H^* \left( \sum_{i=1} ^n (-1)^i 
( \partial_i ^{\infty} - \partial_i ^{0}) (Z) \right) \\
&=& \sum_{i=1} ^n (-1)^i  (\mu_{\phi} ^* {{\rm pr}'} ^* p_{K/k} ^*) 
(\partial_i ^{\infty} - \partial_i ^{0}) (Z)\\
&=& \sum_{i=1} ^n (-1)^i (\partial_{i+1} ^{\infty} - \partial_{i+1} ^{0}) 
(\mu_{\phi} ^* {{\rm pr}'} ^* p_{K/k} ^* (Z))\\
&=& - \sum_{i=2} ^{n+1} (-1)^i (\partial_i ^{\infty} - \partial_i ^{0} ) 
(H^* (Z)),\\
\partial H^* (Z) &=& \sum_{i=1} ^{n+1} (-1)^i 
(\partial_i ^{\infty} - \partial _i ^{0}) H^* (Z) \\
&=& \sum_{i=1} ^{n+1} (-1)^i (\partial _i ^{\infty} - \partial _i ^{0}) 
(H^* (Z))\\
&=& (-1) (\partial _1 ^{\infty} - \partial _1 ^{0} ) 
(H^* (Z)) + \sum_{i=2} ^{n+1} (-1)^ i 
(\partial _i ^{\infty} - \partial _i ^{0}) (H^* (Z)).
\end{eqnarray*} Hence, 
\[
(\partial H^* + H^* \partial )(Z) = (\partial _1 ^{0} - 
\partial _1 ^{\infty} ) (H^* (Z)) =  p_{K/k} ^* (Z) - {\eta} (Z_K).
\]
This proves the lemma.
\end{proof}
\subsection{Proof of the moving lemma for projective spaces}
\label{subsection:MLPP}
We are now ready to finish the proof of \thmref{thm:ML} for $\mathbb{P} ^r$.

By the Lemma~\ref{lem:Hrelation}, the base extension 
\[
p_{K/k} ^* : \frac{{\TZ} ^q (\mathbb{P}^r _k, \bullet ;m)}
{{\TZ}^q _{\mathcal{W}} 
(\mathbb{P} ^r _k, \bullet;m)} \to 
\frac{ {\TZ}^q (\mathbb{P} ^r _K, \bullet;m)}
{ {\TZ}^q _{\mathcal{W}_K} (\mathbb{P} ^r _K, \bullet;m)}
\]
is homotopic to the map ${\eta} p_{K/k} ^*$. Note for each admissible cycle 
$Z \in {\TZ} ^q (\mathbb{P} ^r _k, n; m)$, the cycle ${\eta} (Z_K)$ lies in 
${\TZ}^q _{\mathcal{W}} (\mathbb{P} ^r _K, n;m)$. We can prove it as before.

We may assume that $\mathcal{W}$ has only one nonempty algebraic set, say $W$.
In the Lemma~\ref{lem:action}, take $G = SL_{r+1}$, 
$X= \mathbb{P} ^r \times B_{n}$ where $G$ acts on $\mathbb{P} ^r _K$ 
naturally and $B_{n}$ trivially. Let $F$ be a face $B_n$. Let 
$A = W \times F$, $B = Z \cap (\mathbb{P} ^r \times F)$. Since $SL_{r+1}$ 
acts transitively on $\mathbb{P} ^r$, the map $G \times A \to X$ is 
surjective. Hence, by the Lemma~\ref{lem:action}, there is a non-empty open 
subset $U \subset G$ such that for all $g \in U$, the intersection 
$g(A) \cap B$ is proper in $X$. By shrinking $U$ further, we may assume that 
$U$ is closed under taking multiplicative inverse of $\eta$. Taking 
$g = \eta^{-1}$, the inverse of the generic point, we see that after base 
extension to $K$, the intersection of $\eta ^{-1} (W \times F)$ with 
$Z_K \cap (\mathbb{P} ^r \times F_K)$ is proper, which means 
${\eta} (Z_K \cap (\mathbb{P} ^r \times F_K))$ intersects $W_K \times F_K$ 
properly. Since $Z_K$ intersects with $\mathbb{P} ^r \times F_K$ properly by 
the assumption, we conclude that $\eta (Z_K)$ intersects $W_K \times F_K$ 
properly. 
Thus, $\eta (Z_K) \in {\TZ} ^q _{\mathcal{W}} (\mathbb{P} ^r _K, n; m)$. 
Hence, the induced map on the quotient 
\[
\eta p_{K/k} ^* : 
\frac{{\TZ} ^q (\mathbb{P}^r _k , \bullet ;m)}{{\TZ}^q _{\mathcal{W}} 
(\mathbb{P} ^r _k, \bullet;m)} \to 
\frac{ {\TZ}^q (\mathbb{P} ^r _K, \bullet;m)}
{ {\TZ}^q _{\mathcal{W}_K} (\mathbb{P} ^r _K, \bullet;m)}
\]
is zero. Hence the base extension $p_{K/k} ^*$ induces a zero map on homology 
since it is homotopic to the zero map. 

On the other hand, by the spreading lemma, \propref{prop:SL}, the chain map 
$p_{K/k} ^*$ is 
injective on homology. Hence the quotient complex 
${\TZ} ^q (\mathbb{P}^r _k , \bullet ;m)/{{\TZ}^q _{\mathcal{W}}} 
(\mathbb{P} ^r _k, \bullet;m)$ must be acyclic. This proves \thmref{thm:ML}
for the projective spaces. 
$\hfill \square$
\\
\section{Generic projections and moving lemma for projective varieties}
\label{section:moving for varieties}
\subsection{Generic projections}
This section begins with a review of some facts about linear projections. In 
combination with the moving lemma for $\mathbb{P}^r$, that we saw in the 
previous section, we prove the moving 
lemma for general smooth projective varieties.
\begin{lem}\label{lem:LP}
Consider two integers $N>r>0$. Then for each linear subvariety 
$L \subset \mathbb{P} ^N$ of dimension $N-r-1$, there exists a canonical 
linear projection morphism 
$\pi_L: \mathbb{P} ^N \backslash{L} \to \mathbb{P}^r$.
\end{lem}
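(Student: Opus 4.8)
The plan is to realize $\pi_L$ as the projectivization of a linear quotient map. Write $\mathbb{P}^N = \mathbb{P}(V)$ for a $k$-vector space $V$ of dimension $N+1$, and note that a linear subvariety $L \subset \mathbb{P}^N$ of dimension $N-r-1$ corresponds to a linear subspace $W \subset V$ with $\dim_k W = N-r$. First I would form the quotient $q : V \twoheadrightarrow V/W$ and observe that $\dim_k(V/W) = (N+1)-(N-r) = r+1$, so that fixing any basis of $V/W$ identifies $\mathbb{P}(V/W)$ with $\mathbb{P}^r$.

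Next I would check that $q$ descends to a \emph{morphism} on the complement of $L$. Concretely, choose linear forms $\ell_0, \ldots, \ell_r \in V^\vee$ spanning the annihilator $W^\perp \subset V^\vee$, so that their common zero locus in $\mathbb{P}(V)$ is exactly $L$, and set $\pi_L([x]) = [\ell_0(x) : \cdots : \ell_r(x)]$. Since $[x] \in L$ if and only if $\ell_0(x) = \cdots = \ell_r(x) = 0$, the right-hand side is a well-defined point of $\mathbb{P}^r$ for every $[x] \in \mathbb{P}^N \setminus L$, and it is independent of the rescaling of $x$; hence $\pi_L$ is a morphism on $\mathbb{P}^N \setminus L$. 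In the language of linear systems, this is the map defined by the subsystem $W^\perp \subset H^0(\mathbb{P}^N, \mathcal{O}_{\mathbb{P}^N}(1))$, whose base locus is precisely $L$.

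Finally, for the word ``canonical'': the construction depends only on $L$, through $W$ and hence through $W^\perp$; the sole auxiliary choice is the identification $\mathbb{P}(V/W) \cong \mathbb{P}^r$, which is unique up to the $\mathrm{PGL}_{r+1}$-action on the target, so any two such projections differ by a linear automorphism of $\mathbb{P}^r$. I do not expect a genuine obstacle here; the only point requiring care is the verification that $\pi_L$ is defined at every point of $\mathbb{P}^N \setminus L$, which is exactly the statement that $L$ equals the base locus of the linear system $W^\perp$.
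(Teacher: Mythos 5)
Your proof is correct and is essentially the paper's own argument: both define $\pi_L$ by evaluating $r+1$ linear forms whose common zero locus is exactly $L$, with well-definedness on $\mathbb{P}^N\setminus L$ coming from the fact that some form is nonzero off $L$. The one point where you fall slightly short of the literal statement is the word ``canonical'': your map is determined by $L$ only up to the choice of basis of $W^\perp$, i.e.\ up to a $\mathrm{PGL}_{r+1}$-action on the target, whereas the paper removes this ambiguity by normalizing the defining matrix to its reduced row echelon form, so that $L$ uniquely determines $\pi_L$. This normalization is cosmetic for the later applications (the composite $\pi_{L,X}^*\circ\pi_{L,X*}$ is unaffected by an automorphism of the target), but if you want the map itself to be canonical you should fix such a distinguished basis.
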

\begin{proof}
Fix the coordinates $x  = (x_0; \cdots; x_N)$ of $\mathbb{P}^N$. A linear 
subvariety $L$ is given by $(r+1)$ homogeneous linear equations in $x$ whose 
corresponding $(N+1) \times (r+1)$ matrix $A$ has the full rank $r+1$. Take 
the reduced row echelon form of $A$ whose rows are the linear homogeneous 
functions $P_0 (x), \cdots, P_r (x)$ in $x$. 

For $x \in \mathbb{P}^N \backslash L$, define 
$\pi_L (x) := (P_0 (x); \cdots ; P_r (x))$. Since $x \not \in L$, we have 
some $P_i (x) \not = 0$ so that the map $\pi_L$ is well-defined. By 
elementary facts about reduced row echelon forms and row equivalences, the 
subvariety $L$ uniquely decides this map $\pi_L$ in this process.
\end{proof}
Let $X$ be a smooth projective $k$-variety. Let $r= \dim X$. Suppose that we 
have an embedding $X \hookrightarrow \mathbb{P}^N$ for some $N>r$. Consider 
$\pi_L : \mathbb{P} ^N \backslash L \to \mathbb{P}^r$. Whenever 
$L \cap X = \emptyset$, we have a finite morphism 
$\pi_{L,X}:= \pi_{L} |_X: X \to \mathbb{P}^r$. Such $L$'s form a non-empty 
open subset $Gr(N-r-1,N)_X$ of the Grassmannian $Gr (N-r-1, N)$.
Note that such a map ${\pi}_L$ is automatically flat since $X$ is smooth
({\sl cf.} \cite[Ex. III-10.9, p. 276]{Hart}). In particular, the pull-back
${\pi}^*_{L, X}$ and push-forward ${\pi}_{L, X *}$ are defined by
\thmref{thm:basic}.

For any closed integral admissible cycle $Z$ on $X \times B_n$, define 
$\widetilde{L} (Z)$ to be 
$$ \widetilde{L}(Z):= \pi_{L,X} ^* ({\pi_{L,X}} _* ([Z])) - [Z].$$ 
Extending this map linearly, this defines a morphism of complexes
\[
\widetilde{L} : {\TZ}^q (X, \bullet;m) \to {\TZ}^q (X, \bullet;m).
\]

\subsection{Chow's moving lemma}
Recall that for two locally closed subsets $A, B$ of pure codimension $a$ and 
$b$, the \emph{excess} of $A, B$ is defined to be 
\[
e(A, B): =  \max \{ a+b - {\rm codim}_X (A \cap B), 0 \}.
\]
That the intersection $A \cap B$ is proper means $e(A,B) = 0$. If $A, B$ are 
cycles, then we define $e(A,B) := e({\rm Supp} (A), {\rm Supp} (B))$. The 
excess 
measures how far an intersection is from being proper.
\begin{lem}[{{\sl cf.} \cite[Lemma~1.12]{KL}}]\label{lem:Chowlemma}
Let $X \subset \mathbb{P}^N$ be a smooth closed projective $k$-subvariety of 
dimension $r$. Let $Z, W$ be cycles on $X$. Then there is a non-empty open 
subscheme $U_{Z, W} \subset Gr(N-r-1, N)_X$ such that for each field 
extension $K \supset k$ and each $K$-point $L$ of $U_{Z, W}$, we have 
$$e (\widetilde{L} (Z), W) \leq \max \{ e(Z,W) -1, 0 \}.$$
\end{lem}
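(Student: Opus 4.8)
The plan is to prove this as an instance of the classical Chow moving lemma effected by generic linear projections, following \cite[Lemma~1.12]{KL}; note that only the ambient smooth projective $X$ and honest cycles on it enter, so faces of $B_n$ play no role. First I would reduce to prime cycles: writing $Z=\sum_in_iA_i$ and $W=\sum_jm_jB_j$ with the $A_i,B_j$ irreducible, and using that $\widetilde{L}$ is additive, that $\Supp\widetilde{L}(Z)\subseteq\bigcup_i\Supp\widetilde{L}(A_i)$, and that the excess depends only on supports, one reduces to producing, for each pair $(A,B)=(A_i,B_j)$, a dense open $U_{A,B}\subseteq Gr(N-r-1,N)_X$ over which $e(\widetilde{L}(A),B)\le\max\{e(A,B)-1,0\}$, and then setting $U_{Z,W}=\bigcap_{i,j}U_{A_i,B_j}$.

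Fix then irreducible $A,B\subseteq X$ of codimensions $a,b$, and write $c=\codim_X(A\cap B)$ (omitted when $A\cap B=\emptyset$), so $e:=e(A,B)=\max\{a+b-c,0\}$; the case $a=0$ (so $A=X$) gives $e=0$ and $\widetilde{L}(A)=(d-1)[X]$, hence is trivial, so assume $a\ge1$. I would record two facts. First, for $L$ disjoint from $X$ the projection $\pi=\pi_{L,X}\colon X\to\mathbb{P}^r$ is finite flat, so $\pi^{-1}$ of a subvariety is equidimensional of the same dimension; and for $L$ in a dense open where in addition $\pi|_A$ is birational onto its image and unramified along the generic point of $A$, the cycle $\widetilde{L}(A)=\pi^*\pi_*[A]-[A]$ does not contain $A$, is equidimensional of codimension $a$, and has $\Supp\widetilde{L}(A)=\overline{\pi^{-1}(\pi(A))\setminus A}=\bigcup_{C\ne A}C$, the union over the components $C\ne A$ of $\pi^{-1}(\pi(A))$. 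Second, each $C\cap A$ is a proper closed subset of $A$ (as $\dim C=\dim A$ and $C\ne A$), so $\Supp\widetilde{L}(A)\cap A$ is a proper closed subset of $A$ for every such $L$.

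Now comes the core dimension count over $G=Gr(N-r-1,N)$, split according to whether a component of $\widetilde{L}(A)\cap B$ lies in $A$ or not. Off $A$, I would use the incidence
\[
\mathcal{J}_1=\{\,(L,x,y,p)\ :\ x\in B\setminus A,\ y\in A,\ p\in L\cap\overline{xy}\,\}\subseteq G\times X\times X\times\mathbb{P}^N,
\]
where $\overline{xy}\subseteq\mathbb{P}^N$ is the line joining $x$ and $y$; projecting to the $(y,x)$-factor, and over a fixed pair letting $p$ run over the line and $L$ over the $(N-r-1)$-planes through $p$, one bounds $\dim\mathcal{J}_1\le\dim G+(r-a-b)$, so that for $L$ outside a proper closed subset of $G$ the locus $(\widetilde{L}(A)\cap B)\setminus A$ has dimension $\le r-a-b$. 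On $A$, any such component lies in $(A\cap B)\cap\Supp\widetilde{L}(A)$; here I would introduce $\mathcal{J}_2=\{(L,x):x\in(A\cap B)\cap\Supp\widetilde{L}(A)\}\subseteq G\times(A\cap B)$ and argue, using the second fact above together with a parameter count (a general secant or tangent line of $A$ through a general point $x\in A\cap B$ misses a general $L$), that the fibre of $\mathcal{J}_2$ over a general $x$ is a \emph{proper} closed subset of $G$; this yields $\dim\mathcal{J}_2\le\dim G+\dim(A\cap B)-1$, hence for $L$ outside a proper closed subset the locus $(A\cap B)\cap\Supp\widetilde{L}(A)$ has dimension $\le r-c-1$. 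For $L$ avoiding both bad loci, every component of $\widetilde{L}(A)\cap B$ has codimension $\ge\min\{a+b,\,c+1\}$ in $X$; since $\widetilde{L}(A)$ is pure of codimension $a$, this gives $e(\widetilde{L}(A),B)\le\max\{a+b-(c+1),\,0\}=\max\{e-1,0\}$.

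Finally, $U_{A,B}$ is the complement of the finitely many proper closed subsets of $G$ produced above, each cut out by the expected dimensionality of an explicit incidence scheme defined over $k$; hence $U_{Z,W}=\bigcap_{i,j}U_{A_i,B_j}$ is a nonempty open subscheme of $Gr(N-r-1,N)_X$ defined over $k$. Since extension of the base field preserves all the dimensions and codimensions occurring in the argument, for any field extension $K\supseteq k$ and any $K$-point $L$ of $U_{Z,W}$ the same estimates apply to $\widetilde{L}(Z_K)$ and $W_K$, giving $e(\widetilde{L}(Z),W)\le\max\{e(Z,W)-1,0\}$. The hard part will be the second incidence count — showing that for generic $L$ the ``extra sheets'' of $\pi_L^{-1}(\pi_L(A))$ meet $A\cap B$ in dimension $\le\dim(A\cap B)-1$ — since this is exactly the point where the classical, modulus-free argument must be reproduced with care, tracking the behaviour of $\pi_L$ along and near $A$.
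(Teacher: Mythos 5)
The paper does not actually prove this lemma: immediately after the statement it defers to Roberts' Main Lemma and to Levine's Lemma~3.5.4, so there is no in-house argument to compare against. What you have written is, in outline, a reconstruction of that classical proof of Roberts: reduction to prime components; the generic properties of $\pi_{L,X}$ (finite flat, with $\pi_L|_A$ birational onto its image and $A$ appearing with multiplicity one in $\pi_L^*\pi_{L*}[A]$, so that $A\not\subseteq\Supp\widetilde{L}(A)$ when $\codim_X A\ge 1$); and the two incidence counts, one for the part of $\widetilde{L}(A)\cap B$ lying off $A$ and one for the part lying in $A\cap B$, giving codimension $\ge\min\{a+b,\,c+1\}$ for every component and hence the drop in excess. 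The first count and all the dimension bookkeeping are correct, as is the descent of the conclusion to arbitrary $K$-points of a $k$-open subset via upper semicontinuity of fibre dimension.

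The one genuinely thin spot is the second count, which you flag yourself, and there your phrasing has the quantifiers inverted: what is needed is not that a general secant through $x$ misses a general $L$, but that a general $L$ misses, all at once, the entire cone $C_x(A)\cup T_xX$, where $C_x(A)$ is the closure of the union of lines joining $x$ to points of $A\setminus\{x\}$ and $T_xX$ is the embedded tangent space of $X$ at $x$. The correct implication is: if $x\in A$ lies on a component $C\neq A$ of $\pi_L^{-1}(\pi_L(A))$, then $x$ is a limit of points $c\in C\setminus A$ collinear with a point of $A$ and a point of $L$, and the limit line through $x$ lies either in $C_x(A)$ (if the limiting point of $A$ differs from $x$) or, by smoothness of $X$, in $T_xX$ (if it collapses to $x$); since $\dim\left(C_x(A)\cup T_xX\right)\le r$ and $\dim L=N-r-1$, the set of $L$ meeting this cone is a proper closed subset of the Grassmannian for \emph{every} $x$, and the incidence count over $x\in A\cap B$ then gives the codimension-$(c+1)$ bound. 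Note that this is precisely where smoothness of $X$ enters; a formulation only in terms of secants and tangents of $A$ does not suffice. Since $k$ is merely perfect, one should also justify separability of $\pi_L|_A$ onto its image (so that generic injectivity on points really gives multiplicity one); this follows from the same kind of count applied to the tangent space of $A$ at a general smooth point. With these points made precise, your argument is exactly the one in Roberts to which the paper appeals.
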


For its proof, see J. Roberts \cite[Main Lemma, p. 93]{Ro}, or 
\cite[Lemma 3.5.4, p. 96]{Le} for a slightly different but equivalent 
version. The point of the projection business is the following lemma:
\begin{lem}\label{lem:consequence of projection}
Let $X$ be a smooth projective $k$-variety, and let $\mathcal{W}$ be a finite 
set of locally closed algebraic subsets of $X$. Let $m, n\geq 1$, $q \geq 0$ 
be integers. Let $e: \mathcal{W} \to \mathbb{Z}_{\geq 0}$ be a set-theoretic 
function. Define $e-1: \mathcal{W} \to \mathbb{Z} _{\geq 0}$ by
\[
(e-1)(W):= \max \{ e(W) -1, 0 \}.
\]
Let $K$ be the function field of $Gr(N-r-1, N)$, and let 
$L_{gen} \in Gr (N-r-1, N) _X(K)$ be the generic point. 
Then, the map
\[
\widetilde{L}_{gen} :  Tz^q (X, \bullet;m) \to Tz^q (X_K, \bullet; m)
\]
maps ${\TZ}^q _{\mathcal{W}, e} (X, \bullet; m)$ to 
${\TZ}^q _{\mathcal{W}_K, e-1} (X_K, \bullet;m)$.
\end{lem}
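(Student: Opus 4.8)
The plan is to show that for an irreducible admissible cycle $Z \in \TZ^q_{\sW, e}(X, n; m)$, the cycle $\wt{L}_{gen}(Z) = \pi^*_{L_{gen}, X}(\pi_{L_{gen}, X *}([Z])) - [Z]$ remains admissible and has the required excess drop against each $W \in \sW$. Admissibility of $\wt{L}_{gen}(Z)$ as an additive higher Chow cycle on $X_K \times B_n$ is the relatively routine part: the push-forward $\pi_{L_{gen}, X *}([Z])$ is admissible by \propref{prop:P0} (applied to the projective map $\pi_{L_{gen}, X}$), and then the flat pull-back $\pi^*_{L_{gen}, X}$ preserves admissibility by \thmref{thm:basic}(2), since $\pi_{L_{gen}, X}$ is flat (as noted in the excerpt, using smoothness of $X$). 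Finally $[Z]$ itself is admissible by hypothesis, so the difference lies in $\TZ^q(X_K, n; m)$. I should be a little careful that the modulus condition is preserved under these operations, but this is exactly what \propref{prop:restp} and \lemref{lem:surjm} (as used in the proofs of \propref{prop:P0} and \thmref{thm:basic}) are designed to deliver.

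The heart of the matter is the excess estimate. Fix $W \in \sW$ and a face $F$ of $B_n$. I want
\[
\codim_{W \times F}\bigl(\wt{L}_{gen}(Z) \cap (W \times F)\bigr) \ge q - (e-1)(W) = q - \max\{e(W) - 1, 0\}.
\]
The idea is to apply Chow's moving lemma, \lemref{lem:Chowlemma}, not directly to $X$ but slicewise. For each face $F$ (there are finitely many), the intersection $Z \cap (X \times F)$ is an admissible cycle on $X \times F$, and $W \times F$ is a locally closed subset of $X \times F$; the linear projection $\pi_L$ induces, fiberwise over $F$, the operation $\wt{L}$ on cycles of $X$. So I would run \lemref{lem:Chowlemma} with the ambient smooth projective variety $X$, the cycle $Z_F := $ the projection of $Z \cap (X \times F)$ to $X$ (or work with $Z$ directly and track the $F$-factor as an inert parameter), and the cycle $W$, to get for each $F$ a nonempty open $U_{Z_F, W} \subset Gr(N-r-1, N)_X$ on which the excess against $W$ drops by one. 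Intersecting these finitely many opens over all faces $F$ and all $W \in \sW$ gives a single nonempty open $U \subset Gr(N-r-1,N)_X$, and the generic point $L_{gen}$ is a $K$-point of $U$; hence the excess bound holds for $\wt{L}_{gen}(Z)$ against every $W \times F$ simultaneously. Translating ``excess drops by one'' into ``codimension deficiency drops by one'' is the bookkeeping that converts $e$ into $e - 1$: if $Z$ had $\codim_{W \times F}(Z \cap (W \times F)) \ge q - e(W)$, i.e. excess at most $e(W)$ in the relevant sense, then after moving we get excess at most $\max\{e(W)-1,0\}$, which is the claimed membership in $\TZ^q_{\sW_K, e-1}(X_K, \bullet; m)$.

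The main obstacle I anticipate is making the ``slicewise'' application of \lemref{lem:Chowlemma} rigorous and uniform across all faces at once, since $\wt{L}$ is defined on $X$ but we need control of intersections inside each $W \times F$. The clean way is to observe that $\wt{L}(Z) \cap (X \times F) = \wt{L}(Z \cap (X \times F))$ (the projection $\pi_L$ acts only on the $X$-coordinate and commutes with restriction to a face, as in the proof of \propref{prop:P0}, Claims (1)--(2)), so that the excess of $\wt{L}(Z) \cap (W \times F)$ inside $W \times F$ is computed by the excess of $\wt{L}$ of an actual cycle on $X$ against $W$ on $X$ — with a harmless shift in codimensions by $\dim F$. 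Once that commutation is in place, \lemref{lem:Chowlemma} applies verbatim for each $(W, F)$, finiteness lets us intersect the resulting dense opens, and specializing to $L_{gen}$ finishes the argument. A secondary (but genuinely routine) point is re-checking that after base change to $K$ the set $\sW_K$ and the faces behave as expected and that no new admissibility failure is introduced; this is standard and already implicitly handled by the constructions in \thmref{thm:basic} and the spreading/containment machinery of Sections~\ref{section:AHCG} and \ref{section:ML}.
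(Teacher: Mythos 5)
Your proposal is correct and follows essentially the same route as the paper: the paper's proof simply defers to \cite[Lemma~1.13]{KL} and \cite[\S 3.5.6]{Le}, whose content is exactly your slicewise application of Chow's moving lemma (Lemma~\ref{lem:Chowlemma}) to each pair $(W,F)$, followed by intersecting the finitely many dense opens in $Gr(N-r-1,N)_X$ and specializing to the generic point $L_{gen}$. Your additional care with admissibility of $\widetilde{L}_{gen}(Z)$ via \propref{prop:P0} and the flat pull-back of \thmref{thm:basic} is precisely what the paper means by the arguments working ``in this additive context without change.''
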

\begin{proof} The arguments of \cite[Lemma 1.13, p. 84]{KL} or 
\cite[\S 3.5.6, p. 97]{Le} work in this additive context without change. The 
central idea is to use a variation of Chow's moving lemma as in 
Lemma~\ref{lem:Chowlemma}.
\end{proof}
\subsection{Proof of the moving lemma}
\begin{proof}[Proof of Theorem~\ref{thm:ML}]
Let $L_{gen}$ be the generic point of the Grassmannian $Gr(N-r-1, N)$ as in 
Lemma~\ref{lem:consequence of projection}. Then, for each function 
$e: \mathcal{W} \to \mathbb{Z}_{\geq 0}$, the morphism
\[
\widetilde{L}_{gen} = \pi ^* _{L_{gen}} \circ 
{\pi _{L_{gen}}}_* - p_{K/k} ^* : 
\frac{{\TZ}^q _{\mathcal{W}, e}(X, \bullet;m)}
{ {\TZ}^q _{\mathcal{W}, e-1} (X, \bullet;m)} \to 
\frac{{\TZ}^q _{\mathcal{W}_K, e} (X_K, \bullet;m)} 
{ {\TZ} ^q _{\mathcal{W}_K, e-1} (X_K, \bullet; m)}
\]
is zero. Hence $\pi^* L_{gen} \circ {\pi _{L_{gen}} }_*$ is equal to the base 
extension morphism $p_{K/k} ^*$ on the quotient complex.

On the other hand, $\pi^* L_{gen} \circ {\pi _{L_{gen}} }_*$ factors as
\[
\frac{{\TZ}^q _{\mathcal{W}, e}(X, \bullet;m)}
{ {\TZ}^q _{\mathcal{W}, e-1} (X, \bullet;m)} 
\overset{{\pi_{L_{gen}}}_*}{\longrightarrow} 
\frac{ {\TZ}^q _{\mathcal{W'}, e'} (\mathbb{P} ^r _K, \bullet; m)}
{{\TZ}^q _{\mathcal{W'}, e'-1} (\mathbb{P} ^r _K, \bullet;m)} 
\overset{{\pi _{L_{gen}} ^*}}{\longrightarrow} 
\frac{{\TZ}^q _{\mathcal{W}_K, e} (X_K, \bullet;m)}
{{\TZ} ^q _{\mathcal{W}_K, e-1} (X_K, \bullet; m)},
\]
where $\mathcal{W}'$ and $e'$ are defined as follows: 
for each $W \in \mathcal{W}$, the constructible subset 
$\pi_{L_{gen}} (W)$ can be written as
\[
\pi_{L_{gen}} (W) = W_1 ' \cup \cdots \cup W_{i_W}'
\] 
for some $i_W \in \mathbb{N}$ and locally closed irreducible sets $W_j '$ in 
$\mathbb{P} _K ^r$. Let 
$d_j = \codim_{\mathbb{P}^n _K} (W_j ') - \codim _X (C)$. 
Let $\mathcal{W}' = \{ W_j ' | W \in \mathcal{W} \}$. 
Define $e' : \mathcal{W}' \to \mathbb{Z} _{\geq 0}$ by the rule 
$e' (W_j '):= e(W) + d_j$. 
We have already shown in Section ~\ref{subsection:MLPP} that the moving lemma 
is true for all projective spaces.
In particular, for all functions 
$e' : \mathcal{W}' \to \mathbb{Z} _{\geq 0}$, the complex in the middle 
\[
\frac{{\TZ}^q _{\mathcal{W}'_K, e'} (\mathbb{P} ^r _K, \bullet ; m) }
{ {\TZ}^q _{\mathcal{W}'_K, e' -1} (\mathbb{P} ^r _K, \bullet ;m)}
\]
is acyclic (see Remark~\ref{remk:chain}). Hence, the base extension map
\[
p_{K/k}  ^*: \frac{{\TZ}^q _{\mathcal{W}, e}(X, \bullet;m)}
{{\TZ}^q _{\mathcal{W}, e-1} (X, \bullet;m)} \to  
\frac{{\TZ}^q _{\mathcal{W}_K, e} (X_K, \bullet;m)} 
{ {\TZ} ^q _{\mathcal{W}_K, e-1} (X_K, \bullet; m)}
\]
is zero on homology. Consequently, by induction, the base extension map
\[
p_{K/k} ^*: 
\frac{{\TZ}^q (X, \bullet;m)}{{\TZ}^q _{\mathcal{W}} (X, \bullet;m)} 
\to \frac{{\TZ}^q (X_K, \bullet ;m)}{{\TZ}^q _{\mathcal{W}_K} (X_K, \bullet;m)}
\]
is zero on homology. On the other hand, this map is also injective on 
homology by \propref{prop:SL}. This happens only when 
\[
\frac{{\TZ}^q (X, \bullet;m)}{{\TZ}^q _{\mathcal{W}} (X, \bullet;m)}
\]
is acyclic, that is, the inclusion
\[
{\TZ}^q _{\mathcal{W}} (X, \bullet;m) \to {\TZ}^q (X, \bullet ;m)
\]
is a quasi-isomorphism. This finishes the proof of \thmref{thm:ML}.
\end{proof}

\section{Application to contravariant functoriality}\label{section:CF}
In this section, we prove the following general contravariance property of
the additive higher Chow groups as an application of the moving lemma.
\begin{thm}\label{thm:funct}
Let $f: X \to Y$ be a morphism of quasi-projective varieties over $k$, where
$Y$ is smooth and projective. Then there is a pull-back map
$$f^*: {\TH}^q(Y, n; m) \to {\TH}^q(X, n; m)$$ such that for a composition
$X \xrightarrow{f} Y \xrightarrow{g} Z$ with $Y$ and $Z$ smooth and projective,
we have 
\[
{(g \circ f)}^* = f^* \circ g^* : {\TH}^q(Z, n; m) \to  {\TH}^q(X, n; m).
\]
\end{thm}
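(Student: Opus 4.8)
The plan is to follow the classical recipe for pull-backs in the presence of a moving lemma, exactly as in Bloch's construction of pull-backs for higher Chow groups. Let $f : X \to Y$ be a morphism of quasi-projective varieties with $Y$ smooth and projective. The graph $\Gamma_f \subset X \times Y$ is a closed subscheme, and since $X$ and $Y$ are quasi-projective we may take a compactification $\ov{X}$ of $X$ together with the induced projection $p_Y : \ov{X} \times Y \to Y$. First I would choose $\sW$ to be the finite collection of locally closed subsets of $Y$ consisting of the images under $p_Y$ of $\Gamma_f$ and of all the strata obtained by intersecting $\Gamma_f$ with the various coordinate faces of $X \times Y$; more precisely, the members of $\sW$ are designed so that an admissible cycle $Z$ on $Y \times B_n$ intersecting each $W \times F$ properly ($W \in \sW$, $F$ a face of $B_n$) has the property that its pull-back along $f \times \id : X \times B_n \to Y \times B_n$ is a well-defined cycle of the correct codimension on $X \times B_n$, whose intersection with every face is again of the correct codimension. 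By Theorem~\ref{thm:ML}, the inclusion $\TZ^q_{\sW}(Y, \bullet; m) \inj \TZ^q(Y, \bullet; m)$ is a quasi-isomorphism, so on homology $\TH^q(Y, n; m) \cong \TH^q_{\sW}(Y, n; m)$, and it suffices to define the pull-back on the subcomplex $\TZ^q_{\sW}(Y, \bullet; m)$.

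The actual construction of $f^*$ on a cycle $Z \in \TZ^q_{\sW}(Y, n; m)$ is to set $f^*(Z) := (f \times \id_{B_n})^*(Z)$, the refined flat/Tor-independent pull-back, which is defined because the good-position hypothesis built into $\sW$ forces the relevant intersections to have the expected dimension (this is where the Bloch-Levine argument that proper intersection guarantees the existence of the pull-back cycle is invoked). The boundary-compatibility $\partial \circ f^* = f^* \circ \partial$ is immediate from the fact that restriction of $Z$ to a face $F$ of $B_n$ commutes with pulling back along $f$, so $f^*$ is a map of complexes $\TZ^q_{\sW}(Y, \bullet; m) \to \TZ^q(X, \bullet; m)$, inducing $f^* : \TH^q(Y, n; m) \to \TH^q(X, n; m)$. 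The crucial point that needs care here is the \emph{modulus condition}: one must check that $(f \times \id)^*(Z)$ still satisfies $M_{sum}$ (resp.\ $M_{ssup}$) on $X \times B_n$. This is precisely where the containment lemma, Proposition~\ref{prop:restp}, enters --- one realizes each component of $(f\times\id)^*(Z)$ as a component of an intersection $V \cap Y'$ where $V$ is (the pull-back to $\ov{X}\times B_n$ of) $Z$, already known to satisfy the modulus condition, and $Y'$ is $\Gamma_f \times B_n$, whose closure meets $X \times F_{n,0}$ and $X \times F^1_n$ properly; Proposition~\ref{prop:restp} then yields the modulus condition for the components of $V \cap Y'$, hence for $f^*(Z)$.

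Well-definedness on homology --- independence of the choice of moving within $\TZ^q_{\sW}(Y, \bullet; m)$ --- is automatic since $f^*$ is a genuine chain map on that subcomplex and the inclusion into $\TZ^q(Y, \bullet; m)$ is a quasi-isomorphism; more precisely, given $Z, Z' \in \TZ^q_\sW$ homologous in $\TZ^q(Y,\bullet;m)$, a standard argument shows they are homologous already in $\TZ^q_\sW$ (enlarge $\sW$ to also control a chain bounding $Z - Z'$, apply the moving lemma again), so $f^*[Z] = f^*[Z']$.

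For functoriality, consider $X \xrightarrow{f} Y \xrightarrow{g} Z$ with $Y, Z$ smooth and projective. I would choose a collection $\sW_Z$ of locally closed subsets of $Z$ large enough that: (i) it controls the pull-back along $g$ as above; and (ii) the images under $g$ of the subsets of $Y$ needed to control the pull-back along $f$ are also included, together with enough further strata that $g^*$ of a cycle admissible for $\sW_Z$ lands in $\TZ^q_{\sW_Y}(Y,\bullet;m)$ for the collection $\sW_Y$ on $Y$ that controls $f^*$. With such a choice, for a cycle $W \in \TZ^q_{\sW_Z}(Z, n; m)$ one has, purely on the level of cycles, $(g \circ f \times \id)^*(W) = (f \times \id)^* (g \times \id)^*(W) = f^*(g^*(W))$, by functoriality of refined Gysin/flat pull-back of cycles under composition of Tor-independent squares. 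Invoking Theorem~\ref{thm:ML} once more to identify $\TH^q(Z, n;m)$ with $\TH^q_{\sW_Z}(Z, n; m)$, this gives $(g\circ f)^* = f^* \circ g^*$ on additive higher Chow groups.

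\textbf{Main obstacle.} The intersection-theoretic bookkeeping (choice of $\sW$, verification of good position, compatibility of the various moving lemmas) is routine once one trusts Theorem~\ref{thm:ML}; the genuinely delicate point, and the one the introduction flags explicitly, is propagating the \emph{modulus condition} through the pull-back $(f \times \id)^*$. Unlike in the higher Chow setting, pulling back an admissible additive cycle along $f \times \id$ could a priori destroy the modulus condition, and the only tool available to rescue it is the containment lemma (Proposition~\ref{prop:restp}), which applies to $M_{sum}$ and $M_{ssup}$ but --- as emphasized in the text --- not to $M_{sup}$. So the main work is to set up the geometry of $\Gamma_f$ and its closure carefully enough that every component of $f^*(Z)$ is exhibited as an irreducible component of an intersection $V \cap Y'$ meeting the hypotheses of Proposition~\ref{prop:restp}, and to handle the fact that $X$ need only be quasi-projective (so one must pass to a compactification $\ov{X}$ and argue that properness of the relevant closures is preserved), which is the step where a second application of the containment argument --- restricting back from $\ov X$ to $X$ --- is needed.
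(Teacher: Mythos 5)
Your proposal follows essentially the same route as the paper: move the cycle into good position with respect to a stratification of $Y$ recording the fiber dimensions of $f$ (Theorem~\ref{thm:ML}), pull back through the graph $\Gamma_f \subset X \times Y$, and invoke the containment lemma (Proposition~\ref{prop:restp}) to preserve the modulus condition; the paper organizes the pull-back as flat pull-back along $pr_2 : X \times Y \to Y$ followed by the Gysin map of Corollary~\ref{cor:lci} for the regular closed embedding $gr_f$, which is exactly what your ``refined flat/Tor-independent pull-back'' amounts to. The one imprecision is your description of $\sW$ --- the correct choice is the set of irreducible components of the closures $Y_i$ of $\{y \in Y : \dim f^{-1}(y) \ge i\}$, not images of $\Gamma_f$ under a projection (which would just give $f(X)$) --- but this is the standard choice from Bloch's argument and does not change the structure of the proof.
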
     
Before proving this functoriality, we mention one more consequence 
of our containment lemma (\propref{prop:restp}).
\begin{cor}\label{cor:lci}
Let $X \xrightarrow{i} Y$ be a regular closed embedding of quasi-projective 
but not necessarily smooth varieties over $k$. Then there is a Gysin chain 
map of additive cycle complexes
\[
i^* : {\TZ}^q_{\{X\}}(Y, \bullet;m) \to {\TZ}^q(X, \bullet;m).
\]
\end{cor}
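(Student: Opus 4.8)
\textbf{Plan of proof for Corollary~\ref{cor:lci}.} The plan is to construct the Gysin map $i^*$ by intersecting admissible cycles on $Y$ with $X$, exactly as one does for higher Chow groups, and then to verify the two conditions of Definition~\ref{defn:AdditiveComplex} for the resulting cycles on $X$. Fix $Z \in {\TZ}^q_{\{X\}}(Y, n; m)$ an irreducible admissible cycle; by definition, $Z$ intersects $X \times F$ in the expected codimension for every face $F$ of $B_n$ (including $F = B_n$), so each irreducible component of $Z \cap (X \times F)$ has codimension $\geq q$ in $X \times F$. Because $i$ is a regular embedding and the intersection dimensions are correct, the refined Gysin homomorphism (or simply the naive cycle-theoretic intersection, which agrees with it here since $Z$ meets $X \times B_n$ properly) produces a well-defined cycle $i^*(Z) \in \un{\TZ}^q(X, n; m)$, with the good-position condition $(1)$ being immediate from the codimension hypotheses on $Z$ and the faces. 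Extending $\Z$-linearly gives a homomorphism on cycle groups; compatibility with the boundary maps $\partial^{\epsilon}_j$ follows from the standard fact that $i^*$ commutes with pull-back along the face inclusions $\iota_{n,j,\epsilon}$ (both sides being described by the same fiber products), so we get a chain map once we know the target complex is the right one.

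The main point to check — and the place where the additive theory genuinely differs from the higher Chow setting — is condition $(2)$, the modulus condition, for each component $V$ of $i^*(Z)$. Here I would invoke the containment lemma, Proposition~\ref{prop:restp}, in the form it was stated: take $``X"$ there to be $Y$, take $``V"$ there to be a component of $Z$ (which satisfies $M_{sum}$ or $M_{ssup}$ by hypothesis), and take $``Y"$ there to be $X \times B_n \subset Y \times B_n$. For the hypothesis of Proposition~\ref{prop:restp} one needs that the closure of $X \times B_n$ in $Y \times \wh{B}_n$ — which is just $X \times \wh{B}_n$ — meets the Cartier divisors $Y \times F_{n,0}$ and $Y \times F^1_n$ properly; but this is clear since $F_{n,0}$ and $F^1_n$ are divisors on $\wh{B}_n$ pulled back to the product, and $X \times \wh{B}_n$ is not contained in any $Y \times \{$divisor in $\wh{B}_n\}$. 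Then $V$, being a component of $Z \cap (X \times B_n)$, automatically inherits the modulus condition $M$ from $Z$. This is precisely the ``containment sort of argument'' advertised in the introduction, and it replaces the intersection-theoretic reasoning that would suffice for higher Chow groups.

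I expect the main obstacle to be bookkeeping rather than a deep difficulty: one must make sure that every component of $i^*(Z)$ actually appears as a component of some $V \cap Y$ to which Proposition~\ref{prop:restp} applies, and that the chosen closures and normalizations in the statement of that proposition match the ones implicit in the definition of the modulus condition for cycles on $X \times B_n$. Concretely, after forming $i^*(Z)$ one passes to the closure $\ov{i^*(Z)}$ in $X \times \wh{B}_n$ and its normalization; one must check that this is the same as (a union of components of) the normalization ${\widehat{V}_Y}^N$ appearing in Proposition~\ref{prop:restp}, which amounts to the compatibility of normalization with taking closures of components of proper intersections — a point already used repeatedly in the paper. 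Once these identifications are in place, conditions $(1)$ and $(2)$ hold for $i^*(Z)$, the map descends to the quotient by degenerate cycles, and commutation with $\partial$ upgrades it to the asserted chain map $i^*: {\TZ}^q_{\{X\}}(Y, \bullet; m) \to {\TZ}^q(X, \bullet; m)$.
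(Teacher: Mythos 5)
Your proposal is correct and follows essentially the same route as the paper: form the intersection product $(X\times B_n)\cdot Z$ via the regular embedding (well defined and commuting with the face maps by Fulton's intersection theory, since $Z\in{\TZ}^q_{\{X\}}(Y,\bullet;m)$ meets all $X\times F$ properly), and then obtain the modulus condition for the components of $Z\cap(X\times B_n)$ by applying the containment lemma, Proposition~\ref{prop:restp}, with the closed subvariety $X\times B_n\subset Y\times B_n$, whose closure $X\times\wh{B}_n$ visibly meets $F_{n,0}$ and $F^1_n$ properly. The normalization bookkeeping you flag is harmless and is not treated as an issue in the paper either.
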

\begin{proof} Let $\iota: Z \subset Y \times B_n$ be a closed irreducible 
admissible subvariety in ${\TZ}_{\{X \}}^q (Y, n; m)$. By assumption, 
$Z$ intersects all faces $X \times F$ properly. Hence 
the abstract intersection product of cycles 
$(X\times B_n )\cdot Z = [\iota^* (X\times B_n)] \in z^q (X \times B_n)$ 
is well defined, and the intersection formula for the regular embedding
implies that this intersection product commutes with the boundary maps
(\cite[\S 2.3, \S 6.3]{F}). We want this cycle to be $i^* (Z)$. Thus we only 
need to show that each component
of $Z \cap (X \times B_n)$ satisfies the modulus condition in order for
$i^*$ to be a map of additive cycle complexes. Since $X \times {\wh B}_n$
clearly intersects $F^1_n$ and $F_{n, 0}$ properly on $Y \times {\wh B}_n$,
this modulus condition follows directly from \propref{prop:restp}, for $Z$ 
has the modulus condition.
\end{proof} 
\begin{proof}[Proof of Theorem~\ref{thm:funct}]
We do this by imitating the proof of \cite[Theorem~4.1]{Bl1}. So, 
let $f: X \to Y$ be a map as in Theorem~\ref{thm:funct}.
Such a morphism can be factored as
the composition $X \overset{gr_f}{\to} X \times Y \overset{pr_2}{\to} Y$, 
where $gr_f$ is the graph of $f$ and $pr_2$ is the projection.
Notice that $pr_2$ is a flat map and moreover, the smoothness of $Y$ implies 
that $gr_f$ is a regular closed embedding. Let ${\Gamma}_f \subset X \times Y$
denote the image of $gr_f$ which is necessarily closed. 

For $0 \leq i \leq \dim Y$, let $Y_i$ be the Zariski closure of the 
collection of all points $y \in Y$ such that $\dim f^{-1} (y) \geq i$. We use 
the convention that $\dim \emptyset = -1$. Let $\mathcal{W}$ be the 
collection of the irreducible components of all $Y_i$. Then $\sW$ is a 
finite collection. 
\\
\\  
{\bf Claim : } \emph{Let $Z \in {\TZ}^q _{\mathcal{W}} (Y, n;m)$ be an 
irreducible admissible closed subvariety of $Y \times B_n$. Then 
$(pr_2 \times {\rm Id}_{B_n})^{-1} (Z) = 
X \times Z$ in $X \times Y \times B_n$ is an admissible closed subset that 
intersects $\Gamma_f \times F$ properly in $X \times Y \times B_n$ for all 
faces $F \subset B_n$. This gives a chain map
\[
{pr_2}^* : {\TZ}_{\mathcal{W}} ^q (Y, \bullet ; m) \to 
{\TZ}^q _{\{\Gamma_f\}} (X \times Y, \bullet, m).
\]}

That $(pr_2 \times {\rm Id}_{B_n})^{-1} (Z) = X \times Z$ is admissible is 
obvious by \cite[\S 3.4]{KL}. Since $Z$ intersects $W \times F$ properly for 
all $W \in \mathcal{W}$ and faces $F \subset B_n$, we have
\[
\dim \widetilde{Z}_i \leq \dim Y_i + \dim F - q, \ \ \mbox{ where } 
\widetilde{Z}_i := Z \cap (Y_i \times F).
\]

Now, $(X \times Z) \cap (\Gamma_f \times F) = 
\cup_i X \times \widetilde{Z}_i$, and for each $i$ we have
\begin{eqnarray*}
\dim (X \times \widetilde{Z}_i) & = & \dim X + \dim \widetilde{Z}_i \\
&\leq & \dim X + \dim F - q = \dim (\Gamma_f \times F) - q.
\end{eqnarray*} 
Hence ${\rm codim}_{\Gamma_f \times F} (X \times Z) \cap (\Gamma_f \times F) 
\geq q$ and we have the desired map ${pr_2}^*: 
{\TZ}_{\mathcal{W}} ^q (Y, n; m) \to 
{\TZ}^q _{\{\Gamma_f\}} (X \times Y, n;m)$ for 
each $n \geq 1$.  That this gives a chain map is obvious since $f^*$ clearly 
commutes with the boundary maps. This proves the Claim.

The pull-back map $f^*$ is now given by composing $pr^*_2$ with the
Gysin map $gr^*$ of Corollary~\ref{cor:lci} and then using the moving lemma, 
Theorem~\ref{thm:ML}. The composition law can be checked directly from the
construction of $f^*$. This completes the proof of \thmref{thm:funct}.
\end{proof}
\section{Algebra structure on additive higher Chow groups}\label{section:WD}
In this section, we consider an algebra structure on the additive higher Chow 
groups of smooth projective varieties. This algebra structure corresponds to
the exterior product on the cohomology of the sheaves of 
absolute K{\"a}hler differentials on the smooth projective varieties.
We show that this algebra structure on the additive higher Chow groups is
compatible with the module structure on these groups for the ordinary
Chow ring of the variety. We draw some important consequences of this
towards the end of this section. We shall deduce our algebra structure 
on the additive higher Chow groups as a consequence of
the following general result whose proof will occupy most of this section.

\begin{prop}\label{prop:wedge-product}
Let $X$ and $Y$ be smooth projective varieties over a field $k$. 
Then there exists an external wedge product on the additive higher Chow groups 
\begin{equation}\label{eqn:EWP}
\ov {\wedge} : \TH^{q_1} ( X, n_1; m) \otimes_{\mathbb{Z}} 
\TH^{q_2} (Y, n_2; m) \to \TH^{q} (X \times Y, n ; m),
\end{equation}
where $q = q_1 + q_2 - 1$, $n = n_1 + n_2 - 1$, and 
$q_i, n_i, m \geq 1$ for $i = 1,2$. In the case of $X = Y$, one has  
\begin{equation}\label{eqn:anticommute}
\xi {\ov {\wedge}} \eta = 
(-1)^{(n_1-1)(n_2-1)} \eta \ov {\wedge} \xi
\end{equation} 
for all classes $\xi \in \TH^{q_1} (X, n_1;m)$ and $\eta \in \TH^{q_2} 
(X, n_2;m)$.
\end{prop}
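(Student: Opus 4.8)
The plan is to construct $\ov{\wedge}$ at the level of additive cycle complexes and then show it descends to homology with the stated properties. Given irreducible admissible cycles $Z \in \un{\TZ}^{q_1}(X, n_1; m)$ and $W \in \un{\TZ}^{q_2}(Y, n_2; m)$, write $B_{n_i} = \G_m \times \square^{n_i-1}$ and form the external product $Z \times W$ inside $X \times Y \times \G_m \times \G_m \times \square^{n_1-1} \times \square^{n_2-1}$, after reordering the factors so that the two $\G_m$'s sit adjacent. The only discrepancy with $(X\times Y) \times B_n = (X\times Y) \times \G_m \times \square^{n-1}$, $n = n_1 + n_2 - 1$, is the extra copy of $\G_m$, and I would remove it by pushing forward along the multiplication map $\mu\colon \G_m \times \G_m \to \G_m$, $(t_1, t_2)\mapsto t_1 t_2$, on those two factors (identity elsewhere). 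Since $\mu$ is not proper, I realize this as a correspondence: take the closure $\ov{\Gamma}_\mu$ of the graph of $\mu$ in $\P^1 \times \P^1 \times \P^1$, pull $Z \times W$ back to it, push forward along the third projection (proper, since $\P^1\times\P^1$ is complete), and restrict the result to the $\G_m$-part. Extending bilinearly, this defines $Z\ov{\wedge} W$ on $(X\times Y)\times B_n$. The dimension bookkeeping is immediate: $\dim(Z\times W) = (\dim X + n_1 - q_1) + (\dim Y + n_2 - q_2)$ equals $\dim(X\times Y) + n - q$ with $q = q_1 + q_2 - 1$, so the output has codimension $q$ once one knows the correspondence is generically finite on $Z\times W$. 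The latter is forced by the modulus condition: if the projection $Z \to X\times\square^{n_1-1}$ had a one-dimensional generic fiber, that fiber would be dense in a vertical $\P^1$ meeting $\{t=0\}$ over a general cube-coordinate away from all $\{y_j=1\}$, contradicting $(m+1)[\nu_Z^*F_{n_1,0}] \le [\nu_Z^*F^1_{n_1}]$; hence $Z\to X\times\square^{n_1-1}$ is generically finite, and then so is $\mu$ restricted to $Z\times W$.

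Next I would check admissibility of $Z\ov{\wedge} W$. Every face $F$ of $B_n$ is cut out by $\square$-coordinates alone and is therefore untouched by $\mu$; its preimage in the doubled space is a product of a face of $B_{n_1}$ and a face of $B_{n_2}$ (times the two $\G_m$'s), so proper intersection of $Z\times W$ with it follows from the good position of $Z$ and of $W$ separately, and generic finiteness of the correspondence then yields proper intersection of $Z\ov{\wedge}W$ with $(X\times Y)\times F$. \textbf{The main obstacle is the modulus condition.} The key computation is that on the doubled space one has $\mu^*(F_{n,0}) = \{t_1 = 0\} + \{t_2 = 0\}$ as Cartier divisors (multiplicity one along each component, since at a general point of $\{t_i=0\}$ the other coordinate is a unit), while $\mu^*(F^1_n)$ is exactly the concatenation $F^1_{n_1} + F^1_{n_2}$ of the face-modulus divisors of the two factors. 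Consequently the modulus conditions of $Z$ and of $W$ add up, on the normalization of the closure of $Z\times W$, to $(m+1)\mu^*[F_{n,0}] \le \mu^*[F^1_n]$; applying \lemref{lem:surjm} to the finite surjection of that normalization onto $\ov{Z\ov{\wedge}W}^N$ then gives the $M_{sum}$ modulus condition for $Z\ov{\wedge}W$, exactly as in the proof of \propref{prop:P0}, and the containment lemma \propref{prop:restp} plays the same role here — in combination with \lemref{lem:rest} — that it does there in transporting the condition to components of intersections with faces (for $M_{ssup}$ one argues the same way, tracking the distinguished coordinate as in \propref{prop:restp}). Finally, the face maps of $B_{n_1}$ and $B_{n_2}$ correspond under the merging to face maps of $B_n$, which yields a Leibniz rule $\partial(Z\ov{\wedge}W) = (\partial Z)\ov{\wedge}W \pm Z\ov{\wedge}(\partial W)$ with the usual cubical sign; together with the fact that the external product of a degenerate cycle is degenerate, this shows $\ov{\wedge}$ is a morphism of total complexes and hence induces the asserted map on homology.

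It remains to establish the anticommutativity relation when $X=Y$. Comparing $\xi\ov{\wedge}\eta$ and $\eta\ov{\wedge}\xi$ amounts to applying the involution of $X\times X$ that interchanges the two factors (an isomorphism, so no sign on cycles) together with interchanging the block $(y_1,\dots,y_{n_1-1})$ of cube coordinates coming from the first factor with the block $(y_1,\dots,y_{n_2-1})$ coming from the second; the merged $\G_m$-coordinate is unaffected because $\mu$ is symmetric, so it contributes no sign. The block interchange is realized by $(n_1-1)(n_2-1)$ transpositions of adjacent $\square$-coordinates, and in the non-degenerate cubical complex each such transposition acts by $-1$; this produces precisely the sign $(-1)^{(n_1-1)(n_2-1)}$ and completes the proof.
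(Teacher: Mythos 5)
Your construction of the underlying cycle $\mu_*(Z\times W)$, the dimension count via generic finiteness, the proper-intersection check, and the $M_{sum}$ modulus argument (decomposing $\mu^*(F_{n,0})=\{t_1=0\}+\{t_2=0\}$ and $\mu^*(F^1_n)=F^1_{n_1}+F^1_{n_2}$, adding the two modulus inequalities on $\ov V_1^N\times\ov V_2^N$, and descending by \lemref{lem:surjm}) all match the paper's treatment in Subsection~\ref{subsection:external-wedge} and \propref{prop:product-modulus}. But there are two genuine gaps.

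First, the parenthetical ``for $M_{ssup}$ one argues the same way'' is false, and this is where most of the paper's work lies. The additivity argument is intrinsically an $M_{sum}$ argument: $M_{ssup}$ requires a \emph{single} index $i$ with $(m+1)\nu^*(\{t_1=0\}+\{t_2=0\})\le\nu^*\{y_i=1\}$, and no single face divisor is controlled by the modulus data of both factors --- an $i$ from the first block bounds only $\{t_1=0\}$ and one from the second block bounds only $\{t_2=0\}$. The paper's proof of \propref{prop:product-modulus} for $M_{ssup}$ is a separate five-step argument: it first disposes of the case where a factor sits in a fiber $\{t=a\}$, then enlarges $V_1$ to all of $\G_m\times\square^{n_1-1}$ (so that $\{t_1=0\}$ disappears from the pullback of $F_{n,0}$ and the containment lemma applies), identifies the closure of the resulting $\G_m$-orbit as $\P^1\times W_p$, proves the inequality on the open orbit $W'$, and shows the complement meets $\{t=0\}$ in codimension $\ge 2$ so that the Weil-divisor inequality extends to the normalization of the closure. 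None of this is a routine variant of the $M_{sum}$ computation.

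Second, your sign argument for ~\eqref{eqn:anticommute} rests on the claim that a transposition of adjacent $\square$-coordinates acts by $-1$ ``in the non-degenerate cubical complex.'' That is not true at the level of the complex: the quotient by degenerate cycles does not identify $\sigma\cdot Z$ with $\sgn(\sigma)Z$, and the statement that permutations act by their sign on the \emph{homology} is a nontrivial fact whose proof requires homotopies that one would have to check preserve the modulus condition (the paper itself flags this as an unproved input in \remref{remk:delalt}). The paper avoids the issue by defining $\ov\wedge$ as the signed sum $\mu_*\bigl(\sum_{\sigma\in{\rm \mathbb{P}erm}_{(n_1-1,n_2-1)}}\sgn(\sigma)\,\sigma\cdot(V_1\times V_2)\bigr)$ over shuffles; with that definition the identity $\xi\ov\wedge\eta=(-1)^{(n_1-1)(n_2-1)}\eta\ov\wedge\xi$ is a purely combinatorial identity of cycles (block-swapping is a bijection between $(n_1-1,n_2-1)$- and $(n_2-1,n_1-1)$-shuffles changing signs uniformly by $(-1)^{(n_1-1)(n_2-1)}$), requiring no statement about how permutations act on homology. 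Your unshuffled product does give a chain map and hence a pairing on homology, but as it stands the anticommutativity is unproved; either adopt the shuffle definition or supply a proof, valid for additive cycles with modulus, that coordinate transpositions act by $-1$ on $\TH^q(X,n;m)$.
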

\subsection{External wedge product}\label{subsection:external-wedge}
The external wedge product is based on the product map
$\mu : \G_m \times \G_m \to \G_m$ which clearly extends to the product
map 
\begin{equation}\label{eqn:product}
\mu : \G_m \times \P^1 \to \P^1.
\end{equation}
Note that this product defines a $\G_m$-action on $\P^1$ and hence is a
smooth map.

We define the external product at the level of cycle complexes in the
following way.
\begin{equation}\label{eqn:exproduct}
\xymatrix{ 
X \times \mathbb{G}_m \times \square^{n_1 -1} \times 
Y \times \mathbb{G}_m \times \square^{n_2 -1} 
\ar[r] ^{   \ \ \ \ \ \ \tau} \ar[rd]_{\mu} &  X \times Y \times 
\mathbb{G}_m \times \mathbb{G}_m \times \square^{n -1}  
\ar[d] ^{1 \times 1 \times \mu \times 1}\\
& X \times Y \times \mathbb{G}_m \times \square^{n -1},}
\end{equation}
where $\tau$ is the transposition map 
$(x,t,y, x', t', y') \mapsto (x, x', t, t', y, y')$. We denote the composite
map also by $\mu$.

Let $V_1 \in  {\TZ}^{q_1}(X, n_1 ; m)$ and $V_2  \in  {\TZ}^{q_2}(Y, n_2 ; m)$
be two irreducible admissible cycles. Define ${\mu}_*(V_1 \times V_2)$ to be
the Zariski closure of ${\mu}(V_1 \times V_2)$ in $X \times Y \times B_n$.
We first claim that ${\rm codim}_{X \times Y \times B_n}
({\mu}_*(V_1 \times V_2)) = q$, or, equivalently,
$\dim ({\mu}(V_1 \times V_2)) = \dim (V_1) + \dim (V_2)$.

This is obvious if one of the $V_i$'s lie in a fiber of the projection
map to $\G_m$. Otherwise, the modulus condition implies that none of these
can be of the form $W \times \G_m$. Thus, the set of points of $W$ such that 
the fiber of $V_i$ is $\G_m$ must be nowhere dense. In particular, there is a 
dense subset of closed points of $W$ such that the fiber of $V_i$ over this 
subset must be nowhere dense in $\G_m$. But then this fiber must be finite. 
Hence for any
$c \in \G_m$ in a dense open subset, there are points $(a, t_1) \in 
\G_m \times X \times \square^{n_1 -1}$ such that
$(a', t_1) \notin \G_m \times X \times \square^{n_1 -1}$ and $aa' \neq c$. 
Also, we have then $(ca^{-1}, t_2) \in \G_m \times Y \times 
\square^{n_1 -1}$ as $V_i$ map dominantly onto an open subset of $\G_m$.  
But then we see that $(a, ca^{-1}, t_1, t_2) \in V_1 \times V_2$ but
$(a', ca'^{-1}, t_1, t_2) \notin V_1 \times V_2$.
On the other hand, we have ${\mu}(a, ca^{-1}, t_1, t_2) =
{\mu}(a', ca'^{-1}, t_1, t_2)$.  
This implies that $V_1 \times V_2 \neq {\mu}^{-1}({\mu}(V_1 \times V_2))$.
Since $\mu$ is flat of relative dimension one, this implies that
$\dim ({\mu}(V_1 \times V_2)) = \dim (V_1) + \dim (V_2)$,
proving the claim. 

Thus we have shown that if $V_1 \in  {\TZ}^{q_1}(X, n_1 ; m)$ 
and $V_2  \in  {\TZ}^{q_1}(Y, n_2 ; m)$ are two irreducible admissible 
additive 
cycles, then ${\mu}_*(V_1 \times V_2)$ is a closed subvariety of 
$X \times Y \times B_n$ of codimension $q$. Our aim is to show the
admissibility of ${\mu}_*(V_1 \times V_2)$ as an additive cycle which we do
in several steps. Let us denote ${\mu}_*(V_1 \times V_2)$ by $Z$.
\begin{lem}\label{lem:proper-int}
The cycle $Z$ has the proper intersection property with all faces of
$B_n$.
\end{lem}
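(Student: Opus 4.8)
The plan is to reduce the proper-intersection statement for $Z = {\mu}_*(V_1 \times V_2)$ to the proper-intersection (good position) properties that $V_1$ and $V_2$ already enjoy as admissible cycles, exploiting the flatness of $\mu$. First I would record the key structural fact: a face $F$ of $B_n = \G_m \times \square^{n-1}$ is cut out by equations $y_{i_1} = \epsilon_1, \dots, y_{i_s} = \epsilon_s$ with $\epsilon_j \in \{0,\infty\}$ in the $\square$-coordinates only; the $\G_m$-coordinate $t$ is never constrained by a face. Under the map $\mu$ of~\eqref{eqn:exproduct}, the coordinate $t$ on the target is the product $t_1 t_2$ of the two source $\G_m$-coordinates, while the coordinates $y_1, \dots, y_{n-1}$ on the target are just the disjoint concatenation of the coordinates $y_1, \dots, y_{n_1-1}$ on the first factor and $y_1, \dots, y_{n_2-1}$ on the second. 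Consequently, a face $F \subset B_n$ pulls back under $\mu$ (more precisely, under $1\times1\times\mu\times1 \circ \tau$) to $X \times F_1 \times Y \times F_2$ for suitable faces $F_1 \subset B_{n_1}$, $F_2 \subset B_{n_2}$ (here $F_1$ or $F_2$ may be the full $B_{n_i}$), since the only coordinates involved in defining $F$ are the $y$'s and these split cleanly between the two factors.

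Next I would use that $\mu$ is flat of relative dimension one (it is a $\G_m$-action on $\P^1$, as noted just after~\eqref{eqn:product}) to compute dimensions. Writing $\mu$ also for the composite in~\eqref{eqn:exproduct}, we have $Z = \overline{\mu(V_1 \times V_2)}$ and the paragraph preceding the lemma already established $\dim Z = \dim V_1 + \dim V_2$, i.e. $\codim_{X\times Y\times B_n} Z = q := q_1 + q_2 - 1$. For a face $F \subset B_n$ corresponding to $(F_1, F_2)$ as above, one has
\[
Z \cap (X \times Y \times F) \subseteq \mu\bigl((V_1 \times V_2) \cap \mu^{-1}(X \times Y \times F)\bigr) = \mu\bigl((V_1 \cap (X \times F_1)) \times (V_2 \cap (Y \times F_2))\bigr).
\]
By the good position of $V_1$ and $V_2$ we have $\dim(V_i \cap (X_i \times F_i)) \le r_i + \dim F_i - 1$ where $r_i$ is the relevant dimension; since $\mu$ is flat of relative dimension one and the map on the product is surjective onto its image with fibers of dimension exactly one generically (this is where I would reuse the fiber argument from the paragraph before the lemma to see that the two factors do not collapse), $\dim Z \cap (X\times Y \times F)$ is bounded by $\dim(V_1 \cap (X\times F_1)) + \dim(V_2 \cap (Y \times F_2))$, which is $\le (\dim V_1 + \dim V_2) - 2 + \dim F_1 + \dim F_2 \dots$; after the bookkeeping $\dim F = \dim F_1 + \dim F_2 - 1$ (one $\G_m$-direction is shared) this yields exactly $\dim(Z \cap (X \times Y \times F)) \le (\dim X + \dim Y + 1 - q) + \dim F - 1$, which is the good-position bound $\codim_{X \times Y \times F}(Z \cap (X\times Y\times F)) \ge q$.

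The step I expect to be the main obstacle is the careful dimension bookkeeping in the case where a face $F$ involves the $\G_m$-coordinate only trivially versus the cases where collapsing could occur: I must make sure that the bound $\dim(Z \cap (X\times Y\times F)) \le \dim(V_1 \cap (X\times F_1)) + \dim(V_2 \cap (Y\times F_2))$ is correct, which requires knowing that the restriction of $\mu$ to the closure of $(V_1\cap(X\times F_1)) \times (V_2 \cap (Y\times F_2))$ does not have fibers of dimension larger than one over its image — equivalently, that these restricted cycles are not of the form $W \times \G_m$. But this is guaranteed by the modulus condition (via Proposition~\ref{prop:restp}, the restrictions $V_i \cap (X_i\times F_i)$ are again admissible, so they satisfy the modulus condition and hence cannot contain a full $\G_m$-fiber), exactly as in the argument immediately preceding the lemma. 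With that input the inequality follows from flatness of $\mu$ and upper-semicontinuity of fiber dimension, and the lemma is proved.
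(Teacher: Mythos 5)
Your argument is correct and is essentially the paper's own proof: since a face of $B_n$ constrains only the $\square$-coordinates, which $\mu$ leaves untouched, the intersection of $\mu(V_1\times V_2)$ with $X\times Y\times F$ sits inside $\mu\bigl((V_1\cap(X\times F_1))\times(V_2\cap(Y\times F_2))\bigr)$, and the good-position bounds for $V_1$ and $V_2$ together with $\dim F=\dim F_1+\dim F_2-1$ give exactly the required codimension $q=q_1+q_2-1$ (the paper phrases this as equivariance of $\mu$ with respect to the face maps). One correction to your final paragraph: the bound $\dim\mu(A)\le\dim A$ holds for any morphism and any closed $A$ — large fibers only shrink the image and hence can only improve an upper bound — so the modulus condition and Proposition~\ref{prop:restp} are not needed here at all; generic finiteness of $\mu$ on $V_1\times V_2$ is needed only for the equality $\dim\mu(V_1\times V_2)=\dim V_1+\dim V_2$ established in the paragraph preceding the lemma (which pins down the codimension of $Z$), not for the proper-intersection estimates.
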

\begin{proof} To show the good intersection property, it is enough to 
intersect the Zariski-dense open subset $\mu (Z_1 \times Z_2)$ with 
$X \times Y \times F$ for any face $F$ of $B_n$. Write 
$F = \mathbb{G}_m \times F_1 \times F_2$ for some faces 
$F_1 \subset \square^{n_1 -1}$ and $F_2 \subset \square ^{n_2 -1}$.

Since the multiplication $\mu$ is equivariant with respect to all face maps 
$\partial_i ^{\epsilon}$ given by the intersection with a codimension 
$1$-face of $\square^{n_i-1}$, and since the faces $F_i$ are obtained by 
intersecting a multiple number of those codimension $1$-faces, we immediately 
see that $\mu (V_1 \times V_2)$ intersects $X \times Y \times F$ properly if 
$V_1$ intersects $X \times \mathbb{G}_m \times F_1$ properly and
$V_2$ intersects $Y \times \mathbb{G}_m \times F_2$ properly. But
this is indeed the case.
\end{proof}
\begin{prop}\label{prop:product-modulus} 
The cycle $Z$ satisfies the modulus condition in $X \times Y \times B_n$.
\end{prop}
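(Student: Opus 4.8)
The goal is to show that $Z = \mu_*(V_1 \times V_2)$ satisfies the modulus condition on $X \times Y \times B_n$, given that $V_1$ and $V_2$ satisfy it on $X \times B_{n_1}$ and $Y \times B_{n_2}$ respectively. My plan is to reduce this to a statement about pulling back divisors along the multiplication map $\mu : \G_m \times \P^1 \to \P^1$, and then invoke the containment lemma (Proposition~\ref{prop:restp}) together with Lemma~\ref{lem:surjm} to descend the resulting effectivity statement from a product variety to $Z$ itself. First I would set up a commutative diagram relating the normalization $\ov{Z}^N$ of the closure of $Z$ in $X \times Y \times \wh B_n$ to the product $\ov{V}_1^N \times \ov{V}_2^N$ (or rather, to a variety dominating $Z$ that is built from $\ov{V}_1^N \times \ov{V}_2^N$), using the fact that $\mu$ extends to $\G_m \times \P^1 \to \P^1$ and hence to a map on the relevant compactifications. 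The key point is that $Z$ is the image of $V_1 \times V_2$ under a map that is the identity on the $X$, $Y$, and $\square^{n-1}$ factors and is $\mu$ on the $\G_m$-coordinates.

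The heart of the matter is to understand $\mu^*(\{y = 1\})$ and $\mu^*(\{t = 0\})$ where $t$ now denotes the coordinate on the target $\P^1$. Writing $t = t_1 t_2$ for the multiplication, one checks: the divisor $\{t = 0\}$ on the target pulls back to $\{t_1 = 0\} + \{t_2 = 0\}$ (as a Cartier divisor on $\G_m \times \P^1$, or on an appropriate partial compactification — here one must be careful about which ambient space, since $\mu$ is only defined as stated on $\G_m \times \P^1$, but $V_1$ and $V_2$ avoid $t_i = \infty$ by admissibility so this is not an issue near the relevant locus). Meanwhile the face divisors $F^1_{n,i}$ in the new coordinates $y_1, \dots, y_{n-1}$ (which are just the concatenation of the $y$-coordinates from the two factors) pull back to the corresponding face divisors of $B_{n_1}$ or $B_{n_2}$. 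So on a variety $T$ dominating $\ov Z^N$ and mapping to $\ov V_1^N \times \ov V_2^N$, I would get
\[
\nu_Z^*(F^1_n - (m+1)F_{n,0}) \ \text{pulls back to}\ \bigl(\nu_1^*(F^1_{n_1}) - (m+1)\nu_1^*(F_{n_1,0})\bigr) + \bigl(\nu_2^*(F^1_{n_2}) - (m+1)\nu_2^*(F_{n_2,0})\bigr),
\]
and each summand is effective by the $M_{sum}$ modulus condition on $V_1$ and $V_2$ respectively. Then, since $T \to \ov Z^N$ is finite and surjective between normal varieties (after passing to normalizations and using that $\mu$ restricted to $V_1\times V_2$ is flat of relative dimension one, so generically finite onto $Z$ — actually I need properness here, which comes from the compactification), Lemma~\ref{lem:surjm} gives effectivity of $\nu_Z^*(F^1_n - (m+1)F_{n,0})$ on $\ov Z^N$, which is exactly the $M_{sum}$ modulus condition for $Z$.

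For the $M_{ssup}$ case I would argue similarly but slightly more carefully: $V_1$ satisfies $(m+1)\nu_1^*(F_{n_1,0}) \le \nu_1^*(F^1_{n_1,i_1})$ for some $i_1$, and likewise $V_2$ for some $i_2$. Since $\mu^*(F_{n,0})$ splits as the sum of the two "$t = 0$" divisors while each $F^1_{n,i}$ pulls back to a single face divisor in one of the two blocks, I would bound $(m+1)\nu_Z^*(F_{n,0})$ using the $i_1$-th face divisor from block one plus the $i_2$-th face divisor from block two; but $M_{ssup}$ for $Z$ requires a \emph{single} index $i$. Here I expect the main obstacle: one cannot in general split the modulus inequality coordinate-by-coordinate, because $\mu^*(F_{n,0})$ is a \emph{sum} of two divisors, each living in a different block, while a single $F^1_{n,i}$ only dominates one block. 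This is precisely the phenomenon that makes $M_{sum}$ the natural condition for products — indeed, the statement of Proposition~\ref{prop:wedge-product} and the later Theorem~\ref{thm:DGAsum} suggest the CDGA structure is only claimed for $M_{sum}$. For $M_{ssup}$ I would instead have to use a containment/intersection argument: intersect $Z$ with the locus where $t_1$ is "small" versus where $t_2$ is "small," or more likely use Proposition~\ref{prop:restp} applied to $Z$ inside the product $X \times Y \times B_n$ viewed as cut out inside $\mathbb{P}^r$-type ambient spaces, to transfer the single-index condition; but I suspect the honest statement is that $Z$ satisfies $M_{sum}$ always and $M_{ssup}$ only after applying the moving lemma or under extra hypotheses. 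I would therefore write the proof to establish $M_{sum}$ cleanly via the divisor-pullback computation plus Lemma~\ref{lem:surjm}, and handle $M_{ssup}$ by remarking that $\mu^*(F_{n,0}^1)$ contains $\nu_1^*(F^1_{n_1,i_1})$ pulled forward, and that $(m+1)\nu_Z^*(F_{n,0}) \le (m+1)(\text{block-1 part}) + (m+1)(\text{block-2 part})$, absorbing one of the two via the $M_{ssup}$ bound on the corresponding $V_j$ and the other via $M_{sum}$ on the remaining factor, concluding $M_{ssup}$ holds with index coming from the first factor once the second-factor contribution is dominated — the delicate point being whether that second contribution is itself $\le$ the same face divisor, which will require the explicit form of $\mu^*$ worked out in the first step.
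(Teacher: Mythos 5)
Your treatment of the $M_{sum}$ case is essentially the paper's argument: decompose $\mu^*\bigl(F^1_n - (m+1)F_{n,0}\bigr)$ as $D^1 + D^2$ with each block effective by the modulus condition on $V_1 \times B_{n_2}$ and $B_{n_1} \times V_2$, then descend along the surjection $\ov V_1^N \times \ov V_2^N \to \ov Z^N$ via Lemma~\ref{lem:surjm}. (The paper first reduces to $X = Y = {\rm Spec}(k)$ using Proposition~\ref{prop:P0} and the containment lemma, but your direct divisor computation covers this half.)

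For $M_{ssup}$ there is a genuine gap. You correctly identify the obstruction --- $\mu^*(F_{n,0})$ splits as $\{t_1 = 0\} + \{t_2 = 0\}$ across the two blocks while any single $F^1_{n,i}$ lives in only one block --- but your proposed ``absorption'' cannot close it: controlling $(m+1)\{t_1=0\}$ requires a block-one face index $i_1$ and controlling $(m+1)\{t_2=0\}$ a block-two index $i_2$, which gives a two-index bound, not the single index that $M_{ssup}$ demands. Your fallback suspicion that $Z$ might satisfy only $M_{sum}$ is also wrong; the proposition is asserted and proved for $M_{ssup}$. The missing idea is a containment trick: after disposing of the case where some $V_j$ lies in a fiber $\{t=a\}$, the paper \emph{enlarges} $V_1$ to all of $\G_m \times \square^{n_1-1}$, which is legitimate because $Z$ is contained in the image of the enlarged product and Proposition~\ref{prop:restp} lets one prove the modulus condition for the larger variety instead. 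The payoff is that the closure of the enlarged first factor sits in $\G_m \times (\P^1)^{n_1-1}$, which misses $\{t_1=0\}$ entirely, so $\ov{\mu}^*(\{t=0\})$ becomes the \emph{single} divisor $\G_m \times \{t_2=0\} \times (\P^1)^{n-1}$ and the single-index condition from $V_2$ transfers verbatim. One must then still deal with the fact that the image $W'$ of the enlarged product is not closed: the paper shows that its closure is $Z' = \P^1 \times p(W')$ (an orbit-closure computation) and that $S = Z' \setminus W'$ meets $\{t=0\}$ in codimension at least two in $Z'$, so no component of $\nu_{Z'}^*(\{t=0\})$ lies over $S$ and the divisor inequality extends from ${W'}^N$ to ${Z'}^N$. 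None of this appears in your sketch, and without it the $M_{ssup}$ half does not go through.
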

\begin{proof}
For the structure morphisms $p: X \to {\rm Spec} (k)$ and 
$p': Y \to {\rm Spec} (k)$,
consider $W_1 = (p \times {\rm Id}_{B_{n_1}  }) (V_1)$ and 
$W_2 = (p' \times {\rm Id}_{B_{n_2}}) (V_2)$. 
By the admissibility of the projective images, Proposition~\ref{prop:P0},
$W_1$ and $W_2$ are admissible in $B_{n_1}$ and $B_{n_2}$ respectively.
In particular, $X \times W_1$ and $Y \times W_2$ are admissible cycles
by the flat pull-back ({\sl cf.} \thmref{thm:basic}). 
Now, we have
\[
V_1 \subset X \times W_1, \ \ V_2 \subset Y \times W_2,
\]
which implies that
\begin{equation} 
Z = \mu_* (V_1 \times V_2) \subset X \times Y \times \mu_* (W_1 \times W_2). 
\end{equation} 
Since $\ov Z$ intersects $F^1_n$ and $F_{n, 0}$ properly in $X \times Y \times
{\wh B}_n$, we can use \propref{prop:restp} to conclude that $Z$ has
the modulus condition if $\mu_* (W_1 \times W_2)$ has. 
That is, we reduce to the case when $X= Y = {\rm Spec} (k)$.

We first dispose of the case of the modulus condition $M_{sum}$ as it is
relatively straightforward.
Let $\ov{V_1} \subset \ov{B}_{n_1}, \ov{V_2}\subset \ov{B}_{n_2},$ and 
$\ov{Z}\subset \ov{B}_{n}$ be the Zariski closures of $V_1, V_2$, and $Z$
respectively. Take their normalizations $\nu_{\ov{V}_1} : \ov{V}_1 ^N \to 
\ov{V}_1$, $\nu_{\ov{V}_2} : \ov{V}_2 ^N \to \ov{Z}_2$, and $\nu_{\ov{Z}} : 
\ov{Z} ^N \to \ov{Z}$. By \cite[Lemma 3.1]{KL}, the product of two reduced 
normal finite type $k$-schemes is again normal over perfect fields. Thus, the 
morphism
\[
\nu:= \nu_{\ov{V}_1} \times \nu_{\ov{V}_2} : \ov{V}_1 ^N \times \ov{V}_2 ^N 
\to \ov{V}_1 \times \ov{V}_2 = \ov{V_1 \times V_2}
\]
is a normalization, under the identification $\mathbb{A}^2 \times 
\square^{n-1} = \mathbb{A} ^1 \times \square ^{n_1-1} \times \mathbb{A} ^1 
\times \square^{n_2 -1}.$ Thus, we can regard 
$\ov{V}_1 ^N \times \ov{V}_2 ^N$ as $\ov{V_1 \times V_2} ^N$. 
This gives the following diagram:
\[
\xymatrix{
\ov{V}_1 ^N \times \ov{V}_2 ^N \ar[r] ^{\nu} \ar[d]^{\ov{\mu} ^N} & 
\ov{V}_1  \times \ov{V}_2 \ar[r] ^{\iota_1 \times \iota_2 \ \ \ } 
\ar[d] ^{\ov{\mu}} & \mathbb{A}^2 \times \left( \mathbb{P} ^1 \right)^{n-1} 
\ar[d] ^{\mu} \\
\ov{Z}^N \ar[r] ^{\nu_{\ov{Z}}} & \ov{Z} 
\ar[r] ^{\iota \ \ \ \  \ \  } & \mathbb{A}^1 \times 
\left( \mathbb{P} ^1 \right) ^{n-1},}
\]
where $\ov{\mu}$ is the restriction $\mu|_{\ov{V}_1 \times \ov{V}_2}$, and 
$\ov{\mu} ^N$ is given by the universal property of the normalization 
$\nu_{\ov{Z}}$. Note that the restriction $\ov{\mu}: \ov{V}_1 \times 
\ov{V}_2 \to \ov{Z}$ is a surjective morphism. Hence, $\ov{\mu}^N$ is also 
surjective.

Let $(t_1, t_2, y_1, \cdots, y_{n-1}) \in \mathbb{A}^2 \times 
\left( \mathbb{P} ^1 \right) ^{n-1}$ and 
$(w, y_1, \cdots, y_{n-1}) \in 
\mathbb{A} ^1 \times \left( \mathbb{P} ^1 \right) ^{n-1}$ be the coordinates.

Consider the Cartier divisor 
$D := \sum_{i=1} ^{n-1} \{ t_i = 1 \} - (m+1) \{ w = 0 \}$ on 
$\mathbb{A}^1 \times \left( \mathbb{P} ^1 \right) ^{n-1}$. Then, as a Cartier 
divisor on $\mathbb{A} ^2 \times \left( \mathbb{P} ^1 \right) ^{n-1}$, we have 
\begin{eqnarray*}
\mu^* D & = &\sum_{i=1} ^{n-1} \{ y_i = 1 \} - (m+1) \{ t_1 = 0\} - 
(m+1) \{ t_2 = 0 \} \\
& = &  \left( \sum_{i=1 } ^{n_1 -1} \{ y_i = 1 \}  - (m+1) \{ t_1 = 0 \} 
\right) + 
\\
& & \left( \sum_{i={n_1}} ^{ n-1} \{ y_i = 1 \} - 
(m+1) \{ t_2 = 0 \} \right) \\
 & =: & D^1 + D^2,
\end{eqnarray*}  
where we note that $n-1 = (n_1 -1) + (n_2 -1)$. Note that by pulling back 
along $\nu = \nu _{\ov{V}_1} \times \nu_{\ov{V}_2}$, we see that 
\begin{equation}\label{eqn:middle} 
((\iota_1 \times \iota_2 ) \circ \nu)^* (\mu^* D ) \geq 0
\end{equation} 
since we have 
\begin{eqnarray*}
((\iota_1 \times 1) \circ (\nu_{\ov{V}_1 } \times 1) ) ^* D^1 \geq 0, \\
((1 \times \iota_2) \circ (1 \times \nu_{\ov{V}_2})) ^* D^2 \geq 0,
\end{eqnarray*}
by the modulus condition $M_{sum}$ of $V_1 \times B_{n_2}$ and 
$B_{n_1} \times V_2$, regarding $V_1 \times B_{n_2}$ as a 
cycle in ${\TZ}_{p_1 + n_2} (B_{n_2}, n_1; m)$, and similarly for 
$B_{n_1} \times V_2$.  

This inequality ~\eqref{eqn:middle} is equivalent to 
$(\ov{\mu} ^N )^*  \left( (\iota \circ \nu_{\ov{Z}}) ^* D \right) \geq 0$ 
by the commutativity of the diagram. Then, by Lemma~\ref{lem:surjm}
applied to the surjective morphism $\ov{\mu} ^N$, we get 
$(\iota \circ \nu_{\ov{Z}}) ^* D  \geq 0.$ This is the modulus $m$ condition 
$M_{sum}$ for $Z = \mu_* (V_1 \times V_2)$. 

Now we prove the $M_{ssup}$ condition for $Z$ if 
$V_i$'s satisfy this modulus condition. This is a much more delicate case and
we prove it in several steps.
First suppose that $V_1$ or $V_2$ is contained in the locus of $\{t=a\}$ for a
closed point $a \in \mathbb{G}_m$.
By symmetry, we may then assume that $V_1 = \{ a \} \times W_1$ for a 
closed irreducible subvariety $W_1 \subset \square ^{n_1 -1}$ 
intersecting all faces of $\square^{n_1-1}$ properly. Then, 
\[
\mu (V_1 \times V_2) = a^{-1} * 
( \tau (W_1 \times V_2)),
\]
where $\tau$ is the transposition 
\[
\square^{n_1 -1} \times \mathbb{G}_m \times 
\square^{n_2 -1} \to \mathbb{G}_m \times \square^{n_1 -1} 
\times \square^{n_2 -1},
\]
and $*$ is the action of $\mathbb{G}_m$ as in ~\eqref{eqn:MCC**}.
This is already closed in $ B_{n-1}$, so we have
\[
\mu_* (V_1 \times V_2) = \mu (V_1 \times V_2) = 
a^{-1} * ( \tau (W_1 \times V_2)).
\]
Furthermore, the modulus condition for $V_2$ implies the modulus condition for 
$a^{-1} * ( \tau (W_1 \times V_2))$. Hence, $\mu_* (V_1 \times V_2)$ has the
modulus condition $M_{ssup}$.

Hence, for the rest of the proof, we assume that neither $V_1$ nor $V_2$ lies 
in the 
loci of $\{ t =a\}$ for some $a \in \mathbb{G}_m$. In particular, the images of
$V_1$ and $V_2$ are open dense subsets of $\G_m$. 
 
Let $V'_1 = {\G}_m \times {\square}^{n_1-1}$ and let $\wt Z =
\ov {{\mu}(V'_1 \times V_2)}$, the closure of ${\mu}(V'_1 \times V_2)$ in
$\G_m \times B_n$. Note that $V'_1$ is just a closed subvariety
and not an admissible cycle. Note further that $Z$ is a closed subvariety of 
$\wt Z$ and moreover $\ov Z$ intersects the divisors $F^1_n$ and $F_{n, 0}$ 
properly in $\P^1 \times {\wh B}_n$ since our $V_1$ and $V_2$ have this 
property. Hence by \propref{prop:restp}, to prove the modulus condition for 
$Z$, it suffices to prove the modulus 
condition for the closed subvariety $\wt Z$. So, from now on, we shall replace 
$V_1$ by $V'_1$ and $ Z$ by $\wt Z$, while we call the new ones as still 
$V_1$ and $Z$, respectively.

We set $B:= {(\P^1)}^{n-1}$, and let ${\ov V}_1$ and ${\ov V}_2$ be the 
closures of $V_1$ and $V_2$ in $\mathbb{G}_m \times (\P^1)^{n_1 -1}$ and 
$\wh{B}_{n_2}$, respectively. 

Let $p : \P^1 \times {(\P^1)}^{n-1} \to {(\P^1)}^{n-1}$ and $q : 
\P^1 \times {(\P^1)}^{n-1} \to \P^1$ be the projections. Let 
$W = {\mu}(V_1 \times V_2)$ and $W' = 
{\ov {\mu}}(\ov V_1 \times \ov V_2)$, where
${\ov {\mu}} = {\mu} \times {\rm Id}_B : \G_m \times \P^1 \times B \to
\P^1 \times B$ is the extension of $\mu$ as defined in ~\eqref{eqn:product}.
Then we get a commutative diagram
\begin{equation}\label{eqn:main}
\xymatrix{
V_1 \times V_2 \ar[d]_{\mu} \ar[r] & \ov V_1  \times \ov V_2 \ar[r] 
\ar[d]^{\ov {\mu}} & \G_m \times \P^1 \times B 
\ar[ddd]^{\ov {\mu}}
\\
W \ar[d] \ar[r] & W' \ar[ddr]^{\phi} \ar[d] & \\
Z\ar[r] \ar[d] & Z' \ar[dr]_{{\phi}'} & \\
\G_m \times {\square}^{n-1} \ar[rr] & & \P^1 \times B,}
\end{equation}
where $Z'$ is the closure of $W'$ in $\P^1 \times {(\P^1)}^{n-1}$, and 
$\phi, \phi'$ are the inclusions.
Note that all the arrows except $\mu$ and ${\ov {\mu}}$ are injective,
and $\mu$ and ${\ov {\mu}}$ are surjective.

Notice that by the given assumptions, the composition under the first 
projection ${\ov V}_2 \subset {\wh B}_{n_2} \to \mathbb{P} ^1$ is surjective
since this map is projective and hence a closed map, while $V_2$ is not in 
the locus of $\{ t =a\}$ for some $a \in \mathbb{G}_m$ so that the map is 
dominant.

First note that $W$ is open in $W'$. This is because $W = W' \cap 
(B_n)$ and $B_{n }$ is open in ${\wh B}_{n}$.
Likewise, $W'$ is open in $Z'$. This is because $W' = Z' \cap
{\rm Image}(\ov{\mu})$, where ${\rm Image}(\ov {\mu})$ is open because 
$\ov {\mu}$ is flat. Hence, the Zariski closure of $W$ in ${\wh B}_{n}$ 
is equal to the closure of $W'$, which is $Z'$ by definition.
Since $Z$ is the Zariski closure of $W$ in $B_{n}$, which is open in 
${\wh B}_{n}$, the Zariski closure ${\ov Z}$ of $Z$ in ${\wh B}_{n}$ 
is consequently $Z'$. In other words, the Zariski closures of 
$W, Z, W'$ in ${\wh B}_{n}$ are all equal to $Z'$.

Let $\ov V'_2 = \tau \left({(\P^1)}^{n_1-1} \times \ov V_2 \right)
\subset \wh{B}_n$ for the transposition
$$\tau : (\P^1)^{n_1 -1} \times \P^1 \times (\P^1)^{n_2 -1} 
\overset{\sim}{\to} \P^1 \times \P^{n_1 -1} \times \P^{n_2 -1}.$$ 
Notice that $W'$ is the orbit $ \mathbb{G}_m \cdot \ov V'_2$ in $\wh{B}_n$, 
where the action of $\mathbb{G}_m$ on ${(\P^1)}^{n-1}$ is by the identity. In 
particular, $Z'$ is the orbit closure of $\ov V'_2$. Let $W_p: = p(W')$. \\
{\bf Claim (1): } \emph{$W_p =p (W')= p(V'_2)$ and it is closed in 
$B = {(\P^1)}^{n-1}$.}

Consider the following diagram.
\[
\xymatrix{
\G_m \times \P^1 \times B \ar[r]^{\ov {\mu}} \ar[d]^{p'} & 
\P^1 \times B \ar[d]^{p}
\\
\G_m \times B \ar[r]_{r} & B}
\]
Note that the lower horizontal arrow and the two vertical arrows are the 
projection
maps. Since $p$ is projective, $p(\ov V'_2)$ is closed in $B$. Next we have,
\[
p(W') = p \circ {\ov {\mu}}(\G_m \times \ov V'_2) = r \circ p' (\G_m \times
\ov V'_2) = r(\G_m \times p(\ov V'_2)) = p(\ov V'_2)
\]
since $p'$ is identity 
on $\G_m$ and $r$ is the projection. This proves Claim (1). \\
{\bf Claim (2): } \emph{There is a non-empty open subset $U \subset W_p$ such 
that 
$\mathbb{G}_m \times U \subset W'$ as an open subset.}

From Claim (1), we have a surjection $\ov V'_2  \to W_p$. Since 
$\ov V'_2$ is irreducible, so is $W_p$. Now recall the following 
well-known generic flatness theorem. For its proof, see 
\cite[Theorem 5.12, p.123]{FGA}:
\begin{thm}\label{thm:GF}
Let $f: X \to Y$ be morphism of noetherian schemes of finite 
type over $k$, where $Y$ is integral. Then there exists a non-empty open 
subset $U' \subset Y$ such that $f^{-1} (U') \to U'$ is flat.
\end{thm}
Using this theorem and the openness of a flat map, we see that the image of 
the open set $\ov V'_2 \cap q^{-1}(\G_m) \cap p^{-1}(U') \to U'$
is open in $U'$ and hence in $W_p$. Notice that since the map 
$\ov V_2  \to \P^1$ is surjective, $\ov V'_2 \cap q^{-1}(\G_m)$ in not empty. 
Let $U$ be this image in $W_p$. 

Now, by the choice of $U$, we see that for each $u \in U$, the fibre 
$p^{-1}(u)$ meets
$\ov V'_2 \cap q^{-1}(\G_m)$ non-trivially. This implies that the orbit of
$\ov V'_2 \cap p^{-1}(U)$ contains at least $\G_m  \times U$ (it might also
contain some points of $\{0, \infty\} \times U$). In particular, we conclude
that $\G_m \times U \subset W'$. Since $\G_m \times U$ is open in 
$p^{-1}(U) =  \P^1  \times U$, it must be open in $W'$, too.  
This proves Claim (2).

Claim (1) implies that $Z' \subset \mathbb{P} ^1 \times W_p$ and
Claim (2) implies that there is a non-empty open subset 
$\mathbb{G}_m \times U \subset Z' \subset \mathbb{P} ^1 \times W'$. 
Since $Z'$ is closed in $\mathbb{P} ^1 \times W_p$ and irreducible, and 
since $\mathbb{G}_m \times U$ is open dense in $\mathbb{P} ^1 \times W_p$
which is also irreducible, we conclude that $Z' = \P^1 \times W_p$.\\
{\bf Claim (3): } \emph{Let $S := Z' \backslash W'$. Then 
$Z' \cap q^*({\{t = 0\}})$ is irreducible and 
${\rm codim}_{Z'} (S \cap q^*{\{t = 0\}}) \ge 2$, where $t$ is the coordinate 
of $\P^1$.}

Since we have just seen that $Z' = \P^1 \times W_p$,
the closed subscheme $Z'_0 = Z' \cap q^*({\{t = 0\}})$ is in fact 
$W_p \times \{0\}$ and hence irreducible as $W_p$ is so.
This also implies that ${\rm dim}(Z'_0) = {\rm dim}(W_p) =
{\rm dim}(Z') - 1$. Now we recall that the map $\ov V_2 \to \P^1$ is 
surjective as a consequence of our assumption. Hence, $\ov V'_2 \to \P^1$
is also surjective. This implies in particular that $W' \to \P^1$ is surjective
too. This in turn shows that $W' \cap q^*({\{t = 0\}}) = 
W' \cap (Z' \cap  q^*({\{t = 0\}})) = W' \cap Z'_0$ is a non-empty open subset
of $Z'_0$. Since we have just shown that $Z'_0$ is irreducible, this implies
that ${\rm dim}_k(S \cap Z'_0) = {\rm dim}_k(Z'_0 \backslash (W' \cap Z'_0))
\le {\dim}_k(Z'_0) -1$. Thus we get
\[
{\rm dim}_k(S \cap Z'_0) \le  {\dim}_k(Z'_0) -1 = {\rm dim}_k(Z') - 2.
\]
This proves Claim (3).\\
{\bf Claim (4): } \emph{For the composition 
\[
\nu_{W'}: {W'}^N \to W' \overset{\phi}{\to} {\wh B}_{n-1},
\]
where the first arrow is the normalization, there exists an index  
$i \in \{ 1, \cdots, n_2 -1 \}$ for which we have on ${W'}^N$
\[
\nu_{W'}^*\left[\{ y_i = 1 \} - (m+1) \{ t = 0 \}\right] \geq 0.
\]}

Consider the following normalization diagram:
\begin{equation}\label{eqn:normal}
\xymatrix@C.8cm{
{\ov V^N_1  \times \ov V^N_2} \ar[d]_{{\ov {\mu}^N}} \ar[r] &
{\ov V_1  \times \ov V_2} \ar[d]^{{\ov {\mu}}} \ar[r] &
{\G_m \times \P^1 \times {(\P^1)}^{n-1}} \ar[d]^{\ov {\mu}} \ar[r]^{pr} &
{\G_m \times \P^1 \times {(\P^1)}^{n_2-1}} \ar[d]^{\ov {\mu}}
\\
{W'}^N \ar[r] & W' \ar[r]^{\phi} & {\P^1 \times {(\P^1)}^{n-1}}
\ar[r]^{pr} & \P^1 \times {(\P^1)}^{n_2-1}.}
\end{equation}
Here the horizontal arrows in the last square are the obvious projection
maps. We also note that ${\ov V^N_1  \times \ov V^N_2}$ 
is the normalization of ${\ov V_1  \times \ov V_2}$ by \cite[Lemma~3.1]{KL}.
We have seen that $\ov {\mu}$ is a surjective map of irreducible 
varieties and hence dominant. This gives the map of the corresponding 
normalizations, which must also be surjective. 
Since the modulus condition $M_{ssup}$ holds for $V_2$, there is an
$1 \le i \le n_2 -1$ such that the Cartier divisor ${\nu}^*_2 [\{y_i =1\} - 
(m+1)\{t = 0\}] \ge 0$ on $\ov V^N_2$.
This implies that in the diagram 
\[
\xymatrix{
{\ov V^N_1  \times \ov V^N_2} \ar[d]^{v} \ar[r]^{{\nu}_{1,2}} &
{\G_m \times \P^1 \times {(\P^1)}^{n-1}} \ar[d]^{s} \\
\ov V^N_2 \ar[r]^{{\nu}_2} & \P^1 \times {(\P^1)}^{n_2-1},}
\]
we have 
\begin{equation}\label{eqn:one}
{\nu}^*_{1,2} \circ s^*[\{y_i =1\} - (m+1)\{t = 0\}] \ge 0
\ \ {\rm on} \ \ \ov V^N_1 \times \ov V^N_2, 
\end{equation}where $v, s$ are the obvious projections, and $\nu_{1,2}$ is 
the composition of the first two arrows of the upper part of the 
Diagram \eqref{eqn:normal}.

Next we observe that as $\G_m$ acts trivially on ${(\P^1)}^{n-1}$, one has
${\ov {\mu}}^*(\{y_i =1\}) = \G_m \times \P^1 \times {(\P^1)}^{n-2} 
\times (\{y_i =1\}) = s^*(\{y_i =1\})$. We also observe that 
${\ov {\mu}}^*(\{t = 0\}) =  \G_m \times (\{t = 0\}) \times {(\P^1)}^{n-1} 
= s^*(\{t = 0\})$. In particular, we have for $1 \le  i \le n_2 -1$,
\[
s^*[\{y_i =1\} - (m+1)\{t = 0\}] =
{\ov {\mu}}^* \circ pr^*[\{y_i =1\} - (m+1)\{t = 0\}]
\]
\[
\hspace*{4.5cm} = {\ov {\mu}}^*[\{y_i =1\} - (m+1)\{t = 0\}].
\]
Combining this with ~\eqref{eqn:one}, we conclude that
${\nu}^*_{1,2} \circ{\ov {\mu}}^*[\{y_i =1\} - (m+1)\{t = 0\}] \ge 0$
and hence  
${\ov {\mu}^N}^* \circ {\nu}^*_{W'}[\{y_i =1\} - (m+1)\{t = 0\}] \ge 0$. 
We now apply \lemref{lem:surjm} to the surjective morphism $\bar{\mu} ^N$ of 
normal integral $k$-varieties to conclude that 
${\nu}^*_{W'}[\{y_i =1\} - (m+1)\{t = 0\}] \ge 0$ on ${W'}^N$.
This proves Claim (4).

Now, we have the final statement of this lengthy proposition:\\
{\bf Claim (5): } \emph{The modulus condition $M_{ssup}$ holds for $Z$.}

Since we have shown that the Zariski closure of $Z$ in ${\wh B}_{n}$ is 
$Z'$, we need to show that the modulus condition holds
on ${Z'}^N$. Consider the following commutative diagram
\begin{equation}\label{eqn:final1}
\xymatrix{
{W'}^N = f^{-1}(W') \ar[r] \ar[d] & {Z'}^N \ar[d]^{f} & f^{-1}(S) \ar[d] 
\ar[l] \\
W' \ar[r] & Z' & S, \ar[l]}
\end{equation}
where $f$ is a normalization, and we recall that $S = Z' \backslash W'$. 
Since $W'$ is as open subset of
$Z'$ as shown above, we have $f^{-1}(W') = {W'}^N$. 
Let ${\nu}_{Z'}: {Z'}^N \overset{f}{\to} Z' \overset{\phi'}{\to} \wh{B}_n$ be 
the composite. We need to show that for some $1 \le i \le n-1$, we have
\begin{equation}\label{eqn:final2}
[{\nu}^*_{Z'}(\{y_i -1\})] \ge (m+1)[{\nu}^*_{Z'}(\{t = 0\})]
\end{equation}
as Weil divisors on ${Z'}^N$.
Since ${Z'}^N$ is an irreducible normal 
variety and the relation ~\eqref{eqn:final2} holds on ${W'}^N$ by 
Claim (4), the same relation will hold on ${Z'}^N$
if and only if no component of ${\nu}^*_{Z'}(\{t = 0\})$ is contained
in $f^{-1}(S)$. However, we have shown in Claim (3) that 
$\dim (S \cap {{\phi}'}^*(\{t =0\})) \le \dim (Z') -2$.
On the other hand, since $f$ is a finite map, 
if a component $D$ of ${\nu}^*_{Z'}(\{t = 0\})$ is 
contained in $f^{-1}(S)$, then $f(D) \subset S \cap {\phi}^*(\{t =0\})$
and we get
\[
\dim({Z'}^N) - 1 = \dim(D) = \dim(f(D))
\le \dim (S \cap {{\phi}'}^*(\{t =0\}))
\]
\[ 
\hspace*{9cm} \le \dim (Z') -2
= \dim({Z'}^N) - 2,
\]
where the second and the last equalities hold by the finiteness and 
surjectivity of $f$. This gives a contradiction. 
This proves the modulus condition $M_{ssup}$ for $Z$,  thus, Claim (5). This 
completes the proof of the proposition.
\end{proof}
\begin{cor}\label{cor:wedge-AD}
Let $X$ and $Y$ be smooth projective varieties. Let 
$V_1 \in  {\TZ}^{q_1}(X, n_1 ; m)$ and $V_2  \in  {\TZ}^{q_2}(Y, n_2 ; m)$
be two irreducible admissible cycles. Then $Z = {\mu}_*(V_1 \times V_2)$
is an admissible additive cycle in ${\TZ}^{q}(X \times Y, n ; m)$.
\end{cor}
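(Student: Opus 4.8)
The statement to prove, Corollary~\ref{cor:wedge-AD}, is a direct assembly of the lemmas established just above it. The plan is to verify the three defining conditions of an admissible additive cycle in Definition~\ref{defn:AdditiveComplex} for $Z = \mu_*(V_1 \times V_2) \subset X \times Y \times B_n$: (i) it is an integral closed subscheme of the correct dimension, (ii) it meets all faces properly, and (iii) it satisfies the modulus condition $M$ (either $M_{sum}$ or $M_{ssup}$). Conditions (ii) and (iii) are exactly the content of Lemma~\ref{lem:proper-int} and Proposition~\ref{prop:product-modulus}, so the proof is essentially a bookkeeping argument that collects these.

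First I would observe that $V_1 \times V_2$ is an integral closed subscheme of $X \times \G_m \times \square^{n_1-1} \times Y \times \G_m \times \square^{n_2-1}$, since the product of two integral $k$-schemes is integral when $k$ is perfect (one uses here that $k$ is perfect, as in \cite[Lemma~3.1]{KL}). Its image under the map $\mu$ of \eqref{eqn:exproduct} is therefore irreducible, and taking the Zariski closure $Z := \mu_*(V_1 \times V_2)$ in $X \times Y \times B_n$ keeps it an integral closed subscheme. The dimension count $\dim Z = \dim V_1 + \dim V_2$, equivalently $\codim_{X \times Y \times B_n} Z = q$ with $q = q_1 + q_2 - 1$, has already been carried out in the discussion preceding Lemma~\ref{lem:proper-int}: it follows from the flatness of $\mu$ of relative dimension one together with the fact that the modulus condition forces $V_1 \times V_2 \neq \mu^{-1}(\mu(V_1 \times V_2))$ (unless one of the $V_i$ sits in a fiber over $\G_m$, in which case the count is immediate). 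So the first defining property holds.

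Next, the good position condition (condition (1) of Definition~\ref{defn:AdditiveComplex}) for $Z$ with respect to all faces $F$ of $B_n$ is precisely Lemma~\ref{lem:proper-int}, whose proof reduces proper intersection of $Z$ with $X \times Y \times F$ to proper intersection of $V_i$ with the corresponding faces of $X \times \G_m \times \square^{n_i-1}$ using the equivariance of $\mu$ under the face maps $\partial_i^\epsilon$. The modulus condition (condition (2)) is Proposition~\ref{prop:product-modulus}, which handles both $M_{sum}$ and $M_{ssup}$: the $M_{sum}$ case via the normalization diagram and Lemma~\ref{lem:surjm}, and the more delicate $M_{ssup}$ case via the five-step orbit-closure argument ending in the codimension estimate of Claim~(3) and the application of Lemma~\ref{lem:surjm} to $\bar\mu^N$.

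Assembling these: $Z$ is integral closed of codimension $q$ in $X \times Y \times B_n$, it meets every face properly, and it satisfies the chosen modulus condition $M$. Hence $Z \in \un{\TZ}^q(X \times Y, n; m)$, and since it is irreducible it is a generator of $\TZ^q(X \times Y, n; m)$. The one step I expect to require the most care — indeed the only nontrivial one — is invoking Proposition~\ref{prop:product-modulus} correctly, since that is where the geometric subtlety of the $M_{ssup}$ modulus truly lives; everything else in the corollary is a transcription of the preceding lemmas. No genuinely new argument is needed, which is why this is stated as a corollary rather than a proposition.
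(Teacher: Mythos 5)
Your proposal is correct and follows the paper's own route exactly: the paper's proof of this corollary is a one-line citation of Lemma~\ref{lem:proper-int} and Proposition~\ref{prop:product-modulus}, and your additional remarks on integrality and the codimension count merely make explicit what the paper established in the discussion of Subsection~\ref{subsection:external-wedge} immediately preceding those results. Nothing is missing.
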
 
\begin{proof}
This follows immediately from \lemref{lem:proper-int} and
\propref{prop:product-modulus}.
\end{proof}
\subsection{Shuffle products}\label{subsection:shuffle}
For an integer $r \geq 1$, let ${\rm \mathbb{P}erm}_r$ be the group of 
permutations on the set $\{ 1, \cdots, r \}$. For integers 
$s, p_1, p_2, \cdots, p_s \geq 1$, a $(p_1, \cdots, p_s)$-{\sl shuffle} is a 
permutation 
$\sigma \in {\rm \mathbb{P}erm}_{p_1 + \cdots + p_s}$ satisfying the 
following properties:
\[
\left\{\begin{array}{lll}
& \sigma (1) \le \cdots \le \sigma (p_1) \\
& \ \ \ \ \  \ \ \ \ \ \ \vdots  \\
& \sigma (p_1 + \cdots + p_{i-1} + 1 ) \le \cdots \le \sigma 
(p_1 + \cdots + p_{i-1} + p_i), \\
& \ \ \ \ \ \ \ \ \ \ \ \vdots \\
& \sigma (p+1 + \cdots + p_{s-1} + 1 ) \le \cdots \le 
\sigma (p_1 + \cdots + p_{s-1} + p_s)
\end{array}
\right.
\]
Note that since $\sigma$ is an automorphism, each of the above inequalities
is in fact strict unless some $p_i = 1$ in which case one has
$\sigma (p_1 + \cdots + p_{i-1} + 1 ) = \sigma 
(p_1 + \cdots + p_{i-1} + p_i)$.

The set of all $(p_1, \cdots, p_s)$-shuffles is denoted by 
${\rm \mathbb{P}erm}_{(p_1, \cdots, p_s)}$. Here, 
${\rm \mathbb{P}erm}_r = 
{\rm \mathbb{P}erm}_{ \underset{r}{(\underbrace{1, \cdots, 1})}}$, and 
$|{\rm \mathbb{P}erm}_{(p_1, \cdots, p_s)}| = 
\frac{ (p_1 + \cdots + p_s)!}{p_1 ! \cdots p_s !}.$
A permutation $\sigma \in {\rm \mathbb{P}erm}_{n-1}$ acts compatibly on the 
spaces $\square^{n-1}$ and ${(\P^1)}^{n-1}$ via 
$$\sigma \cdot (t_1, \cdots, t_{n-1}) = 
(t_{\sigma^{-1} (1)}, \cdots, t_{\sigma^{-1} (n-1)}).$$
This generalizes to spaces of the form $Y \times \square^{n-1}$ 
and $Y \times {(\P^1)}^{n-1}$ via the trivial action of $\sigma $ on $Y$. 
We obtain induced actions on the groups of algebraic cycles such as 
${\TZ}^q (X, n;m)$, or $z^q (X, n-1)$, etc.

For a permutation $\sigma$, the sign $\sgn (\sigma)$ is $+1$ if $\sigma$ is 
even, and $\sgn (\sigma)$ is $-1$ if odd.

\subsubsection{Permutation identities}
The following basic identities on permutations 
play important roles in proving the associativity of 
the wedge product, and in proving that the differential operator defined in 
Section~\ref{section:DOL}, is a graded derivation for the wedge 
product.
\begin{prop}\label{prop:perm-identity}
Let $r,s,t \geq 1$ be integers. Then in the group ring 
$\mathbb{Z}[{\rm \mathbb{P}erm}_{r+s+t}]$, we have two equations
\begin{eqnarray*}
\sum_{\nu \in {\rm \mathbb{P}erm}_{(r,s,t)}} \sgn(\nu) \nu &=& 
\sum_{\tau \in {\rm \mathbb{P}erm}_{(r+s, t)}} 
\sum_{\sigma \in {\rm \mathbb{P}erm}_{(r,s)}} \sgn (\tau) 
\sgn (\sigma) \tau \cdot (\sigma \times {\rm Id_3}),\\
\sum_{\nu \in {\rm \mathbb{P}erm}_{(r,s,t)}} \sgn(\nu) \nu &=& 
\sum_{\tau' \in {\rm \mathbb{P}erm}_{(r, s+t)}} 
\sum_{\sigma' \in {\rm \mathbb{P}erm}_{(s,t)}} \sgn (\tau') 
\sgn (\sigma ') \tau' \cdot ({\rm Id}_1  \times \sigma '),
\end{eqnarray*}
where ${\rm Id}_1$ is the identity function of the set $\{1, \cdots, r \}$, 
and ${\rm Id}_3$ is the identity function of the set 
$\{ r+s+1, \cdots, r+s+t \}$.
\end{prop}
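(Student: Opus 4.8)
The plan is to prove the two identities by a direct bijective analysis of shuffle sets, exploiting the fact that a $(r,s,t)$-shuffle of $\{1,\dots,r+s+t\}$ is the same data as a nested pair of shuffles in two different ways. Since the two equations are completely symmetric (the second is obtained from the first by grouping the last two blocks rather than the first two), it suffices to prove the first one and then remark that the second follows by the obvious relabeling $\{1,\dots,r\}\leftrightarrow\{1,\dots,r\}$, $\{r+1,\dots,r+s+t\}$. So I will concentrate on
\[
\sum_{\nu \in {\rm \mathbb{P}erm}_{(r,s,t)}} \sgn(\nu)\,\nu
= \sum_{\tau \in {\rm \mathbb{P}erm}_{(r+s,t)}}
\sum_{\sigma \in {\rm \mathbb{P}erm}_{(r,s)}}
\sgn(\tau)\sgn(\sigma)\, \tau\cdot(\sigma\times{\rm Id}_3).
\]

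First I would set up the combinatorial bijection. The key observation is that a permutation $\nu\in{\rm \mathbb{P}erm}_{(r,s,t)}$ is determined by, and determines, an unordered partition of the output positions $\{1,\dots,r+s+t\}$ into three blocks of sizes $r$, $s$, $t$ (namely the images of the three groups), since within each group $\nu$ is strictly increasing. On the other side, a pair $(\tau,\sigma)$ with $\tau\in{\rm \mathbb{P}erm}_{(r+s,t)}$ and $\sigma\in{\rm \mathbb{P}erm}_{(r,s)}$ first splits $\{1,\dots,r+s+t\}$ into a block of size $r+s$ and a block of size $t$ (via $\tau$), and then $\sigma$ acts on the first $r+s$ coordinates to split the inputs $\{1,\dots,r+s\}$ into subblocks of sizes $r$ and $s$; composing, $\tau\cdot(\sigma\times{\rm Id}_3)$ sends the three groups of inputs to three increasing subsequences, so it lies in ${\rm \mathbb{P}erm}_{(r,s,t)}$. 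I would check that the assignment $(\tau,\sigma)\mapsto \tau\cdot(\sigma\times{\rm Id}_3)$ is a bijection from ${\rm \mathbb{P}erm}_{(r+s,t)}\times{\rm \mathbb{P}erm}_{(r,s)}$ onto ${\rm \mathbb{P}erm}_{(r,s,t)}$: surjectivity because given $\nu$ one recovers $\tau$ as the unique $(r+s,t)$-shuffle with the same image-set for the third block and the correct increasing order on the first $r+s$ outputs, and recovers $\sigma$ as the induced reordering; injectivity by the same uniqueness. A cardinality sanity check, $\frac{(r+s+t)!}{(r+s)!\,t!}\cdot\frac{(r+s)!}{r!\,s!}=\frac{(r+s+t)!}{r!\,s!\,t!}$, confirms that the two sets have the same size.

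The remaining point is the compatibility of signs, namely $\sgn\!\left(\tau\cdot(\sigma\times{\rm Id}_3)\right)=\sgn(\tau)\sgn(\sigma)$. This is immediate from multiplicativity of the sign character on the symmetric group together with $\sgn(\sigma\times{\rm Id}_3)=\sgn(\sigma)$, since $\sigma\times{\rm Id}_3$ fixes the last $t$ letters and acts as $\sigma$ on the first $r+s$. Hence each term $\sgn(\nu)\,\nu$ on the left corresponds under the bijection to exactly one term $\sgn(\tau)\sgn(\sigma)\,\tau\cdot(\sigma\times{\rm Id}_3)$ on the right with the same coefficient, and the two sums in $\mathbb{Z}[{\rm \mathbb{P}erm}_{r+s+t}]$ agree term by term. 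The second identity is proved identically by grouping blocks two and three; alternatively, apply the first identity with $(r,s,t)$ replaced by $(t,s,r)$ and conjugate by the order-reversing permutation, but the direct argument is cleaner.

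I do not expect any serious obstacle here: the entire content is the standard "associativity of shuffles" bookkeeping, and the only mild care needed is in the degenerate cases $p_i=1$ (where the inequalities in the definition of a shuffle become equalities but the permutation is still well-defined) — these cause no trouble since the bijection above never uses strictness. The mildly delicate step, if anything, is pinning down precisely which increasing-order convention is used when extracting $\tau$ and $\sigma$ from $\nu$, so that the composition recovers $\nu$ on the nose rather than up to an extra permutation; I would fix this by defining $\tau$ via the image of the third block and then letting $\sigma$ be the unique element making the first $r+s$ outputs of $\tau\cdot(\sigma\times{\rm Id}_3)$ coincide with those of $\nu$.
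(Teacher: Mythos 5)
Your proposal is correct and follows essentially the same route as the paper: the paper isolates the bijections $(\tau,\sigma)\mapsto\tau\cdot(\sigma\times{\rm Id}_3)$ and $(\tau',\sigma')\mapsto\tau'\cdot({\rm Id}_1\times\sigma')$ as Lemmas~\ref{lem:perm*1} and ~\ref{lem:perm*3} (proving injectivity directly and concluding bijectivity from the same cardinality count you perform), and then deduces the proposition from multiplicativity of the sign exactly as you do. The only cosmetic difference is that you also sketch surjectivity by constructing the inverse explicitly, whereas the paper gets it for free from the counting argument.
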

We need the following two simple lemmas.

\begin{lem}\label{lem:perm*1}
Let $r,s,t \geq 1$ be integers. For 
$\tau \in {\rm \mathbb{P}erm}_{(r+s, t)}$ and 
$\sigma \in {\rm \mathbb{P}erm}_{(r,s)}$, the permutation 
$\tau \cdot (\sigma \times {\rm Id}_3)$ is in 
${\rm \mathbb{P}erm}_{(r,s,t)}$. Furthermore, the set-theoretic function
\[
\phi : {\rm \mathbb{P}erm}_{(r+s, t)} \times {\rm \mathbb{P}erm}_{(r,s)} \to 
\mathbb{P}erm_{(r,s,t)}
\]
\[
(\tau, \sigma) \mapsto \tau \cdot (\sigma \times {\rm Id}_3)
\]
is a bijection.
\end{lem}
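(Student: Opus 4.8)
**Proof plan for Lemma~\ref{lem:perm*1}.**

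The plan is to give a direct combinatorial argument. A $(p_1,\dots,p_s)$-shuffle of $\{1,\dots,N\}$ with $N=p_1+\cdots+p_s$ is in bijection with an ordered partition of $\{1,\dots,N\}$ into subsets $S_1,\dots,S_s$ with $|S_i|=p_i$: namely, $S_i$ is the (sorted) image under $\sigma$ of the $i$-th block of consecutive indices. So I would first translate the statement into this language: a $(r,s,t)$-shuffle $\nu$ corresponds to an ordered triple $(A,B,C)$ of disjoint subsets of $\{1,\dots,r+s+t\}$ with $|A|=r$, $|B|=s$, $|C|=t$ partitioning the whole set; a $(r+s,t)$-shuffle $\tau$ corresponds to an ordered pair $(D,C)$ with $|D|=r+s$, $|C|=t$; and a $(r,s)$-shuffle $\sigma$ (acting on $\{1,\dots,r+s\}$) corresponds to an ordered pair $(A',B')$ partitioning $\{1,\dots,r+s\}$ with $|A'|=r$, $|B'|=s$.

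The key step is then to check that under these identifications the map $\phi$ sends $((D,C),(A',B'))$ to the triple $(A,B,C)$ where $A$ is the set of elements of $D$ whose rank (position) inside the sorted list $D$ lies in $A'$, and $B$ is the corresponding subset of $D$ for $B'$; the third block $C$ is unchanged because $\sigma\times\mathrm{Id}_3$ fixes $\{r+s+1,\dots,r+s+t\}$ pointwise, and $\tau$ maps those positions to the block $C$. One verifies that the composite $\tau\cdot(\sigma\times\mathrm{Id}_3)$ is order-preserving on each of the three blocks $\{1,\dots,r\}$, $\{r+1,\dots,r+s\}$, $\{r+s+1,\dots,r+s+t\}$, hence is indeed a $(r,s,t)$-shuffle: on the first block it is $\sigma$ (order-preserving onto $A'\subset\{1,\dots,r+s\}$) followed by $\tau$ (order-preserving on $\{1,\dots,r+s\}$), so the composite is order-preserving; similarly for the other two blocks. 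For the bijectivity, I would exhibit the inverse explicitly: given $(A,B,C)$, set $D=A\cup B$ and $C=C$ (this recovers $\tau$), and let $A'$, $B'$ be the sets of ranks of $A$, respectively $B$, inside the sorted list $D$ (this recovers $\sigma$). One checks these two assignments are mutually inverse, which is a routine unwinding of the definitions.

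The main obstacle — really the only subtle point — is bookkeeping with the two different ambient index sets: $\sigma$ permutes $\{1,\dots,r+s\}$ while $\tau$ permutes $\{1,\dots,r+s+t\}$, and one must be careful that $\sigma\times\mathrm{Id}_3$ denotes the extension of $\sigma$ by the identity on $\{r+s+1,\dots,r+s+t\}$, and that composition ``$\tau\cdot(\sigma\times\mathrm{Id}_3)$'' applies $\sigma\times\mathrm{Id}_3$ first. Once the conventions are pinned down, the order-preservation on each block and the explicit inverse make both claims immediate. I would also remark that a clean alternative proof is a counting argument: the source has cardinality $\binom{r+s+t}{t}\binom{r+s}{s}$ and the target has cardinality $\binom{r+s+t}{r,s,t}=\binom{r+s+t}{t}\binom{r+s}{s}$, so it suffices to prove $\phi$ is injective; injectivity follows because $\tau$ is recovered as the unique $(r+s,t)$-shuffle whose last block agrees with the last block of $\tau\cdot(\sigma\times\mathrm{Id}_3)$, and then $\sigma$ is recovered from $\tau^{-1}\cdot\bigl(\tau\cdot(\sigma\times\mathrm{Id}_3)\bigr)$.
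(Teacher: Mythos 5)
Your proposal is correct. Your ``clean alternative'' at the end (equal cardinalities $\binom{r+s+t}{t}\binom{r+s}{s}=\frac{(r+s+t)!}{r!\,s!\,t!}$ plus injectivity, with $\tau$ recovered from the last block and $\sigma$ then determined) is in fact exactly the paper's proof: the paper declares the first claim obvious, proves injectivity by noting that $\sigma_i\times{\rm Id}_3$ fixes $S_3=\{r+s+1,\dots,r+s+t\}$ so that $\tau_1|_{S_3}=\tau_2|_{S_3}$, which by strict monotonicity of $(r+s,t)$-shuffles on $\{1,\dots,r+s\}$ forces $\tau_1=\tau_2$ and hence $\sigma_1=\sigma_2$, and then concludes by the cardinality count. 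Your primary route --- encoding shuffles as ordered partitions $(A,B,C)$, checking order-preservation block by block, and writing down the explicit inverse $(A,B,C)\mapsto\bigl((A\cup B,\,C),\,(\text{ranks of }A\text{ in }A\cup B,\ \text{ranks of }B\text{ in }A\cup B)\bigr)$ --- is a genuinely constructive variant that dispenses with the counting step entirely and makes the first claim (membership in ${\rm \mathbb{P}erm}_{(r,s,t)}$) a transparent consequence of composing order-preserving maps rather than an assertion left to the reader. Either version is complete; the explicit inverse buys a self-contained argument, while the paper's counting shortcut is less bookkeeping.
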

\begin{proof} The first part is obvious. For the second part, consider the 
following:
\begin{claim}\label{claim:perm*2}
If $\phi (\tau_1, \sigma_1) = \phi (\tau_2, \sigma_2)$, then 
$\tau_1 = \tau_1$ and $\sigma_1 = \sigma_2$. That is, $\phi$ is injective.
\end{claim}
We are given $\tau_1 \cdot (\sigma_1 \times {\rm Id}_3) = 
\tau_2 (\sigma_2 \times {\rm Id}_3)$. Since 
$\sigma_1 \times {\rm Id}_3, \sigma_2 \times {\rm Id}_3$ do not touch the 
set $S_3:=\{ r+s+1, \cdots, r+s+t \}$, we get 
$\tau_1 |_{S_3} = \tau_2 |_{S_3}$. However, $\tau_i$ are $(r+s,t)$-shuffles 
so that they are strictly increasing on $\{1, \cdots, r+s \}$. This forces 
$\tau_1 |_{\{ 1, \cdots, r+s \}} = \tau_2 |_{\{ 1, \cdots, r+s \}}$. Hence, 
$\tau_1 = \tau_2$. This implies $\sigma_1 = \sigma_2$. Thus the Claim is 
proved.

Notice that for the function $\phi$, the domain and the target have equal 
cardinalities:
\[
|{\rm \mathbb{P}erm}_{(r+s, t)} |\times |{\rm \mathbb{P}erm}_{(r,s)}| = 
\frac{ (r+s+t)!}{ (r+s)! t!} \cdot \frac{ (r+s)!}{r! s!} = 
\frac{ (r+s+t)!}{ r! s! t!} = |{\rm \mathbb{P}erm}_{(r,s,t)}|.
\]
Since $\phi$ is an injective function, this shows that it is automatically 
bijective.
\end{proof}
\begin{lem}\label{lem:perm*3}
Let $r,s,t \geq 1$ be integers. For 
$\tau' \in {\rm \mathbb{P}erm}_{(r, s+t)}$, 
$\sigma' \in {\rm \mathbb{P}erm}_{(s,t)}$, the permutation 
$\tau' \cdot ({\rm Id}_1 \times \sigma')$ is in 
${\rm \mathbb{P}erm}_{(r,s,t)}$. Furthermore, the set-theoretic function
\[
\psi : { \rm \mathbb{P}erm}_{(r, s+t)} \times {\rm \mathbb{P}erm}_{(s,t)} 
\to {\rm \mathbb{P}erm}_{(r,s,t)}
\]
\[
(\tau' , \sigma') \mapsto \tau' \cdot ({\rm Id}_1 \times \sigma')
\]
is a bijection.
\end{lem}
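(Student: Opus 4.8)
The plan is to mirror, essentially line for line, the proof of Lemma~\ref{lem:perm*1}: the map $\psi$ bears to the splitting ``$r$ then $s+t$'', followed by ``$s$ then $t$'', the same relation that $\phi$ bears to the splitting ``$r+s$ then $t$'', followed by ``$r$ then $s$''; the only difference is that the block left (strictly) increasing by the outer shuffle is now the \emph{front} block $\{1,\cdots,r\}$ rather than the back block.

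First I would check that $\tau'\cdot({\rm Id}_1\times\sigma')$ lies in ${\rm \mathbb{P}erm}_{(r,s,t)}$. On $\{1,\cdots,r\}$ the factor ${\rm Id}_1\times\sigma'$ is the identity while $\tau'$, being an $(r,s+t)$-shuffle, is increasing there; on $\{r+1,\cdots,r+s\}$ and on $\{r+s+1,\cdots,r+s+t\}$ the factor ${\rm Id}_1\times\sigma'$ acts as $\sigma'$ shifted by $r$, which is increasing on each of these two sub-blocks since $\sigma'$ is an $(s,t)$-shuffle, and then $\tau'$ is increasing on all of $\{r+1,\cdots,r+s+t\}$. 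Composing, the resulting permutation is increasing on each of the three blocks, hence is an $(r,s,t)$-shuffle.

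Next comes injectivity of $\psi$. If $\tau'_1\cdot({\rm Id}_1\times\sigma'_1)=\tau'_2\cdot({\rm Id}_1\times\sigma'_2)$, then, since ${\rm Id}_1\times\sigma'_i$ fixes the set $S_1:=\{1,\cdots,r\}$ pointwise, restricting both sides to $S_1$ gives $\tau'_1|_{S_1}=\tau'_2|_{S_1}$, hence $\tau'_1(\{r+1,\cdots,r+s+t\})=\tau'_2(\{r+1,\cdots,r+s+t\})$ as sets; as each $\tau'_i$ is strictly increasing on $\{r+1,\cdots,r+s+t\}$, this forces $\tau'_1=\tau'_2$, and then cancelling gives $\sigma'_1=\sigma'_2$. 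Finally, the elementary count
\[
|{\rm \mathbb{P}erm}_{(r, s+t)}|\cdot|{\rm \mathbb{P}erm}_{(s,t)}|=\frac{(r+s+t)!}{r!\,(s+t)!}\cdot\frac{(s+t)!}{s!\,t!}=\frac{(r+s+t)!}{r!\,s!\,t!}=|{\rm \mathbb{P}erm}_{(r,s,t)}|
\]
shows that $\psi$, being an injection between finite sets of equal cardinality, is a bijection. I do not anticipate any genuine obstacle; the only point requiring care is the bookkeeping of which coordinate block each factor fixes and which it permutes — the exact mirror image of the situation in Lemma~\ref{lem:perm*1}.
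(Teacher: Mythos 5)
Your proof is correct and is exactly what the paper intends: the paper simply states that the proof is essentially identical to that of Lemma~\ref{lem:perm*1}, and your argument is the faithful mirror image (restricting to the block $\{1,\cdots,r\}$ fixed pointwise by ${\rm Id}_1\times\sigma'$, using strict monotonicity of $\tau'$ on the complementary block, then the cardinality count). No issues.
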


\begin{proof}
Its proof is essentially identical to that of Lemma~\ref{lem:perm*1}.
\end{proof}
\begin{proof}[Proof of Proposition \ref{prop:perm-identity}] This obviously 
follows from Lemmas ~\ref{lem:perm*1} and 
~\ref{lem:perm*3} by observing that 
$\sgn (\tau \cdot (\sigma \times {\rm Id}_3)) = \sgn (\tau) \sgn (\sigma)$, 
and $\sgn (\tau' \cdot ({\rm Id}_1 \times \sigma )) = \sgn (\tau') 
\sgn (\sigma')$. This proves the proposition.
\end{proof}
For the Leibniz rule later, we need the following as well as the above 
results:
\begin{defn}\label{defn:P*}
For permutations $\sigma \in {\rm \mathbb{P}erm}_{n}$ and $\tau \in  
{\rm \mathbb{P}erm}_{(1, n)}$ with 
$\tau(1) = i \in \{ 1, \cdots, n \}$, define the permutation 
$\sigma_{\tau} =\sigma[i] \in {\rm \mathbb{P}erm}_{n+1}$ by sending
\[
j \in \{ 1, \cdots, n+1\} \mapsto \tuborg \sigma (j) & \mbox { if } j < i, \\ 
j & \mbox{ if } j = i , \\ \sigma (j-1) & \mbox{ if } j > i.\sluttuborg 
\]
\end{defn}
\begin{lem}\label{lem:Per*} 
Let $\sigma \in {\rm \mathbb{P}erm}_{(r,s)}$ and $\tau \in  
{\rm \mathbb{P}erm}_{(1, r+s)}$. Then the 
product $\sigma_{\tau} \cdot \tau$ in ${\rm \mathbb{P}erm}_{r+s+1}$ is a 
$(1, r, s)$-shuffle, {\sl i.e.}, 
$\sigma_{\tau} \cdot \tau \in  {\rm \mathbb{P}erm}_{(1, r, s)}$.
Furthermore, the set-theoretic map
\[
\phi_1:  {\rm \mathbb{P}erm}_{(r,s)} \times  {\rm \mathbb{P}erm}_{(1, r+s)} 
\to {\rm \mathbb{P}erm}_{(1, r, s)}
\]
\[
\left( \sigma , \tau \right) \mapsto \sigma_{\tau} \cdot \tau
\]
is a bijection.
\end{lem}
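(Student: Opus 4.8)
The plan is to prove Lemma~\ref{lem:Per*} by the same two-step strategy used for Lemmas~\ref{lem:perm*1} and \ref{lem:perm*3}: first check that the map $\phi_1$ lands in $\mathbb{P}erm_{(1,r,s)}$, then prove injectivity, and finally invoke a cardinality count to upgrade injectivity to bijectivity. For the first part, I would unwind Definition~\ref{defn:P*}. Write $\tau(1) = i$, so $\tau$ is the shuffle that inserts the singleton into slot $i$ and is strictly increasing on $\{2,\dots,r+s+1\}$ (in fact $\tau$ restricted to $\{2,\dots,r+s+1\}$ is the order-preserving bijection onto $\{1,\dots,r+s+1\}\setminus\{i\}$). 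The permutation $\sigma_\tau = \sigma[i]$ fixes the value $i$ and acts as $\sigma$ on the values $<i$ and as a shifted copy of $\sigma$ on the values $>i$. Composing, $\sigma_\tau\cdot\tau$ sends $1\mapsto i$, and on $\{2,\dots,r+1\}$ and $\{r+2,\dots,r+s+1\}$ it is a composite of an increasing map followed by an increasing map (the first block of $\sigma$ on one, the second block of $\sigma$ on the other), hence increasing on each of those two blocks. That is exactly the $(1,r,s)$-shuffle condition.

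Next I would prove injectivity of $\phi_1$. Suppose $\sigma_\tau\cdot\tau = \sigma'_{\tau'}\cdot\tau'$, and call this common permutation $\nu \in \mathbb{P}erm_{(1,r,s)}$. Evaluating at $1$ gives $\tau(1) = \nu(1) = \tau'(1)$, so $i := \tau(1) = \tau'(1)$; since an $(1,r+s)$-shuffle is determined by the single value $\tau(1)$ (the rest being forced to be order-preserving onto the complement), this yields $\tau = \tau'$. Cancelling $\tau$ on the right then gives $\sigma_\tau = \sigma'_\tau$, i.e.\ $\sigma[i] = \sigma'[i]$; but $\sigma$ is recovered from $\sigma[i]$ by deleting the fixed slot $i$ and un-shifting, so $\sigma = \sigma'$. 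Hence $\phi_1$ is injective. Finally, the cardinality bookkeeping:
\[
|\mathbb{P}erm_{(r,s)}|\cdot|\mathbb{P}erm_{(1,r+s)}| = \frac{(r+s)!}{r!\,s!}\cdot\frac{(r+s+1)!}{1!\,(r+s)!} = \frac{(r+s+1)!}{1!\,r!\,s!} = |\mathbb{P}erm_{(1,r,s)}|,
\]
so an injective map between finite sets of equal size is automatically a bijection, completing the proof.

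The only mild subtlety — and the one place I would be careful rather than wave hands — is the bookkeeping with strict versus weak inequalities when some of $r$, $s$ equals $1$ (so that the relevant block has length one and "increasing" degenerates to "constant on that block"), exactly the phenomenon flagged in the remark after the definition of shuffles. This does not change any of the arguments above: evaluation at $1$, the determination of a $(1,r+s)$-shuffle by its value at $1$, and the reconstruction of $\sigma$ from $\sigma[i]$ are all insensitive to whether a block has length one. So I expect no real obstacle; the proof is a direct transcription of the pattern already established, with the verification of the block-wise monotonicity of $\sigma_\tau\cdot\tau$ being the one computation worth spelling out. Accordingly I would keep the written proof short, referring back to Lemma~\ref{lem:perm*1} for the structure and only recording the evaluation-at-$1$ trick and the new cardinality identity explicitly.
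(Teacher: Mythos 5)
Your proof is correct and follows essentially the same route as the paper's: verify that $\phi_1$ lands in ${\rm \mathbb{P}erm}_{(1,r,s)}$, establish one of injectivity/surjectivity by tracking where $1$ is sent, and finish with the cardinality identity $\frac{(r+s)!}{r!\,s!}\cdot\frac{(r+s+1)!}{(r+s)!}=\frac{(r+s+1)!}{r!\,s!}$. The only (immaterial) difference is that the paper deduces bijectivity from surjectivity plus the count, whereas you deduce it from injectivity plus the count, and you spell out the reconstruction of $(\sigma,\tau)$ from $\sigma_\tau\cdot\tau$ in more detail than the paper does.
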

\begin{proof}
The first statement is obvious. For the second statement, the surjectivity 
part is obvious by keeping track of where $1$ is sent. But since both sides 
have the cardinality $\frac{ (r+s)!}{ r! s!} \frac{ (r+s + 1)!} {(r+s)!} = 
\frac{ (r+s+1)!}{r! s!}$, the map $\phi_1$ must be bijective.
\end{proof}
\begin{lem}\label{lem:Per*0}
In the group ring $\mathbb{Z} [{\rm \mathbb{P}erm}_{r+s+1}]$, we have
\begin{eqnarray*}\sum_{\sigma \in  {\rm \mathbb{P}erm}_{(r,s)} } (\sgn 
(\sigma)) 
\left( \sum_{\tau \in  {\rm \mathbb{P}erm}_{(1, r+s)}} (\sgn (\tau)) 
\sigma_{\tau} \cdot 
\tau \right)  = \sum_{\nu \in  {\rm \mathbb{P}erm}_{(1, r, s)}} 
(\sgn (\nu)) \nu.
\end{eqnarray*}
\end{lem}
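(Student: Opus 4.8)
The plan is to deduce Lemma~\ref{lem:Per*0} directly from the bijection $\phi_1$ established in Lemma~\ref{lem:Per*}, combined with a multiplicativity property of the sign. First I would observe that the left-hand side is, by definition, a sum over all pairs $(\sigma, \tau) \in {\rm \mathbb{P}erm}_{(r,s)} \times {\rm \mathbb{P}erm}_{(1, r+s)}$ of the term $\sgn(\sigma)\sgn(\tau)\, \sigma_\tau \cdot \tau$, after expanding the nested sums and pulling the scalars together. By Lemma~\ref{lem:Per*}, the map $\phi_1 : (\sigma, \tau) \mapsto \sigma_\tau \cdot \tau$ is a bijection onto ${\rm \mathbb{P}erm}_{(1,r,s)}$, so the index set of this double sum is in bijection with the index set of the right-hand side, and each $\nu \in {\rm \mathbb{P}erm}_{(1,r,s)}$ is hit exactly once.

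The remaining point is to check that the coefficient attached to $\nu = \sigma_\tau \cdot \tau$ on the left matches the one on the right, i.e.\ that $\sgn(\sigma)\sgn(\tau) = \sgn(\sigma_\tau \cdot \tau)$. Since $\sgn$ is a homomorphism, $\sgn(\sigma_\tau \cdot \tau) = \sgn(\sigma_\tau)\sgn(\tau)$, so it suffices to verify $\sgn(\sigma_\tau) = \sgn(\sigma)$. This is immediate from Definition~\ref{defn:P*}: the permutation $\sigma_\tau = \sigma[i]$ is obtained from $\sigma$ by inserting a fixed point at position $i$ and shifting the remaining values accordingly; concretely, if $\sigma$ is written as a product of transpositions, the same product (with indices relabeled by the order-preserving inclusion $\{1,\dots,n\}\setminus\{i\}^c \hookrightarrow \{1,\dots,n+1\}$ skipping $i$) represents $\sigma_\tau$, so the parity is unchanged. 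Equivalently, one counts inversions: the inversions of $\sigma_\tau$ are in order-preserving bijection with those of $\sigma$, since position $i$ contributes none.

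Putting these together: the left-hand side equals $\sum_{(\sigma,\tau)} \sgn(\sigma_\tau \cdot \tau)\, (\sigma_\tau \cdot \tau)$, which under the bijection $\phi_1$ is exactly $\sum_{\nu \in {\rm \mathbb{P}erm}_{(1,r,s)}} \sgn(\nu)\, \nu$, the right-hand side. I do not anticipate a serious obstacle here; the only mild subtlety is the bookkeeping in the sign identity $\sgn(\sigma_\tau) = \sgn(\sigma)$, which is the one place where one must look carefully at Definition~\ref{defn:P*} rather than invoke a black box. Everything else is a formal consequence of $\phi_1$ being a bijection and $\sgn$ being multiplicative.
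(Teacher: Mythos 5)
Your proposal is correct and follows essentially the same route as the paper: the paper's proof consists precisely of invoking the bijection of Lemma~\ref{lem:Per*} together with the observation that $\sgn(\sigma_\tau\cdot\tau)=\sgn(\sigma)\sgn(\tau)$. You merely supply the short inversion-counting justification for $\sgn(\sigma_\tau)=\sgn(\sigma)$ that the paper leaves implicit.
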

\begin{proof}
Note that $\sgn (\sigma_{\tau} \cdot \tau) = \sgn (\sigma) \sgn (\tau)$. 
Thus, together with the Lemma~\ref{lem:Per*}, we get the desired result.
\end{proof}
\subsection{Pre-wedge product via shuffles}\label{subsection:prewedge}
Let $X$ and $Y$ be smooth projective varieties. Consider the groups 
${\TZ}^{q_1} (X, n_1; m)$ and ${\TZ}^{q_2} (Y, n_2; m)$. Let 
$n = n_1 + n_2 -1$ and $q = q_1 + q_2 -1$. Consider the group of 
cubical higher Chow cycles, $z^{q+1} (X \times Y \times 
\mathbb{G}_m \times \mathbb{G}_m, n-1)$, i.e. cycles of codimension $q+1$ in 
$X \times Y \times \mathbb{G}_m \times 
\mathbb{G}_m \times \square^{n-1}$ that intersect all faces of 
$\square^{n-1}$ properly, modulo the degenerate cycles.

\begin{defn}\label{defn:shuffle*}
For two irreducible admissible cycles $V_1 \in {\TZ}^{q_1} (X, n_1 ; m)$, 
and $V_2 \in {\TZ}^{q_2} (Y, n_2; m)$,
the \sl{shuffle product} $V_1 \times_{sh} V_2$ is defined as a cycle in 
$z^{q+1} (X \times Y \times \mathbb{G}_m \times \mathbb{G}_m, n-1)$, 
given by the equation
\begin{equation}\label{eqn:shuffle0}
V_1 \times _{sh} V_2 := 
\sum_{\sigma \in {\rm \mathbb{P}erm}_{(n_1 -1, n_2 -1)}} \sgn(\sigma)  
\ \sigma \cdot (V_1 \times V_2).
\end{equation}
\end{defn}
We can extend this definition $\mathbb{Z}$-bilinearly to get a homomorphism
\[
\times_{sh}: {\TZ}^{q_1} (X, n_1;m) \otimes_{\mathbb{Z}} 
{\TZ}^{q_2} (Y, n_2; m) 
\to z^{q+1} (X \times Y \times \mathbb{G}_m \times \mathbb{G}_m, n-1).
\]
The image of this map is the group of $(n_1-1, n_2 -1)$-shuffles.
\begin{lem}\label{lem:shuffle*1}
For the cycles $V_1, V_2$ above and for 
$\sigma \in {\rm \mathbb{P}erm}_{(n_1 -1, n_2 -1)}$, one has that
${\mu}_*\left(\sigma(V_1 \times V_2)\right) \in
{\TZ}^q(X \times Y, n ; m)$.
\end{lem}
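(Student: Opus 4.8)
The plan is to reduce the statement to \corref{cor:wedge-AD}. The key point is that the permutation $\sigma$ acts only on the $\square^{n-1}$-factor, whereas the multiplication $\mu$ affects only the two $\G_m$-factors, so the two operations commute. Concretely, in the composite defining $\mu_*$ (see \eqref{eqn:exproduct}) the step $1 \times 1 \times \mu \times 1$ commutes with the automorphism of $X \times Y \times \G_m \times \G_m \times \square^{n-1}$ that permutes the $\square^{n-1}$-coordinates according to $\sigma$. Hence, writing $Z := {\mu}_*(V_1 \times V_2)$, we obtain ${\mu}_*\bigl(\sigma(V_1 \times V_2)\bigr) = \sigma(Z)$. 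By \corref{cor:wedge-AD}, $Z$ is an admissible cycle in ${\TZ}^q(X \times Y, n; m)$, so it remains to show that the permutation action of \secref{subsection:shuffle} carries ${\TZ}^q(X \times Y, n; m)$ into itself. I would prove this more general statement, as it is exactly what is needed when one assembles $\mu_*(V_1 \times_{sh} V_2)$ out of the individual terms $\mu_*(\sigma(V_1\times V_2))$.

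So let $\sigma \in {\rm \mathbb{P}erm}_{n-1}$ and let $Z \subset (X \times Y) \times B_n$ be an irreducible admissible cycle. The automorphism of $B_n$ induced by $\sigma$ fixes the $\G_m$-factor and permutes the coordinate hypersurfaces $\{y_i = \epsilon\}$; in particular it carries each face $F$ of $B_n$ to a face $\sigma(F)$ with $\dim \sigma(F) = \dim F$. Therefore, for every face $F$, the intersection $\sigma(Z) \cap \bigl((X \times Y) \times F\bigr)$ is the $\sigma$-image of $Z \cap \bigl((X \times Y) \times \sigma^{-1}(F)\bigr)$, and since $\dim \sigma^{-1}(F) = \dim F$, the good position inequality for $Z$ at the face $\sigma^{-1}(F)$ yields the corresponding one for $\sigma(Z)$ at $F$. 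Thus $\sigma(Z)$ has the good position property.

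For the modulus condition, observe that the automorphism of ${\wh B}_n$ induced by $\sigma$ fixes the divisor $F_{n,0} = \{t = 0\}$ and permutes the divisors $F^1_{n,1}, \dots, F^1_{n,n-1}$ among themselves; in particular it fixes $F^1_n = \sum_{i=1}^{n-1} F^1_{n,i}$. Since ${\ov{\sigma(Z)}} = \sigma({\ov Z})$ and normalization commutes with automorphisms, $\sigma$ lifts to an isomorphism ${\ov Z}^N \xrightarrow{\sim} {\ov{\sigma(Z)}}^N$ compatible with the two maps $\nu$ to ${\wh B}_n$, under which $\nu^*_{\sigma(Z)}(F_{n,0})$ pulls back to $\nu^*_Z(F_{n,0})$ and $\nu^*_{\sigma(Z)}(F^1_{n,j})$ pulls back to $\nu^*_Z(F^1_{n,\sigma^{-1}(j)})$. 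For $M_{sum}$ this immediately transports the inequality $(m+1)[\nu^*(F_{n,0})] \le [\nu^*(F^1_n)]$ from $Z$ to $\sigma(Z)$; for $M_{ssup}$, if $(m+1)[\nu^*_Z(F_{n,0})] \le [\nu^*_Z(F^1_{n,i})]$ holds for some $i$, then the corresponding inequality holds for $\sigma(Z)$ with the index $\sigma(i)$. Combining the last two paragraphs gives $\sigma(Z) \in {\TZ}^q(X \times Y, n; m)$, hence ${\mu}_*\bigl(\sigma(V_1 \times V_2)\bigr) = \sigma(Z) \in {\TZ}^q(X\times Y, n; m)$, as claimed.

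This lemma is essentially bookkeeping and I do not expect a serious obstacle; the only point requiring care is the compatibility of the $\sigma$-action on ${\wh B}_n$ with the normalizations ${\ov Z}^N$ (and the resulting index shift $i \mapsto \sigma(i)$ in the $M_{ssup}$ case), which is why I would package the modulus argument through the explicit lift ${\ov Z}^N \xrightarrow{\sim} {\ov{\sigma(Z)}}^N$ rather than manipulating Weil divisors on different varieties directly. One could alternatively invoke \propref{prop:restp}, but since $\sigma$ is an isomorphism the direct argument is cleaner.
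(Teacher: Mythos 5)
Your proof is correct and follows essentially the same route as the paper's: commute $\sigma$ with $\mu$ to get ${\mu}_*\bigl(\sigma(V_1\times V_2)\bigr)=\sigma\bigl({\mu}_*(V_1\times V_2)\bigr)$ and then invoke Corollary~\ref{cor:wedge-AD}. The only difference is that you explicitly verify that the $\sigma$-action preserves both the good-position and the modulus conditions (with the index shift $i\mapsto\sigma(i)$ in the $M_{ssup}$ case), a step the paper leaves implicit; this is a worthwhile addition but not a different argument.
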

\begin{proof} We first observe that $\sigma$ induces an automorphism of
${(\P^1)}^{n-1}$ which preserves ${\square}^{n-1}$ and acts trivially
on $X \times Y \times \G_m \times \G_m$. In particular, the actions of
${\mu}$ and $\sigma$ commute. Hence we have ${\mu}\left(\sigma
(V_1 \times V_2)\right) = \sigma\left({\mu}(V_1 \times V_2)\right)$,
which in turn implies that ${\mu}_*\left(\sigma
(V_1 \times V_2)\right) = \sigma\left({\mu}_*(V_1 \times V_2)\right)$.
The lemma now follows from Corollary~\ref{cor:wedge-AD}.
\end{proof}

This lemma allows one to define the {\sl pre-wedge product}
$V_1 \ov {\wedge} V_2$ of $V_1$ and $V_2$ in 
${\TZ}^{q} (X \times Y, n; m)$ by the equation
\begin{eqnarray*}
V_1 \ov {\wedge} V_2 &:=& \mu_* (V_1 \times_{sh} V_2) =  
\sum_{\sigma \in {\rm \mathbb{P}erm}_{(n_1 -1, n_2 -1)}} \sgn(\sigma)  
\mu_* ( \sigma \cdot (V_1 \times V_2))\\
&=&  \sum_{\sigma \in {\rm \mathbb{P}erm}_{(n_1 -1, n_2 -1)}} \sgn(\sigma)   
\sigma \cdot (\mu_* (V_1 \times V_2))
\end{eqnarray*}
As before, extend it $\mathbb{Z}$-bilinearly to get a homomorphism
\begin{equation}\label{eqn:shuffle*3}
\ov {\wedge} : {\TZ}^{q_1} (X, n_1;m) \otimes_{\mathbb{Z}} 
{\TZ}^{q_2} (Y, n_2; m) \to {\TZ}^{q} (X \times Y, n;m).
\end{equation}
The image of this map is the group of $(n_1 -1, n_2 -1)$-pre-wedges of 
codimension $q$ and modulus $m$. The group of pre-wedges is simply the image 
under $\mu_*$ of the group of shuffles.
\begin{cor}\label{cor:pre-wedge associativity}
For $V_i \in {\TZ}^{q_i}(X_i, n_i; m)$, we have $(V_1 \times_{sh} V_2) 
\times_{sh} V_3 = V_1 \times_{sh} (V_2 \times_{sh} V_3)$ in
${\TZ}^q(X_1 \times X_2 \times X_3, n ;m)$ for appropriate $q$ and $n$.
The same is true for $\ov {\wedge}$.
\end{cor}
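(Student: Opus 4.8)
The plan is to deduce both assertions from the two permutation identities of Proposition~\ref{prop:perm-identity}. Put $r = n_1-1$, $s = n_2-1$, $t = n_3-1$, and regard the external product $V_1 \times V_2 \times V_3$ as an irreducible subvariety of $X_1 \times X_2 \times X_3 \times {\mathbb G}_m^3 \times \square^{r+s+t}$, on which the symmetric group ${\rm \mathbb{P}erm}_{r+s+t}$ acts through permutation of the $\square^{r+s+t}$-coordinates (trivially on the remaining factors). Unwinding Definition~\ref{defn:shuffle*}, shuffling $V_1$ against $V_2$ and then the result against $V_3$ produces the cycle
\[
\sum_{\tau \in {\rm \mathbb{P}erm}_{(r+s,\,t)}} \ \sum_{\sigma \in {\rm \mathbb{P}erm}_{(r,s)}} \sgn(\tau)\sgn(\sigma)\ \bigl(\tau \cdot (\sigma \times {\rm Id}_3)\bigr)\cdot (V_1 \times V_2 \times V_3),
\]
while shuffling $V_2$ against $V_3$ first produces
\[
\sum_{\tau' \in {\rm \mathbb{P}erm}_{(r,\,s+t)}} \ \sum_{\sigma' \in {\rm \mathbb{P}erm}_{(s,t)}} \sgn(\tau')\sgn(\sigma')\ \bigl(\tau' \cdot ({\rm Id}_1 \times \sigma')\bigr)\cdot (V_1 \times V_2 \times V_3).
\]

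First I would observe that it therefore suffices to prove the corresponding identity in the group ring $\mathbb{Z}[{\rm \mathbb{P}erm}_{r+s+t}]$ acting on the fixed subvariety $V_1 \times V_2 \times V_3$; this is exactly the content of Proposition~\ref{prop:perm-identity}, which says that both double sums equal $\sum_{\nu \in {\rm \mathbb{P}erm}_{(r,s,t)}} \sgn(\nu)\,\nu$. Hence $(V_1 \times_{sh} V_2) \times_{sh} V_3$ and $V_1 \times_{sh}(V_2 \times_{sh} V_3)$ are both equal to the ``total shuffle'' $\sum_{\nu \in {\rm \mathbb{P}erm}_{(r,s,t)}} \sgn(\nu)\, \nu\cdot (V_1 \times V_2 \times V_3)$ in the ambient cubical higher Chow group, and the associativity of $\times_{sh}$ follows after extending $\mathbb{Z}$-trilinearly.

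Next I would pass to $\ov{\wedge}$ by pushing forward along multiplication. Let $\mu^{(3)} : {\mathbb G}_m^3 \to {\mathbb G}_m$ denote the triple product. The iterated pre-wedge $(V_1 \ov{\wedge} V_2)\ov{\wedge} V_3$ is obtained by applying $\mu_*$ twice, and since multiplication on ${\mathbb G}_m$ is associative, the resulting composite push-forward is $\mu^{(3)}_*$ independently of the bracketing, and likewise for $V_1 \ov{\wedge}(V_2 \ov{\wedge} V_3)$. By Lemma~\ref{lem:shuffle*1} (and its proof) the push-forward $\mu^{(3)}_*$ commutes with the ${\rm \mathbb{P}erm}_{r+s+t}$-action, since the permutations act trivially on the ${\mathbb G}_m$-factors and $\mu^{(3)}$ only touches those factors; moreover Corollary~\ref{cor:wedge-AD}, applied iteratively to the smooth projective variety $X_1 \times X_2 \times X_3$, shows that $\mu^{(3)}_*\bigl(\nu\cdot (V_1 \times V_2 \times V_3)\bigr)$ is an admissible cycle in ${\TZ}^q(X_1 \times X_2 \times X_3, n;m)$ for the appropriate $q = q_1+q_2+q_3-2$ and $n = n_1+n_2+n_3-2$. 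Applying $\mu^{(3)}_*$ to the identity of the previous paragraph then gives $(V_1 \ov{\wedge} V_2)\ov{\wedge} V_3 = V_1 \ov{\wedge}(V_2 \ov{\wedge} V_3)$ in ${\TZ}^q(X_1 \times X_2 \times X_3, n;m)$.

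The only step requiring genuine care is the first one: one must verify that the two ways of iterating the binary shuffle really are encoded by the operators $\tau\cdot(\sigma\times{\rm Id}_3)$ and $\tau'\cdot({\rm Id}_1\times\sigma')$ appearing in Lemmas~\ref{lem:perm*1} and~\ref{lem:perm*3} — that is, that the $(r+s,\,t)$-shuffles act on the coordinates of $V_1\times_{sh}V_2$ exactly as the notation suggests, with the correct sign conventions, and similarly on the other side. Once the index sets and signs are matched, the combinatorial heart of the argument is Proposition~\ref{prop:perm-identity}, and the remaining geometric inputs — admissibility of the triple product cycles and compatibility of $\mu^{(3)}_*$ with the permutation action — are already in hand.
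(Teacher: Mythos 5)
Your proof is correct and follows essentially the same route as the paper, which deduces associativity of $\times_{sh}$ from the two identities of Proposition~\ref{prop:perm-identity} (via Lemmas~\ref{lem:perm*1} and~\ref{lem:perm*3}) and then passes to $\ov{\wedge}$ using the associativity of $\mu$ and the fact that $\mu_*$ commutes with the permutation action. You have merely written out in detail what the paper states in one line.
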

\begin{proof}
This follows from the associativity of $\mu$ and 
Proposition~\ref{prop:perm-identity}.
\end{proof}
\begin{lem}\label{lem:shuffle*4}
For two cycles $\xi \in {\TZ}^{q_1} (X, n_1; m)$ and $\eta \in {\TZ}^{q_2} 
(X, n_2 ; m)$, we have equations
\begin{eqnarray}
\partial (\xi \bar{\wedge} \eta) = (\partial \xi) \bar{\wedge} \eta + 
(-1)^{n_1 -1} \xi \bar{\wedge} (\partial \eta), \\
\xi \bar{\wedge} \eta = (-1)^{(n_1 -1)(n_2 -1)} \eta \bar{\wedge} \xi,
\end{eqnarray} 
where $\partial$ is the boundary map in the definition of additive higher
Chow groups.
\end{lem}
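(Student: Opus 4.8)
The plan is to exploit the factorization $\ov{\wedge}=\mu_*\circ(-\times_{sh}-)$ and reduce both identities to the corresponding statements for the shuffle product $\times_{sh}$ in the ordinary cubical higher Chow complex $z^{q+1}(X\times Y\times\G_m\times\G_m,\bullet-1)$, where they are classical cubical algebra. By \corref{cor:wedge-AD} and \lemref{lem:shuffle*1} the push-forward $\mu_*$ sends the group of $(n_1-1,n_2-1)$-shuffles into $\TZ^{q}(X\times Y,n;m)$; and since $\mu$ does not touch the cube coordinates $y_1,\dots,y_{n-1}$ and is equivariant for all the face maps $\partial_i^{\epsilon}$ (as noted in the proof of \lemref{lem:proper-int}), the map $\mu_*$ commutes both with the differential $\partial=\sum_i(-1)^i(\partial_i^{\infty}-\partial_i^{0})$ and with every permutation action. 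One must also check that $\times_{sh}$, and hence $\ov{\wedge}$, is well-defined on the non-degenerate complexes: a shuffle of a degenerate cube with an arbitrary cube is again degenerate, and $\mu_*$ preserves the degenerate subgroups; I would dispose of this routine point first.

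Granting the reduction, the first identity is the Leibniz rule for the cubical shuffle product. Writing out
\[
\partial\bigl(V_1\times_{sh}V_2\bigr)=\sum_{j=1}^{n-1}(-1)^j\bigl(\partial_j^{\infty}-\partial_j^{0}\bigr)\sum_{\sigma\in{\rm \mathbb{P}erm}_{(n_1-1,n_2-1)}}\sgn(\sigma)\,\sigma\cdot(V_1\times V_2),
\]
I would use the elementary relation $\partial_j^{\epsilon}\circ\sigma=\sigma'\circ\partial_{\sigma^{-1}(j)}^{\epsilon}$, where $\sigma'$ is the shuffle on the remaining $n-2$ coordinates obtained by deleting the $j$-th value of $\sigma$ (so $\sigma'\in{\rm \mathbb{P}erm}_{(n_1-2,n_2-1)}$ or ${\rm \mathbb{P}erm}_{(n_1-1,n_2-2)}$ according as $\sigma^{-1}(j)$ lies in the first or the second block). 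Splitting the double sum along this dichotomy and reindexing by the sign-respecting bijections of shuffle sets of the type already used in \lemref{lem:perm*1} and \lemref{lem:Per*}, the first block of terms collapses to $(\partial V_1)\times_{sh}V_2$ and the second to $(-1)^{n_1-1}V_1\times_{sh}(\partial V_2)$, the sign $(-1)^{n_1-1}$ being precisely the cost of carrying a face operator past the $n_1-1$ cube coordinates of $V_1$. Applying $\mu_*$ gives the first displayed formula of the lemma.

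For the graded commutativity, let $\chi\in{\rm \mathbb{P}erm}_{n-1}$ be the block transposition interchanging the first $n_1-1$ and the last $n_2-1$ cube coordinates, so $\sgn(\chi)=(-1)^{(n_1-1)(n_2-1)}$ and $\tau\mapsto\tau\cdot\chi$ is a bijection ${\rm \mathbb{P}erm}_{(n_2-1,n_1-1)}\xrightarrow{\ \sim\ }{\rm \mathbb{P}erm}_{(n_1-1,n_2-1)}$. Because $\mu\colon\G_m\times\G_m\to\G_m$ is commutative, $\mu_*(V_2\times V_1)$ equals $\chi\cdot\mu_*(V_1\times V_2)$ once the two factors $X\times Y$ are identified with $Y\times X$; hence
\[
V_2\,\ov{\wedge}\,V_1=\!\!\sum_{\tau\in{\rm \mathbb{P}erm}_{(n_2-1,n_1-1)}}\!\!\sgn(\tau)\,\tau\cdot\mu_*(V_2\times V_1)=(-1)^{(n_1-1)(n_2-1)}\!\!\sum_{\sigma\in{\rm \mathbb{P}erm}_{(n_1-1,n_2-1)}}\!\!\sgn(\sigma)\,\sigma\cdot\mu_*(V_1\times V_2),
\]
which is $(-1)^{(n_1-1)(n_2-1)}\,V_1\ov{\wedge}V_2$ (the two sides being compared, as usual, through the flip isomorphism $X\times Y\cong Y\times X$). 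Both formulas then extend $\mathbb{Z}$-bilinearly. The main obstacle is not conceptual but bookkeeping: keeping the signs straight in the Leibniz computation — in particular verifying that the second block of boundary terms reassembles with exactly the factor $(-1)^{n_1-1}$. All the genuinely new input over the classical (higher Chow) case, namely that $\mu_*(V_1\times V_2)$ is an admissible additive cycle satisfying the modulus condition, has already been provided by \corref{cor:wedge-AD}.
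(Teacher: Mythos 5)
Your proposal is correct and follows the same route as the paper: reduce both identities to the shuffle product $\times_{sh}$ via the fact that $\mu_*$ commutes with $\partial$ and with the permutation actions, and then observe that both identities are purely combinatorial facts about cubical shuffles. The paper leaves the combinatorics as "purely combinatorial statements"; you have merely written them out (correctly, including the sign $(-1)^{(n_1-1)(n_2-1)}$ of the block transposition and the reindexing of shuffle sets for the Leibniz rule).
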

\begin{proof}
For both of the equations, it is enough to prove it for $\times_{sh}$, where 
we use the fact that $\partial$ and $\mu_*$ commute for the first. 
But, actually both are just purely combinatorial statements.
\end{proof}
\begin{proof}[Proof of Proposition~\ref{prop:wedge-product}]
The external product structure in ~\eqref{eqn:EWP} follows
directly from the pre-wedge product of cycles in ~\eqref{eqn:shuffle*3}
and from the first identity of Lemma~\ref{lem:shuffle*4}.
If $X = Y$, the anti-commutativity follows directly from the second
identity of Lemma~\ref{lem:shuffle*4}.
\end{proof}

We now prove the following main result of this section and its consequences.
\begin{thm}\label{thm:wedge-product*}
Let $X$ be a smooth projective variety over a field $k$. 
Then there exists an internal wedge product on the additive higher Chow groups 
of $X$.
\begin{equation}\label{eqn:EWP2}
{\wedge}_X : \TH^{q_1} (X, n_1; m) \otimes_{\Z} 
\TH^{q_2} (X, n_2; m) \to \TH^{q} (X, n ; m),
\end{equation}
where $q = q_1 + q_2 - 1$, $n = n_1 + n_2 - 1$, and 
$q_i, n_i, m \geq 1$ for $i = 1,2$, which is associative and
satisfies the equation
\begin{equation}\label{eqn:anticommute*}
\xi {\wedge} _X \eta = 
(-1)^{(n_1-1)(n_2-1)} \eta {\wedge}_X \xi.
\end{equation} 
for all classes $\xi \in \TH^{q_1} (X, n_1;m)$ and $\eta \in \TH^{q_2} 
(X, n_2;m)$.

This wedge product is natural with respect to the pull-back maps of additive 
higher Chow groups and satisfies the projection formula
\begin{equation}\label{eqn:projection-wedge}
f_*\left(a {\wedge}_X f^*(b)\right) = f_*(a) {\wedge}_Y b
\end{equation}
for a morphism $f: X \to Y$ of smooth projective varieties.
\end{thm}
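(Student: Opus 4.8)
The plan is to obtain the internal wedge product on $X$ from the external one of Proposition~\ref{prop:wedge-product} by composing with a pull-back along the diagonal. More precisely, for a smooth projective variety $X$ let $\Delta : X \to X \times X$ be the diagonal morphism; this is a regular closed embedding since $X$ is smooth, so by Theorem~\ref{thm:funct} (or directly by Corollary~\ref{cor:lci} combined with the moving lemma, Theorem~\ref{thm:ML}) there is a pull-back map $\Delta^* : \TH^q(X \times X, n; m) \to \TH^q(X, n; m)$. I would then \emph{define}
\[
\xi \wedge_X \eta := \Delta^*\left(\xi \,\ov{\wedge}\, \eta\right)
\]
for $\xi \in \TH^{q_1}(X, n_1; m)$ and $\eta \in \TH^{q_2}(X, n_2; m)$, where $\ov{\wedge}$ is the external product of~\eqref{eqn:EWP} with $Y = X$. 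The anti-commutativity relation~\eqref{eqn:anticommute*} is then immediate from~\eqref{eqn:anticommute}, because $\Delta$ is symmetric under the flip of the two factors of $X \times X$ and $\Delta^*$ is functorial: applying $\Delta^*$ to $\xi \ov{\wedge} \eta = (-1)^{(n_1-1)(n_2-1)}\,\eta \ov{\wedge} \xi$ gives the claim directly.

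The associativity is where one must be a little careful. I would deduce it from Corollary~\ref{cor:pre-wedge associativity}, which already gives associativity of $\ov{\wedge}$ at the level of cycles on triple products, together with the compatibility of the various diagonal pull-backs. Concretely, writing $\Delta_3 : X \to X \times X \times X$ for the small diagonal, and $\Delta \times \id,\ \id \times \Delta$ for the two ways of factoring $\Delta_3$ through $X \times X \times X$, one has $\Delta_3 = (\Delta \times \id_X)\circ \Delta = (\id_X \times \Delta)\circ \Delta$ as morphisms $X \to X\times X\times X$; by the composition law in Theorem~\ref{thm:funct} the associated pull-back maps satisfy $\Delta^* \circ (\Delta \times \id)^* = \Delta_3^* = \Delta^* \circ (\id \times \Delta)^*$. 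Applying this to the cycle-level equality $(\xi \ov{\wedge} \eta)\ov{\wedge}\zeta = \xi\ov{\wedge}(\eta\ov{\wedge}\zeta)$ in $\TH^\bullet(X\times X\times X, \bullet; m)$, and using that the external product is visibly compatible with products of factors (so that $(\xi\ov\wedge\eta)\ov\wedge\zeta$ is the image of $\xi\ov\wedge\eta\ov\wedge\zeta$ under the appropriate diagonal), yields $(\xi\wedge_X\eta)\wedge_X\zeta = \xi\wedge_X(\eta\wedge_X\zeta)$.

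For naturality under pull-back, let $f : X \to Y$ be a morphism of smooth projective varieties. The key point is the commutative square relating $f$ and $f \times f$ to the diagonals: $(f\times f)\circ \Delta_X = \Delta_Y \circ f$. The external product is manifestly natural (it is built from the flat product map $\mu$ on $\G_m$ and flat pull-backs, so $(f\times f)^*$ intertwines the two external products — this is essentially the functoriality of $\ov\wedge$ one reads off~\eqref{eqn:exproduct}), and then chasing $\Delta_X^* \circ (f\times f)^* = f^* \circ \Delta_Y^*$, which follows from the composition law of Theorem~\ref{thm:funct} applied to the above square, gives $f^*(\xi\wedge_Y\eta) = f^*\xi \wedge_X f^*\eta$. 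For the projection formula~\eqref{eqn:projection-wedge}, I would combine the projection formula for the external product (which follows from Theorem~\ref{thm:basic}, or can be checked directly at the cycle level from the definition of $\ov\wedge$) with the base-change/projection compatibility of $\Delta^*$ along $f$; here one uses again $(f\times f)\circ\Delta_X = \Delta_Y\circ f$ together with the projective push-forward $f_*$ of Theorem~\ref{thm:basic}. The main obstacle I anticipate is the bookkeeping needed to make the cycle-level identities on $X\times X$ and $X\times X\times X$ genuinely compatible with the diagonal pull-backs after passing to homology: because $\Delta^*$ is only defined on the moving-lemma subcomplex $\TZ^\bullet_{\{\Delta\}}$ and then transported via the quasi-isomorphism of Theorem~\ref{thm:ML}, one must check that the external products of admissible cycles can be moved into general position with respect to all relevant diagonals simultaneously — this is exactly what the moving lemma for the finite collection $\sW$ of (small and partial) diagonals delivers, and verifying that the several instances of $\Delta^*$ constructed this way are compatible with composition is the one place requiring genuine care rather than formal manipulation.
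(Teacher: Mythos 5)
Your proposal is correct and follows essentially the same route as the paper: define $\wedge_X = \Delta_X^*\circ\ov{\wedge}$ via Theorem~\ref{thm:funct}, deduce graded commutativity and associativity from Lemma~\ref{lem:shuffle*4} and Corollary~\ref{cor:pre-wedge associativity}, and obtain naturality and the projection formula from the identity $(f\times f)\circ\Delta_X=\Delta_Y\circ f$. The only difference is one of bookkeeping: where you invoke the composition law abstractly and flag the compatibility of the various $\Delta^*$'s as the delicate point, the paper discharges exactly that point by factoring $f$ into a closed embedding followed by a projection and verifying at the cycle level that $\mu_*$ commutes with flat and Gysin pull-backs via the Cartesian squares~\eqref{eqn:flatC0} and~\eqref{eqn:flatC1}.
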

\begin{proof}
Consider the diagonal map $\Delta_X : X \to X \times X$. Since $X$ is smooth 
projective, so is $X \times X$. Hence, by applying Theorem~\ref{thm:funct}
to $\Delta_X$, we get the pull-back map 
\[
\Delta^* _X : \TH^{q}(X \times X, n; m) \to \TH^{q} (X, n; m),
\]
for all integer $q \geq 0$. Composing with the pre-wedge product 
$\ov {\wedge}$, we 
have
\[
\xymatrix{ 
\TH^{q_1} (X, n_1; m) \otimes \TH^{q_2} (X, n_2 ; m) 
\ar[dr] _{\wedge_X} \ar[r] ^{\ \ \ \ \ \ \ \ \ \ \  \ov {\wedge}} &  
\TH^{q} ( X \times X, n; m) \ar[d] ^{\Delta ^* _X} \\
& \TH^{q} (X, n; m),}
\]
where the induced map $\ov {\wedge}$ is well-defined by 
Proposition~\ref{prop:wedge-product}. Now we define 
$\wedge_X := \Delta^* _X \circ \ov {\wedge}$. This gives the desired wedge 
product by the second equation of Lemma~\ref{lem:shuffle*4}. The associativity
follows from Corollary~\ref{cor:pre-wedge associativity}.

We now show the naturality of the wedge product and the projection 
formula. We first observe that if $X \xrightarrow{f} Y  \xrightarrow{g} Z$
are morphisms of smooth projective varieties, then it follows from the
contravariance property of the additive higher Chow groups ({\sl cf.}
Theorem~\ref{thm:funct}) that the naturality of the wedge product with
$f^*$ and $g^*$ implies the same with ${(g \circ f)}^*$.
Similarly, the projection formula for $f$ and $g$ implies that
\[
\begin{array}{lll}
{(g \circ f)}_*[a {\wedge}_X {(g \circ f)}^*(b)] & = &
(g_* \circ f_*)[a {\wedge}_X (f^* \circ g^*(b))] \\
& = & g_*[f_*(a) {\wedge}_Y g^*(b)] \\
& = & (g_* \circ f_*)(a) {\wedge}_Z b \\
& = & {(g \circ f)}_*(a) {\wedge}_Z b
\end{array}   
\]
Hence by factoring a map $f : X \to Y$ as a composite of the closed embedding
$X \inj X \times Y$ and the projection $X \times Y \to Y$, it suffices to
prove the naturality and projection formula when $f$ is one of these two types
of morphisms. 

To prove the naturality, we can use the contravariance property of the
additive higher Chow groups and the construction of the wedge product 
above to reduce to proving the naturality for the pre-wedge product
in Proposition~\ref{prop:wedge-product}. In this case, we only need to show
that for the flat map $f : X \to Y$, the diagram
\begin{equation}\label{eqn:flatC}
\xymatrix@C.8pc{
z^{q}(Y \times Y \times \G_m \times \G_m , n-1)
\ar[r]^{\ \ \ \ \ {\mu}_*} \ar[d]_{f^*} & z^{q}(Y \times Y \times \G_m, n-1) 
\ar[d]^{f^*} \\
z^{q}(X \times X \times \G_m \times \G_m , n-1)
\ar[r]_{\ \ \ \ \ {\mu}_*} & z^{q}(X \times X \times \G_m, n-1)}
\end{equation}
commutes, which is immediate from the definition of $f^*$ and ${\mu}_*$.

If $f$ is a closed embedding, we can use Theorem~\ref{thm:ML} to
replace ${\TZ}^q(Y, \bullet ; m)$ by ${\TZ}^q_{\{X\}}(Y, \bullet ; m)$.
Then, the pull-back map is induced by the Gysin map $f^*$ of
Corollary~\ref{cor:lci}. In this case, we have for the irreducible admissible
$V_i \in {\TZ}^{q_i}_{\{X\}}(Y, n_i;m)$ that $V_1 \times V_2 
\in  {\TZ}^{q+1}_{\{X \times X\}}(Y \times Y, n ;m)$ and 
$f^*(V_1 \times V_2) = f^*(V_1) \times f^*(V_2)$. Thus we only need to show
that the Diagram~\eqref{eqn:flatC} commutes where 
$z^{q}(Y \times Y \times \G_m \times \G_m , n-1)$ (resp.
$z^{q}(Y \times Y \times \G_m, n-1)$) is
replaced by $z^{q}_{\{X \times X \times \G_m \times \G_m\}}
(Y \times Y \times \G_m \times \G_m , n-1)$
(resp. $z^{q}_{\{X \times X \times \G_m \}}
(Y \times Y \times \G_m, n-1)$). But this follows easily once we know that
the diagram
\begin{equation}\label{eqn:flatC0} 
\xymatrix{
X \times X \times \G_m \times \G_m \ar[r] \ar[d]_{\mu} &
Y \times Y \times \G_m \times \G_m \ar[d]^{\mu} \\
X \times X \times \G_m \ar[r] & Y \times Y \times \G_m}
\end{equation}
is in fact a Cartesian square. This proves the naturality with pull-backs.

To prove the projection formula for the closed embedding $X 
\xrightarrow{f} Y$, we can again assume that the cycles under consideration
intersect $X$ or $X \times X$ properly. Then we have for
$V_1 \in z^{q_1}(X \times \G_m, n_1-1)$ and $V_2 \in 
z^{q_2}_{\{X \times \G_m \}}(Y \times \G_m, n_2-1)$, 
\[
\begin{array}{lll}
f_*[{\Delta}^*_X \{{\mu}^{X \times X}_*(V_1 \times [X] \cdot V_2)\}] & = &
{\Delta}^*_Y[{(f \times f)}_* 
\{{\mu}^{X \times X}_*(V_1 \times [X] \cdot V_2)\}] \\
& = & {\Delta}^*_Y[{\mu}^{Y \times Y}_* \{{(f \times f)}_* 
(V_1 \times [X] \cdot V_2)\}] \\
& = & {\Delta}^*_Y[{\mu}^{Y \times Y}_* \{f_*(V_1) \times ([X] \cdot V_2)\}] \\
& = & {\Delta}^*_Y[{\mu}^{Y \times Y}_* \{f_*(V_1) \times V_2)\}],
\end{array}
\] 
where the first equality follows from the fact that the the left square 
in the diagrams
\begin{equation}\label{eqn:flatC1}
\xymatrix{
X \ar[r]^{{\Delta}_X} \ar[d]_{f} & X \times X \ar[d]^{f \times f} &
X \times Y \ar[r]^{{\rm Id}_X \times {\Delta}_Y} \ar[d]_{f} & W \ar[d]^{f'} \\
Y \ar[r]^{{\Delta}_Y} & Y \times Y & Y \ar[r]^{{\Delta}_Y} & Y \times Y}
\end{equation}
is Cartesian and the last equality holds since ${\Delta}^*$ commutes with
${\mu}_*$, as follows from Theorem~\ref{thm:ML}.
This proves the projection formula for the closed embedding.

Finally, we prove the projection formula for the projection map
$f : Z = X \times Y \to Y$. Let $W = X \times Y \times Y$ and let 
${\mu}^W : W \times \G_m \times \G_m \to W \times \G_m$ be the product
map. Let $p : Z \times Z \to W$ be the projection map.
Then for any irreducible admissible cycles $V_1 \in
z^{q_1}(Z \times \G_m, n_1-1)$ and $V_2 \in z^{q_2}(Y \times \G_m, n_2-1)$,
we have
\[
\begin{array}{lll} 
f_*[{\Delta}^*_Z \{{\mu}^{Z \times Z}_*(V_1 \times f^*(V_2))\}] & = &
f_*[{\Delta}^*_Z \{{\mu}^{Z \times Z}_*(V_1 \times (X \times V_2))\}] \\
& = & f_*[{\Delta}^*_Z \{p^*\left({\mu}^{W}_*(V_1 \times V_2)\right)\}] \\
& = &
f_*[{(id_X \times {\Delta}_Y)}^* \left({\mu}^{W}_*(V_1 \times V_2)\right)] 
\\
& = & {\Delta}^Y_*[f'_*\left({\mu}^{W}_*(V_1 \times V_2)\right)] 
\ \ \ \ \ \ \ \ \ \ (*) \\
& = & {\Delta}^Y_*[{\mu}^{Y \times Y}_*\{{f'}_*(V_1 \times V_2)\}] \\
& = & {\Delta}^Y_*[{\mu}^{Y \times Y}_*\{{f}_*(V_2) \times V_1)\}],
\end{array}
\]
where the equality $(*)$ follows from the right Cartesian square in
~\eqref{eqn:flatC1}.
This proves the projection formula for the projection map. This completes the 
proof of the theorem.
\end{proof}
For a smooth projective variety $X$ over $k$,
let $\TH_n (X) = \bigoplus_q \TH^q (X, n+1; m)$ and let $\TH(X)  = 
\bigoplus_{n \geq 0} \TH_n (X)$. Let $A(X) = \bigoplus_q CH^q(X)$ be the
ordinary Chow ring of $X$. As an immediate consequence of 
Theorem~\ref{thm:wedge-product*}, we have :
\begin{cor}\label{cor:wedge-product**}
For $X$ as above, there is a wedge product structure on ${\TH}(X)$ 
\[
\TH(X) {\otimes}_{A(X)} \TH(X) \xrightarrow{\wedge} \TH(X),
\]
that makes $\TH(X)$ a graded-commutative algebra.
\end{cor}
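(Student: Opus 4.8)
The plan is to assemble the asserted structure out of \thmref{thm:wedge-product*} and \thmref{thm:basic}; the only point requiring real work is the compatibility of the wedge product with the $A(X)$-action, everything else being formal. First I would check that the internal wedge product $\wedge_X$ of \eqref{eqn:EWP2} respects the shifted grading: if $\xi\in\TH_a(X)$, say $\xi\in\TH^{q_1}(X,a+1;m)$, and $\eta\in\TH_b(X)$, say $\eta\in\TH^{q_2}(X,b+1;m)$, then $\xi\wedge_X\eta$ lies in $\TH^{q_1+q_2-1}\big(X,(a+1)+(b+1)-1;m\big)=\TH^{q_1+q_2-1}\big(X,(a+b)+1;m\big)\subseteq\TH_{a+b}(X)$. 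Summing over codimensions, $\wedge_X$ thus induces a $\Z$-bilinear product $\TH_a(X)\otimes_{\Z}\TH_b(X)\to\TH_{a+b}(X)$, making $\TH(X)=\bigoplus_{n\ge0}\TH_n(X)$ a graded ring; associativity is part of the statement of \thmref{thm:wedge-product*}, and the sign $(-1)^{(n_1-1)(n_2-1)}$ of \eqref{eqn:anticommute*} becomes exactly $(-1)^{ab}$ in the shifted indexing, so the ring is graded-commutative. Next, \thmref{thm:basic} supplies a cap product $CH^r(X)\otimes\TH^{q}(X,n;m)\to\TH^{q+r}(X,n;m)$ which is associative and leaves the cubical level $n$ untouched; summing over codimensions it makes each $\TH_n(X)$, hence $\TH(X)$, a module over $A(X)=\bigoplus_qCH^q(X)$, with action written $\alpha\cdot\xi$.

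It then remains to show that $\wedge_X$ is $A(X)$-bilinear, i.e. that $(\alpha\cdot\xi)\wedge_X\eta=\alpha\cdot(\xi\wedge_X\eta)$ for $\alpha\in A(X)$ — the identity $\xi\wedge_X(\alpha\cdot\eta)=\alpha\cdot(\xi\wedge_X\eta)$ then follows by graded-commutativity — so that $\wedge_X$ descends to $\TH(X)\otimes_{A(X)}\TH(X)$. The approach I would take is to use that, by construction, both products are an external product on a product of copies of $X$ followed by the diagonal pull-back of \thmref{thm:funct}: one has $\xi\wedge_X\eta=\Delta_X^*(\xi\,\ov{\wedge}\,\eta)$ with $\ov{\wedge}$ the external wedge of \secref{subsection:external-wedge}, and, likewise, $\alpha\cdot\xi=\Delta_X^*(\alpha\times\xi)$ where $\alpha\times\xi\in\TH^{q+r}(X\times X,n;m)$ is the naive external product of an ordinary cycle with an additive one. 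On the level of cycles the external wedge acts only on the $\G_m$- and $\square^{\bullet}$-coordinates (through $\mu_*$ and the shuffle permutations) and leaves the variety factors fixed, so for $\alpha$ on $X_1$, $\xi$ on $X_2$ and $\eta$ on $X_3$ one obtains the on-the-nose identity $(\alpha\times\xi)\,\ov{\wedge}\,\eta=\alpha\times(\xi\,\ov{\wedge}\,\eta)$ in $\TH^{\bullet}(X_1\times X_2\times X_3,\bullet;m)$. Feeding this into the two factorizations of the triple diagonal $\delta_X\colon X\to X\times X\times X$ through $X\times X$, and invoking the naturality of the external products with respect to pull-backs (\thmref{thm:wedge-product*}, \thmref{thm:basic}) together with the composition law $\Delta_X^*\circ(\Delta_X\times\id_X)^*=\big((\Delta_X\times\id_X)\circ\Delta_X\big)^*$ of \thmref{thm:funct}, both sides of the desired equation become $\delta_X^*\big(\alpha\times(\xi\,\ov{\wedge}\,\eta)\big)$ and hence agree. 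This yields the $A(X)$-algebra structure, and together with the grading and signs established above it shows that $\TH(X)$ is a graded-commutative $A(X)$-algebra.

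The step I expect to be the main obstacle is this last compatibility. The on-the-nose identity for the external products is routine, but promoting it to an identity of cap and wedge products on $X$ requires the external products to be natural along the non-flat diagonal maps, which is exactly where the moving lemma \thmref{thm:ML} and the containment argument \propref{prop:restp} re-enter: one must first move the cycles representing $\alpha\times\xi$ and $\xi\,\ov{\wedge}\,\eta$ into general position with respect to the relevant diagonals and check that the identity is preserved under this moving. By contrast, the re-indexing, the associativity and graded-commutativity of $\wedge_X$, and the $A(X)$-module axioms are all immediate from \thmref{thm:wedge-product*} and \thmref{thm:basic}.
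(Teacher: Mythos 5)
Your proposal is correct and follows essentially the same route as the paper: the corollary is deduced from Theorem~\ref{thm:wedge-product*}, with the only substantive point being the $A(X)$-bilinearity of the (pre-)wedge product, which the paper asserts "can be easily checked from the construction of the shuffle product" and which you verify explicitly via the external identity $(\alpha\times\xi)\,\ov{\wedge}\,\eta=\alpha\times(\xi\,\ov{\wedge}\,\eta)$ and naturality along diagonal pull-backs. Your write-up simply supplies more detail (grading check, sign bookkeeping, factorization through the triple diagonal) than the paper's terse argument.
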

\begin{proof}
This follows immediately from Theorem~\ref{thm:wedge-product*} once we know 
that the pre-wedge product in ~\eqref{eqn:shuffle*3} is bilinear over the ring
$A(X)$, where the $A(X)$-module structure on $\TH(X)$ is given by 
Theorem~\ref{thm:basic}. But this can be easily checked from the 
construction of the shuffle product in ~\eqref{eqn:shuffle*3}.
\end{proof}
As another consequence of Theorem~\ref{thm:wedge-product*}, we get the
following result which was widely expected in view of the belief
that the additive higher Chow groups compute the relative $K$-theory of
the infinitesimal thickenings of smooth varieties.
\begin{cor}\label{cor:module}
Let $X$ be a smooth projective variety over a field $k$ such that
${\rm char}(k) \neq 2$. Then for any $q, n, m \ge 1$, the group 
$\TH^q(X, n;m)$ is a ${\W}_m(k)$-module, where
${\W}_m(k)$ is the ring of generalized Witt-vectors of length $m$ over
$k$. In particular, $\TH^q(X, n;m)$ is naturally a $k$-vector space if
${\rm char}(k) = 0$.
\end{cor}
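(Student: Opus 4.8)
The plan is to combine the zero-cycle computation of Theorem~\ref{thm:0-cycle} with the wedge product of Theorem~\ref{thm:wedge-product*}. First I would record that, since ${\rm char}(k)\neq 2$, Theorem~\ref{thm:0-cycle} provides an isomorphism $R^1_{0,m}:\TH^1(k,1;m)\xrightarrow{\ \sim\ }\W_m\Omega^0_k=\W_m(k)$, and that this is in fact an isomorphism of \emph{rings}: by ingredient $(4)$ in the proof of Theorem~\ref{thm:0-cycle} together with Remark~\ref{remk:delaltS}, the collection $\{\TH^n(k,n;m)\}_{n\ge1}$ with the wedge product and $\delta_{alt}$ is a Witt complex, while the inverse map $S^n_{0,m}$ is the one supplied by the universal property of the de Rham--Witt complex and is therefore a morphism of Witt complexes; in degree $n=1$ this says precisely that $\wedge$ on $\TH^1(k,1;m)$ corresponds to the multiplication of $\W_m(k)$. (This is exactly the point where the hypothesis ${\rm char}(k)\neq 2$ enters.)

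Next, let $\pi_X:X\to{\rm Spec}(k)$ be the structure morphism. Since $\pi_X$ is flat, the flat pull-back of Theorem~\ref{thm:basic} gives $\pi_X^*:\TH^1(k,1;m)\to\TH^1(X,1;m)$, and by the naturality of the wedge product with respect to flat maps established in the proof of Theorem~\ref{thm:wedge-product*} one has $\pi_X^*(w_1\wedge w_2)=\pi_X^*(w_1)\wedge_X\pi_X^*(w_2)$. Composing $\pi_X^*$ with the internal wedge product $\wedge_X:\TH^1(X,1;m)\otimes\TH^q(X,n;m)\to\TH^q(X,n;m)$ — note that wedging with a class of bidegree $(1,1)$ preserves the bidegree $(q,n)$, as $1+q-1=q$ and $1+n-1=n$ — produces a pairing
\[
\W_m(k)\otimes_{\Z}\TH^q(X,n;m)\longrightarrow\TH^q(X,n;m),\qquad w\otimes\xi\longmapsto\pi_X^*(w)\wedge_X\xi .
\]
It then remains to check the module axioms. $\Z$-bilinearity is clear, and associativity $(w_1w_2)\cdot\xi=w_1\cdot(w_2\cdot\xi)$ follows from the displayed naturality identity together with the associativity of $\wedge_X$ (Theorem~\ref{thm:wedge-product*}, equivalently Corollary~\ref{cor:pre-wedge associativity}). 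For the unit axiom I would trace the construction: the multiplicative unit of $\TH^1(k,1;m)\cong\W_m(k)$ is represented, under the Bloch-Esnault-R\"ulling normalization, by the $0$-cycle supported at the $k$-rational point $t=1$ of $B_1=\G_m$, whose flat pull-back is $X\times\{t=1\}\subset X\times\G_m$; since $\wedge_X=\Delta_X^*\circ\overline{\wedge}$ is built from $\mu_*$ for the multiplication $\mu:\G_m\times\P^1\to\P^1$ and the shuffle set ${\rm \mathbb{P}erm}_{(0,\,n-1)}$ is trivial, wedging with this cycle is induced by multiplication by $1$ on the $\G_m$-coordinate followed by the diagonal pull-back, hence is the identity already on cycle complexes (and in particular the moving lemma is not needed for this step, the relevant intersection being automatically proper). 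Thus $1\cdot\xi=\xi$, and $\TH^q(X,n;m)$ is a $\W_m(k)$-module.

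Finally, when ${\rm char}(k)=0$ the ghost map identifies $\W_m(k)$ with a finite product of copies of $k$, so $\W_m(k)$ is a $k$-algebra and any $\W_m(k)$-module is automatically a $k$-vector space. The only genuinely delicate ingredient above is the first step — that $R^1_{0,m}$ respects multiplication and not merely addition — which rests on the Witt-complex structure on $\{\TH^n(k,n;m)\}$ produced inside the proof of Theorem~\ref{thm:0-cycle}; everything else is a formal consequence of the product and pull-back formalism developed in Section~\ref{section:WD}.
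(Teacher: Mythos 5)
Your proposal is correct and follows essentially the same route as the paper: the action is the composite of $p^*\otimes{\rm Id}$ with the internal wedge product $\wedge_X$, together with R\"ulling's isomorphism $\W_m(k)\cong\TH^{1}(k,1;m)$, and in characteristic zero one observes that $\W_m(k)$ is itself a $k$-algebra. The paper states the module axioms as an immediate consequence of Theorem~\ref{thm:wedge-product*}, whereas you spell out the ring-compatibility of $R^1_{0,m}$, associativity, and the unit check; the underlying argument is the same.
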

\begin{proof}
The follows immediately from Theorem~\ref{thm:wedge-product*} by
considering the composite map
\[
\xymatrix{
\TH^{1} (k, 1; m) \otimes_{\Z} 
\TH^{q} (X, n; m) \ar[r]^{p^* \otimes {\rm Id}} \ar[dr] &
\TH^{1} (X, 1; m) \otimes_{\Z} \TH^{q} (X, n; m) \ar[d]^{\wedge} \\
& \TH^{q} (X, n; m),}
\]
where $p : X \to {\rm Spec}(k)$ is the structure map, and using the
isomorphism ${\W}_m(k) \xrightarrow{\cong} \TH^{1} (k, 1; m)$
({\sl cf.} \cite{R}). That this gives a module structure, also follows from
Theorem~\ref{thm:wedge-product*}.
In characteristic zero, ${\W}_m(k)$ is itself a $k$-module.
\end{proof} 
\section{Differential operator on additive higher Chow groups}
\label{section:dga}
We have shown in Section~\ref{section:WD} that the additive higher Chow
groups of a smooth projective variety have a structure of
naturally defined commutative graded algebra. Our main goal in the remaining
part of this paper is to show that these are also 
equipped with differential operators, one of which turns this algebra into a 
{\sl differential graded algebra}. We construct one of these differential
operators in this section.

Let $X$ be a smooth projective variety. Let 
$\G^{\times}_m$ denote the variety $\G_m \backslash \{1\}$. 
We have natural inclusions of open sets $\G^{\times}_m \inj 
\square \inj \P^1$. For $n, m \ge 1$,
define the map 
\begin{equation}\label{eqn:diagonal} 
\phi_n : X \times \G^{\times}_m \times \square^{n-1} \to 
X \times \G_m \times \square^{n}
\end{equation}
\[
(x, t, y_1, \cdots, y_{n-1}) \mapsto (x, t, t^{-1}, y_1, \cdots , y_{n-1}).
\]
Note that $\phi_n$ is not a closed immersion. Rather, it is the composite of
the closed immersion $X \times \G^{\times}_m \times \square^{n-1} \inj
X \times \G^{\times}_m \times \G^{\times}_m \times \square^{n-1}$
followed by the open immersion 
$X \times \G^{\times}_m \times \G^{\times}_m \times \square^{n-1}
\inj X \times \G_m \times \square^{n}$. 
For once and all, we fix the coordinates $(t, y_1, \cdots, y_n)$ of
$\G_m \times \square^{n} \subset \P^1 \times {(\P^1)}^n$.
For any irreducible cycle $Z \subset  X \times \G_m \times 
\square^{n-1}$, let $Z^{\times}$ denote its restriction to the open set
$X \times \G^{\times}_m \times \square^{n-1}$.
Our first observation is the following.
\begin{lem}\label{lem:closed}
For any irreducible admissible cycle $Z \in \un{\TZ}^q(X, n;m)$,
$\phi_n(Z^{\times})$ is closed in $X \times \G_m \times \square^{n}$.
\end{lem}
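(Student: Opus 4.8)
The statement claims that $\phi_n(Z^{\times})$ is closed in $X \times \G_m \times \square^n$. The plan is to show that the potential "missing" points of the closure — those lying over $\{t = 0\}$ or at the boundary $\{t^{-1} \in \{0,\infty\}\}$ of the new $\square$-factor — are in fact excluded, the first by the modulus condition and the second because $\phi_n$ is the identity on the relevant coordinate $t$. First I would factor $\phi_n$ as in the statement: the closed immersion $j : X \times \G^{\times}_m \times \square^{n-1} \inj X \times \G^{\times}_m \times \G^{\times}_m \times \square^{n-1}$ given by $(x,t,y_1,\dots,y_{n-1}) \mapsto (x,t,t^{-1},y_1,\dots,y_{n-1})$, followed by the open immersion $u : X \times \G^{\times}_m \times \G^{\times}_m \times \square^{n-1} \inj X \times \G_m \times \square^n$. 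Since $j$ is a closed immersion, $j(Z^{\times})$ is closed in $X \times \G^{\times}_m \times \G^{\times}_m \times \square^{n-1}$; thus it suffices to show that the closure of $\phi_n(Z^{\times}) = u(j(Z^{\times}))$ inside $X \times \G_m \times \square^n$ does not acquire any new points, i.e. that $\phi_n(Z^{\times})$ is already closed in the larger space.

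The key step is to control the closure along the two types of boundary divisors that $u$ omits. For the divisor $\{t = 0\}$: the image under $\phi_n$ sends the first $\G_m$-coordinate $t$ of $Z$ to itself, and the modulus condition $M$ on $Z$ (recall $M = M_{sum}$ or $M_{ssup}$) forces, on the normalization of the closure $\overline{Z}$ in $X \times \widehat{B}_n$, that $(m+1)[\nu^*(F_{n,0})] \le [\nu^*(F^1_n)]$, and in particular $\overline{Z}$ misses $X \times \{t=0\} \times (\P^1)^{n-1}$ away from where it meets $X \times F^1_n$; more to the point, the closure of $Z$ in $X \times \A^1 \times \square^{n-1}$ misses $\{t=0\}$ entirely (this is the standard consequence of the modulus condition used, e.g., in Remark~\ref{remk:n=1}). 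Hence the projection of $\overline{\phi_n(Z^{\times})}$ to the $t$-coordinate lands in $\G_m = \A^1 \setminus \{0\}$, not in all of $\P^1$, and no limit points with $t = 0$ or $t = \infty$ arise. For the divisor where the \emph{second} coordinate $t^{-1}$ hits $\{0,\infty\} \subset \P^1$: since on $\phi_n(Z^{\times})$ we have the identity $(\text{second coord}) = (\text{first coord})^{-1}$ with the first coordinate lying in $\G_m$, the second coordinate is automatically in $\G_m$ as well, so $\phi_n(Z^{\times})$ never approaches $\{t^{-1}=0\}$ or $\{t^{-1}=\infty\}$. Combining these, any point in $\overline{\phi_n(Z^{\times})} \subset X \times \G_m \times \square^n$ has its first $\square$-slot coordinate (which is $t^{-1}$) in $\G_m$ and lies over $t \in \G_m$, hence lies in the image of $u$; pulling back along $u$ and using that $j(Z^{\times})$ is closed, it lies in $\phi_n(Z^{\times})$.

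The main obstacle — and the step requiring care — is making precise that the modulus condition really does prevent $\overline{Z}$ from meeting $\{t=0\}$ in the \emph{scheme-theoretic} sense needed, i.e. that the closure of $Z$ computed inside $X \times \G_m \times \square^{n-1}$ coincides with the closure computed inside $X \times \A^1 \times \square^{n-1}$ intersected with $\{t \ne 0\}$, with no extra components appearing over $t=0$. This is exactly the content of Lemma~\ref{lem:M5*}, which guarantees that an admissible cycle in $X \times B_n$ equals its own closure in $X \times \wt{B}_n$, so its closure in $X \times \A^1 \times (\P^1)^{n-1}$ meets $\{t=0\}$ only in a proper (and in fact, by the modulus inequality, empty-over-the-relevant-open-locus) way; I would invoke this lemma directly. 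Once this is granted, the remainder is the elementary topological argument above: $\phi_n(Z^{\times})$ is the preimage under the open immersion $u$ of the closed set $j(Z^{\times})$ intersected with the image of $u$ — but in fact equals $j(Z^{\times})$ under the identification, and $j(Z^{\times})$ being closed in a space that maps isomorphically onto an open subset of $X \times \G_m \times \square^n$ containing $\overline{\phi_n(Z^{\times})}$ forces $\phi_n(Z^{\times})$ to be closed.
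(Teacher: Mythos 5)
Your proposal misidentifies which boundary loci actually threaten closedness, and as a result the one step that the lemma really turns on is missing. The ambient space is $X \times \G_m \times \square^{n}$, and $\G_m$ already excludes $t=0$; so the entire discussion of $\{t=0\}$, the modulus condition, and Lemma~\ref{lem:M5*} is a red herring --- the paper's proof of this lemma uses no modulus condition at all (the modulus condition enters only in the subsequent Lemma~\ref{lem:DRmod}). The loci that the open immersion $u$ genuinely omits are $\{t=1\}$ and $\{y_1\in\{0,\infty\}\}$, because $\G^{\times}_m=\G_m\setminus\{1\}$: the source of $\phi_n$ is $Z$ with the hyperplane $t=1$ removed, so the closure of $\phi_n(Z^{\times})$ could a priori pick up limit points over $t=1$. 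Your final assertion that a point of the closure with $t\in\G_m$ and $y_1=t^{-1}\in\G_m$ ``hence lies in the image of $u$'' is false as justified: the image of $u$ is $X\times(\G_m\setminus\{1\})\times(\G_m\setminus\{1\})\times\square^{n-1}$, so you must additionally rule out $t=1$ and $y_1=1$, and you never do.

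The missing idea --- which is the whole content of the paper's proof --- is that on the closure the relation $y_1t=1$ persists, so a new point over $\{t=1\}$ would necessarily satisfy $y_1=1$ as well; since $\square=\P^1\setminus\{1\}$, such a point does not lie in $X\times\G_m\times\square^{n}$ at all. Equivalently, $\overline{\phi_n(Z^{\times})}\setminus\phi_n(Z^{\times})$ is contained in $\{t=1\}\cap\{y_1=1\}$, which is disjoint from the ambient space. Your correct observation that $y_1=t^{-1}$ with $t\in\G_m$ forces $y_1\notin\{0,\infty\}$ handles the other omitted divisors, and combined with the $t=1$ argument (plus the fact that $y_1\in\square$ already gives $y_1\neq 1$, hence $t\neq 1$) the proof closes up in one line; but as written the crux is absent and has been replaced by an irrelevant appeal to the modulus condition.
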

\begin{proof} We look at the Zariski closure $W:=\ov {\phi_n(Z^{\times})}$ of 
$\phi_n (Z^{\times})$ in the bigger space $X \times \G_m \times {(\P^1)}^n$, 
and see what happens.
The image of $Z^{\times}$ is clearly closed in
$X \times \G^{\times}_m \times \G^{\times}_m \times \square^{n-1}$.

Hence $W \backslash \phi_n(Z^{\times})$ must be contained in 
$\{ t = 1 \} \cup \{ y_1 = 1 \}$. By the definition of $\phi_n$, if a point 
in $W \backslash \phi_n (Z^{\times})$ intersects $\{ t= 1 \}$, then it must 
also intersect $\{y_1 = 1\}$ in
$X \times \G_m \times {(\P^1)}^n$. Hence $W \backslash \phi_n (Z^{\times})$ 
is in fact contained in $\{ t = 1 \} \cap \{ y_1 = 1 \}$ in 
$X \times \mathbb{G}_m \times (\P^1)^n$. In particular, 
$W \backslash \phi_n(Z^{\times})$ cannot intersect with 
$ X \times \G_m \times \square^{n}$. Hence if $W'$ is the Zariski closure of 
$\phi_n (Z^{\times})$ in $X \times \mathbb{G}_m \times \square^n$, then 
$W' \backslash \phi_n (Z^{\times})$, which is a subset of 
$W \backslash \phi_n (Z^{\times})$, does not intersect 
$X \times \mathbb{G}_m \times \square^n$, either. This shows that 
$W' = \phi_n (Z^{\times})$. Hence, $\phi_n(Z^{\times})$ is closed.
\end{proof}
We shall often write the morphisms such as $\phi_n$ in the sequel simply as
rational maps on the ambient space and also write $\phi_n(Z^{\times})$ as 
$\phi_n(Z)$.
\begin{lem}\label{lem:DRmod}
For $Z$ as in Lemma~\ref{lem:closed}, the closed subvariety 
$V := \phi_n(Z^{\times})$
satisfies the modulus condition.
\end{lem}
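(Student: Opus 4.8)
The plan is to reduce the modulus statement for $V=\phi_n(Z^\times)$ to the modulus condition already known for $Z$ by carefully comparing pullbacks of the relevant Cartier divisors under the rational map $\phi_n$, together with our containment lemma (Proposition~\ref{prop:restp}) and Lemma~\ref{lem:surjm}. The key observation is that $\phi_n$ sends the coordinate $t$ to $t$ and inserts a new coordinate $y_1 = t^{-1}$, so on the level of divisors $\{t=0\}$ on the target pulls back to $\{t=0\}$ on the source, while the divisors $\{y_i = 1\}$ (for $i\ge 2$) pull back to $\{y_{i-1}=1\}$, and the new divisor $\{y_1=1\}$ pulls back to $\{t=1\}$, which does \emph{not} meet $X\times\G_m\times\square^{n-1}$. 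So the source-side divisor $\nu^*(F^1_{n+1})$ (or the appropriate single $\nu^*(F^1_{n+1,i})$ for $M_{ssup}$) essentially restricts to $\nu^*(F^1_n)$ (resp. $\nu^*(F^1_{n,i-1})$) on the closure of $Z^\times$, which is the same as the closure of $Z$.

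First I would set up the compactified picture: let $\ov Z\subset X\times\wh B_n$ be the closure of $Z$ with normalization $\nu:\ov Z^N\to X\times\wh B_n$, and let $\ov V\subset X\times\wh B_{n+1}$ be the closure of $V$ with normalization $\nu_V:\ov V^N\to X\times\wh B_{n+1}$. Since $\phi_n$ is an isomorphism from $X\times\G^\times_m\times\square^{n-1}$ onto its (locally closed) image inside $X\times\G_m\times\square^n$, the varieties $Z^\times$ and $V$ are isomorphic, and hence $\ov V^N$ is birational to $\ov Z^N$; more precisely there is a proper birational (in fact an isomorphism over a dense open) comparison between the two normalizations because $\phi_n$ extends to a rational map $X\times\wh B_n\dashrightarrow X\times\wh B_{n+1}$ whose indeterminacy is resolved on the normalizations. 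Then I would compute, on $\ov V^N$, that $\nu_V^*(F_{n+1,0}) = \nu^*(F_{n,0})$ (both are the pullback of $\{t=0\}$) and $\nu_V^*(F^1_{n+1,i}) = \nu^*(F^1_{n,i-1})$ for $2\le i\le n$, because the map identifies the $i$-th $\P^1$-factor of $\wh B_{n+1}$ with the $(i-1)$-st $\P^1$-factor of $\wh B_n$; the extra factor $y_1$ contributes $\nu_V^*(F^1_{n+1,1})=\nu^*(\{t=1\})$, which is an effective divisor whose support lies outside the locus where $Z^\times$ lives and hence can only help.

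Given these identifications, the conclusion is immediate. For $M_{sum}$: summing the identities for $2\le i\le n$ and adding the (effective) divisor $\nu_V^*(F^1_{n+1,1})$ gives $\nu_V^*(F^1_{n+1}) \ge \nu^*(F^1_n) \ge (m+1)\,\nu^*(F_{n,0}) = (m+1)\,\nu_V^*(F_{n+1,0})$, where the middle inequality is the $M_{sum}$ modulus condition for $Z$. For $M_{ssup}$: if $Z$ satisfies $(m+1)\nu^*(F_{n,0})\le \nu^*(F^1_{n,i_0})$ for some $1\le i_0\le n-1$, then taking $i=i_0+1$ in the range $2\le i\le n$ gives $(m+1)\nu_V^*(F_{n+1,0}) \le \nu_V^*(F^1_{n+1,i_0+1})$, which is exactly $M_{ssup}$ for $V$. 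To make the comparison of divisors on the normalizations rigorous, I would apply the containment lemma Proposition~\ref{prop:restp} and Lemma~\ref{lem:surjm} as in the proof of Lemma~\ref{lem:HV} and Proposition~\ref{prop:product-modulus}: realize $\ov V$ as sitting inside a product of $\ov Z$ with a projective factor (coming from the graph of $t\mapsto t^{-1}$), observe that $\ov V$ meets $F^1_{n+1}$ and $F_{n+1,0}$ properly since $Z$ does, and transport the modulus condition from the larger degenerate-type cycle down to $V$. The main obstacle I anticipate is precisely this bookkeeping at the boundary: verifying that no component of $\nu_V^*(F_{n+1,0})$ escapes into the locus $\{y_1=1\}\cup\{t=1\}$ that was removed (so that the divisor identities hold as Weil divisors on the normalization, not merely generically) — this is the analog of Claim~(3) in Proposition~\ref{prop:product-modulus} and will require a short dimension count using that $\phi_n$ is an isomorphism onto its image and that $Z$ meets all faces properly.
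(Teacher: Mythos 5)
Your proposal is correct and follows essentially the same route as the paper: extend $\phi_n$ to a morphism $\ov{\phi_n}$ of the compactifications, compute $\ov{\phi_n}^*(F_{n+1,0})=F_{n,0}$ and $\ov{\phi_n}^*(F^1_{n+1,i})=F^1_{n,i-1}$ for $i\ge 2$, and descend effectiveness along the induced projective surjective map $\ov{Z}^N\to\ov{V}^N$ of normalizations via Lemma~\ref{lem:surjm}. The boundary ``obstacle'' you anticipate at the end dissolves in this setup: since one only needs that an effective pullback upstairs forces effectiveness downstairs, surjectivity plus Lemma~\ref{lem:surjm} suffices and no Claim~(3)-style dimension count is required.
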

\begin{proof} Consider the following commutative diagram.
\begin{equation}\label{eqn:DRmod0}
\xymatrix@C.8cm{
{\ov Z}^N \ar[r]^{\wt {\phi_n}} \ar[d] \ar@/_1pc/[dd]_{f} &
{\ov V}^N  \ar[d] \ar@/^1pc/[dd]^{g} \\
{\ov Z} \ar[r] \ar[d] & {\ov V} \ar[d] \\
{X \times \P^1 \times {(\P^1)}^{n-1}} \ar[r]_{\ov {\phi_n}} &
{X \times \P^1 \times {(\P^1)}^{n}}}
\end{equation}
Here ${\ov {\phi_n}}(x, t, y_1, \cdots, y_{n-1}) = 
(x, t, t^{-1}, y_1, \cdots, y_{n-1})$ is the natural extension of $\phi_n$.
Note also that $\wt {\phi_n}$ is induced by the dominant map $Z^{\times} \to  
V$, which in turn gives the map ${\ov Z} \to {\ov V}$ as $\ov {\phi_n}$ is
closed. In particular, $\wt {\phi_n}$ is projective and surjective.

Next, it is easy to check from the description of $\ov {\phi_n}$ that 
${\ov {\phi_n}}^*(F_{n+1, 0}) = F_{n, 0}$ and ${\ov {\phi_n}}^*(F^1_{n+1, i})
= F^1_{n, i-1}$ for $i \ge 2$. In particular, the modulus condition 
$M_{ssup}$ for $Z$ implies that 
\[
{\wt {\phi_n}}^* \circ g^*[F^1_{n+1, i+1} - (m+1)F_{n+1, 0}]
= f^* \circ {\ov {\phi_n}}^*[F^1_{n+1, i+1} - (m+1)F_{n+1, 0}]
\]
\[
\hspace*{5cm} =
f^*[F^1_{n, i} - (m+1)F_{n, 0}] \ge 0
\]
for some $1 \le i \le n-1$. We conclude from the surjectivity of
${\wt {\phi_n}}$ and from an easy variant of Proposition~\ref{prop:P0} that 
$g^*[F^1_{n+1, i+1} - (m+1)F_{n+1, 0}] \ge 0$ for some $1 \le i \le n-1$, 
which is the $M_{ssup}$ condition for $V$.

If $Z$ satisfies the modulus condition $M_{sum}$, then following the same
argument as above, we get
\begin{eqnarray*}
& & {\wt {\phi_n}}^* \circ g^*[F^1_{n+1} - (m+1)F_{n+1, 0}] \\
& \ge & \left(\sum_{i=1} ^n
{\wt {\phi_n}}^* \circ g^*[F^1_{n+1, i}]\right) - 
{\wt {\phi_n}}^* \circ g^*[(m+1)F_{n+1, 0}] \\
& = & \left( \sum_{i=2} ^n
f^* \circ {\ov {\phi_n}}^*[F^1_{n+1, i}]\right) - 
f^* \circ {\ov {\phi_n}}^*[(m+1)F_{n+1, 0}] \\
& = & f^*[F^1_n - (m+1)F_{n,0}] \geq 0.
\end{eqnarray*}
We again conclude from the surjectivity of
${\wt {\phi_n}}$ and from an easy variant of Proposition~\ref{prop:P0} that 
$g^*[F^1_{n+1} - (m+1)F_{n+1, 0}] \ge 0$. This shows the modulus condition 
$M_{sum}$ for $V$.
\end{proof}
\begin{prop}\label{prop:DR}
For any irreducible admissible cycle $Z \in \un{\TZ}^q(X, n;m)$,
$\phi_n(Z^{\times})$ defines an admissible irreducible cycle in 
$ \un{\TZ}^{q+1}(X, n+1;m)$, that we denote by $\delta(Z)$. Furthermore, 
$\delta$ and
$\partial$ satisfy the relation $\partial \delta + \delta \partial = 0$.
\end{prop}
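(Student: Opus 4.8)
The plan is to establish the three assertions of Proposition~\ref{prop:DR} in turn, with the first two already essentially discharged by the preceding lemmas. First I would note that Lemma~\ref{lem:closed} gives that $V = \phi_n(Z^{\times})$ is closed in $X \times \G_m \times \square^n$; since $\phi_n$ is injective and $Z^{\times}$ is irreducible of dimension equal to that of $Z$ (the map $t \mapsto (t,t^{-1})$ is a closed immersion onto its image), $V$ is an irreducible closed subvariety of codimension $q+1$ in $X \times B_{n+1}$. Next, the good-position condition: for a face $F$ of $B_{n+1}$ defined by equations $y_{i_j} = \epsilon_j$ with $\epsilon_j \in \{0,\infty\}$, I would observe that $\phi_n$ identifies $X \times \G_m^{\times} \times \square^{n-1}$ with a locally closed subset of $X \times \G_m \times \square^n$ on which $y_1 = t^{-1}$ never takes the values $0$ or $\infty$ (since $t \in \G_m^{\times}$); hence the only faces that meet $V$ are those not involving $y_1$, and for such a face $F = X \times \{y_{i_1} = \epsilon_1, \ldots\}$ the intersection $V \cap (X \times F)$ is the $\phi_n$-image of $Z^{\times} \cap (X \times F')$ where $F'$ is the corresponding face of $B_n$, which has the right dimension by admissibility of $Z$. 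The modulus condition for $V$ is exactly Lemma~\ref{lem:DRmod}. This shows $\delta(Z) := \phi_n(Z^{\times}) \in \un{\TZ}^{q+1}(X, n+1; m)$, and $\delta$ extends $\Z$-linearly to the whole group.

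The genuinely new computation is the relation $\partial \delta + \delta \partial = 0$, and this is where I would spend most of the effort. The strategy is a direct comparison of face operators. Writing $\partial = \sum_{j=1}^{n} (-1)^j(\partial_j^{\infty} - \partial_j^0)$ on level-$(n+1)$ cycles and $\partial = \sum_{j=1}^{n-1}(-1)^j(\partial_j^{\infty}-\partial_j^0)$ on level-$n$ cycles, I would compute each $\partial_j^{\epsilon}\delta(Z)$. For $j \geq 2$, the face $\{y_j = \epsilon\}$ of $B_{n+1}$ pulls back under $\phi_n$ to the face $\{y_{j-1} = \epsilon\}$ of $B_n$, giving $\partial_j^{\epsilon}\,\delta(Z) = \delta(\partial_{j-1}^{\epsilon} Z)$; carefully reindexing, $\sum_{j=2}^{n}(-1)^j(\partial_j^{\infty}-\partial_j^0)\delta(Z) = -\delta(\partial Z)$ (the sign flip coming from the shift $j \mapsto j-1$, exactly as in the computation in the proof of Lemma~\ref{lem:Hrelation}). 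For $j = 1$, the face $\{y_1 = \epsilon\}$ with $\epsilon \in \{0,\infty\}$: since on $V$ we have $y_1 = t^{-1}$ with $t \in \G_m^{\times}$, the coordinate $y_1$ never equals $0$ or $\infty$ on $\phi_n(Z^{\times})$ itself, but one must intersect the \emph{closure} $\ov V = \delta(Z)$ in $X \times B_{n+1}$ with $\{y_1 = \epsilon\}$. The key point — which mirrors the analysis inside Lemma~\ref{lem:closed} — is that the locus $\ov V \setminus \phi_n(Z^{\times})$ lies in $\{t = 1\} \cap \{y_1 = 1\}$, so in particular $\ov V \cap \{y_1 = 0\}$ and $\ov V \cap \{y_1 = \infty\}$ are empty in $X \times B_{n+1}$ (the points where $y_1 \in \{0,\infty\}$ would force $t \in \{\infty, 0\}$, which is outside $\G_m$). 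Hence $\partial_1^0 \delta(Z) = \partial_1^{\infty}\delta(Z) = 0$, so the $j=1$ term contributes nothing, and therefore $\partial\,\delta(Z) = -\delta(\partial Z)$, i.e., $\partial\delta + \delta\partial = 0$.

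The main obstacle, and the step I would present most carefully, is the vanishing of the $j=1$ face operators $\partial_1^0\delta(Z)$ and $\partial_1^{\infty}\delta(Z)$ — equivalently, the claim that the degenerate-looking faces do not contribute. This is not purely formal because a priori the closure of $\phi_n(Z^{\times})$ in $X \times \square^n$ could acquire components over $\{y_1 = 0\}$ or $\{y_1 = \infty\}$; the resolution is precisely the geometric input of Lemma~\ref{lem:closed}, that $\phi_n(Z^{\times})$ is already closed in $X \times \G_m \times \square^n$ and that the boundary behaviour is confined to $\{t=1\}\cap\{y_1=1\}$, which lies \emph{off} every codimension-one face $\{y_1 = 0\}$, $\{y_1 = \infty\}$. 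I would also need to check compatibility of $\delta$ with the quotient by degenerate cycles — that $\delta$ carries degenerate cycles to degenerate cycles — but this is immediate since $\phi_n$ introduces the new coordinate $y_1 = t^{-1}$ which is genuinely non-constant along the image, and a degenerate $Z$ (constant in some $y_i$) yields a $\delta(Z)$ still constant in $y_{i+1}$. Everything else is the bookkeeping of signs in the face operators, which is identical in form to the chain-homotopy computation already carried out in Lemma~\ref{lem:Hrelation}.
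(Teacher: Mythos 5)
Your proposal is correct and follows essentially the same route as the paper: the paper's proof also rests on the two face identities $\partial^{\epsilon}_{n+1,1}\circ\phi_n=0$ and $\partial^{\epsilon}_{n+1,i}\circ\phi_n=\phi_{n-1}\circ\partial^{\epsilon}_{n,i-1}$ for $i\ge 2$ (its ``Claim''), deduces good position from them, invokes Lemma~\ref{lem:DRmod} for the modulus condition, and finishes with the same reindexing/sign computation. Your justification of the vanishing of the $j=1$ faces — that $\phi_n(Z^{\times})$ is already closed by Lemma~\ref{lem:closed} and $y_1=t^{-1}$ never hits $0$ or $\infty$ there — is exactly the geometric content behind the paper's part $(1)$ of the Claim.
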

\begin{proof} We first prove the following.\\
{\bf Claim : } \emph{$(1) \ \ {\partial}^{\epsilon}_{n+1, 1} \circ 
{\phi}_n = 0$ for $\epsilon = 0, \infty$.
\\
$(2) \ \ {\partial}^{\epsilon}_{n+1, i} \circ {\phi}_n
= {\phi}_{n-1} \circ {\partial}^{\epsilon}_{n, i-1}$ for $i \ge 2$ and
$\epsilon = 0, \infty$.}

Here $\partial_{n+1, i} ^{\epsilon}$ is the $i$-th face 
$\partial_i ^{\epsilon}$ on $B_{n+1}$. It is easy to see from the definition 
of $\phi_n$ that ${\phi}_n(Z)$ intersects
$\{y_1 = 1\}$ if and only if it intersects $\{t = 1\}$. This clearly implies 
$(1)$.

For $(2)$, we can again observe from the definition of $\phi_n$ that it just
shifts the coordinates of $\square^{n-1}$ by one. In particular, for
$i \ge 2$ and $\epsilon = 0, \infty$, the diagram
\begin{equation}\label{eqn:cart}
\xymatrix{
\G^{\times}_m \times \square^{n-2} \ar[r]^{\iota^{\epsilon}_{i-1}} 
\ar[d]_{\phi_{n-1}} &
\G^{\times}_m \times \square^{n-1} \ar[d]^{\phi_n} \\
\G_m \times \square^{n-1} \ar[r]_{\iota^{\epsilon}_i} & 
\G_m \times \square^n}
\end{equation} 
is Cartesian.
Hence we have for $i \ge 2$,
\[
{\partial}^{\epsilon}_{n+1, i} \circ {\phi}_n (Z^{\times}) 
= {\partial}^{\epsilon}_{n+1, i} \left(\phi_n^{-1}(F^{\epsilon}_{n+1,i}) 
\cdot Z^{\times}\right) = \phi_{n-1}\left(F^{\epsilon}_{n,i-1}
\cdot Z^{\times}\right) 
\]
\[
\hspace*{3cm} = \phi_{n-1}
\left({(F^{\epsilon}_{n,i-1} \cdot Z)}^{\times}\right) 
= \phi_{n-1} 
\left({\left(\partial^{\epsilon}_{n, i-1}(Z)\right)}^{\times}\right).
\]
This proves the Claim.

Using the Claim and the proper intersection property of $Z$, 
we see immediately that $\delta(Z)$ has proper intersections with faces.
We also conclude from Lemma~\ref{lem:DRmod} that $\delta(Z)$ satisfies
the modulus condition. Since $\delta$ does not change the dimension,
we have shown that $\delta(Z)$ is in ${\TZ}^{q+1}(X, n+1;m)$.

Finally, if we denote the operator $\delta$ at the level $n$ of 
${\TZ}^{q}(X, \bullet ;m)$ by $\delta_n$, then we have
\begin{eqnarray*}
\partial \circ {\delta}_n (Z) & = & 
\sum_{i=1} ^n {(-1)}^i [{\partial}^{\infty}_{n+1, i}
\circ {\delta}_n(Z) - {\partial}^{0}_{n+1, i} \circ {\delta}_n(Z)] \\
& = & \sum_{i=2} ^n {(-1)}^i 
[{\delta}_{n-1} \circ {\partial}^{\infty}_{n, i-1}(Z) -
{\delta}_{n-1} \circ {\partial}^{0}_{n, i-1}(Z)] \\
& = & -\left( \sum_{i=1} ^{n-1} {(-1)}^i 
[{\delta}_{n-1} \circ {\partial}^{\infty}_{n, i}(Z) -
{\delta}_{n-1} \circ {\partial}^{0}_{n, i}(Z)]\right) \\
& = & - {\delta}_{n-1}\left( \sum_{i=1} ^{n-1} {(-1)}^i 
[{\partial}^{\infty}_{n, i}(Z) - {\partial}^{0}_{n, i}(Z)]\right) \\ 
& = & - {\delta}_{n-1} \circ \partial (Z),
\end{eqnarray*}
where the second equality follows from the above Claim. This 
proves the proposition.
\end{proof}
\begin{cor}\label{cor:DR*}
For every $q \ge 1$, $\delta$ defines a chain map
\[
\delta : {\TZ}^{q}(X, \bullet ;m) \to {\TZ}^{q+1}(X, \bullet ;m)[1].
\]
\end{cor}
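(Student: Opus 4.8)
The plan is to assemble Corollary~\ref{cor:DR*} directly from Proposition~\ref{prop:DR}, treating the single remaining point---that $\delta$ descends to the non-degenerate quotient complex and commutes (up to the stated sign shift) with $\partial$---as a routine bookkeeping exercise. First I would recall that Proposition~\ref{prop:DR} already produces, for each irreducible admissible $Z \in \un{\TZ}^q(X,n;m)$, an admissible cycle $\delta(Z) = \phi_n(Z^{\times}) \in \un{\TZ}^{q+1}(X,n+1;m)$, and that extending $\mathbb{Z}$-linearly gives a group homomorphism $\delta = \delta_n : \un{\TZ}^q(X,n;m) \to \un{\TZ}^{q+1}(X,n+1;m)$ at each level $n$. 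The content to be checked is therefore: (i) $\delta$ carries degenerate cycles to degenerate cycles, so that it induces a map on the quotients ${\TZ}^q(X,\bullet;m) \to {\TZ}^{q+1}(X,\bullet;m)$ shifted by one in the cubical degree; and (ii) the resulting maps form a morphism of complexes when the target is given the shift $[1]$, meaning precisely the relation $\partial \circ \delta_n = -\,\delta_{n-1} \circ \partial$ established in Proposition~\ref{prop:DR}.

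For step (i), I would argue as follows. A cycle in $\un{\TZ}^q(X,n;m)$ is degenerate if it is a $\mathbb{Z}$-linear combination of cycles pulled back along one of the projections $B_n \to B_{n-1}$ forgetting a coordinate $y_j$ (with $j \in \{1,\dots,n-1\}$). If $Z = p_j^*(Z')$ for such a projection $p_j$, then from the coordinate description $\phi_n(x,t,y_1,\dots,y_{n-1}) = (x,t,t^{-1},y_1,\dots,y_{n-1})$ one sees that $\phi_n(Z^{\times})$ is again independent of the coordinate $y_j$ (now sitting in position $j+1$ of $\square^n$), hence is pulled back along the corresponding projection $B_{n+1}\to B_n$ and is therefore degenerate in $\un{\TZ}^{q+1}(X,n+1;m)$. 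Consequently $\delta$ preserves the degenerate subgroups and passes to the quotient, giving well-defined maps $\delta: {\TZ}^q(X,n;m) \to {\TZ}^{q+1}(X,n+1;m)$.

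For step (ii), the sign-shift is exactly the statement that $\delta$ is a chain map into the complex $\bigl({\TZ}^{q+1}(X,\bullet;m)\bigr)[1]$ whose differential at the shifted level is $-\partial$. Writing $D$ for the differential of ${\TZ}^q(X,\bullet;m)$ and $D'$ for that of ${\TZ}^{q+1}(X,\bullet;m)[1]$, we have $D' = -\partial$ by the standard convention for shifting complexes; and Proposition~\ref{prop:DR} gives $\partial \delta_n + \delta_{n-1}\partial = 0$, i.e. $\delta_{n-1} D = -\partial \delta_n = D' \delta_n$. This is precisely the chain-map identity, so $\delta$ defines the asserted morphism $\delta : {\TZ}^q(X,\bullet;m) \to {\TZ}^{q+1}(X,\bullet;m)[1]$. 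The main (and only real) obstacle here has already been cleared in Proposition~\ref{prop:DR}: one needed the modulus condition and the proper-intersection behaviour of $\phi_n(Z^{\times})$ to make sense of $\delta(Z)$ as an admissible cycle, and one needed the combinatorial identities $\partial^\epsilon_{n+1,1}\circ\phi_n = 0$ and $\partial^\epsilon_{n+1,i}\circ\phi_n = \phi_{n-1}\circ\partial^\epsilon_{n,i-1}$ for $i\ge 2$ to get the anticommutation with $\partial$. What remains for the corollary is genuinely formal, so the proof should be a short paragraph invoking Proposition~\ref{prop:DR} and the compatibility of $\delta$ with degeneracies.
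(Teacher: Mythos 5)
Your proposal is correct and follows the same route as the paper: the paper's proof likewise defines $\delta$ on irreducible admissible cycles via Proposition~\ref{prop:DR}, extends linearly, notes that $\delta$ visibly preserves degenerate cycles from the coordinate description of $\phi_n$ in ~\eqref{eqn:diagonal}, and then invokes the anticommutation relation $\partial\delta + \delta\partial = 0$ for the chain-map property. Your write-up merely makes explicit the two points the paper leaves as immediate (the preservation of degeneracies under the coordinate shift, and the sign convention for the shifted complex), which is harmless.
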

\begin{proof}
For an irreducible admissible cycle $Z \in \un{\TZ}^q(X, n;m)$, we define 
$\delta(Z)$ as in Proposition~\ref{prop:DR} and then extend linearly to 
$\un{\TZ}^q(X, n;m)$. It is clear from ~\eqref{eqn:diagonal} that $\delta$
preserves the degenerate cycles. Now the corollary follows from 
Proposition~\ref{prop:DR}.
\end{proof} 
\subsection{Computation of $\delta^2$}\label{subsection:Szero}
Our next goal is show that $\delta^2$ is zero to make it into a differential
operator on the additive higher Chow groups.
We achieve this by explicitly constructing certain 
admissible cycles which bound $\delta^2(Z)$ for any irreducible admissible
cycle $Z$.
We first define certain 2-cycles in $z^2(\G_m, 3)$ which are all two
dimensional analogues of variants of B. Totaro's 1-cycles in \cite{T}.  
For any general point $t \in \G_m$, the parameter $u$ will always denote 
$t^{-1}$ in this part of the section.

For $1 \le j \le 4$, let $\Gamma^1_j \subset \G_m \times \square^3$ be the 
the 2-cycles defined by the rational maps
$\psi_j^1 : \G_m \times \square \to \G_m \times \square^3$ given as
follows:
\begin{equation}\label{eqn:Tcycles1}
\begin{array}{lll}
\psi^1_1(t, x) & = & \left(t, u, x, \frac{(1-u)x - (1-u)/(1-t)}
{x - (1-u)/(1-t)}\right) \\
\psi^1_2(t, x) & = & \left(t, u, x, \frac{(1-u)x - 1}{x-1}\right) \\
\psi^1_3(t, x) & = & \left(t, x, \frac{ux - 1}{x-1}, 1-u\right) \\
\psi^1_4(t, x) & = & \left(t, x, 1-x, \frac{u - x}{1-x}\right) 
\end{array} 
\end{equation}
for $t \in \G_m \backslash \{1\}$ and $x \in \square \backslash \{0\}$.
We similarly define the 2-cycles $\Gamma^2_j \subset \G_m \times \square^3$
given by the rational maps
\begin{equation}\label{eqn:Tcycles2}
\begin{array}{lll}
\psi^2_1(t, x) & = & \left(t, u^2, x, \frac{(1-u^2)x - (1-u^2)/(1-t^2)}
{x - (1-u^2)/(1-t^2)}\right) \\
\psi^2_2(t, x) & = & \left(t, u^2, x, \frac{(1-u^2)x - 1}{x-1}\right) \\
\psi^2_3(t, x) & = & \left(t, x, \frac{u^2x - 1}{x-1}, 1-u^2\right) \\
\psi^2_4(t, x) & = & \left(t, x, 1-x, \frac{u^2 - x}{1-x}\right) \\
\psi^2_5(t, x) & = & \left(t, x, \frac{ux - u^2}{x-u^2}, -u^2\right) \\
\psi^2_6(t, x) & = & \left(t, u, x, \frac{ux + u^2}{x+u^2}\right)
\end{array} 
\end{equation}
for $t \in \G_m \backslash \{1, -1\}$ and $x \in \square \backslash \{0\}$.

For an irreducible 1-cycle $\alpha \subset \G_m \times \square^2$
which is admissible and defined by a rational map 
\begin{equation}\label{eqn:para}
\phi : \G_m \to \G_m \times \square^2
\end{equation}
\[ 
\phi(t) = (\phi(t)(0), \phi(t)(1), \phi(t)(2)),
\]
we shall often write $\alpha$ by the parametrization
$(\phi(t)(0), \phi(t)(1), \phi(t)(2))$ to simplify the notations.

It is now easy to check from the definitions that all $\Gamma^l_j$
are closed in $\G_m \times \square^3$ and they in fact define admissible
cycles in $z^2(\G_m, 3)$ ({\sl cf.} Lemma~\ref{lem:closed}).
Moreover, one can also check in a straightforward
way (or using the computations in \cite[Section~2]{T}) that these cycles
have the following boundaries:
\begin{equation}\label{eqn:Tcycles3}
\begin{array}{lll}
\partial \Gamma^l_1 & = & 
\left(t, u^l,  \frac{1-u^l}{1-t^l}\right) - \left(t, u^l, 1-u^l\right) -
\left(t, u, \frac{1}{1-t^l}\right) \\
\partial \Gamma^l_2 & = & 
\left(t, u^l, 1-t^l\right) - \left(t, u^l, \frac{1}{1-t^l}\right) \\
\partial \Gamma^l_3 & = & 
\left(t, u^l, 1-t^l\right) - \left(t, t^l, 1-t^l\right) \\
\partial \Gamma^l_4 & = & 
\left(t, u^l, 1-u^l\right) \\
\partial \Gamma^2_5 & = & 
2 \left(t, u, -u^2\right) - \left(t, u^2, -u^2\right) \\
\partial \Gamma^2_6 & = & 
\left(t, u, -u^2\right) - \left(t, u, u\right) - \left(t, u, -u\right)
\end{array}
\end{equation}  

Since $u= t^{-1}$, note that
\begin{equation}\frac{1 - u^l}{1-t^l} = \frac{ 1- t^{-l}}{1- t^l} = 
\frac{t^l -1}{1-t^l} \cdot t^{-l} = - t^{-l} = - u^l.\end{equation} Hence, we 
have 
\[
\left(t, u^l, -u^l\right) = \left(t, u^l, (1-u^l)/(1-t^l)\right).
\]
Using \eqref{eqn:Tcycles3} together with this, we see at once that for 
$l= 1, 2$, 
\begin{equation}\label{eqn:Tcycles4}
\begin{array}{lll}
\left(t, u^l, - u^l\right) & = & \partial \Gamma^l_1 - \partial 
\Gamma^l_2 - \partial \Gamma^l_3
+ 2 \Gamma^l_4.
\end{array}
\end{equation}
We also obtain from ~\eqref{eqn:Tcycles3} that 
\begin{equation}\label{eqn:Tcycles5}
\begin{array}{lll}
\left(t, u^2, - u^2\right) & = & 2(t, u, u) -  \partial \Gamma^2_5 + 
2 \Gamma^2_6 + 2 (t, u, -u).
\end{array}
\end{equation} 
Combining ~\eqref{eqn:Tcycles4} and  ~\eqref{eqn:Tcycles5} together, we obtain
that as an element of $z^2(\G_m, 2)$, 
\begin{equation}\label{eqn:Tcycles6}
\begin{array}{lll}
 2(t, u, u) & = & \partial \Gamma^2_1 - \partial \Gamma^2_2 - \partial
\Gamma^2_3 + 2 \partial \Gamma^2_4 + \partial \Gamma^2_5 
- 2 \partial \Gamma^2_6 \\
& & - 2 (\partial \Gamma^1_1 - \partial \Gamma^1_2 - \partial \Gamma^1_3
+ 2 \partial \Gamma^1_4) \\
& = :& \partial \Gamma.
\end{array}
\end{equation} 
Let $X$ be a smooth projective variety. For any admissible additive cycle
$Z \in  \un{\TZ}^{q}(X, n;m)$, we can naturally consider it as a higher
Chow cycle in $z^q(X \times \G_m, n-1)$. For any $\Gamma^l_j \in
z^2(\G_m, 3)$, we get the exterior product $Z \times \Gamma^l_j
\in z^{q+2}(X \times \G_m \times \G_m, n+2)$. Moreover, it also follows
from the definition of these cycles that $Z \times \Gamma^l_j$ intersects
$X \times \G_m$ properly under the diagonal embedding $\Delta_{\G_m}:
X \times \G_m \to X \times \G_m \times \G_m$ given by
$(x, t) \mapsto (x, t, t)$.
Since $X$ is smooth, we get the pull-back cycle 
$Z \star \Gamma^l_j = {\Delta}^*_{\G_m}(Z \times \Gamma^l_j) \in
z^{q+2}(X \times \G_m, n+2)$.
\begin{lem}\label{lem:boundary}
The cycle $Z \star \Gamma^l_j$ lies in $\un{\TZ}^{q+2}(X, n+3 ;m)$ under the
natural inclusion $\un{\TZ}^{q+2}(X, n+3 ;m) \inj 
z^{q+2}(X \times \G_m, n+2)$.
\end{lem}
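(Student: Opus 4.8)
The plan is to check, for the cycle $W := Z \star \Gamma^l_j$, the two conditions of \defref{defn:AdditiveComplex} defining $\un{\TZ}^{q+2}(X, n+3;m)$, using what is already recorded before the lemma: that $W \in z^{q+2}(X \times \G_m, n+2)$ and that each component of $W$ has codimension $q+2$ in $X \times B_{n+3}$ (the latter also forces the dimension to be the one required at level $n+3$). The good position condition is essentially automatic: a face $F$ of $B_{n+3} = \G_m \times \square^{n+2}$ is of the form $\G_m \times F'$ with $F'$ a face of $\square^{n+2}$, because the equations $y_i = \epsilon$ cutting out a face do not involve the $\G_m$-coordinate; hence $W \cap (X \times F) = W \cap (X \times \G_m \times F')$ has the right codimension exactly because $W$ is a cubical cycle in $z^{q+2}(X \times \G_m, n+2)$. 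So the substantive point is the modulus condition.

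For the modulus condition I would invoke the containment lemma, \propref{prop:restp}. Write the coordinates of $B_{n+3} = \G_m \times \square^{n+2}$ as $(t, y_1, \dots, y_{n+2})$, where $y_1, \dots, y_{n-1}$ are the $\square^{n-1}$-coordinates coming from $Z$ and $y_n, y_{n+1}, y_{n+2}$ are the $\square^3$-coordinates coming from $\Gamma^l_j$. From the definition of $\Delta^*_{\G_m}$, every point of $W$ has its $(x, t, y_1, \dots, y_{n-1})$-coordinates lying in $Z$, so $W \subset \wt Z$, where $\wt Z := Z \times \square^3$ is the preimage of $Z$ under the projection $X \times B_{n+3} \to X \times B_n$ forgetting $y_n, y_{n+1}, y_{n+2}$. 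The cycle $\wt Z$ is again an admissible additive cycle, hence satisfies the modulus condition $M$ (either $M_{sum}$ or $M_{ssup}$): the extension $\ov{\pi} : X \times \wh B_{n+3} \to X \times \wh B_n$ of the forgetful projection is smooth, so $\ov{\wt Z} = {\ov{\pi}}^{-1}(\ov Z)$ has normalization ${\ov Z}^N \times (\P^1)^3$ (by \cite[Lemma~3.1]{KL}), and since $\ov{\pi}^*(F_{n,0}) = F_{n+3,0}$ and $\ov{\pi}^*(F^1_{n,i}) = F^1_{n+3,i}$ for $1 \le i \le n-1$, the modulus inequality holding on ${\ov Z}^N$ pulls back to one on ${\ov Z}^N \times (\P^1)^3$, the divisors $F^1_{n+3,i}$ for $i = n, n+1, n+2$ contributing effective classes that only help (for $M_{sum}$) and being irrelevant (for $M_{ssup}$). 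Now let $W'$ be any irreducible component of $W$. Then $W' \subset \wt Z$, so $W'$ is the unique component of $\wt Z \cap W'$; moreover $\ov{W'}$ meets the Cartier divisors $X \times F_{n+3,0}$ and $X \times F^1_{n+3}$ properly, since $W'$ lies over $\G_m$ (so $\ov{W'} \not\subset \{t=0\}$) and lies in $\square^{n+2}$ (so $\ov{W'} \cap \{y_i=1\}$ sits in the boundary $\ov{W'} \setminus W'$, whence $\ov{W'} \not\subset \{y_i=1\}$). Applying \propref{prop:restp} with ``$V$'' $= \wt Z$ and ``$Y$'' $=$ ``$V_Y$'' $= W'$ then gives the modulus condition $M$ for $W'$; since $W'$ was arbitrary, $W$ satisfies condition $(2)$ of \defref{defn:AdditiveComplex}, and together with the good position this shows $W \in \un{\TZ}^{q+2}(X, n+3;m)$.

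The step I expect to need the most care is the claimed admissibility of $\wt Z = Z \times \square^3$: one must confirm that adjoining three extra cube factors to an admissible additive cycle preserves both good position with respect to all faces (the face-restrictions of $\wt Z$ are face-restrictions of $Z$ times the surviving $\square$-factors) and, more delicately, the modulus condition. The latter is where the smoothness of the forgetful projection $\ov{\pi}$ is essential, since it guarantees that normalization commutes with $\ov{\pi}$ and that inequalities of effective Cartier divisors pull back along it; without such a clean description of $\ov{\wt Z}$ and its normalization the argument would not go through. Everything after that — the inclusion $W \subset \wt Z$, the proper intersection of $\ov{W'}$ with $F_{n+3,0}$ and $F^1_{n+3}$, and the invocation of the containment lemma — is routine.
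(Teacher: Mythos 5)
Your proof is correct, but it takes a genuinely different route from the paper's. The paper proves the modulus condition by a \emph{push-forward} argument: it writes $Z \star \Gamma^l_j$ as the closure of the image of $Z' = \square \times Z$ under the rational map $\Psi^l_j$ of ~\eqref{eqn:boundary*}, builds the normalization square ~\eqref{eqn:boundary*0} with the surjection ${\ov Z}^N \times \P^1 \to {\ov{V'}}^N$, checks that ${\ov {\Psi^l_j}}^*$ fixes the divisors $\{t=0\}$ and $\{y_i=1\}$ for $i$ in the range of the coordinates coming from $Z$, and then descends the divisor inequality along the surjection via \lemref{lem:surjm} (as in \lemref{lem:DRmod} and \propref{prop:P0}). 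You instead argue by \emph{containment}: $Z \star \Gamma^l_j \subset Z \times \square^3$, the degenerate cycle $Z \times \square^3$ inherits the modulus condition from $Z$ by pulling back along the forgetful projection (using that ${\ov Z}^N \times (\P^1)^3$ is the normalization of $\ov Z \times (\P^1)^3$ over a perfect field, \cite[Lemma~3.1]{KL}), and \propref{prop:restp} then transfers the condition to each component $W'$, whose closure visibly meets $F_{n+3,0}$ and $F^1_{n+3}$ properly. This is exactly the containment trick the paper itself deploys in Claim (2) of \lemref{lem:HV} and in \propref{prop:product-modulus}, so it is fully within the paper's toolkit; its advantage is that it never uses the explicit formulas for $\psi^l_j$, and so proves the stronger statement that $Z \star \gamma$ is admissible for \emph{any} $\gamma \in z^2(\G_m,3)$ meeting the diagonal properly, whereas the paper's route is the one already set up for the operator $\delta$ and reuses that machinery verbatim. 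Two small points of hygiene: the coordinate ordering you adopt (the $\Gamma^l_j$-coordinates last) is the transpose of the paper's (which puts them in positions $1,2,3$), harmless since both modulus conditions and the face conditions are permutation-invariant; and your appeal to ``smoothness of $\ov\pi$'' for identifying the normalization is really an appeal to normality of products over a perfect field, which is the correct citation and the one you in fact give.
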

\begin{proof} We only need to show that $Z\star \Gamma^l_j$ satisfies the
modulus condition. For this, we observe that $Z\star \Gamma^l_j$ is the 
closure of image of $Z' = \square \times Z$ under the rational map
\begin{equation}\label{eqn:boundary*}
\Psi^l_j : X \times \G_m \times \square^n \to  X \times \G_m \times 
\square^{n+2}
\end{equation}
\[
\Psi^l_j(x, t, y, y_1, \cdots, y_{n-1}) =
(x, \psi^l_j(0), \psi^l_j(1), \psi^l_j(2), \psi^l_j(3), y_1, \cdots, y_{n-1})
\]
in the notation of ~\eqref{eqn:para}. We now follow the proof of 
Lemma~\ref{lem:DRmod} to prove the modulus condition for $Z'$. Let $V' =
Z \star \Gamma^l_j = \ov {\Psi^l_j(Z')}$.  We
consider the following commutative diagram.
\begin{equation}\label{eqn:boundary*0} 
\xymatrix@C.8cm{
{\ov Z}^N \times \P^1 \ar[r]^{\wt {\Psi^l_j}} \ar[d]_{f'} &
{\ov V'}^N \ar[d]^{g'} \\
{X \times \P^1 \times \P^1 \times {(\P^1)}^{n-1}} \ar[r]_{\ov {\Psi^l_j}} &
{X \times \P^1 \times {(\P^1)}^{n+2}}}
\end{equation}
Here $f ' = f \times {\rm Id}$, where $f : {\ov Z}^N \to X \times \P^1
\times {(\P^1)}^{n-1}$ is the normalization map for $\ov Z$ as in the
Diagram ~\eqref{eqn:DRmod0}.
Note that the map ${\ov {\Psi^l_j}}$ is defined since all the rational 
maps 
$\psi^l_j$ naturally extend to morphisms $\psi^l_j: \P^1 \times \P^1 \to
\P^1 \times {(\P^1)}^3$.

If $Z$ satisfies the modulus condition $M_{ssup}$, then there is some 
$1 \le i \le n-1$ such that 
$[f^*(y_i = 1) - (m+1)f^*(t = 0)] \ge 0$ on ${\ov Z}^N$. Since
$f'$ is identity on $\P^1$, this implies that 
$[{f'}^*(y_i = 1) - (m+1){f'}^*(t = 0)] \ge 0$ on ${\ov Z}^N$
for some $2 \le i \le n$. Since ${\ov {\Psi^l_j}}$ is identity on the
last $(n-1)$ copies of $\P^1$, we conclude that
${f'}^* \circ {\ov {\Psi^l_j}}^*[F^1_{n+3, i} - (m+1)F_{n+3, 0}] \ge 0$
for some $4 \le i \le n+2$, which in turn gives 
${\wt {\Psi^l_j}}^* \circ {g'}^*[F^1_{n+3, i} - (m+1)F_{n+3, 0}] \ge 0$
on ${\ov Z}^N \times \P^1$. Since ${\wt {\Psi^l_j}}$ is projective and 
surjective, we conclude from Proposition~\ref{prop:P0} that
${g'}^*[F^1_{n+3, i} - (m+1)F_{n+3, 0}] \ge 0$ for some
$4 \le i \le n+2$. This prove the modulus condition $M_{ssup}$ for 
$Z\star \Gamma^l_j$. 

If $Z$ satisfies the modulus condition $M_{sum}$,
then we use the same argument as above plus the proof of the $M_{sum}$
part of Lemma~\ref{lem:DRmod} to complete the proof of the lemma.
\end{proof}
Our main interest about $\delta^2$ is the following.
\begin{prop}\label{prop:Dsquare}
Assume that ${\rm char}(k) \neq 2$ and let 
$\alpha \in \un{\TZ}^{q}(X, n ;m)$ be a cycle such that 
$\partial (\alpha) = 0$. Then $\delta^2(\alpha) = 0$ as a homology class
in $\TH^{q+2}(X, n+2 ; m)$.
In particular, $\delta$ descends to a natural map of additive higher Chow 
groups $\delta : \TH^q(X, \bullet ; m) \to  \TH^{q+1}(X, \bullet ; m)[1]$ such
that $\delta^2 = 0$.
\end{prop}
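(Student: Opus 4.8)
The plan is to realise $\delta^{2}(\alpha)$, up to a reindexing of cube coordinates, as a $\star$-product of $\alpha$ with the Totaro-type $1$-cycle $\Lambda_{0}\in z^{2}(\G_{m},2)$ parametrised by $t\mapsto(t,u,u)$ (with $u=t^{-1}$), and then to invoke the bounding relation~\eqref{eqn:Tcycles6}, which says precisely that $2\Lambda_{0}=2(t,u,u)=\partial\Gamma$ for an explicit admissible $\Gamma$. First I would unwind the definitions. By~\eqref{eqn:diagonal} the operator $\delta$ prepends the coordinate $u=t^{-1}$ to the cube coordinates of $Z^{\times}$ and takes the closure; iterating, $\delta^{2}(\alpha)$ is the closure of $\{(x,t,u,u,y_{1},\dots,y_{n-1})\ :\ (x,t,y_{1},\dots,y_{n-1})\in\alpha^{\times}\}$. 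On the other hand, the construction of $\star$ (the Gysin pullback $\Delta_{\G_{m}}^{*}$ of the exterior product, as set up before Lemma~\ref{lem:boundary}) gives that $\alpha\star\Lambda_{0}$ is the closure of $\{(x,t,y_{1},\dots,y_{n-1},u,u)\}$. Hence $\delta^{2}(\alpha)=\rho\cdot(\alpha\star\Lambda_{0})$, where $\rho$ is the permutation of the $n+1$ cube coordinates carrying the last two slots to the first two.

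Since $\rho$ fixes the $\G_{m}$-coordinate $t$ and only permutes the divisors $\{y_{i}=1\}$ among themselves, it preserves the modulus condition ($M_{sum}$ is symmetric in these divisors and $M_{ssup}$ asks only that one of them work), so it acts on the additive cycle complex; and, exactly as for the cubical higher Chow complex, this action is chain-homotopic to $\sgn(\rho)\cdot\id$, the homotopy being built from rescalings of cube coordinates which again leave $t$ — hence the modulus condition — untouched. Therefore $\delta^{2}(\alpha)$ and $\sgn(\rho)\,\alpha\star\Lambda_{0}$ represent the same class in $\TH^{q+2}(X,n+2;m)$.

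Next I would use~\eqref{eqn:Tcycles6}: $2\Lambda_{0}=\partial\Gamma$ with $\Gamma$ the displayed $\Z$-linear combination of the cycles $\Gamma^{l}_{j}\in z^{2}(\G_{m},3)$. Since the exterior product of cycles in good position is compatible with $\partial$, and the Gysin pullback along the regular embedding $\Delta_{\G_{m}}$ is a chain map (the intersection formula, as in Corollary~\ref{cor:lci}) wherever the relevant intersections stay proper — which they do for these cycles — the assignment $\beta\mapsto\alpha\star\beta$ obeys a Leibniz rule $\partial(\alpha\star\beta)=(\partial\alpha)\star\beta\pm\alpha\star(\partial\beta)$. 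Using $\partial\alpha=0$, this yields $2(\alpha\star\Lambda_{0})=\alpha\star(\partial\Gamma)=\pm\,\partial(\alpha\star\Gamma)$. By Lemma~\ref{lem:boundary}, each $\alpha\star\Gamma^{l}_{j}$ lies in $\un{\TZ}^{q+2}(X,n+3;m)$, hence so does $\alpha\star\Gamma$; thus $\partial(\alpha\star\Gamma)$ is a boundary in the additive cycle complex and $2\,\delta^{2}(\alpha)=0$ in $\TH^{q+2}(X,n+2;m)$.

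Finally, to remove the factor $2$: since $\mathrm{char}(k)\neq2$, Corollary~\ref{cor:module} makes $\TH^{q+2}(X,n+2;m)$ a module over $\W_{m}(k)$, in which $2$ is a unit (as $k$ is a $\Z[1/2]$-algebra); hence $2\,\delta^{2}(\alpha)=0$ forces $\delta^{2}(\alpha)=0$. That $\delta$ descends to a natural operator on $\TH^{\bullet}(X,\bullet;m)$ is immediate from Corollary~\ref{cor:DR*} ($\delta$ is a chain map), and the computation above shows $\delta^{2}=0$ on homology. The step I expect to be the main obstacle is the first two paragraphs: correctly pinning down $\delta^{2}(\alpha)$ as a reindexed $\star$-product and, in particular, checking that the coordinate-permutation action on the \emph{additive} complex (with its modulus condition) is still homotopic to the sign; granting that, the conclusion follows formally from the already-established Lemma~\ref{lem:boundary} and identity~\eqref{eqn:Tcycles6}.
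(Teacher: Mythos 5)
Your proposal follows the paper's own proof almost step for step: identify $\delta^{2}(\alpha)$ with $\alpha\star(t,u,u)$, use the Leibniz-type identity $\partial(\alpha\times\Gamma)=\alpha\times\partial\Gamma-\partial\alpha\times\Gamma$ together with the relation $2(t,u,u)=\partial\Gamma$ of~\eqref{eqn:Tcycles6} and the admissibility of $\alpha\star\Gamma$ from Lemma~\ref{lem:boundary} to get $2\,\delta^{2}(\alpha)=\partial(\alpha\star\Gamma)$, and then kill the factor $2$ via the $\W_m(k)$-module structure of Corollary~\ref{cor:module}. The one place you diverge is the reindexing permutation $\rho$, and this is worth flagging. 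It arises only because you placed the two new coordinates $u,u$ at the end of the cube factors; in the paper's construction (see~\eqref{eqn:boundary*} and~\eqref{eqn:Dsquare*}) the map $\Psi^l_j$ inserts the coordinates coming from the $\G_m$-factor \emph{first}, so that $\delta^{2}(\alpha)$ is literally $\alpha\star(t,u,u)=\Delta^*_{\G_m}(\alpha\times(t,u,u))$ with no permutation at all. As written, your argument leans on the assertion that a permutation of cube coordinates acts on the additive cycle complex as $\sgn(\rho)\cdot\id$ up to a chain homotopy preserving the modulus condition; this is plausible (and the paper alludes to it in Remark~\ref{remk:delalt}) but is nowhere proved in the paper, so you would either have to supply that homotopy and verify its modulus condition, or — much more simply — adopt the coordinate ordering that makes $\rho$ the identity, after which your proof is complete and coincides with the paper's.
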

\begin{proof} The last part of the proposition follows from 
Corollary~\ref{cor:DR*} once we prove the first part. 
Since $\delta^2 $ is equal to a boundary in $\un{\TZ}^{q}(X, \bullet ;m)$ if 
and only if it is a boundary of an admissible additive cycle in 
$z^{q}(X \times \G_m, \bullet-1)$, we can work with the
latter complexes. We begin with the following.\\
{\bf Claim : } \emph{For any $\alpha \in \un{\TZ}^{q}(X, n ;m)$
and $\Gamma$ as in ~\eqref{eqn:Tcycles6}, one has 
\[
\partial (\alpha \times \Gamma) = \alpha \times \partial \Gamma
- \partial \alpha \times \Gamma
\]
in $z^{q+2}(X \times \G_m \times \G_m, n+2)$.}

This is an elementary computation. Since $\Gamma$ is a $\mathbb{Z}$-linear 
combination
of $\Gamma^l_j$'s, it suffices to prove the claim for each $\Gamma^l_j$.
We can further assume that $\alpha$ is
represented by an irreducible cycle $Z$. Then we note that for 
$\epsilon \in \{ 0 , \infty \}$, $l \in \{ 1, 2 \}$,
\[
\partial^{\epsilon}_i(Z \times \Gamma^l_j) = \left\{\begin{array}{ll}
Z \times \partial^{\epsilon}_i \Gamma^l_j & \mbox{if $1 \le i \le 3$} \\
\partial^{\epsilon}_{i-3}(Z) \times \Gamma^l_j & \mbox{if $4 \le i \le n+2$}.
\end{array}
\right.
\]
This in turn gives
\begin{eqnarray*}
\partial (Z \times \Gamma^l_j) & = & 
\sum_{i=1} ^{n+2} {(-1)}^i
[\partial^{\infty}_i(Z \times \Gamma^l_j) - 
\partial^{0}_i(Z \times \Gamma^l_j)] \\
& = & Z \times \left\{\sum_{i=1} ^3  {(-1)}^i
[\partial^{\infty}_i(\Gamma^l_j) - \partial^{0}_i(\Gamma^l_j)]\right\}
 \\
& & 
+ \left\{\stackrel{n+2}{\underset {i = 4}{\sum}} {(-1)}^i
[\partial^{\infty}_{i-3}(Z) - \partial^{0}_{i-3}(Z)]\right\} \times 
\Gamma^l_j \\
& = & Z \times \partial (\Gamma^l_j) + 
\left\{\stackrel{n+2}{\underset {i = 4}{\sum}} {(-1)}^i
[\partial^{\infty}_{i-3}(Z) - \partial^{0}_{i-3}(Z)]\right\} \times 
\Gamma^l_j \\
& = & Z \times \partial (\Gamma^l_j) +
\left\{\stackrel{n-1}{\underset {i = 1}{\sum}} {(-1)}^{i+3}
[\partial^{\infty}_i(Z) - \partial^{0}_i(Z)]\right\} \times \Gamma^l_j \\
& = &  Z \times \partial (\Gamma^l_j) - \partial Z \times \Gamma^l_j.
\end{eqnarray*}
This proves the claim.

Next, we see from the definition of $\phi_n$ in ~\eqref{eqn:diagonal} that
for any irreducible admissible cycle $Z \in \un{\TZ}^{q}(X, n ;m)$,
$\delta^2(Z)$ is just the image of $Z$ under the rational map
\begin{equation}\label{eqn:Dsquare*}
X \times \G_m \times \square^{n-1} \to X \times \G_m \times \square^{n+1}
\end{equation}
\[
(x, t, y_1, \cdots , y_{n-1}) \mapsto 
(x, t, t^{-1}, t^{-1}, y_1, \cdots, y_{n-1})
\]
\[
\hspace*{5cm} = (x, t, u, u, y_1, \cdots, y_{n-1}).
\]
In particular, we see from ~\eqref{eqn:boundary*} that $\delta^2(Z)$ is
the cycle $Z \star (t, u, u) = \Delta^*_{\G_m}\left(Z \times (t, u, u)\right)$.
Hence for any $\alpha \in \un{\TZ}^{q}(X, n ;m)$, we have
$\delta^2(\alpha) = \alpha \star (t, u, u)$ as an element of
$z^{q+2}(X \times \G_m, n+1)$. Since the diagram
\[
\xymatrix{
\G_m \times \square^{n+1} \ar[r]^{{\iota}^{\epsilon}_i} 
\ar[d]_{\Delta_{\G_m}} & 
\G_m \times \square^{n+2} \ar[d]^{\Delta_{\G_m}} \\
\G_m \times \G_m \times \square^{n+1} \ar[r]_{{\iota}^{\epsilon}_i} &
\G_m \times \G_m \times \square^{n+2}}
\]
is Cartesian, we see in $z^{q+2}(X \times \G_m, n+1)$ that for any
$\alpha \in \un{\TZ}^{q}(X, n ;m)$ with $\partial (\alpha) = 0$,
we have
\[
\begin{array}{llll}
2 \delta^2(\alpha) & = & 2 \alpha * (t, u, u) & \\
& = & \alpha * 2 (t, u, u) & \\
& = & \alpha * \partial \Gamma & (\mbox{by ~\eqref{eqn:Tcycles6}}) \\
& = & \Delta^*_{\G_m}\left(\alpha \times \partial \Gamma \right) & \\
& = & \Delta^*_{\G_m}\left(\alpha \times \partial \Gamma - 
\partial \alpha \times \Gamma \right) & \\    
& = & \Delta^*_{\G_m}\left(\partial (\alpha \times \Gamma) \right) &
(\mbox{by the Claim}) \\
& = & \partial \left(\Delta^*_{\G_m} (\alpha \times \Gamma) \right) & \\
& = & \partial (\alpha * \Gamma). 
\end{array} 
\] 
Since $\alpha \star \Gamma \in \un{\TZ}^{q+2}(X, n+3 ;m)$ by 
Lemma~\ref{lem:boundary}, we conclude that $2 \delta^2(\alpha) = 0$
as a class in $\TH^{q+2}(X, n+2, m)$. Since ${\rm char}(k) \neq 2$, we
conclude from Corollary~\ref{cor:module} that the homology class of
$\delta^2(\alpha)$ is zero in $\TH^{q+2}(X, n+2, m)$.
\end{proof}  
The following is the main result of this section.
\begin{thm}\label{thm:preDGA}
Let $X$ be a smooth projective variety over a field $k$ such that
char$(k) \neq 2$. Then the additive higher Chow groups
$\left(\TH(X), \wedge \right)$ is a graded-commutative
algebra which is equipped with a differential operator $\delta$ of degree one
satisfying $\delta^2 = 0$. Moreover, this differential operator commutes with
the pull-back and push-forward maps of additive higher Chow groups. 
\end{thm}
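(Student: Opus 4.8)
The plan is to assemble Theorem~\ref{thm:preDGA} from the three structural results already established in the preceding sections. First I would invoke Theorem~\ref{thm:wedge-product*}, which provides the internal wedge product $\wedge_X$ on $\TH(X)$, its associativity, and the graded-commutativity relation \eqref{eqn:anticommute*}; together with Corollary~\ref{cor:wedge-product**} this gives the graded-commutative algebra structure $(\TH(X),\wedge)$. Next I would invoke Corollary~\ref{cor:DR*} together with Proposition~\ref{prop:Dsquare}: the former shows $\delta$ is a chain map raising the cubical degree by one (hence a degree-one operator on the graded groups $\TH(X)$), and the latter shows $\delta^2=0$ on homology, using char$(k)\neq 2$ and Corollary~\ref{cor:module}. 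Thus the only genuinely new assertions to verify are the compatibilities: that $\delta$ commutes with pull-back maps $f^*$ and push-forward maps $f_*$ of additive higher Chow groups.

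For the compatibility with $f_*$ along a projective morphism $f:X\to Y$, I would check at the level of cycles that the square
\[
\xymatrix{
X\times\G^{\times}_m\times\square^{n-1} \ar[r]^{\phi^X_n} \ar[d]_{f\times\id} & X\times\G_m\times\square^{n} \ar[d]^{f\times\id}\\
Y\times\G^{\times}_m\times\square^{n-1} \ar[r]^{\phi^Y_n} & Y\times\G_m\times\square^{n}}
\]
commutes, where $\phi_n$ is the map \eqref{eqn:diagonal}; since $\phi_n$ only inserts the coordinate $t^{-1}$ and acts as the identity on the $X$- and $\square^{n-1}$-factors, this is immediate, and then $\delta\circ f_* = f_*\circ\delta$ follows from the functoriality of proper push-forward on cycle complexes (Theorem~\ref{thm:basic}) together with Lemma~\ref{lem:DRmod} ensuring both sides land in the admissible subgroup. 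For the compatibility with $f^*$, the subtlety is that the general pull-back $f^*$ of Theorem~\ref{thm:funct} is constructed only up to the moving lemma, as the composite of a flat pull-back $pr_2^*$, a Gysin map $gr_f^*$, and a chain homotopy identification via Theorem~\ref{thm:ML}. So I would treat the two building blocks separately: for a flat morphism $f$, the analogous square with $\phi_n$ is Cartesian (again because $\phi_n$ touches neither the base nor $\square^{n-1}$), so $\delta$ commutes with flat pull-back directly; for a regular closed embedding, $\delta$ commutes with the Gysin map because $\phi_n$ restricted to the subvariety is the base change of $\phi_n$ on the ambient space, exactly as in the Cartesian-square arguments of Corollary~\ref{cor:lci} and Lemma~\ref{lem:boundary}. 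Finally, on homology the identification in Theorem~\ref{thm:funct} is canonical, so compatibility of $\delta$ with the two building blocks forces compatibility with $f^*$ itself.

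The main obstacle I anticipate is the last point: one must be careful that $\delta$, which is defined only on admissible cycles and whose well-definedness on homology rests on Proposition~\ref{prop:Dsquare}, genuinely commutes with the moving-lemma identification used to define $f^*$ on $\TH$. Concretely, given a class in $\TH^q(Y,n;m)$ one represents it by a cycle $Z$ meeting the relevant $\sW$ properly (Theorem~\ref{thm:ML}); one must verify that $\delta(Z)$ also meets the faces properly after passing through $pr_2^*$ and $gr_f^*$, and that the chain homotopy witnessing the moving lemma can be chosen compatibly with $\delta$. Since $\delta$ is induced by the single rational map $\phi_n$ and all the maps in the construction of $f^*$ (projection, graph embedding, generic linear projection) commute with insertion of the $t^{-1}$-coordinate, this is a matter of bookkeeping rather than a new idea; the containment lemma (Proposition~\ref{prop:restp}) handles the modulus condition throughout, exactly as in the proofs of Lemma~\ref{lem:DRmod} and Lemma~\ref{lem:boundary}. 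I would therefore phrase the proof as: ``The algebra structure is Corollary~\ref{cor:wedge-product**}; the differential $\delta$ with $\delta^2=0$ is Corollary~\ref{cor:DR*} and Proposition~\ref{prop:Dsquare}; and the stated compatibilities follow by checking the evident (Cartesian) squares relating $\phi_n$ to flat pull-backs, Gysin maps, and proper push-forwards, together with the functoriality already recorded in Theorem~\ref{thm:basic}, Corollary~\ref{cor:lci}, and Theorem~\ref{thm:funct}.''
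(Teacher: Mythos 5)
Your proposal is correct and follows the same route as the paper: the algebra structure is quoted from Corollary~\ref{cor:wedge-product**}, the differential with $\delta^2=0$ from Proposition~\ref{prop:Dsquare} (via Corollary~\ref{cor:DR*}), and the compatibility with $f^*$ and $f_*$ is the "direct check from the definition" that the paper leaves to the reader. Your expansion of that last check --- the commuting/Cartesian squares relating $\phi_n$ to flat pull-back, Gysin maps, and proper push-forward, plus the care with the moving-lemma identification --- is exactly the bookkeeping the paper's one-line remark suppresses.
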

\begin{proof}
It follows directly from Corollary~\ref{cor:wedge-product**} and
Proposition~\ref{prop:Dsquare}. The commutativity of $\delta$ with the
pull-back and push-forward maps can be directly checked from its
definition.
\end{proof}
\begin{remk}\label{remk:char2} It seems that the assumption 
char$(k) \neq 2$ in Corollary~\ref{cor:module} and Theorem~\ref{thm:preDGA}
is not serious and can be removed using the infinite pro-$l$ extension of the 
field for $l \neq 2$. We do not go into this here.
\end{remk}

\section{Differential operator and Leibniz rule}\label{section:DOL}
In this section we introduce another differential operator on the
additive cycle complexes. This differential is an analogue of the Connes' 
boundary operator in the theory of Hochschild and cyclic homology 
({\sl cf.} \cite[Chapter~2]{Loday}).
We shall show that this satisfies the Leibniz rule
for the wedge product on the admissible cycle classes. We shall comment about
the relation between the two differential operators towards the end of this
section.

For $1 \le i \le n$, let $\sigma_{i}$ be the permutation 
\[
\sigma_i(j) =
\left\{\begin{array}{ll}
i \ \ \ \ \ \ \ \mbox{if $j=1$} \\
j-1 \ \ \mbox{if $2 \le j \le i$} \\
j \ \ \ \ \ \ \ \mbox{if $j > i$}.
\end{array}
\right.
\]
Let $\delta_i : X \times B_n \to X \times B_{n+1}$ be the rational map
$\phi^i_n = \sigma_{i} \circ \phi_n$, where $\phi_n$ is defined in 
~\eqref{eqn:diagonal}. In particular, we have $\phi^1_n = \phi_n$. Since 
$\sigma_i$ defines an automorphism of $X \times \ov B_n$ which preserves the 
modulus condition and proper intersection, it follows from 
Proposition~\ref{prop:DR} that for any admissible cycle 
$Z \in \un{\TZ}^q(X, n;m)$, $V = \phi^i_n(Z^{\times})$ defines an admissible 
cycle $\delta_i(Z) = V \in \un{\TZ}^{q+1}(X, n+1;m)$. Thus $\delta_i =
\sigma^*_i \circ \delta$. We put 
\begin{equation}\label{eqn:alt}
\delta_{alt} = \stackrel{n}{\underset {i =1} \sum} (-1)^i 
\delta_i : \ {\TZ}^q(X, n;m) \to {\TZ}^{q+1}(X, n+1;m).
\end{equation}
These $\delta_i$'s satisfy the following identities.
\begin{lem}\label{lem:deltaidentities}
For $i, j \in \{ 1, \cdots, n\}$, we have
\begin{equation}\label{eqn:Pind}
\left\{\begin{array}{lll}
& \delta_i \delta _j = \delta _{j+1} \delta _i , & \mbox{ if } i \leq j, \\
& \delta_i \delta_j = \delta_j \delta_{i-1}, & \mbox{ if } i > j.
\end{array}
\right.
\end{equation}
\end{lem}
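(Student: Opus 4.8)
The plan is to verify the identities \eqref{eqn:Pind} by a direct computation on the level of the underlying rational maps $\delta_i: X\times B_n \to X\times B_{n+1}$, reducing everything to a comparison of permutations and of the duplication-of-the-$t^{-1}$-coordinate operation. Recall that $\delta_i = \sigma_i^*\circ\delta$, where $\delta = \phi_n$ duplicates the coordinate $u = t^{-1}$ immediately after $t$, and $\sigma_i$ is the cyclic permutation moving that new coordinate into the $i$-th slot of $B_{n+1}$. So $\delta_i$ takes a point $(x,t,y_1,\dots,y_{n-1})$ and produces a point in $X\times B_{n+1}$ whose coordinates are $(x,t,y_1,\dots,y_{i-1},u,y_i,\dots,y_{n-1})$, i.e. it \emph{inserts} a copy of $u=t^{-1}$ into the $i$-th box-coordinate position (counting box-coordinates $1,\dots,n$ on $B_{n+1}$). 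This reformulation is the key: once we see $\delta_i$ as ``insert a $t^{-1}$ in position $i$,'' the identities \eqref{eqn:Pind} become the standard cosimplicial/precubical identities for insertion (degeneracy-type) operators.

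First I would make this insertion description precise and check it is literally correct from the definitions of $\phi_n$ in \eqref{eqn:diagonal} and of $\sigma_i$. Then I would compute both sides of each identity by tracking where each coordinate lands. For $i\le j$: apply $\delta_j$ first, inserting a $u$ at position $j$; then $\delta_i$ inserts another $u$ at position $i\le j$, which pushes the previously-inserted copy from position $j$ to position $j+1$. On the other side, $\delta_{j+1}\delta_i$ first inserts at position $i$, then at position $j+1$; since $i\le j < j+1$, the first insertion is not disturbed and sits at position $i$, while the second sits at $j+1$. Both give the same configuration $(\dots,u \text{ at }i,\dots,u\text{ at }j+1,\dots)$, so $\delta_i\delta_j = \delta_{j+1}\delta_i$. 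For $i>j$, a symmetric bookkeeping (the insertion at position $j$ happens to the left of where the second insertion would naively go, shifting its effective index down by one) gives $\delta_i\delta_j = \delta_j\delta_{i-1}$. The only subtlety is that both inserted coordinates equal the \emph{same} value $u=t^{-1}$, so no twisting of the value occurs; the check is purely combinatorial and identical to the one for cosimplicial degeneracies, with the slots offset because the first box-coordinate of $B_{n+1}$ is $y_1$ (the old $t$-coordinate is never an insertion target).

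The only genuine thing to be careful about is that these are equalities of \emph{rational maps} (equivalently, of cycles obtained by taking Zariski closures of images of $Z^{\times}$), not of morphisms on all of $X\times B_n$, since $\phi_n$ is not defined where $t=1$; but Lemma~\ref{lem:closed} already guarantees that the relevant images are closed in $X\times\G_m\times\square^{n+1}$ and hence that the cycle-level operators $\delta_i$ are well-defined, and the composites on both sides of \eqref{eqn:Pind} are just the closures of the images of $Z^{\times}$ under the two (equal) rational maps on the domain $X\times\G_m^\times\times\square^{n-1}$. Since these two rational maps agree on their common domain of definition by the coordinate computation above, the induced cycles agree. I expect the main obstacle to be purely notational — getting the index shifts in the ``$i>j$'' case right and making sure the role of the fixed $t$-coordinate versus the box-coordinates is handled consistently — rather than any conceptual difficulty; the argument is the cubical analogue of a well-known simplicial identity and involves no modulus or intersection-theoretic input.
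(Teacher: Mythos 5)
Your proposal is correct and follows exactly the route the paper intends: the paper's proof is simply ``this is obvious from the definition of $\delta_i$'s,'' and your reformulation of $\delta_i$ as insertion of $u=t^{-1}$ into the $i$-th box-coordinate slot, followed by the standard degeneracy-operator bookkeeping, is precisely the verification being left to the reader. Your added care about these being equalities of rational maps (hence of the associated closed cycles, via Lemma~\ref{lem:closed}) is a correct and harmless refinement.
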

\begin{proof} This is obvious from the definition of $\delta_i$'s.
\end{proof}
\begin{lem}\label{lem:del2}
We have $\delta^2_{alt} = 0$.
\end{lem}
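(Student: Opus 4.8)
The plan is to compute $\delta_{alt}^2$ by expanding the double sum and then pairing up terms using the combinatorial identities from Lemma~\ref{lem:deltaidentities}. First I would write
\[
\delta_{alt}^2 = \left(\sum_{i=1}^{n+1} (-1)^i \delta_i\right)\left(\sum_{j=1}^{n} (-1)^j \delta_j\right) = \sum_{i,j} (-1)^{i+j} \delta_i \delta_j,
\]
where the outer sum runs over $1 \le i \le n+1$ and the inner over $1 \le j \le n$, since $\delta_j$ raises the cubical level from $n$ to $n+1$ and then $\delta_i$ raises it from $n+1$ to $n+2$. Split this into the part with $i \le j$ and the part with $i > j$.

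Next I would apply the two cases of \eqref{eqn:Pind}. In the range $i \le j$ we have $\delta_i \delta_j = \delta_{j+1} \delta_i$, so the contribution is $\sum_{1 \le i \le j \le n} (-1)^{i+j} \delta_{j+1}\delta_i$. Reindex by setting $i' = i$ and $j' = j+1$; then $i' < j'$ and $j'$ ranges up to $n+1$, so this equals $\sum_{1 \le i' < j' \le n+1} (-1)^{i' + j' - 1} \delta_{j'}\delta_{i'}$, i.e. $-\sum_{i' < j'} (-1)^{i'+j'}\delta_{j'}\delta_{i'}$. In the range $i > j$ we have $\delta_i \delta_j = \delta_j \delta_{i-1}$, so that contribution is $\sum_{1 \le j < i \le n+1} (-1)^{i+j} \delta_j \delta_{i-1}$; reindex with $a = j$, $b = i - 1$, giving $a \le b$, $b$ up to $n$, and sign $(-1)^{a+b+1}$, i.e. $-\sum_{a \le b} (-1)^{a+b}\delta_a \delta_b$. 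Adding the two reindexed expressions, the first is (minus) a sum over strictly-ordered pairs $\delta_{\text{larger}}\delta_{\text{smaller}}$ and the second is (minus) a sum over weakly-ordered pairs $\delta_{\text{smaller}}\delta_{\text{larger}}$; but these are literally the same collection of operators with opposite overall structure, so one checks they cancel term by term, leaving $\delta_{alt}^2 = 0$. (One should be slightly careful about the diagonal terms $a = b$ in the second sum: there $\delta_a\delta_a$ appears with sign $-(-1)^{2a} = -1$, and these must be matched against something — in fact the boundary/degeneracy considerations or a direct check using $\phi^i_n$ shows the diagonal contributions vanish or cancel internally, which I would verify explicitly.)

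The step I expect to be the main obstacle is not any deep geometry — the modulus and admissibility issues are already handled by Proposition~\ref{prop:DR} and the remark preceding \eqref{eqn:alt} — but rather bookkeeping the index shifts and signs correctly so that the two halves genuinely cancel, including the borderline diagonal terms. This is exactly the standard simplicial-identity computation showing that an alternating sum of face-type maps squares to zero (as in the proof that $\partial^2 = 0$), so once the indices are aligned the cancellation is forced. I would present it as a short direct calculation, invoking Lemma~\ref{lem:deltaidentities} at the two places indicated and noting that the resulting sums are negatives of each other after the substitutions $j \mapsto j+1$ and $i \mapsto i-1$ respectively.
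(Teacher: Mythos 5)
Your setup and the use of Lemma~\ref{lem:deltaidentities} are the right idea and match the paper's strategy, but the final cancellation step as you describe it has a genuine gap. After your two reindexings the expression is
\[
\delta_{alt}^2 \;=\; -\sum_{1\le i<j\le n+1}(-1)^{i+j}\,\delta_{j}\delta_{i}\;-\;\sum_{1\le a\le b\le n}(-1)^{a+b}\,\delta_{a}\delta_{b},
\]
and these two sums are \emph{not} "literally the same collection of operators": the first consists of compositions with the larger index applied last, the second with the smaller index applied last, and $\delta_j\delta_i\neq\delta_i\delta_j$ in general. To make them cancel you must invoke \eqref{eqn:Pind} a second time, e.g. $\delta_j\delta_i=\delta_i\delta_{j-1}$ for $j>i$, which after substituting $b=j-1$ converts the first sum into $+\sum_{1\le a\le b\le n}(-1)^{a+b}\delta_a\delta_b$ and kills the second. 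In particular your parenthetical guess about the diagonal is wrong: the terms $\delta_a\delta_a$ do not vanish and are not degenerate (if $\delta\circ\delta$ were zero on the nose, Proposition~\ref{prop:Dsquare} and the Totaro-type cycles there would be pointless); they are cancelled precisely by the off-diagonal terms $\delta_{a+1}\delta_a=\delta_a\delta_a$ contributed by the first sum under this extra application of the identity.

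The paper avoids this detour by transforming only one of the two halves: it leaves the $i\le j$ part untouched as $\sum_{i\le j}(-1)^{i+j}\delta_i\delta_j$ and rewrites only the $i\ge j+1$ part via $\delta_i\delta_j=\delta_j\delta_{i-1}$, so that after the substitution $i'=j$, $j'=i-1$ one lands directly on $\sum_{i'\le j'}(-1)^{i'+j'+1}\delta_{i'}\delta_{j'}$, the exact negative of the untouched half. I would recommend restructuring your computation that way; it removes both the asymmetry between your two sums and the diagonal bookkeeping entirely.
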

\begin{proof}
Indeed $\delta^2_{alt}$ is,
\[
\left( \sum_{i=1} ^{n+1} (-1)^i \delta_i \right) 
\left(\sum_{j=1} ^{n} (-1)^j \delta_j \right) = 
\sum_{i \leq j} (-1)^{i+j} \delta_i \delta _j + \sum_{i \geq j+1} (-1)^{i+1} 
\delta_i \delta_j 
\]
\[
= \sum_{i \leq j} (-1)^{i+1} \delta_i \delta _j + \sum_{i \geq j+1} 
(-1)^{i+j} \delta _j \delta_{i-1}.
\]
For the right hand side, use the substitution $i-1 = j'$ and $j = i'$ so that 
we have
\[
\delta^2_{alt} = \sum_{i \leq j } (-1)^{i+j} \delta_i \delta _j + 
\sum_{i' \leq j' } (-1)^{i' + j' + 1} \delta_{i'} \delta_{j'} = 0.
\]
\end{proof}
One major drawback of $\delta_{alt}$ is that unlike $\delta = \delta_1$,
it does not have good commutativity (or anti-commutativity) relations with 
the boundary operator $\partial$ of the additive cycle complex.
We shall show in the next section that $\delta_{alt}$ still defines 
an operator on the homology groups. At this stage, we note that 
$\delta_{alt}$ and $\partial$ satisfy the following properties.
\begin{lem}\label{lem:pdelta} The following identities hold, where $\epsilon 
\in \{ 0, \infty \}$:
\begin{equation}\label{eqn:partialF}
\left\{\begin{array}{lll}
& \partial_i ^\epsilon\delta_k = \delta_{k-1} \partial_i ^\epsilon, & 
\mbox{ if } i <k, \\
& \partial_i ^\epsilon \delta_k = 0 , & \mbox{ if } i = k,\  \\
& \partial_i ^\epsilon \delta_k = \delta_k \partial ^\epsilon _{i-1}, & 
\mbox{ if } i > k. 
\end{array}
\right.
\end{equation}
Equivalently,
\begin{equation}\label{eqn:deltaF}
\left\{\begin{array}{lll}
& \delta_k \partial_i ^\epsilon = \partial_{i+1} ^\epsilon \delta_k , & 
\mbox{ if } k \leq 
i, \\
& \delta_k \partial_i ^\epsilon = \partial_i ^ \epsilon \delta_{k+1}, & 
\mbox{ if } k \geq i.
\end{array}
\right.
\end{equation}
In particular, $\partial_{i+1} ^\epsilon \delta_i = \partial_i ^\epsilon 
\delta_{i+1}$.
\end{lem}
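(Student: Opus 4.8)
The plan is to reduce both sets of identities to the explicit description of the rational maps $\delta_k$ and of the face operators $\partial_i^\epsilon$ on the ambient spaces, and then to track a few Cartesian squares exactly as in the proof of Proposition~\ref{prop:DR}. First I would record that, in the coordinates $(t,y_1,\ldots,y_\ell)$ of $X\times\G_m\times\square^\ell$, the map $\delta_k=\sigma_k\circ\phi_n$ is the rational map
\[
(x,t,y_1,\ldots,y_{n-1})\mapsto (x,t,y_1,\ldots,y_{k-1},t^{-1},y_k,\ldots,y_{n-1}),
\]
i.e.\ it inserts the new coordinate $t^{-1}$ in the $k$-th $\square$-slot and shifts $y_k,\ldots,y_{n-1}$; this is a locally closed immersion (a closed immersion onto the graph locus $\{y_k=t^{-1}\}$ followed by an open immersion), and for admissible $Z$ the cycle $\delta_k(Z)=\overline{\delta_k(Z^\times)}$ acquires no new points inside $X\times B_{n+1}$, as observed just before the statement of the lemma via Proposition~\ref{prop:DR}. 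Likewise $\partial_i^\epsilon$ is intersection with $X\times\{y_i=\epsilon\}$ followed by forgetting that coordinate.

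Next I would compute $\delta_k^{-1}(\{y_i=\epsilon\})$ directly from the formula: the $i$-th $\square$-coordinate of $\delta_k(x,t,\vec y)$ equals $y_i$ for $i<k$, equals $t^{-1}$ for $i=k$, and equals $y_{i-1}$ for $i>k$. Since $t^{-1}\in\G_m$ never takes the values $0$ or $\infty$, the locus $\{y_k=\epsilon\}$ with $\epsilon\in\{0,\infty\}$ does not meet the image of $\delta_k$, which gives $\partial_k^\epsilon\delta_k=0$. For $i\neq k$ one obtains a Cartesian square of ambient (rational) maps: for $i<k$ the square whose vertical arrows are $\delta_{k-1}$ and $\delta_k$ and whose horizontal arrows are the face inclusions $\iota_{\bullet,i,\epsilon}$, and for $i>k$ the analogous square with $\delta_k$ on both verticals and face inclusions $\iota_{\bullet,i-1,\epsilon}$, $\iota_{\bullet,i,\epsilon}$ — precisely of the type of diagram~\eqref{eqn:cart}. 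Since cycle pull-back commutes with such Cartesian squares, and $\delta_k$ is dimension-preserving and carries admissible cycles to admissible cycles (Proposition~\ref{prop:DR} and its permutation variant, with the proper-intersection hypotheses supplied by admissibility of $Z$), the first and third identities of \eqref{eqn:partialF} follow.

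Finally, \eqref{eqn:deltaF} is obtained from \eqref{eqn:partialF} by relabelling: replacing $k$ by $k+1$ in the first identity (using $i<k+1\iff i\le k$) yields $\delta_k\partial_i^\epsilon=\partial_i^\epsilon\delta_{k+1}$ for $k\ge i$, and replacing $i$ by $i+1$ in the third identity (using $i+1>k\iff i\ge k$) yields $\delta_k\partial_i^\epsilon=\partial_{i+1}^\epsilon\delta_k$ for $k\le i$. The displayed special case $\partial_{i+1}^\epsilon\delta_i=\partial_i^\epsilon\delta_{i+1}$ then follows by applying the third identity of \eqref{eqn:partialF} to the left side (indices $i+1>i$) and the first identity to the right side (indices $i<i+1$), both of which reduce the expression to $\delta_i\partial_i^\epsilon$. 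The one genuinely geometric input is the emptiness of $\{y_k=\epsilon\}\cap\delta_k(X\times B_n^\times)$; everything else is the commutation of cycle pull-back with Cartesian squares together with index bookkeeping, and I expect keeping the ranges of $i$ and $k$ straight to be the only point requiring care.
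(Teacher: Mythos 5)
Your argument is correct and is precisely the index-bookkeeping computation that the paper's proof leaves to the reader (the paper only says the lemma is ``straightforward''): the explicit formula $\delta_k(x,t,\vec y)=(x,t,y_1,\ldots,y_{k-1},t^{-1},y_k,\ldots,y_{n-1})$, the observation that $t^{-1}\in\G_m$ avoids $\{0,\infty\}$ (giving $\partial_k^\epsilon\delta_k=0$, with Lemma~\ref{lem:closed} guaranteeing no extra boundary points appear in the closure), and the Cartesian squares of type~\eqref{eqn:cart} for $i\neq k$ are exactly the intended ingredients. The relabelling deriving \eqref{eqn:deltaF} from \eqref{eqn:partialF} and the special case also check out.
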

\begin{proof} This is straightforward, while it takes some patience to keep 
track of the indices correctly.
\end{proof}
We will come back to this issue about the interaction of $\delta_{alt}$ with 
$\partial$ in the next section. See Lemma \ref{reason for normalization} and 
Question \ref{ques:QI}.

\subsubsection{Leibniz rule}
We now show that the differential $\delta_{alt}$ 
is in fact a derivation for the wedge product on the additive cycle complex. 
We first define a new operator on a pair of additive cycles which is the 
cycle theoretic analog of the {\sl cyclic shuffle product} in the
Hochschild complex in \cite[Section~4.3.2]{Loday}.
Recall that this cyclic shuffle product is used to show that the
Conne's boundary operator is a derivation for the wedge product on the
Hochschild homology. We prove here the analogous statement for the 
additive higher Chow groups. 

Consider the rational map
\begin{equation}\label{eqn:prod*}
{\mu}' : X \times X \times \G_m \times \G_m \times
{\square}^{n_1 + n_2 -2} \times \square \to
X \times X \times \G_m \times {\square}^{n_1 + n_2}
\end{equation}
\[
{\mu}' \left(x, t_1, t_2, y_1, \cdots , y_{n_1 + n_2 -2}, y\right)
= \left(x, t_1t_2, y, \frac{t_1y - 1}{t_1t_2y-1}, y_1, \cdots , 
y_{n_1 + n_2 -2}\right).
\]
For two irreducible admissible cycles $Z_i \in {\TZ}^{q_i}(X, n_i, m)$ for
$i = 1,2$, let $Z_1 {\times}' Z_2$ be the closure of 
${\mu}'\left((Z_1 \times Z_2)\times \square\right)$ in
$X \times X \times \G_m \times {\square}^{n_1 + n_2}$, where we omit a 
suitable transposition from our notations. As before, we put
$n = n_1+n_2-1$ and $q = q_1+q_2-1$. 
\begin{prop}\label{prop:prod*0}
$Z_1 {\times}' Z_2$ is an admissible cycle in $\TZ^{q}(X \times X , n+2;m)$.
\end{prop}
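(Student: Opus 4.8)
The plan is to follow the template already established for the ordinary product $\mu_*$ in Section~\ref{subsection:external-wedge}: verify the dimension count, then the good-position condition with respect to all faces, and finally the modulus condition via the containment lemma (\propref{prop:restp}). First I would check that $\dim(Z_1 {\times}' Z_2) = \dim(Z_1) + \dim(Z_2) + 1$, equivalently that $Z_1 {\times}' Z_2$ has codimension $q$ in $X \times X \times B_{n+2}$. As in the proof for $\mu_*$, the only danger is that the fibers of ${\mu}'$ over $Z_1 \times Z_2 \times \square$ are positive-dimensional in a way that drops the dimension; since the coordinate $y$ on the extra $\square$ is a genuinely new free parameter in the target and $t_1 t_2$ (together with $\frac{t_1 y - 1}{t_1 t_2 y - 1}$) separates points of a dense subset once $Z_1, Z_2$ are not fibered over $\G_m$ (which the modulus condition forbids), the same fiber-counting argument as for $\mu_*$ applies verbatim. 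One must record separately the easy case where one of the $Z_i$ lies in a fiber $\{t = a\}$.

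Next I would handle the good-position condition. Write a face $F$ of $B_{n+2}$ as $\G_m \times \{y = \epsilon\}^? \times F_1 \times F_2$ according to how it meets the $y$-coordinate, the $n_1 - 1$ coordinates coming from $Z_1$, and the $n_2 - 1$ from $Z_2$. The key point, exactly as in \lemref{lem:proper-int}, is that ${\mu}'$ is equivariant with respect to the codimension-one face maps $\partial_i^\epsilon$ on the $y_j$-coordinates coming from $\square^{n_1+n_2-2}$, so properness of intersection reduces to properness of $Z_1 \times Z_2 \times \square$ against the relevant products, which holds because $Z_1$ and $Z_2$ are admissible and the extra factor $\square$ is intersected trivially. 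The faces involving the new coordinate $y$ (i.e.\ $y = 0$ or $y = \infty$) require a short separate check: at $y = 0$ the second new coordinate becomes $\frac{-1}{-1} = 1$, which lands in a deleted locus, so that face is disjoint from the image, and similarly one tracks $y = \infty$; in each degenerate case the cycle either becomes degenerate in the cubical sense or intersects the face in the expected dimension.

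For the modulus condition I would again reduce to $X = \Spec k$ by the containment argument: setting $W_i = (p \times \id)(Z_i)$ as in Proposition~\ref{prop:product-modulus}, one has $Z_1 {\times}' Z_2 \subset X \times X \times ({\mu}'_* (W_1 \times W_2))$, and since the closure of $Z_1 {\times}' Z_2$ in $X \times X \times \wh B_{n+2}$ meets $F^1_{n+2}$ and $F_{n+2,0}$ properly, \propref{prop:restp} shows it suffices to prove the modulus condition for ${\mu}'_*(W_1 \times W_2)$ over $\Spec k$. For $M_{sum}$ this is the straightforward computation: on $\A^2 \times (\P^1)^{n+1}$ one computes ${\mu}'^*$ of the divisor $\sum \{y_i = 1\} - (m+1)\{w = 0\}$ and checks it is the sum of the pullbacks of the two divisors witnessing the $M_{sum}$ condition for $W_1 \times B_{\cdot}$ and $B_{\cdot} \times W_2$, plus a contribution from the new coordinate $y$ that is manifestly effective (since the new factor $\frac{t_1 y - 1}{t_1 t_2 y - 1}$ has no pole along $\{w = t_1 t_2 = 0\}$ away from the good locus); then \lemref{lem:surjm} applied to the surjective normalized map finishes it.

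The hard part will be the $M_{ssup}$ case, just as it was in Proposition~\ref{prop:product-modulus}. Here one cannot simply add divisors; one must follow the orbit-closure analysis of that proposition almost line by line, with the $\G_m$-action replaced by the one hidden in ${\mu}'$ (multiplication in the $t$-variable together with the Möbius-type twist in the second new coordinate). I expect the argument to again split into the case where some $W_i$ lies in a fiber $\{t = a\}$ (handled directly, reducing to the modulus condition for the other factor after translation by the $\G_m$-action) and the generic case, where one introduces the auxiliary variety $\wt Z$ obtained by replacing $W_1$ with the full $\G_m \times \square^{n_1-1}$, shows $W$ is open in $W'$ which is open in $Z'$, identifies $Z'$ as a product $\P^1 \times W_p$ via the triviality of the $\G_m$-action on the cube coordinates (Claims (1)--(3) of Proposition~\ref{prop:product-modulus}), lifts the $M_{ssup}$-witnessing divisor index $i \in \{1,\dots,n_2-1\}$ from $\ov V_2$ to $W'^N$ via \lemref{lem:surjm} (Claim (4)), and finally extends the inequality from $W'^N$ to $Z'^N$ using that the complement $S = Z' \setminus W'$ meets $\{t = 0\}$ in codimension $\ge 2$ (Claim (5)). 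The only genuinely new bookkeeping compared to Proposition~\ref{prop:product-modulus} is verifying that the second new coordinate $\frac{t_1 y - 1}{t_1 t_2 y - 1}$ does not create an unexpected component over $\{t = 0\}$ and does not interfere with the divisor $\{y_i = 1\}$ for the $Z_2$-index $i$; this is a local computation on the normalization that I would carry out explicitly but which does not change the structure of the proof.
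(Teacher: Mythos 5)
Your treatment of the dimension count and the good-position condition is fine and close in spirit to the paper's: the paper simply computes all codimension-one faces of $Z_1\times' Z_2$ explicitly (finding $\partial^0_1 = 0$, $\partial^\infty_1 = \sigma_{n_1}\cdot\mu_*(Z_1\times\delta(Z_2))$, $\partial^0_2 = \mu_*(\delta(Z_1)\times Z_2)$, $\partial^\infty_2 = \delta(\mu_*(Z_1\times Z_2))$, and $\times'$-products of faces of the $Z_i$ for $i\ge 3$) and observes that these are all admissible, which is exactly your "equivariance plus a separate check at $y=0,\infty$" carried out in detail.

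For the modulus condition, however, you have missed the idea that makes the proof short, and the route you propose instead has a real gap. The paper does \emph{not} redo the orbit-closure analysis of \propref{prop:product-modulus} for $\mu'$. It observes that under the projection $p$ forgetting the two new cube coordinates, $\mu'$ covers $\mu$, so that $Z_1\times' Z_2 \subset p^{-1}\bigl(\mu_*(Z_1\times Z_2)\bigr)$; since $p^*$ takes $F_{n,0}$ and the $F^1_{n,i}$ to the corresponding divisors on $X\times X\times\wh B_{n+2}$, the cycle $p^*(\mu_*(Z_1\times Z_2))$ inherits the modulus condition already established in \corref{cor:wedge-AD}, and a single application of the containment lemma \propref{prop:restp} then gives it for $Z_1\times' Z_2$ — for $M_{sum}$ and $M_{ssup}$ simultaneously, with no new divisor computation. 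By contrast, your plan to transplant Claims (1)--(5) of \propref{prop:product-modulus} to $\mu'$ "almost line by line" is not just extra bookkeeping: that argument depends essentially on $\mu$ being a $\G_m$-action on the $t$-coordinate which is \emph{trivial} on all cube factors, which is what makes $W'$ an orbit closure and forces $Z'=\P^1\times W_p$. The map $\mu'$ sends $(t_1,t_2,y)$ to $\bigl(t_1t_2,\,y,\,\tfrac{t_1y-1}{t_1t_2y-1}\bigr)$, whose third coordinate depends on $t_1$ and $t_2$ separately and mixes them with $y$; the image is not an orbit of any action fixing the cube coordinates, so Claims (1)--(3) (and hence the codimension-two estimate on $S\cap\{t=0\}$ in Claim (5)) do not carry over as stated. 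You would need a genuinely new argument there — or, better, the projection-and-containment reduction above, which makes the whole question disappear.
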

\begin{proof} We first prove the modulus condition for 
$Z = Z_1 {\times}' Z_2$. We consider the commutative diagram
\begin{equation}\label{eqn:prod*1}
\xymatrix{
X \times X \times \G_m \times \G_m \times
{\square}^{n_1 + n_2 -2} \times \square \ar[r]^{\hspace*{1cm} \mu '}\ar[d] &
X \times X \times \G_m \times {\square}^{n_1 + n_2} \ar[d] \\
X \times X \times \G_m \times \G_m \times
{\square}^{n_1 + n_2 -2} \ar[r]_{\mu} & 
X \times X \times \G_m \times {\square}^{n_1 + n_2-2},}
\end{equation}
where the vertical arrows are the natural projections. In particular, we
get the map $Z \to \mu_*(Z_1 \times Z_2)$ under the projection map. 
Let $\ov Z$ and $\ov {\mu_*(Z_1 \times Z_2)}$ denote the closures of $Z$
and $\mu_*(Z_1 \times Z_2)$ in $X \times {\widehat B}_{n+2}$ and
$ X \times {\widehat B}_n$ respectively. 
Thus we get a commutative diagram 
\begin{equation}\label{eqn:prod*2}
\xymatrix{
\ov Z \ar[r] \ar[d] & X \times X \times {\widehat B}_{n+2} \ar[d]^{p} \\
\ov {\mu_*(Z_1 \times Z_2)} \ar[r] & X \times X \times {\widehat B}_{n}.}
\end{equation}
We have shown in Corollary~\ref{cor:wedge-AD} that $\mu_*(Z_1 \times Z_2)$
satisfies the modulus condition.
Since $p^*(F^1_{n, i}) = F^1_{n+2,i}$ and $p^*(F_{n, 0}) = F_{n+2, 0}$,
we see that the modulus condition for $\mu_*(Z_1 \times Z_2)$ implies the
same for $p^*(\mu_*(Z_1 \times Z_2))$. The modulus condition for 
$Z$ now follows from Proposition~\ref{prop:restp}.

Now we compute the various boundaries of $Z$. It is easy to see from
~\eqref{eqn:prod*} that 
\[
\partial_1^0(Z) = 0, \ \partial_1^{\infty}(Z) = \sigma_{n_1} \cdot \left(
\mu_*\left(Z_1 \times \delta(Z_2)\right)\right),
\]
\[
\partial_2^0(Z) = \mu_*\left(\delta(Z_1) \times Z_2\right), \ 
\partial_2^{\infty}(Z) = \delta\left(\mu_*(Z_1 \times Z_2)\right).
\]
For $3 \le i \le n+1$, we have
\[
\partial_i^{\epsilon}(Z) =
\left\{\begin{array}{ll}
\partial_{i-2}^{\epsilon}(Z_1) {\times}' Z_2  
\ \ \ \ \ \ \ \mbox{if $3 \le i \le n_1+1$} \\
Z_1 {\times}' \partial_{i-n_1-1}^{\epsilon}(Z_1)  
\ \ \mbox{if $n_1+2 \le i \le n+1$}.
\end{array}
\right.
\]
Since $Z_i$'s are admissible cycles, the above automatically imply the
proper intersection property of $Z$.
\end{proof} 
Using Proposition~\ref{prop:prod*0}, we can define the our 
{\sl cyclic shuffle product} as 
\begin{equation}\label{eqn:csp}
Z_1 \bar\wedge' Z_2:= 
\sum_{\nu \in {\rm \mathbb{P}erm}_{(1, n_1-1, n_2 -1)} }
\sgn (\nu) \nu \cdot (Z_1 \times ' Z_2) \in \TZ^{q+1} (X \times X, n+2;m),
\end{equation} where the permutations $\nu \in 
{\rm \mathbb{P}erm}_{(1, n_1 -1, n_2 -1)}$ act on the given set of $n$ objects 
$\{(1,2), 3, \cdots, n+1\}$ in the obvious way, treating the element $(1,2)$ 
as a single 
object. This induces the action $\nu \cdot (\xi \times ' \eta)$.
We extend this bilinearly to get the cyclic shuffle product
\begin{equation}\label{eqn:csp0}
\TZ^{q_1} (X, n_1;m) \otimes \TZ^{q_2} (X, n_2;m) \xrightarrow{\bar\wedge'}
\TZ^{q+1} (X \times X, n+2;m).
\end{equation} 
\begin{prop}[Leibniz rule]\label{prop:derivation}
Let $\xi \in {\TZ}^{q_1} (X, n_1; m), \eta \in 
{\TZ}^{q_2} (X, n_2 ;m)$. Then, in the group ${\TZ}^{q+1} 
(X \times X, n+1; m)$, we have
\begin{equation}\label{eqn:derivation}
\delta_{alt}(\xi \bar\wedge \eta) - (\delta_{alt}\xi) \bar\wedge \eta - 
(-1)^{n_1 -1} \xi \bar\wedge (\delta_{alt} \eta)
\end{equation}
\[
\hspace*{5cm} = 
\partial (\xi \bar \wedge' \eta) - (\partial \xi) \bar \wedge ' \eta - 
(-1)^{n_1 -1} \xi \bar\wedge ' (\partial \eta),
\]
where $n = n_1  + n_2-1$, $q= q_1 + q_2 -1$.
\end{prop}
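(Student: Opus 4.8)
The plan is to reduce the proposed Leibniz-type identity to a purely combinatorial statement about permutations and boundary face maps, in the same spirit as the proof of the analogous statement for the Connes boundary operator and the cyclic shuffle product in Hochschild theory (\cite[Section~4.3.2]{Loday}). First I would work entirely at the level of admissible cycles, since every term appearing in~\eqref{eqn:derivation} has already been shown to be a well-defined admissible additive cycle: the $\delta_{alt}$-terms lie in ${\TZ}^{q+1}(X\times X, n+1; m)$ by \propref{prop:DR} applied after the pre-wedge (here one uses that $\sigma_i^* \circ \delta$ preserves admissibility and that $\bar\wedge$ lands in ${\TZ}^{q}(X\times X, n; m)$ by \propref{prop:wedge-product} together with \corref{cor:wedge-AD}), and the $\bar\wedge'$-terms lie in ${\TZ}^{q+1}(X\times X, n+2; m)$ by \propref{prop:prod*0}. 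Since everything is $\mathbb{Z}$-bilinear, I may assume $\xi$ and $\eta$ are represented by irreducible admissible cycles $Z_1$ and $Z_2$.

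Next I would expand both sides using the boundary formulas already computed in the proof of \propref{prop:prod*0}. The key observations are: (i) $\partial_1^0(Z_1 \times' Z_2) = 0$, (ii) $\partial_1^\infty(Z_1 \times' Z_2) = \sigma_{n_1}\cdot(\mu_*(Z_1 \times \delta(Z_2)))$, (iii) $\partial_2^0(Z_1 \times' Z_2) = \mu_*(\delta(Z_1)\times Z_2)$, (iv) $\partial_2^\infty(Z_1 \times' Z_2) = \delta(\mu_*(Z_1 \times Z_2))$, and (v) for $3 \le i \le n+1$ the face $\partial_i^\epsilon$ of $Z_1 \times' Z_2$ is obtained from $\partial^\epsilon$ of $Z_1$ or $Z_2$ applied inside $\times'$. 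Applying $\partial = \sum (-1)^i(\partial_i^\infty - \partial_i^0)$ to $\xi \bar\wedge' \eta = \sum_{\nu \in {\rm \mathbb{P}erm}_{(1,n_1-1,n_2-1)}} \sgn(\nu)\,\nu\cdot(Z_1\times' Z_2)$ and matching faces $i=1,2$ against the $\delta$-terms will produce exactly $\delta_{alt}(\xi\bar\wedge\eta)$ on the left — via Lemma~\ref{lem:Per*0}, which states precisely that $\sum_{\sigma\in{\rm \mathbb{P}erm}_{(r,s)}}\sum_{\tau\in{\rm \mathbb{P}erm}_{(1,r+s)}}\sgn(\sigma)\sgn(\tau)\,\sigma_\tau\cdot\tau = \sum_{\nu\in{\rm \mathbb{P}erm}_{(1,r,s)}}\sgn(\nu)\nu$ in $\mathbb{Z}[{\rm \mathbb{P}erm}_{r+s+1}]$ — while the faces $i \ge 3$ reorganize, again via Lemma~\ref{lem:perm*1}, Lemma~\ref{lem:perm*3} and Proposition~\ref{prop:perm-identity}, into $(\partial\xi)\bar\wedge'\eta$ and $\xi\bar\wedge'(\partial\eta)$, and the cross-terms $(\delta_{alt}\xi)\bar\wedge\eta$ and $(-1)^{n_1-1}\xi\bar\wedge(\delta_{alt}\eta)$. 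The sign $(-1)^{n_1-1}$ tracks the parity of the block of $n_1-1$ coordinates that a shuffle must move past when $\delta_{alt}$ or $\partial$ acts on the second factor; this is the same sign bookkeeping as in the first identity of Lemma~\ref{lem:shuffle*4}.

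The main obstacle I expect is the sign and index bookkeeping: one must verify that the face maps $\partial_i^\epsilon$ of the $\times'$-cycle, conjugated by the $(1,n_1-1,n_2-1)$-shuffles, land on the correct summands with the correct signs, and that the degenerate faces (those giving $0$, such as $\partial_1^0$) are exactly accounted for. The cleanest route is to first establish the identity \emph{before} antisymmetrizing — i.e. an identity relating $\delta_i(Z_1 \bar\wedge Z_2)$ and $\partial_j^\epsilon(Z_1\times' Z_2)$ purely combinatorially using the relations in Lemma~\ref{lem:deltaidentities} and Lemma~\ref{lem:pdelta} — and then sum over the shuffle permutations with signs, invoking Lemma~\ref{lem:Per*0} and Proposition~\ref{prop:perm-identity} to collapse the double sums. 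Since $\mu$ and $\mu'$ are equivariant with respect to all face maps and all the relevant permutation actions (they act trivially on the $X$-factors and on $\G_m$), the geometry imposes no further constraints beyond those already used in \propref{prop:prod*0}, so the proof is reduced to the combinatorics of shuffles, exactly as in the Hochschild setting. I would therefore conclude: ``This is a combinatorial identity which follows from the boundary computations in the proof of \propref{prop:prod*0} together with Lemma~\ref{lem:Per*0}, Lemma~\ref{lem:deltaidentities}, Lemma~\ref{lem:pdelta} and Proposition~\ref{prop:perm-identity}, in exact parallel with \cite[Section~4.3.2]{Loday}.''
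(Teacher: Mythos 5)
Your proposal is correct and follows essentially the same route as the paper: the paper likewise starts from the boundary computations in \propref{prop:prod*0} (which yield the unshuffled identity relating $\delta(\mu_*(\xi\times\eta))$, $\mu_*(\delta\xi\times\eta)$, $\sigma_{n_1}\cdot\mu_*(\xi\times\delta\eta)$ and $\partial(\xi\times'\eta)$), then sums over the $(1,n_1-1,n_2-1)$-shuffles and collapses the resulting double sums using Lemma~\ref{lem:Per*0} and Proposition~\ref{prop:perm-identity}, the only presentational difference being that the paper carries out the cancellation explicitly on coordinate points via the boundary $\partial C_{t_1,t_2}$ of the curve $\{t_1\}\times'\{t_2\}$ rather than at the abstract level of face-map identities. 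Your plan to verify the face/sign bookkeeping before antisymmetrizing is exactly the content of that coordinate computation, so no idea is missing.
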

\begin{proof}
It follows from Proposition~\ref{prop:prod*0} that 
\begin{equation}\label{eqn:derivation0}
\begin{array}{lll}
\partial\left(\xi \times ' \eta \right) & = & 
\stackrel{n+1}{\underset {i =1} \sum} (-1)^i 
(\partial_i ^{\infty} - \partial_i ^{0})\left(\xi \times ' \eta \right) \\ 
& = & \delta\left(\mu_*(\xi \times \eta)\right) - [\sigma_{n_1} \cdot \left(
\mu_*\left(\xi \times \delta \eta \right)\right) + 
\mu_*\left(\delta \xi \times \eta \right)] \\
& & + 
\stackrel{n_1+1}{\underset {i =3} \sum} (-1)^i
\{(\partial_{i-2}^{\infty} - \partial^0_{i-2})(\xi)\} {\times}' \eta \\ 
& & 
+ \stackrel{n+1}{\underset {i =n_1+2} \sum} (-1)^i
\xi {\times}' \{(\partial_{i-n_1-1}^{\infty} - \partial^0_{i-n_1-1})(\eta)\}
\\
& = & \delta\left(\mu_*(\xi \times \eta)\right) -
[\sigma_{n_1} \cdot \left(\mu_*\left(\xi \times \delta \eta \right)\right) + 
\mu_*\left(\delta \xi \times \eta \right)] \\
& & + \partial \xi {\times}' \eta + (-1)^{n_1-1} \xi {\times}' \partial \eta.
\end{array} 
\end{equation}
In particular, we have
\begin{equation}\label{eqn:derivation1}
\delta\left(\mu_*(\xi \times \eta)\right) - 
\mu_*\left(\delta \xi \times \eta \right) - 
\sigma_{n_1} \cdot \left(\mu_*\left(\xi \times \delta \eta \right)\right)
\end{equation}
\[
\hspace*{7cm} = \partial\left(\xi \times ' \eta \right) - 
\partial \xi {\times}' \eta - (-1)^{n_1-1} \xi {\times}' \partial \eta.
\]

Since the desired identity ~\eqref{eqn:derivation} of the proposition and 
~\eqref{eqn:derivation1}
differ only by the action of various permutations, it is now enough to show
that the identity holds on the coordinates of $X \times B_{n+2}$.

One ingredient in the proof is the application of 
Proposition~\ref{prop:perm-identity} and Lemma~\ref{lem:Per*0}
to the triple shuffles ${\rm \mathbb{P}erm}_{(1,r,s)}$. Observe that the map 
$\delta_{alt}$ can be written as a sum over the set 
${\rm \mathbb{P}erm}_{(1, n)}$ of 
double shuffles. Indeed, for the coordinate $(t, y_1, \cdots, y_n)$, we have
\begin{eqnarray*}
\delta_{alt} (t, y_1, \cdots, y_n) &=&  \sum_{i=1} ^{n+1} (-1)^i ( t, y_1, 
\cdots, y_{i-1} , \underset{i^{\rm th}}
{\underbrace{\frac{1}{t}}}, y_i, \cdots, y_n) \\
& = & - \sum_{\tau \in {\rm \mathbb{P}erm}_{(1, n)}} (\sgn (\tau)) \tau \cdot 
\left( t, \frac{1}{t}, y_1, \cdots, y_n\right).
\end{eqnarray*}

We first compute the term on the left hand side of the
identity \eqref{eqn:derivation} of the proposition on the level the 
coordinates of $B_{n_1}$, 
$B_{n_2}$. Since the variety $X$ doesn't play a role in the calculation, 
we shrink points of $X$ from our notations. 
Let $\xi = (t_1, y_1, \cdots, y_{n_1-1}), 
\eta = (t_2, y_{n_1}, \cdots, y_{n-1})$. We then have
\[
\left\{\begin{array}{lll}
\xi \times \eta = (t_1, t_2, y_1, \cdots, y_{n-1} ) \\
 \xi \times' \eta = C_{t_1,t_2} \times (y_1, \cdots, y_{n-1}).
\end{array}
\right.
\]
Here $C_{t_1,t_2}:= \{t_1\} {\times}' \{t_2\} \subset \G_m \times \square^2$ 
is the 
parameterized curve  
\[
C_{t_1,t_2} = \left\{\left(t_1t_2, y, \frac{t_2 y-1}{t_1t_2y-1}\right)| y \in 
k \right\}
\]
for any given points $t_1, t_2 \in \G_m$. As shown before, this is an
admissible 1-cycle and its boundary is given by 
({\sl cf.} \cite[Lemma~2.5]{P2})
\begin{equation}\label{eqn:bC}
\partial C_{t_1,t_2} = \left(t_1t_2, \frac{1}{t_1}\right) +
\left(t_1t_2, \frac{1}{t_2}\right) - \left(t_1t_2, \frac{1}{t_1t_2}\right).
\end{equation}This property will play an important role in the calculation. 

To simplify the notations, we introduce new indices $r, s, u$ by letting 
$r: = n_1 -1, s := n_2-1,$ and $ u := n-1 =r+s$. 

Then, by a direct calculation we have for the first term of 
\eqref{eqn:derivation},
\begin{eqnarray*}
& & \delta_{alt}(\xi \wedge \eta) \\
&=& \delta_{alt} \left( \sum_{\sigma \in {\rm \mathbb{P}erm}_{(r, s)}} 
(\sgn (\sigma)) \sigma \cdot (\mu_* \left( \xi \times \eta \right)) \right) \\
&=& \delta_{alt} \left( \sum_{\sigma \in {\rm \mathbb{P}erm}_{(r, s)}} 
(\sgn (\sigma)) \sigma \cdot (t_1t_2, y_1, \cdots, y_u) \right) \\
&=& \sum_{\sigma \in {\rm \mathbb{P}erm}_{(r, s)}} (\sgn (\sigma)) 
\delta_{alt} (t_1t_2, y_{\sigma^{-1} (1)}, \cdots, y_{\sigma^{-1} (u))})\\
&=& - \sum_{\sigma \in {\rm \mathbb{P}erm}_{(r, s)}} (\sgn(\sigma)) \
\sum_{\tau \in {\rm \mathbb{P}erm}_{(1, u)}} (\sgn (\tau)) \tau \cdot 
(xy, \frac{1}{t_1t_2} , y_{\sigma ^{-1} (1)}, \cdots, y_{\sigma ^{-1} (u)} )\\
&=& - \sum_{\sigma \in {\rm \mathbb{P}erm}_{(r, s)}} (\sgn (\sigma)) 
\sum_{\tau \in {\rm \mathbb{P}erm}_{(1, u)}} (\sgn (\tau)) 
(\sigma_{\tau} \cdot \tau) \cdot (t_1t_2, \frac{1}{t_1t_2}, y_1, \cdots, y_u) 
\\
&=& - \left( \sum_{\nu \in {\rm \mathbb{P}erm}_{(1, r, s)}} (\sgn (\nu)) 
\nu\right)  \cdot (t_1t_2, \frac{1}{t_1t_2}, y_1, \cdots, y_u),
\end{eqnarray*}
where $\sigma_{\tau}$ and the last equality are from Lemma~\ref{lem:Per*0}.

The second term of \eqref{eqn:derivation} is, 
\begin{eqnarray*}
& &(\delta_{alt} \xi) \wedge \eta \\
&=& \mu \left( \sum_{\sigma \in {\rm \mathbb{P}erm}_{(r +1, s)}} 
(\sgn (\sigma)) \sigma \cdot \left( (\delta_{alt} \xi) \times \eta \right)
\right) \\
&=& -\mu_*  \sum_{\sigma \in {\rm \mathbb{P}erm}_{(r +1, s)}} (
\sgn (\sigma)) \sigma \cdot   \\ 
& &  \left( \sum_{\tau \in (_{(1, r)} }(\sgn (\tau)) \tau 
\cdot (t_1, \frac{1}{t_1}, y_1, \cdots, y_{r} )  \right) 
\times (t_2, y_{r +1}, 
\cdots, y_{u} ) \\
&=& -\left( \sum_{\sigma \in {\rm \mathbb{P}erm}_{(r + 1, s)}} 
(\sgn (\sigma)) \sigma \right) \cdot \\ 
& & \left( \sum_{\tau \in {\rm \mathbb{P}erm}_{(1, r)}} (\sgn (\tau)) 
(\tau \times {\rm Id}_{s})  \right) 
\cdot (t_1t_2, \frac{1}{t_1}, y_1, \cdots, y_u) 
\\
&=& - \left( \sum_{\nu \in {\rm \mathbb{P}erm}_{(1, r, s)}} (\sgn (\nu)) 
\nu \right) \cdot (t_1t_2, \frac{1}{t_1}, y_1, \cdots, y_u),
\end{eqnarray*}
where the last equality follows from Proposition~\ref{prop:perm-identity}.

Before we compute the third term of \eqref{eqn:derivation}, first note that 
\begin{eqnarray*}
\delta_{alt} (t_2, y_{r+1}, \cdots, y_n) 
&=& \sum_{i=r+1} ^{u+1} (-1)^{i-r} ( t_2, y_{r+1}, \cdots, y_{i+r-1} , 
\underset{i+r^{\rm th}}{\underbrace{\frac{1}{t_2}}}, y_{i+r}, \cdots, y_n) \\
&=& - (-1)^r \sum_{\tau \in {\rm \mathbb{P}erm}_{(1, s)}} (\sgn (\tau)) 
\tau \cdot ( t_2, \frac{1}{t_2}, y_{r+1}, \cdots, y_u).
\end{eqnarray*}
Hence, for the third term of \eqref{eqn:derivation}, we have
\begin{eqnarray*}
& & (-1)^{r} \xi \wedge (\delta_{alt} \eta) \\
&=& - \left( \sum_{\sigma \in {\rm \mathbb{P}erm}_{(r, s +1)}} 
(\sgn (\sigma)) \sigma \right) \cdot \\ 
& & \left( \sum_{\tau \in {\rm \mathbb{P}erm}_{(1, s)}} (\sgn (\tau)) 
({\rm Id}_{r} \times \tau) \right) 
\cdot (t_1t_2, \frac{1}{t_2}, y_1, \cdots, y_u )\\
&=& - \left( \sum _{\nu \in {\rm \mathbb{P}erm}_{(1, r, s)}} 
(\sgn (\nu)) \nu \right) \cdot (t_1t_2, \frac{1}{t_2}, y_1, \cdots, y_u),
\end{eqnarray*}
where the last equality follows from Proposition~\ref{prop:perm-identity}. 

Thus, the left hand side of the equation \eqref{eqn:derivation} is 
compactified into
\begin{equation}\label{eqn:compactified}
  \delta_{alt}(\xi \wedge \eta) - (\delta_{alt} \xi) 
\wedge \eta - (-1)^{r} \xi \wedge (\delta_{alt} \eta)\end{equation}
\begin{equation*}
= - \sum_{\nu \in {\rm \mathbb{P}erm}_{(1, r, s)}} 
(\sgn (\nu))\nu  \cdot  \left( (t_1t_2, \frac{1}{t_1}) + 
(t_1t_2, \frac{1}{t_2}) - (t_1t_2, \frac{1}{t_1t_2})\right) 
\times (y_1, \cdots, y_u).
\end{equation*}  
On the other hand, for the coordinate points, we have for the first term of 
the right hand side of \eqref{eqn:derivation},
\begin{eqnarray}\label{eqn:first right hand side}
\partial (\xi \bar\wedge' \eta) &=&  \partial 
\left( \sum_{\nu \in {\rm \mathbb{P}erm}_{(1, r, s)}} 
(\sgn (\nu)) \nu \right) \cdot C_{t_1,t_2} \times (y_1, \cdots, y_{u}).
\end{eqnarray} 
Since we have by ~\eqref{eqn:bC} the equation
\[
\left( t_1t_2, \frac{1}{t_1}\right) + \left( t_1t_2, \frac{1}{t_2}\right) - 
\left( t_1t_2, \frac{1}{t_1t_2} \right) = \partial C_{t_1,t_2},
\]
for each $\nu \in {\rm \mathbb{P}erm}_{(1,r,s)}$, 
the four faces of $\nu \cdot (\xi \times' \eta)$ in the sum 
\eqref{eqn:first right hand side} that interact with 
$C_{t_1,t_2}$, {\sl i.e.}, $\partial_i ^ \epsilon$ with $i \in \nu (1,2)$, 
cancel out 
the corresponding terms of \eqref{eqn:compactified}. This process cancels all 
terms in \eqref{eqn:compactified}, thus all the terms of the left hand side 
of \eqref{eqn:derivation}. Hence, we need to see what happens for the 
remaining 
faces of \eqref{eqn:first right hand side}. But we have already seen in 
~\eqref{eqn:derivation0} that
for any general admissible cycles $\xi$ and $\eta$, 
\begin{eqnarray*} 
\sum_{i=3} ^{n+1} (-1)^i (\partial_i ^{\infty} - \partial_i ^{0}) 
(\xi \times ' \eta)  = \left( \sum_{i=3} ^{n_1 +1} 
(-1)^i (\partial_{i-2} ^{\infty} - \partial_{i-2} ^{0}) \xi \right) 
\times ' \eta \\
+ \xi \times' \left( \sum_{i=n_1 + 2}  ^{n+1} (-1)^i 
(\partial_{i-n_1 -1} ^{\infty} - \partial_{i-n_1 -1} ^{0}) \eta \right) \\
= (\partial \xi) \times' \eta + (-1)^{n_1 -1} \xi \times' (\partial \eta).
\end{eqnarray*}
We apply this argument for the faces $\partial_i ^\epsilon$ with 
$i \not \in \nu (1,2)$ to each $\nu \cdot ( \xi \times ' \eta)$, where 
$\nu \in {\rm \mathbb{P}erm}_{(1,r,s)}$, and take the signed sum. This gives 
the remaining terms $(\partial \xi) \bar\wedge' \eta + 
(-1)^{n_1 -1} \xi \bar{\wedge} ' (\partial \eta)$ of the right hand side of 
~\eqref{eqn:derivation}. 
This completes the proof of Proposition~\ref{prop:derivation}.
\end{proof}

\section{Normalized additive cycle complex}\label{section:NACom}
We have seen in the previous section that the differential operator
$\delta_{alt}$ on the additive cycle complex has all the nice properties
except that it does not commute (or anti-commute) with the boundary map 
$\partial$. In this section, we rectify this anomaly by introducing the 
normalized version of the additive cycle complex. This is analogous to the 
similar construction of S. Bloch in \cite[Theorem 4.4.2]{Bl3}. It turns out 
that $\delta_{alt}$
indeed has good behaviors with respect to the boundary operator of the
normalized complex. Our final goal is then achieved by showing that
the homology of the normalized additive cycle complex does not change
our additive higher Chow groups. We begin with the following 
construction of M. Levine which appeared in \cite{Le0} to study 
Bloch's higher Chow groups. This is essentially equivalent to the method of 
S. Bloch in \cite{Bl3}. We suitably adapt this Levine's construction to the 
additive
world in what follows next.  

\subsection{Homotopy variety}
In the following construction, we shall make an identification between 
$\square$ and $\A^1$
via the map 
\begin{equation}\label{eqn:SA}
\square \to \A^1 ; \ \ \ y \mapsto 1/(1-y).
\end{equation}
This gives the isomorphism $\left(\P^1, \{0, 1, \infty \}\right) \cong
\left(\P^1, \{1, \infty , 0 \}\right)$. The boundary map of the corresponding 
cycle complex
under this identification is given by $\sum_i (-1)^i
(\partial^0 _i - \partial^1 _i)$.
Let $X$ be a smooth projective variety and let $i_n : W^X_n \to X \times \G_m
\times \square^{n+1} \times \mathbb{P} ^1$ be the closed subvariety defined 
by the
equation
\begin{equation}\label{eqn:H0}
t_0(1-y_n)(1-y_{n+1}) = t_0 - t_1,
\end{equation}
where $(y_1, \cdots, y_n)$ are the coordinates of $\square^n$ and $(t_0: t_1)$
are the homogeneous coordinates of $\P^1$.
Let ${\pi}_n : W_n^X \to X \times \G_m \times \square^{n}$ be the map 
defined by 
\begin{equation}\label{eqn:H00}
{\pi}_n(x, t, y_1, \cdots, y_{n+1}, (t_0:t_1)) =
(x, t, y_1, \cdots, y_{n-1}, y_n + y_{n+1} - y_n y_{n+1}).
\end{equation}

Let $\left((u^1_0: u^1_1), \cdots, (u^{n+1}_0: u^{n+1}_1)\right)$ denote
the homogeneous coordinate of ${(\P^1)}^{n+1}$. We identify $\square^{n+1}$
to the open subset of ${(\P^1)}^{n+1}$ given by 
$\stackrel{n+1}{\underset {i = 1} \prod} \{u^i_0 \neq 0\}$, and
we set $y_i = {u^i_1}/{u^i_0}, y = {t_1}/{t_0}$.
In terms of these homogeneous coordinates, the projectivization $\ov {W^X_n}$
of $W^X_n$ in $X \times \square \times  {(\P^1)}^{n+1} \times \P^1$ is given 
by the equation
\begin{equation}\label{eqn:H01}
t_0(u^{n}_0 - u^n_1)(u^{n+1}_0 - u^{n+1}_1) = u^n_0 u^{n+1}_0(t_0 - t_1).
\end{equation}

Let ${\ov \theta}_n : X \times \square \times  {(\P^1)}^{n+1} \times \P^1
\to X \times \square \times  {(\P^1)}^{n-1} \times \P^1$ be the natural
projection map given by
\begin{equation}\label{eqn:mod*}
{\ov {\theta}}_n \left(x, t, (u^1_0;u^1_1), \cdots, (u^{n+1}_0:u^{n+1}_1),
(t_0:t_1)\right) = 
\end{equation}
\[
\hspace*{7cm} \left(x, t, (u^1_0;u^1_1), \cdots, (u^{n-1}_0:u^{n-1}_1),
(t_0:t_1)\right)
\]
and let $\theta_n$ be its restriction to the open set $ X \times \G_m \times
\square^{n+1} \times \square$. Here we identify $\G_m$ as $\square \backslash
\{1\}$. Let ${\ov \pi}_n : \ov {W^X_n} \to X \times \square \times  
{(\P^1)}^{n-1} \times \P^1$ be the restriction of ${\ov \theta}_n$ to
$\ov {W^X_n}$.

Let $p_n : X \times \G_m \times \square^{n+1} \times \P^1\to
X \times \G_m \times \square^{n+1}$ be the natural projection.
\begin{lem}\label{lem:smooth}
$W^X_n \cap \{t_0 = 0 \} = \emptyset$ and hence $W^X_n$ is in fact
contained in the open subset $X \times \G_m \times \square^{n+1} \times 
\square$. The variety $\ov {W^X_n}$ (and hence $W^X_n$) is smooth. Moreover,
$\pi_n$ and ${\ov \pi}_n$ are flat and surjective morphisms of relative 
dimension one.
\end{lem}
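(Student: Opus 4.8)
\textbf{Proof plan for Lemma~\ref{lem:smooth}.}
The strategy is to verify each of the four assertions directly from the defining equation~\eqref{eqn:H01} of $\ov{W^X_n}$ and the description of the projections. First I would establish $\ov{W^X_n}\cap\{t_0=0\}=\emptyset$. Setting $t_0=0$ in~\eqref{eqn:H01} forces $u^n_0 u^{n+1}_0 \cdot(-u^n_1)(-u^{n+1}_1)\cdot 0 = -u^n_0 u^{n+1}_0 u^1_1\cdots$ — more precisely, $t_0=0$ makes the right side vanish, so the equation becomes $0 = 0$ identically, which does \emph{not} immediately give emptiness. The correct approach is to recall that $W^X_n$ lives over $\G_m = \square\setminus\{1\}$ in the $t$-coordinate, so $t = t_1/t_0$ is a unit; hence on the \emph{affine} locus where $W^X_n$ is defined we already have $t_0\neq 0$. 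For the projective closure $\ov{W^X_n}$, one argues that any point with $t_0 = 0$ would have to lie in the closure of points with $t_0 \neq 0$; examining~\eqref{eqn:H01} with $t_0\to 0$ and using the modulus-type vanishing (the equation~\eqref{eqn:H0} rearranges so that $t=1$ would be forced, contradicting $t\in\G_m^\times$ or at worst $t\in\G_m$), one concludes the closure avoids $\{t_0=0\}$. This shows $\ov{W^X_n}\subset X\times\G_m\times{(\P^1)}^{n+1}\times\square$, hence $W^X_n\subset X\times\G_m\times\square^{n+1}\times\square$.

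For smoothness, since $X$ is smooth and the defining equation~\eqref{eqn:H01} involves only the $\G_m\times{(\P^1)}^{n+1}\times\P^1$ factors, it suffices to show the hypersurface cut out by~\eqref{eqn:H01} inside $\square\times{(\P^1)}^{n+1}\times\P^1$ is smooth. Rewriting the equation in the affine chart (using $t_0\neq 0$ from the previous step) as~\eqref{eqn:H0}, namely $t(1-y_n)(1-y_{n+1}) = t - t_1/t_0 = t - ty = t(1-y)$ after the substitution, or more directly as $(1-y_n)(1-y_{n+1}) = 1 - t^{-1}\cdot(\text{something})$ — I would compute the partial derivatives. The key point is that the equation expresses one of the variables (say $y_{n+1}$, or the ``$y$'' coordinate) as a rational function of the others with no critical points on the relevant locus: differentiating~\eqref{eqn:H0} with respect to $y_{n+1}$ gives $t_0(1-y_n)\cdot(-1)$, which vanishes only when $y_n=1$, but then the equation forces $t_0 = t_1$, i.e. $t=1$, excluded since $t\in\G_m$ (and for the closure, excluded by the first step). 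So the Jacobian has maximal rank everywhere and the hypersurface is smooth; being smooth and covered by such charts, $\ov{W^X_n}$ is smooth, hence so is its open subset $W^X_n$.

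For flatness and the relative dimension of $\pi_n$ and $\ov{\pi}_n$: the map $\ov{\pi}_n$ is the composite of the closed immersion $\ov{W^X_n}\inj X\times\square\times{(\P^1)}^{n+1}\times\P^1$ with the projection $\ov\theta_n$ forgetting the coordinates $u^n, u^{n+1}$ (and keeping $(t_0:t_1)$). I would show that over each point of the target $X\times\square\times{(\P^1)}^{n-1}\times\P^1$, the fiber of $\ov{\pi}_n$ is the curve in ${(\P^1)}^2$ (coordinates $u^n, u^{n+1}$) cut out by~\eqref{eqn:H01}, which for fixed $t\neq 1$ is an irreducible conic-type curve of dimension one — this is where~\eqref{eqn:H0} shows the fiber is (birationally) a graph $y_{n+1} = $ rational function of $y_n$, hence one-dimensional, and one checks it never degenerates to dimension two (the equation is not identically satisfied since $t\neq 1$) nor drops to dimension zero. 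Thus $\ov\pi_n$ has equidimensional one-dimensional fibers; since both source and target are smooth (the target being a product of smooth varieties, the source by the step above) and the fiber dimension is constant, ``miracle flatness'' (\cite[Ex.~III-10.9]{Hart}, as already cited in the paper for an analogous situation) applies and $\ov\pi_n$ is flat of relative dimension one. Surjectivity follows because for any point of the target with $t\neq 1$ one can solve~\eqref{eqn:H0} for, say, $y_{n+1}$ given $y_n$ generic, producing a preimage. Restricting to the open locus gives the same conclusions for $\pi_n$. The main obstacle I anticipate is the careful bookkeeping in the first step — pinning down exactly why the projective closure $\ov{W^X_n}$ avoids $\{t_0=0\}$ despite the defining equation~\eqref{eqn:H01} becoming degenerate there — since this requires understanding the closure of the affine variety rather than just the vanishing locus of~\eqref{eqn:H01}, and one must rule out spurious components at infinity in the $t$-direction.
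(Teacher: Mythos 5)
Your strategy for the smoothness and flatness assertions (Jacobian criterion plus miracle flatness) is essentially the one the paper uses, but the first part of your argument rests on a false claim and on a recurring coordinate confusion. The assertion to be proved is $W^X_n\cap\{t_0=0\}=\emptyset$, not a statement about the projective closure $\ov{W^X_n}$. For $W^X_n$ itself this is immediate: inside $X\times\G_m\times\square^{n+1}\times\P^1$ the coordinates $y_n,y_{n+1}$ are finite (in this section $\square$ is identified with $\A^1$ via \eqref{eqn:SA}, and $\square^{n+1}$ is the locus $\prod_i\{u^i_0\neq0\}$), so setting $t_0=0$ in \eqref{eqn:H0} forces $t_1=0$, which is impossible for a point of $\P^1$. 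By contrast, your conclusion that the closure avoids $\{t_0=0\}$ is wrong: $\ov{W^X_n}$ \emph{does} meet $\{t_0=0\}$, along a divisor supported in $\{u^n_0=0\}\cup\{u^{n+1}_0=0\}$ --- this is precisely the content of \eqref{eqn:mod2} in the proof of Lemma~\ref{lem:H03}, and it is essential for the modulus computation there. Relatedly, you repeatedly invoke the coordinate $t$ of the $\G_m$ factor (``$t=t_1/t_0$ is a unit'', ``$t=1$ would be forced, contradicting $t\in\G_m$''), but $t$ does not occur in \eqref{eqn:H0} at all; the relevant affine coordinate is $y=t_1/t_0$ on the auxiliary $\P^1$, and neither $y=1$ nor $y_n=1$ is excluded --- under the identification of this section these are faces, not removed points.

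Your Jacobian computation then differentiates with respect to $y_{n+1}$, whose partial $-t_0(1-y_n)$ genuinely vanishes on the allowed locus $y_n=1$, and the case analysis you run to dismiss this relies on the false exclusions above. The clean observation, which is the paper's, is that \eqref{eqn:H0} solves uniquely for $y$: it is the graph of $y=y_n+y_{n+1}-y_ny_{n+1}$, so the partial derivative of the defining function with respect to $y$ is a unit, $p_n$ restricts to an isomorphism $W^X_n\cong X\times\G_m\times\square^{n+1}$, and under this isomorphism $\pi_n$ is a coordinate projection; smoothness, surjectivity and relative dimension one for $\pi_n$ all follow in one stroke. For $\ov{W^X_n}$ one must still check the charts at infinity where some $u^i_0$ or $t_0$ vanishes --- charts your argument would skip entirely if the closure really avoided $\{t_0=0\}$; in each such chart the dehomogenized form of \eqref{eqn:H01} again has the graph shape and smoothness follows. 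Given that, your appeal to miracle flatness for ${\ov\pi}_n$ (projective, surjective, equidimensional fibers between smooth varieties) is fine.
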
    
\begin{proof} The first assertion is immediate from the defining equation of
$W^X_n$. Using this assertion,  we can write the restriction of $p_n$ on 
$W^X_n$ as
\[
W^X_n \inj X \times \G_m \times \square^{n+1} \times \square \to 
X \times \G_m \times \square^{n+1},
\]
where the first inclusion is given by the equation $y = 
y_n + y_{n+1} - y_n y_{n+1}$. Since $X$ is smooth, it is now easy to see
using the Jacobian criterion that $W_n^X$ is smooth and the above
composite map is an isomorphism. 
Furthermore, under this isomorphism, the map ${\pi}_n$ is just the 
projection $(x, t, y_1, \cdots, y_{n}, y) \mapsto
(x, t, y_1, \cdots, y_{n-1}, y)$, as can be checked from the equation of
$W^X_n$. This also shows that $\pi_n$ is in fact smooth and surjective map
of relative dimension one. To prove the smoothness of $\ov {W^X_n}$, we can
check it locally on an open set of points with coordinates
$\left(x, t, (u^1_0: u^1_1), \cdots, (u^{n+1}_0: u^{n+1}_1), 
(t_0: t_1)\right)$ where either of $u^n_{i}, u^{n+1}_i, t_i$ is non-zero
for $i = 0, 1$. In any such open set, $\ov {W^X_n}$ has the equation of the
form that defines $W^X_n$ and hence is smooth. It is also easy to check
using these local coordinates that ${\ov \pi}_n$ is of relative dimension one.
Moreover, as ${\ov \theta}_n$ is projective and $\pi_n$ is surjective, we
see that ${\ov \pi}_n$ is projective and surjective. In particular,
it is flat ({\sl cf.} \cite[Exercise III-10.9]{Hart}). 
This proves the lemma.
\end{proof}
\begin{lem}\label{lem:H02}
The diagram 
\begin{equation}\label{eqn:mod0}
\xymatrix@C.8cm{
W^X_n \ar[r]^{i_n \hspace*{1.5cm}} \ar[dr]_{\pi_n} & 
X \times \G_m \times \square^{n+1} \times \square 
\ar[r]^{j_n} \ar[d]^{\theta_n} &
X \times \square \times {(\P^1)}^{n+1} \times \P^1 \ar[d]^{{\ov \theta}_n} \\
&  X \times \G_m \times \square^{n} \ar[r]_{j'_n} &
X \times \square \times {(\P^1)}^n}
\end{equation}
commutes.
\end{lem}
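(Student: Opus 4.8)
The plan is to check the commutativity of the diagram \eqref{eqn:mod0} by separating it into its two constituent pieces: the lower-left triangle, asserting $\theta_n\circ i_n=\pi_n$, and the right-hand square, asserting $\overline{\theta}_n\circ j_n=j'_n\circ\theta_n$. All schemes occurring are reduced and separated over $k$ and all the maps are given by explicit coordinate formulas, so in each case it suffices to verify the asserted equality of morphisms on a dense open set (equivalently, on $\overline{k}$-points there) and then invoke separatedness.

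I would first dispose of the right-hand square, which is essentially formal. By construction $\theta_n$ is literally the restriction of $\overline{\theta}_n$ (see \eqref{eqn:mod*}) to the open set $X\times\G_m\times\square^{n+1}\times\square$, while $j_n$ and $j'_n$ are the open immersions coming from the fixed identifications of $\G_m$ and $\square$ with open subvarieties of $\P^1$. Tracing a point $(x,t,y_1,\cdots,y_{n+1},y)$ of $X\times\G_m\times\square^{n+1}\times\square$ around the square, both $\overline{\theta}_n\circ j_n$ and $j'_n\circ\theta_n$ return $(x,t,y_1,\cdots,y_{n-1},y)$, regarded inside $X\times\square\times(\P^1)^{n-1}\times\P^1=X\times\square\times(\P^1)^n$; hence the square commutes. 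The only care needed here is to identify the target $X\times\square\times(\P^1)^{n-1}\times\P^1$ with $X\times\square\times(\P^1)^n$ in the same way on both sides, i.e.\ to keep track of which factor of $(\P^1)^{n+1}$ and which auxiliary $\P^1$-factor get forgotten and which are retained.

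Next I would treat the triangle $\theta_n\circ i_n=\pi_n$, where the defining equation of $W^X_n$ enters. By Lemma~\ref{lem:smooth}, $W^X_n$ lies in the locus $\{t_0\neq 0\}$, so on $W^X_n$ we may use the coordinate $y:=t_1/t_0$ in place of $(t_0:t_1)$, and \eqref{eqn:H0} becomes $(1-y_n)(1-y_{n+1})=1-y$. For a point of $W^X_n$ with coordinates $(x,t,y_1,\cdots,y_{n+1},y)$, the inclusion $i_n$ sends it to the same point of $X\times\G_m\times\square^{n+1}\times\square$, and $\theta_n$ then forgets $y_n,y_{n+1}$ while recording $y$, giving $(x,t,y_1,\cdots,y_{n-1},y)$; whereas $\pi_n$ gives, by \eqref{eqn:H00}, the point $(x,t,y_1,\cdots,y_{n-1},\,y_n+y_{n+1}-y_ny_{n+1})$. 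Since $1-(y_n+y_{n+1}-y_ny_{n+1})=(1-y_n)(1-y_{n+1})=1-y$, we have $y=y_n+y_{n+1}-y_ny_{n+1}$ on $W^X_n$, so the two composites coincide. As a by-product this re-confirms that $\pi_n$ actually maps into $X\times\G_m\times\square^n$ and not merely into $X\times\P^1\times\square^n$: as $y\in\square$, so is $y_n+y_{n+1}-y_ny_{n+1}$.

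I do not expect any genuine obstacle. The mathematical content of the lemma reduces to the single identity $1-(y_n+y_{n+1}-y_ny_{n+1})=(1-y_n)(1-y_{n+1})$ together with the inclusion $W^X_n\subset\{t_0\neq 0\}$ supplied by Lemma~\ref{lem:smooth}. The one thing that genuinely requires attention is purely clerical---the index bookkeeping, i.e.\ keeping straight exactly which coordinates are projected away by $\theta_n$ versus $\overline{\theta}_n$---so that the triangle and the square are each set up with compatible identifications.
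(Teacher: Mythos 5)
Your proof is correct and follows the same route as the paper's (much terser) argument: the paper likewise invokes Lemma~\ref{lem:smooth} to place $W^X_n$ inside $X\times\G_m\times\square^{n+1}\times\square$ with equation $y=y_n+y_{n+1}-y_ny_{n+1}$, from which the triangle commutes by the definition of $\theta_n$, and notes the square commutes by the definitions of $\theta_n$ and $\ov\theta_n$. Your version simply spells out the algebra $1-(y_n+y_{n+1}-y_ny_{n+1})=(1-y_n)(1-y_{n+1})$ that the paper leaves implicit.
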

\begin{proof} By Lemma~\ref{lem:smooth}, $W^X_n$ is contained in the open
subset $X \times \G_m \times \square^{n+1} \times \square$, where it is given
by the equation $y = y_n + y_{n+1} - y_n y_{n+1}$. It is clear from 
the definition of $\theta_n$ in ~\eqref{eqn:mod*} that the triangle on the 
left commutes. The right square commutes by the definitions of $\theta_n$ and 
${\ov \theta}_n$. Hence the outer trapezium also commutes.
\end{proof}
\begin{lem}\label{lem:H03}
Let $Z \subset X \times \G_m \times \square^{n}$ be a closed subvariety
which satisfies the modulus condition $M_{sum}$. Let $Z' = 
 {(i_n)}_*\left({\pi_n}^*(Z)\right)$. Then $Z'$ also satisfies the 
modulus condition $M_{sum}$. 
\end{lem}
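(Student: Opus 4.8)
The plan is to compare the modulus conditions of $Z$ and $Z' = (i_n)_*(\pi_n^*(Z))$ by pulling back the relevant Cartier divisors through the normalizations of the closures, exactly in the spirit of the proof of Lemma~\ref{lem:DRmod}. First I would pass to closures: let $\ov Z \subset X \times \square \times (\P^1)^n$ be the closure of $Z$, and let $\ov {W^X_n}$ be the projectivization of $W^X_n$ as in \eqref{eqn:H01}; since $\ov \pi_n$ is projective and surjective (Lemma~\ref{lem:smooth}), the preimage $\ov\pi_n^{-1}(\ov Z)$ is a closed subvariety whose normalization maps surjectively onto $\ov Z^N$. On the other hand, $Z' = i_n(\pi_n^{-1}(Z))$ sits inside $X \times \G_m \times \square^{n+1}$, and because $\pi_n$ is flat of relative dimension one, $\pi_n^{-1}(Z)$ is irreducible of the right dimension; its closure $\ov{Z'}$ in $X \times \square \times (\P^1)^{n+1}$ receives a projective surjective map $g$ from the normalization $(\ov\pi_n^{-1}(\ov Z))^N$, by the universal property of normalization applied to the dominant map $\pi_n^{-1}(Z) \to Z'$.

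The key computation is then to compare the pullbacks of the face divisors $F^1_{n+1,i}$ and $F_{n+1,0}$ on $\ov{Z'}^N$ with those of $F^1_{n,i}$ and $F_{n,0}$ on $\ov Z^N$. The crucial point is the defining equation \eqref{eqn:H0}: under the isomorphism $W^X_n \cong X \times \G_m \times \square^{n+1}$ (Lemma~\ref{lem:smooth}), the coordinate $y = y_n + y_{n+1} - y_n y_{n+1}$ on the source of $\pi_n$ satisfies $1 - y = (1-y_n)(1-y_{n+1})$, so via the identification \eqref{eqn:SA} between $\square$ and $\A^1$ the ``$y$-coordinate'' of $Z$ is literally the \emph{product} of the last two coordinates of $Z'$. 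Concretely this means $\ov\pi_n^*(\{y_n = 1\}) = \{y_n = 1\} + \{y_{n+1} = 1\}$ as Cartier divisors pulled back to $\ov\pi_n^{-1}(\ov Z)$ (and $\ov\pi_n^*$ fixes $\{y_i = 1\}$ for $i < n$ and fixes $\{t = 0\}$). Summing over $i$, this gives $\ov\pi_n^*(F^1_n) = F^1_{n+1}$ restricted to $\ov\pi_n^{-1}(\ov Z)$ — more precisely $\ov\pi_n^*$ of $\sum_{i=1}^{n-1}\{y_i=1\}$ equals $\sum_{i=1}^{n}\{y_i=1\}$ plus (from the $i=n$ term) $\{y_{n+1}=1\}$, so that altogether $\ov\pi_n^*(F^1_n) = F^1_{n+1}$ as divisors on $\ov\pi_n^{-1}(\ov Z)$, while $\ov\pi_n^*(F_{n,0}) = F_{n+1,0}$.

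Granting these divisor identities, the argument closes formally as in Lemma~\ref{lem:DRmod}: if $Z$ satisfies $M_{sum}$, then $[\nu_{\ov Z}^*(F^1_n - (m+1)F_{n,0})] \ge 0$ on $\ov Z^N$; pulling this back along the surjective map $(\ov\pi_n^{-1}(\ov Z))^N \to \ov Z^N$ and using the divisor identities gives $[\cdot^*(F^1_{n+1} - (m+1)F_{n+1,0})] \ge 0$ on $(\ov\pi_n^{-1}(\ov Z))^N$, which by commutativity factors through $g^*$ of the corresponding divisor on $\ov{Z'}^N$; then Lemma~\ref{lem:surjm} applied to the projective surjective morphism $g$ of normal integral $k$-varieties yields $[\nu_{\ov{Z'}}^*(F^1_{n+1} - (m+1)F_{n+1,0})] \ge 0$, which is exactly $M_{sum}$ for $Z'$. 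I expect the main obstacle to be the careful bookkeeping establishing the divisor identity $\ov\pi_n^*(F^1_n) = F^1_{n+1}$ on the relevant subvariety: one must check at the level of local equations near each component of $\{t = 0\}$ and $\{y_i = 1\}$ that no extra multiplicities or exceptional-type components appear, i.e.\ that the pullback formula computed on the open set $X \times \G_m \times \square^{n+1}$ genuinely extends across the boundary after normalization — this is where the smoothness of $\ov{W^X_n}$ (Lemma~\ref{lem:smooth}) and the explicit form of \eqref{eqn:H01} do the real work, and where one needs to invoke that $\ov\pi_n^{-1}(\ov Z)$ meets $F^1_{n+1}$ and $F_{n+1,0}$ properly (which follows since $Z \subset X \times \G_m \times \square^n$ meets the boundary properly and $\pi_n$ is flat).
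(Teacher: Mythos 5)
Your overall strategy is the paper's: everything reduces to a comparison of boundary divisors forced by the defining equation of $\ov{W^X_n}$, after which the modulus inequality for $Z'$ is obtained from the one for $Z$ by pulling back along the induced map of normalizations. Two steps, however, need repair. First, the ``divisor identity'' $\ov{\pi}_n^*(F^1_n)=F^1_{n+1}$ is not an identity. On the affine locus one does have $1-y=(1-y_n)(1-y_{n+1})$, but on the projective model the homogenized equation \eqref{eqn:H01} only yields an inequality: in the coordinates the paper uses after the identification \eqref{eqn:SA} this is precisely \eqref{eqn:mod2}, namely $\ov{i}_n^*(t_0=0)\le \ov{i}_n^*[(u^n_0=0)+(u^{n+1}_0=0)]$, and the defect is an effective divisor supported where $y_n$ or $y_{n+1}$ lies at infinity (e.g.\ along the locus $\{y_n=\infty,\ y_{n+1}\ \text{on a face}\}$ of $\ov{W^X_n}$, which maps to a finite value of $y$). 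Fortunately the inequality points in the only direction you use --- you need the pullback of the old modulus divisor to be bounded \emph{above} by the new one --- so the argument survives once ``$=$'' is replaced by ``$\le$''; but as stated the equality is false.

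Second, the closing descent through $\bigl(\ov{\pi}_n^{-1}(\ov Z)\bigr)^N$ and Lemma~\ref{lem:surjm} is both ill-posed and unnecessary. Since $\ov{\pi}_n$ is flat of relative dimension one, $\ov{\pi}_n^{-1}(\ov Z)$ is pure of dimension $\dim\ov Z+1$, and $\ov{Z'}$ is itself one of its irreducible components (the one dominating $Z$); the other components lie over $\ov Z\setminus Z$ and do not map to $\ov{Z'}$, so there is no surjection $g$ from the normalization of the whole preimage onto $\ov{Z'}$. The correct and shorter route is the one in the paper: work directly on $\ov{Z'}^N$, use the map $f:\ov{Z'}^N\to\ov{Z}^N$ induced by the surjection $\ov{Z'}\to\ov Z$, and pull back the modulus inequality
\[
(m+1)\,\nu_{\ov Z}^*\bigl[F_{n,0}\bigr]\ \le\ \nu_{\ov Z}^*\bigl[F^1_{n}\bigr]
\]
along $f$ --- this is the easy direction (effectivity of a Cartier divisor is preserved under pullback along a dominant map of normal varieties, cf.\ Lemma~\ref{lem:rest}; no descent statement is needed) --- and then apply the divisor inequality above on $\ov{Z'}^N$. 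Lemma~\ref{lem:surjm} is the tool for transferring a modulus condition along a \emph{push-forward}; here $Z'$ is a \emph{pull-back} of $Z$, so only the trivial direction is required.
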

\begin{proof}
Let $\ov Z$ and $\ov Z'$ denote the closures of $Z$ and $Z'$ in
$X \times \square \times {(\P^1)}^n$ and 
$X \times \square \times {(\P^1)}^{n+1} \times \P^1$ respectively.
Let ${\ov Z}^N$ and ${\ov Z'}^N$ denote the normalizations of $\ov Z$ and
$\ov Z'$ respectively. Using Lemmas~\ref{lem:smooth}, ~\ref{lem:H02}
and the projectivity of the map ${\ov \theta}_n$, we see that 
${\ov \theta}_n (\ov Z') = \ov Z$ in the Diagram~\eqref{eqn:mod0}.
Since $\ov {W^X_n}$ smooth, we get the following commutative diagram:
\begin{equation}\label{eqn:mod1}
\xymatrix@C.8cm{
{\ov Z'}^N \ar[r]_{g} \ar[d]^{f} \ar@/^1pc/[rr]^{\nu_Z'} & 
\ov {W^X_n} \ar[r]_{{\ov i}_n \hspace*{1.5cm}} \ar[dr]_{{\ov \pi}_n}
& X \times \square \times {(\P^1)}^{n+1} \times \P^1 \ar[d]^{{\ov \theta}_n} 
\\
{\ov Z}^N \ar[rr]_{\nu_Z} & & 
X \times \square \times {(\P^1)}^n,}
\end{equation}
where $f$ is the map of normal $k$-schemes induced by the surjective map 
$\ov Z' \to \ov Z$.
As before, let $F^{\infty}_{n+1, i}$ denote the Cartier divisor on 
${(\P^1)}^n$ defined by $\{y_i = \infty\}$ for $1 \le i \le n$ and we have
similar Cartier divisors $F^{\infty}_{n+2, i}$ on ${(\P^1)}^{n+1}$
for $1 \le i \le n+1$.
It is then easy to see from the defining equation of $\ov {W^X_n}$ in
~\eqref{eqn:H01} that 
\begin{equation}\label{eqn:mod2}
\begin{array}{lll} 
{\ov \pi}^*_n (F^{\infty}_{n+1, n}) & = &
{\ov i}^*_n (y = \infty) \\
& = & {\ov i}^*_n (t_0 = 0) \\
& \le & {\ov i}^*_n [(u^n_0 = 0) + (u^{n+1}_0 = 0)] \\
& = &  {\ov i}^*_n [(y_n = \infty) + (y_{n+1} = \infty)] \\ 
& = & {\ov i}^*_n [F^{\infty}_{n+2, n} +
F^{\infty}_{n+2, n+1}].
\end{array}
\end{equation}
Since ${\ov Z'}^N \to \ov {W^X_n}$ is a map of normal $k$-schemes, and since
${\ov \theta}^*_n (F^{\infty}_{n+1, i}) = F^{\infty}_{n+2, i}$ for
$1 \le i \le n-1$, we have
\begin{equation}\label{eqn:mod3}
\begin{array}{lll} 
\nu^*_{Z'} \circ {\ov \theta}^*_n (F^{\infty}_{n+1}) & = &
\stackrel{n}{\underset {i = 1} \sum} \nu^*_{Z'} \circ {\ov \theta}^*_n
(F^{\infty}_{n+1, i}) \\
& = & \nu^*_{Z'}[\stackrel{n-1}{\underset {i = 1} \sum} F^{\infty}_{n+2, i}]
+ g^* \circ {\ov \pi}^*_n (F^{\infty}_{n+1, n}) \\
& \le &
\nu^*_{Z'}[\stackrel{n-1}{\underset {i = 1} \sum} F^{\infty}_{n+2, i}]
+ g^* \circ {\ov i}^*_n [F^{\infty}_{n+2, n} + F^{\infty}_{n+2, n+1}] \\  
& = & \nu^*_{Z'}[\stackrel{n+1}{\underset {i = 1} \sum} F^{\infty}_{n+2, i}] \\
& = & \nu^*_{Z'} (F^{\infty}_{n+2}).
\end{array}
\end{equation}
Now the modulus condition for $Z$ implies that 
$\nu^*_{Z}[(m+1) F_{n+1, 0}] \le \nu^*_{Z}[F^{\infty}_{n+1}]$ which
implies that $f^* \circ \nu^*_{Z}[(m+1) F_{n+1, 0}] \le 
f^* \circ \nu^*_{Z}[F^{\infty}_{n+1}]$. This in turn implies that
$\nu^*_{Z'} \circ {\ov \theta}^*_n [(m+1) F_{n+1, 0}] \le
\nu^*_{Z'} \circ {\ov \theta}^*_n [F^{\infty}_{n+1}]$.
Since ${\ov \theta}^*_n (F_{n+1, 0}) = F_{n+2, 0}$, we conclude from 
~\eqref{eqn:mod3} that
$\nu^*_{Z'}[(m+1) F_{n+2, 0}] \le \nu^*_{Z'} [F^{\infty}_{n+2}]$
which proves the modulus condition for $Z'$.
\end{proof}

For any closed subvariety $Z \subset X \times \G_m \times \square^{n}$, let
\begin{equation}\label{eqn:WX}
W^X_n(Z) : = {(p_n)}_* \circ {(i_n)}_* \circ \pi^*_n (Z).
\end{equation}
Note that $W^X_n(Z)$ is a closed subvariety of 
$X \times \G_m \times \square^{n+1}$ since $p_n$ is projective.
\begin{lem}[{\sl cf.} \cite{Le0}]\label{lem:intersection}
For $Z$ as above, one has \\
$(1) \ \ W^X_n(Z) \cdot \{y_n = 0\} = Z = W^X_n(Z) \cdot \{y_{n+1} = 0\}$.
\\
$(2) \ \ Z \in z^q(X \times \G_m, n) \Rightarrow
W^X_n(Z) \in  z^q(X \times \G_m, n+1)$.
\end{lem}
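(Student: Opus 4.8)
The plan is to make the cycle $W^X_n(Z)$ completely explicit, after which parts $(1)$ and $(2)$ reduce to flat pull-back and good position, exactly as in Levine's construction \cite{Le0}. By Lemma~\ref{lem:smooth} the morphism $p_n\circ i_n$ is an isomorphism $W^X_n\xrightarrow{\ \sim\ }X\times\G_m\times\square^{n+1}$, and under it $\pi_n$ becomes the morphism
\[
\rho\colon X\times\G_m\times\square^{n+1}\longrightarrow X\times\G_m\times\square^{n},\qquad (x,t,y_1,\dots,y_{n+1})\longmapsto\bigl(x,t,y_1,\dots,y_{n-1},\,y_n+y_{n+1}-y_ny_{n+1}\bigr),
\]
which is the identity on the first $n-1$ cube coordinates and the multiplication law $(y_n,y_{n+1})\mapsto 1-(1-y_n)(1-y_{n+1})$ on the last two; under the identification $\square\cong\A^1$ of~\eqref{eqn:SA} the latter is just $(a,b)\mapsto ab$ on $\A^1\times\A^1$, so $\rho$ is flat of relative dimension one, and it is smooth away from the codimension-two locus $\{y_n=y_{n+1}=\infty\}$. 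Since $(p_n)_*(i_n)_*$ is transport of cycles along the isomorphism $p_n\circ i_n$, we get $W^X_n(Z)=\rho^*(Z)$ as a cycle on $X\times\G_m\times\square^{n+1}$.

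For $(1)$, restrict $\rho$ to the divisor $\{y_n=0\}$: since $0+y_{n+1}-0\cdot y_{n+1}=y_{n+1}$, the composite $\{y_n=0\}\hookrightarrow X\times\G_m\times\square^{n+1}\xrightarrow{\ \rho\ }X\times\G_m\times\square^{n}$ is the identity under the evident identification $\{y_n=0\}\cong X\times\G_m\times\square^{n}$ (with $y_{n+1}$ becoming the last coordinate), and symmetrically for $\{y_{n+1}=0\}$. No component of $\rho^*(Z)$ can lie in $\{y_n=0\}$: such a component dominates a component $Z_i$ of $Z$ and has dimension $\dim Z_i+1$, while $\rho|_{\{y_n=0\}}$ is an isomorphism, so its image in $Z$ would have dimension $\dim Z_i+1$, impossible. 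Thus $\rho^*(Z)$ meets $\{y_n=0\}$ properly; since $\rho$ is smooth along $\{y_n=0\}$ (where $a=1\neq 0$), the intersection is reduced and, via $\rho|_{\{y_n=0\}}$, equals $Z$ with multiplicity one, i.e. $W^X_n(Z)\cdot\{y_n=0\}=Z$, and likewise $W^X_n(Z)\cdot\{y_{n+1}=0\}=Z$.

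For $(2)$ it suffices to prove $\dim\bigl(W^X_n(Z)\cap(X\times\G_m\times F)\bigr)\le\dim(X\times\G_m\times F)-q$ for every face $F$ of $\square^{n+1}$; codimension $q$ is immediate from $\rho^*(Z)=\rho^{-1}(Z)$ and flatness of $\rho$, and one checks similarly that $W^X_n$ preserves degenerate cycles. Write $F$ as an intersection of coordinate faces and separate off the conditions, if any, on $y_n$ and $y_{n+1}$. If $F$ imposes no condition on $y_n,y_{n+1}$, then $X\times\G_m\times F=\rho^{-1}(X\times\G_m\times F')$ for the evident face $F'$ of $\square^{n}$ and $\rho|_{X\times\G_m\times F}$ is again flat of relative dimension one, so $W^X_n(Z)\cap(X\times\G_m\times F)=\rho^{-1}\bigl(Z\cap(X\times\G_m\times F')\bigr)$ has dimension $\dim\bigl(Z\cap(X\times\G_m\times F')\bigr)+1\le\dim(X\times\G_m\times F)-q$ by good position of $Z$. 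If $F$ imposes $y_n=0$ or $y_{n+1}=0$, restrict to the corresponding copy of $X\times\G_m\times\square^{n}$, where $W^X_n(Z)$ restricts to $Z$ by $(1)$ and $F$ becomes a face of $\square^{n}$; good position of $Z$ again gives the bound. Finally, if $F$ imposes $y_n=\infty$ (or, symmetrically, $y_{n+1}=\infty$) without imposing $y_n=0$ or $y_{n+1}=0$, observe that $\rho$ carries $\{y_n=\infty\}$ into $X\times\G_m\times\{z_n=\infty\}$, $z_n$ the last coordinate of $\square^{n}$; no component of $\rho^*(Z)$ lies in $\{y_n=\infty\}$, since a component $V$ there dominates some codimension-$q$ component $Z_i$ of $Z$, whence $Z_i=\overline{\rho(V)}\subseteq Z\cap(X\times\G_m\times\{z_n=\infty\})$, a set of dimension $\le\dim Z-1$ by good position --- contradiction. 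Consequently $\rho^*(Z)$ restricts on $\{y_n=\infty\}\cong X\times\G_m\times\square^{n}$ to the pull-back, along the projection forgetting $y_{n+1}$, of $Z\cap(X\times\G_m\times\{z_n=\infty\})$, a codimension-$q$ cycle in good position in $\{z_n=\infty\}\cong\square^{n-1}$; since such projection pull-backs preserve good position, $W^X_n(Z)$ meets $X\times\G_m\times F$ properly here too. The $y_n\leftrightarrow y_{n+1}$ symmetry of $\rho$ covers the remaining faces.

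The step I expect to be the main obstacle is the last case of $(2)$, the faces through $y_n=\infty$ or $y_{n+1}=\infty$: there $\rho$ is neither an isomorphism onto a face nor transverse to the face, so one cannot just pull back and must instead rule out, by the above dimension count resting on the good position of $Z$ at $\{z_n=\infty\}$, that any component of $\rho^*(Z)$ degenerates into such a face. A secondary point is the multiplicity count in $(1)$, for which one uses that $\rho$ is smooth along $\{y_n=0\}$ and $\{y_{n+1}=0\}$. Both, and the whole scheme of the argument, are the additive-world transcriptions of the corresponding steps in \cite{Le0} (compare also \cite{Bl3}).
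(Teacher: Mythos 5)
Your argument is correct in substance and takes the same route as the paper's: the paper likewise uses Lemma~\ref{lem:smooth} to rewrite $Z'=(i_n)_*\pi_n^*(Z)$ as $(Z\times\square_n\times\square_{n+1})\cdot\{y=y_n+y_{n+1}-y_ny_{n+1}\}$ (so that $W^X_n(Z)$ is the flat pull-back $\rho^*(Z)$ in your notation), and then checks the faces one by one, getting $Z$ back on $\{y_n=0\}$ and $\{y_{n+1}=0\}$, $(Z\cdot\{y=1\})\times\square$ on $\{y_n=1\}$ and $\{y_{n+1}=1\}$, and $W^X_{n-1}(Z\cdot\{y_i=\epsilon\})$ on the remaining codimension-one faces; your write-up carries the dimension counts out in more detail than the paper does.

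One point needs correcting, though it does not change the argument. In Section~\ref{section:NACom} the coordinates on $\square^{n+1}$ are those obtained from the identification \eqref{eqn:SA}, so the faces sit at $y_i\in\{0,1\}$, while $\{y_i=\infty\}$ is the modulus divisor $F^{\infty}_{n+2,i}$ and is not a face. Your third case in part $(2)$ should therefore concern the faces $\{y_n=1\}$ and $\{y_{n+1}=1\}$ rather than $\{y_n=\infty\}$ and $\{y_{n+1}=\infty\}$; as written you analyse a non-face and leave the genuine faces at $1$ unverified. Fortunately $y=y_n+y_{n+1}-y_ny_{n+1}$ is identically $1$ on $\{y_n=1\}$, so $\rho$ collapses that face into $\{z_n=1\}$ exactly as you describe for $\infty$, and your dimension count (no component of $\rho^*(Z)$ lies in $\{y_n=1\}$, and the restriction there is $(Z\cdot\{z_n=1\})\times\square$, proper by the good position of $Z$ along $\{z_n=1\}$) goes through verbatim after the relabelling. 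The same swap affects your parenthetical claim about the non-smooth locus of $\rho$, which in these coordinates is $\{y_n=y_{n+1}=1\}$; this is harmless since you only invoke smoothness along $\{y_n=0\}$, where $1-y_n\neq0$.
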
 
\begin{proof} Let $Z' = {(i_n)}_* \circ \pi^*_n (Z)$. Let $\square_i$ denote
the $i$th factor of $\square^{n+1}$ in $X \times \G_m \times \square^{n+1}
\times \square$ in Diagram~\eqref{eqn:mod0}. Then we see from 
Diagram~\eqref{eqn:mod0} that 
\begin{equation}\label{eqn:int0}
Z' = (Z \times \square_{n} \times \square_{n+1})
\cdot W^X_n = (Z \times \square_{n} \times \square_{n+1})
\cdot \{y = y_n + y_{n+1} - y_n y_{n+1}\}.
\end{equation}
Combining this with the equation of $W^X_n$ in
~\eqref{eqn:H0}, we see that 
\[
Z' \cdot \{y_n = \epsilon \} =
(Z \times \square_{n} \times \square_{n+1}) \cdot \{y = y_{n+1}\} \ \
{\rm for} \ \ \epsilon = 0, 1.
\]
Thus we get
\[
W^X_n(Z) \cdot \{y_n = 0\} =
p_n(Z') \cdot \{y_n = 0\} =
{(p_n)}_*[(Z \times \{0\} \times \square_{n+1}) \cdot
\{y = y_{n+1}\}] = Z.
\]
Using the same steps, we also see that
\[
W^X_n(Z) \cdot \{y_n = 1\} = (Z \cdot\{y = 1\}) \times \square.
\]
Since $Z' \cdot \{y_{n+1} = \epsilon \} =
(Z \times \square_{n} \times \square_{n+1}) \cdot \{y = y_{n}\}$,
the same calculation as before shows that
$W^X_n(Z) \cdot \{y_{n+1} = 0\} = Z$ and
$W^X_n(Z) \cdot \{y_{n+1} = 1\} = (Z \cdot\{y = 1\}) \times \square$.
This proves $(1)$. This in particular shows that the intersection
$W^X_n(Z) \cdot \{y_i = \epsilon \}$ is proper for $i = n, n+1$ and
$\epsilon = 0, 1$.

Now we calculate the other boundaries of $W^X_n(Z)$. It follows directly
from ~\eqref{eqn:int0} that for $1 \le i \le n-1$, one has 
$W^X_n(Z) \cdot \{y_i = \epsilon \} = W^X_{n-1}(Z \cdot \{y_i = \epsilon\})$.
Since $\pi_{n-1}$ is flat of relative dimension one
as shown in Lemma~\ref{lem:smooth}, we see that this intersection is proper.
This proves $(2)$, thus the lemma.
\end{proof} 
\begin{prop}\label{prop:addm}
For the modulus condition $M_{sum}$, let 
$Z \in \un{\TZ}^q(X, n+1;m)$ be an irreducible admissible cycle. Then
$W^X_n(Z) \in \un{\TZ}^q(X, n+2;m)$.
\end{prop}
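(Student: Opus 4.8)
The plan is to use the already-established machinery of Lemma~\ref{lem:intersection} together with the modulus-condition analysis carried out for the subvariety $Z' = (i_n)_*\circ\pi_n^*(Z)$ in Lemma~\ref{lem:H03}, and then transport the conclusion across the projection $p_n$ using the containment lemma. Recall that $W^X_n(Z) = (p_n)_*(Z')$ and, by Lemma~\ref{lem:smooth}, the restriction of $p_n$ to $W^X_n$ is an isomorphism onto $X\times\G_m\times\square^{n+1}$; thus $W^X_n(Z)$ is literally the image of $Z'$ under this isomorphism. Consequently it suffices to establish: (i) $W^X_n(Z)$ has dimension $r+n+1$ if $Z$ has dimension $r+n$ — equivalently it stays in codimension $q$ — which is immediate since $\pi_n$ is flat of relative dimension one and $p_n|_{W^X_n}$ is an isomorphism; (ii) $W^X_n(Z)$ meets every face of $B_{n+2}$ properly; and (iii) $W^X_n(Z)$ satisfies the modulus condition $M_{sum}$ on $X\times\overline B_{n+2}$ (equivalently on $X\times\widehat B_{n+2}$).

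For (ii), the good-position condition was essentially checked inside the proof of Lemma~\ref{lem:intersection}: the faces $\{y_n=\epsilon\}$ and $\{y_{n+1}=\epsilon\}$ for $\epsilon\in\{0,1\}$ (in the $\square$-normalization, $1$ plays the role of $\infty$) meet $W^X_n(Z)$ properly because $W^X_n(Z)\cdot\{y_n=0\} = Z = W^X_n(Z)\cdot\{y_{n+1}=0\}$, while $W^X_n(Z)\cdot\{y_i=1\}$ is either $Z$ or $(Z\cdot\{y=1\})\times\square$; and for $1\le i\le n-1$ one has the recursion $W^X_n(Z)\cdot\{y_i=\epsilon\} = W^X_{n-1}(Z\cdot\{y_i=\epsilon\})$, which is proper by flatness of $\pi_{n-1}$. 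I would simply record that the admissibility of $Z$ — in particular that each $Z\cdot\{y_i=\epsilon\}$ is admissible, which by the structure of $M_{sum}$ follows from Proposition~\ref{prop:restp} applied to faces — propagates through this recursion, giving proper intersection of $W^X_n(Z)$ with all faces, including $F=B_{n+2}$ itself (this is the dimension statement (i)).

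For (iii), the key point is that $Z'$ already satisfies $M_{sum}$ by Lemma~\ref{lem:H03}, so I only need to see that the modulus condition is preserved when passing from $Z'$ to its image $W^X_n(Z)$ under the isomorphism $p_n|_{W^X_n}$. Here one must be slightly careful: $p_n$ forgets the last $\square$-coordinate, and Lemma~\ref{lem:H03} phrased the modulus estimate on the normalization of the closure of $Z'$ inside $X\times\square\times(\P^1)^{n+1}\times\P^1$, whereas the modulus condition for $W^X_n(Z)$ lives on the closure inside $X\times\square\times(\P^1)^{n+1}$. Since $W^X_n\cap\{t_0=0\}=\emptyset$ (Lemma~\ref{lem:smooth}) and the equation $y = y_n+y_{n+1}-y_ny_{n+1}$ exhibits $W^X_n$ as a graph over $X\times\G_m\times\square^{n+1}$, the two closures have the same normalization and the divisors $F^1_{n+2,i}$ and $F_{n+2,0}$ pull back compatibly; hence the inequality $\nu^*[(m+1)F_{n+2,0}]\le\nu^*[F^1_{n+2}]$ for $Z'$ transfers verbatim to $W^X_n(Z)$. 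Alternatively, and perhaps cleanly, since $W^X_n(Z)\subset X\times\G_m\times\square^{n+1}$ contains no more "modulus information" than $Z'$ does and $\overline{W^X_n(Z)}$ meets $F^1_{n+2}$ and $F_{n+2,0}$ properly (by part (ii)), one can invoke Proposition~\ref{prop:restp} with the ambient $Z'$ playing the role of the admissible cycle $V$ to conclude. I expect this last bookkeeping step — matching the normalizations and the divisor pull-backs across the $p_n$-projection, and making sure the statement is genuinely about $M_{sum}$ and not quietly using $M_{ssup}$ (the proof is explicitly restricted to $M_{sum}$) — to be the main, though not deep, obstacle; everything else is a direct assembly of Lemmas~\ref{lem:smooth}, \ref{lem:H02}, \ref{lem:H03}, and \ref{lem:intersection}.
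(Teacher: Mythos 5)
Your proposal is correct and takes essentially the same route as the paper, whose proof is the one-liner ``combine Lemmas~\ref{lem:H03} and~\ref{lem:intersection}''; the bookkeeping you supply for transporting the modulus estimate across $p_n$ is precisely the implicit step. One small correction: the closures of $Z'$ and $W^X_n(Z)$ need not have the same normalization (the graph closure can acquire positive-dimensional fibers over the boundary of $\ov{W^X_n(Z)}$), but since $\ov{p}_n$ induces a projective surjection of normalizations and the divisors $F^{\infty}_{n+2,i}$, $F_{n+2,0}$ in Lemma~\ref{lem:H03} are $\ov{p}_n$-pullbacks, Lemma~\ref{lem:surjm} yields the transfer regardless.
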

\begin{proof} This follows by combining Lemmas~\ref{lem:H03} and
~\ref{lem:intersection}.
\end{proof}
Using Proposition~\ref{prop:addm}, we can define for every $n \ge 0$,
a group homomorphism
\begin{equation}\label{eqn:WX0}
W^X_n : \un{\TZ}^q(X, n+1;m)_{sum} \to \un{\TZ}^q(X, n+2;m)_{sum}
\end{equation}
by extending $W^X_n$ linearly. This homomorphism has the property that
$\partial^{0}_i \circ W^X_n = {\rm Id}$ for $i = n, n+1$ as shown in
Lemma~\ref{lem:intersection}.

\subsection{Normalized additive cycle complex}\label{subsection:NACcom*}
We now define the normalized version of our additive 
cycle complexes and study their properties. 
These complexes are the additive analogues of the similar constructions
of S. Bloch and M. Levine \cite{Bl3, Le0} for higher Chow groups. 
It turns out that the normalized
additive cycle complex has better properties. One expects that
the resulting additive Chow groups also form a kind of Witt complex so that
all the expected maps from the relative $K$-groups of the
infinitesimal deformations of smooth schemes to the additive higher Chow
groups actually factors through these normalized additive higher Chow
groups. Another outcome of using the normalized additive cycle complex
is the invention of the motivic version of the cycilc homology of
A. Connes \cite{Loday}. We shall deal with this aspect in the end of this 
section. 
\begin{defn}\label{defn:NACC}
Let $X$ be a smooth projective variety and let $M$ be any of the
modulus conditions $M_{sum}, M_{sup}, M_{ssup}$. For $n, m \ge 1$,
let ${\TZ}^q_N(X, n;m)$ be the subgroup of ${\TZ}^q(X, n;m)$
of cycles $\alpha$ such that $\partial^{0}_i(\alpha) = 0$
for $1 \le i \le n-1$ and $\partial^{\infty}_i(\alpha) = 0$ for
$2 \le i \le n-1$. Using the simplicial structure of the additive cycle
complex, it easy to see that for $\alpha \in {\TZ}^q_N(X, n;m)$,
one has $\partial^{\infty}_{1} \circ \partial^{\infty}_{1}(\alpha) = 0$.
Writing $\partial^{\infty}_{1}$ as $\partial^N$, we thus get a 
subcomplex
$\left({\TZ}^q_N(X, \bullet ;m), \partial^N\right)$ of
$\left({\TZ}^q(X, \bullet;m), \partial \right)$. We shall call
${\TZ}^q_N(X, \bullet ;m)$ the {\sl ``normalized additive cycle complex''}
for any given modulus condition.
\end{defn}
We define the {\sl normalized} additive higher Chow groups of $X$ by
${\TH}^q_N(X, n ;m) = H_n\left({\TZ}^q_N(X, \bullet ;m)\right)$.

The point of using this normalized complex is the following lemma regarding 
its interaction with $\delta_{alt}$ in the previous section.

\begin{lem}\label{reason for normalization}
$(1)$ For all $i \in \{ 1, \cdots, n\}$, we have 
$\delta_i (\TZ^q_N(X, n;m)) \subset \TZ^{q+1}_N(X, n+1;m)$. \\
$(2)$ For ${\partial}^N = \partial^{\infty}_1$, we have
$\delta_{alt} {\partial}^N + {\partial}^N \delta_{alt} = 0$ on
$\TZ^*_N(X, \bullet ;m)$.

\end{lem}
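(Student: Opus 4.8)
Both assertions are purely combinatorial consequences of the face/degeneracy-type identities already established for the operators $\delta_i$ and $\partial^{\epsilon}_i$, namely Lemma~\ref{lem:pdelta} (the identities \eqref{eqn:partialF}--\eqref{eqn:deltaF}) together with the cubical identities for $\partial^{\epsilon}_i$ themselves; no new geometric input is needed beyond Proposition~\ref{prop:DR}, which already guarantees that each $\delta_i$ sends admissible cycles to admissible cycles. The plan for $(1)$ is to fix $\alpha\in\TZ^q_N(X,n;m)$, so that $\partial^0_j(\alpha)=0$ for $1\le j\le n-1$ and $\partial^{\infty}_j(\alpha)=0$ for $2\le j\le n-1$, and to verify directly that $\delta_i(\alpha)$ satisfies the analogous vanishing at level $n+1$: one must check $\partial^0_j\bigl(\delta_i(\alpha)\bigr)=0$ for $1\le j\le n$ and $\partial^{\infty}_j\bigl(\delta_i(\alpha)\bigr)=0$ for $2\le j\le n$. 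For each pair $(i,j)$ we invoke exactly one line of \eqref{eqn:partialF}: if $j<i$ then $\partial^{\epsilon}_j\delta_i=\delta_{i-1}\partial^{\epsilon}_j$, and $\partial^{\epsilon}_j(\alpha)=0$ in the relevant range; if $j=i$ then $\partial^{\epsilon}_j\delta_i=0$ outright; if $j>i$ then $\partial^{\epsilon}_j\delta_i=\delta_i\partial^{\epsilon}_{j-1}$, and again $\partial^{\epsilon}_{j-1}(\alpha)=0$ because $j-1$ lands in the admissible range for the normalization condition. The only points demanding a little care are the boundary indices $j=1$ with $\epsilon=\infty$ (which is \emph{allowed} to be nonzero in the normalized complex, and indeed $\partial^{\infty}_1\delta_i$ for $i\ge 2$ equals $\delta_i\partial^{\infty}_0=0$ since there is no $0$-th face, while $\partial^{\infty}_1\delta_1=0$ by the middle case) and $j=n,\,n+1$ at the top; a short bookkeeping check shows every such case either vanishes by the $j=i$ clause or reduces to a face of $\alpha$ that vanishes by hypothesis.

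For $(2)$, write $\partial^N=\partial^{\infty}_1$ and expand
\[
\delta_{alt}\,\partial^N+\partial^N\,\delta_{alt}
=\sum_{i=1}^{n-1}(-1)^i\,\delta_i\,\partial^{\infty}_1
+\sum_{i=1}^{n}(-1)^i\,\partial^{\infty}_1\,\delta_i
\]
acting on $\TZ^q_N(X,\bullet;m)$, where in the first sum $\delta_i$ is applied at level $n-1$ and in the second at level $n$. Using \eqref{eqn:partialF}: for $i\ge 2$ we have $\partial^{\infty}_1\,\delta_i=\delta_{i-1}\,\partial^{\infty}_1$ (the case $j=1<i$), and the $i=1$ term of the second sum vanishes since $\partial^{\infty}_1\,\delta_1=0$ (the case $j=i$). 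Hence the second sum equals $\sum_{i=2}^{n}(-1)^i\,\delta_{i-1}\,\partial^{\infty}_1=\sum_{i=1}^{n-1}(-1)^{i+1}\,\delta_i\,\partial^{\infty}_1=-\sum_{i=1}^{n-1}(-1)^i\,\delta_i\,\partial^{\infty}_1$, which is exactly the negative of the first sum. Therefore the total is $0$, as claimed. Note this manipulation uses only the commutation relation $\partial^{\infty}_1\delta_i=\delta_{i-1}\partial^{\infty}_1$ for $i\ge2$ and the annihilation $\partial^{\infty}_1\delta_1=0$; it does \emph{not} require $\delta_{alt}$ to interact well with the \emph{other} faces, which is precisely the reason passing to the normalized complex is what makes $\delta_{alt}$ behave like a genuine anticommuting differential.

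The main (and only real) obstacle is purely notational: one must be scrupulous about the index shifts $i\mapsto i-1$ and the ranges of validity of each clause in \eqref{eqn:partialF}, especially around the endpoints $i=1$ and $i=n$ and the asymmetric roles of $\partial^0$ versus $\partial^{\infty}$ in Definition~\ref{defn:NACC}. In particular one should double-check that $\delta_i$ is being evaluated at the correct simplicial level in each summand (level $n-1$ versus level $n$), since the number of faces differs; this is what makes the two sums in $(2)$ have ranges $1\le i\le n-1$ and $1\le i\le n$ respectively and is essential for the telescoping cancellation to come out exactly. Once the indices are tracked correctly, both statements follow by inspection, and I would present them as two short displayed computations rather than belaboring the case analysis.
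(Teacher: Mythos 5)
Your proof of part $(2)$ is correct and is essentially the paper's own argument: the paper starts from $\delta_{alt}\partial^N$ and pushes $\partial^{\infty}_1$ leftward using $\delta_i\partial^{\infty}_1=\partial^{\infty}_1\delta_{i+1}$ from \eqref{eqn:deltaF}, then absorbs the vanishing term $\partial^{\infty}_1\delta_1=0$; you start from $\partial^N\delta_{alt}$ and push $\partial^{\infty}_1$ rightward via the equivalent clause of \eqref{eqn:partialF}. The two are mirror images of the same telescoping computation, and your level bookkeeping (ranges $1\le i\le n-1$ versus $1\le i\le n$) is right.

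Part $(1)$ is where there is a genuine gap, and it sits exactly at the spot you dismiss with ``a short bookkeeping check.'' First, your claimed identity $\partial^{\infty}_1\delta_i=\delta_i\partial^{\infty}_0=0$ for $i\ge2$ is wrong: since the face index $1$ is less than the insertion index $i$, the first clause of \eqref{eqn:partialF} gives $\partial^{\infty}_1\delta_i=\delta_{i-1}\partial^{\infty}_1$, generally nonzero --- and this is precisely the relation your own proof of $(2)$ relies on, so the two halves of your write-up contradict each other. That slip happens to be harmless for $(1)$, because $\partial^{\infty}_1$ is not required to vanish on $\TZ^{q+1}_N(X,n+1;m)$. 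The case that is \emph{not} harmless is the face $\partial^{\infty}_2$ applied to $\delta_1$: the third clause of \eqref{eqn:partialF} gives $\partial^{\infty}_2\delta_1=\delta_1\partial^{\infty}_1$, and $\partial^{\infty}_1\alpha$ is exactly the one face that Definition~\ref{defn:NACC} does \emph{not} require to vanish. So for $\alpha\in\TZ^q_N(X,n;m)$ with $\partial^N\alpha\ne0$, your argument does not show $\partial^{\infty}_2(\delta_1\alpha)=0$, and it is not zero in general; every other case of your analysis does close up, so $\delta_i$ for $i\ge2$ genuinely preserves the normalized subgroup, but $\delta_1$ only does so on $\ker\partial^N$. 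To be fair, the paper offers no proof of $(1)$ beyond ``follows immediately from \eqref{eqn:partialF}'' and so does not confront this either; but a correct write-up should flag that the inclusion in $(1)$ needs $\partial^N\alpha=0$ in the case $i=1$ (which is all that the application in Corollary~\ref{cor:alt0} uses), rather than asserting that every case reduces to a face of $\alpha$ that vanishes by hypothesis.
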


\begin{proof}$(1)$ follows immediately from
~\eqref{eqn:partialF}. To prove $(2)$, we have for any in $\TZ^q_N(X, n;m)$, 
\[
\begin{array}{lll}
\delta_{alt} {\partial}^N & = & 
\delta_{alt} \partial^{\infty}_1 \\
& = & \stackrel{n+1}{\underset {i =1} \sum} (-1)^i \delta_i 
\partial^{\infty}_1 \\
& = & \stackrel{n+1}{\underset {i =1} \sum} (-1)^i
\partial^{\infty}_1 \delta_{i+1} \\
& = & \partial^{\infty}_1 \left(\stackrel{n+1}{\underset {i =1} \sum} (-1)^i
\delta_{i+1}\right) \\
& = & - \partial^{\infty}_1 \delta_1 -
\partial^{\infty}_1 \left(\stackrel{n+2}{\underset {i =2} \sum} (-1)^i
\delta_{i}\right) \\
& = & - \partial^{\infty}_1 \left(\stackrel{n+2}{\underset {i =1} \sum} (-1)^i
\delta_{i}\right) \\
& = & - \partial^N \delta_{alt},
\end{array}
\]
where the third equality holds from ~\eqref{eqn:deltaF} and the fifth one
follows from ~\eqref{eqn:partialF}. This finishes the proof of $(2)$. 
\end{proof}

It was shown in \cite[Theorem 4.4.2]{Bl3} that the normalized cycle complex 
for the usual higher
Chow groups is quasi-isomorphic to the original cycle complex. This prompts one
to ask the following.
\begin{ques}\label{ques:QI}
Is the natural inclusion $\left({\TZ}^q_N(X, \bullet ;m), \partial^N\right)
\inj \left({\TZ}^q(X, \bullet;m), \partial \right)$ of complexes a
quasi-isomorphism?
\end{ques}
Our next goal is to show that the answer to this Question \ref{ques:QI} is 
indeed
affirmative for the modulus condition $M_{sum}$. We also derive some
crucial consequences of this for the additive higher Chow groups.
Although we can not prove this for the other two modulus conditions at
this moment for certain technical reasons, we definitely expect this to be 
the case for all modulus conditions. 

For $n \ge 1$, let $C(n-1)= \oplus_q \un{\TZ}^q (X, n; m)$. 
Let $D(n-1) \subset C(n-1)$ be the subgroup of degenerate cycles. Let 
$C(n-1)^{0} = \bigcap_{i=1} ^{n-1}\ker (\partial_i ^{0}) 
\subset C(n-1)$.
Note that $\bigoplus _{n \geq 1} C(n-1)^{0}$ is a subcomplex of 
$\bigoplus _{n \geq 1} C(n-1)$ with respect to the boundary map 
$\partial = \stackrel{n-1}{\underset {i =1} \sum} (-1)^i 
\partial_i ^{\infty}$. 
We shall write this subcomplex as $\left(C(*), \partial \right)$. 
\begin{prop}\label{prop:direct}
$C(n-1) = D(n-1) \oplus C(n-1)^{0}$. Thus, we can identify 
$$\bigoplus_q {\TZ}^q (X, n;m) = \bigoplus_q
\underline{\TZ} ^q (X, n;m)/ \underline{\TZ} ^q (X, n;m)_{\rm degn}$$ with its 
subgroup $C(n-1)^{0}=\bigcap_{i=1} ^{n-1} \ker (\partial_i ^{0})$.
\end{prop}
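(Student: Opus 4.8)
The plan is to show that every cycle in $C(n-1)$ decomposes uniquely as a sum of a degenerate cycle and a cycle killed by all the face maps $\partial_i^0$, $1 \le i \le n-1$. This is the standard argument, going back to the theory of cubical (or simplicial) abelian groups, that the non-degenerate complex agrees with the "$\partial^0$-normalized" complex; the only thing to check is that the maps involved (degeneracies $s_j$ and faces $\partial_i^0$) are genuinely defined on the level of admissible additive cycles, i.e.\ they preserve both the good-position condition and the modulus condition. Concretely, for a cubical abelian group one has idempotents built from the degeneracy and $\partial^0$-face operators whose image is the degenerate subgroup and whose kernel is $\bigcap_i \ker(\partial_i^0)$; the content of the proposition is that this purely formal decomposition is compatible with the subgroup of admissible cycles.

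First I would recall the cubical structure: for $1 \le j \le n-1$ we have the degeneracy maps $s_j : \un{\TZ}^q(X,n-1;m) \to \un{\TZ}^q(X,n;m)$ given by pulling back along the projection that forgets the $j$-th coordinate $y_j$ of $\square^{n-1}$, and the faces $\partial_i^\epsilon$ for $\epsilon \in \{0,\infty\}$. These satisfy the usual cubical identities, e.g.\ $\partial_i^0 s_i = \id$, $\partial_i^\infty s_i = \id$, and commutation relations of $s_j$ with $\partial_i^\epsilon$ for $i \ne j$. That $s_j$ carries admissible cycles to admissible cycles is immediate: the projection $X \times B_n \to X \times B_{n-1}$ is flat, so by Theorem~\ref{thm:basic} and the discussion following Definition~\ref{defn:AdditiveComplex} (where the face-restriction of the modulus condition is verified via Proposition~\ref{prop:restp}), the pullback of an admissible cycle is admissible. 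Then, following the standard construction, I would define inductively operators $P_j$ on $C(n-1)$ by $P_0 = \id$ and $P_j = P_{j-1} \circ (\id - s_j \partial_j^0)$, and set $P = P_{n-1}$. One checks by the cubical identities that $P$ is an idempotent, that $\ker P$ is exactly the span of the images of the degeneracies $s_1, \dots, s_{n-1}$ — i.e.\ $D(n-1)$ — and that the image of $P$ lies in $\bigcap_{i=1}^{n-1} \ker(\partial_i^0) = C(n-1)^0$; conversely any element of $C(n-1)^0$ is fixed by $P$. This gives $C(n-1) = \ker P \oplus \im P = D(n-1) \oplus C(n-1)^0$.

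The one genuinely non-formal point — and the step I expect to be the main obstacle — is verifying that $P$ actually preserves the subgroup of \emph{admissible} additive cycles, not merely the ambient free abelian group on closed subschemes of $X \times B_n$. The operator $P$ is an alternating sum of composites of degeneracies $s_j$ with $\partial^0$-faces, and while each $s_j$ obviously preserves admissibility (flat pullback), the issue is whether the composites $s_j \partial_j^0$ produce admissible cycles; but $\partial_j^0$ of an admissible cycle is admissible by the good-position hypothesis together with Proposition~\ref{prop:restp} (applied with $Y = X \times F$ for $F$ the face $\{y_j = 0\}$, exactly as in the paragraph after Definition~\ref{defn:AdditiveComplex}), and then $s_j$ applied to that is again admissible by flatness. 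Hence every term in $P$ lands in $\un{\TZ}^q(X,n;m)$, and therefore so does $P(\alpha)$ for any admissible $\alpha$. Consequently the decomposition restricts to the admissible subgroups, and passing to the quotient by $D(n-1)$ identifies $\bigoplus_q {\TZ}^q(X,n;m)$ with $C(n-1)^0 = \bigcap_{i=1}^{n-1}\ker(\partial_i^0)$, as claimed. Finally I would note that this identification is compatible with the boundary map: on $C(n-1)^0$ all the $\partial_i^0$ vanish, so the differential $\partial = \sum_i (-1)^i(\partial_i^\infty - \partial_i^0)$ reduces to $\sum_i (-1)^i \partial_i^\infty$, which is the differential $\partial$ on $\left(C(*),\partial\right)$ used in the statement.
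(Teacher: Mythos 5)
Your proposal is correct and follows essentially the same route as the paper: the paper's inductive replacement $z' = z - \pi_r\circ\partial_r^0(z)$ is exactly your operator $\id - s_r\partial_r^0$, applied by backward induction on $r$, and the paper likewise relies on the cubical identities to check that the $\partial_i^0$ already killed stay killed. The only cosmetic difference is that you package the construction as an idempotent $P$ and deduce directness from $\ker P = D(n-1)$, whereas the paper proves directness by a separate minimality argument on expressions $z=\sum_{i\le r}\pi_i w_i$; your explicit remark that $s_j\partial_j^0$ preserves admissibility (flat pull-back plus the containment lemma) is a point the paper leaves implicit in its earlier discussion of the cubical structure.
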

\begin{proof}
We first prove that $C(n-1) = D(n-1) + C(n-1)^{0}$. 
Let $z \in C(n-1)$, and suppose that $\partial_{r+1} ^{0} (z) = 
\cdots = \partial_{n-1} ^{0} (z) = 0$, to use a backward induction 
argument on the subscripts. 
Let $z' := z - \pi_r \circ \partial_r ^{0} (z)$, where $\pi_r$ is the 
pull-back via the projection $(x, t, y_1, \cdots, y_{n-1}) 
\mapsto (x, t, y_1, \cdots, \widehat{y_i}, \cdots, y_{n-1})$. One easily 
checks that $\partial _{\nu} ^j \circ \pi_{\nu} = 1$. Hence, 
$\partial_r ^{0} (z') = \partial_r ^{0} (z) - 
\partial_r ^{0} (\pi \circ \partial_r ^{0} (z)) = 
\partial_r ^{0} (z) - \partial_r ^{0} (z) = 0$. For $s>r$, one 
first easily sees that $\partial_{\mu -1} ^k \circ \partial_{\nu}^j = 
\partial_{\nu} ^{j} \circ \partial _{\mu} ^k$ and $\pi_{\nu} \circ 
\partial_{\mu -1} ^j = \partial _{\mu} ^j \circ \pi _{\nu}$ for 
$\nu < \mu$ from the standard cubical identities. Hence, using these two and 
the induction hypothesis that $\partial _s ^{0} (z) = 0$, we obtain 
$\partial_s ^{0} (z') = \partial _s ^{0} (z) - 
\partial_s ^{0} ( \pi_r \circ \partial _r ^{0} (z)) = 
0 - \pi_r \circ \partial_{s-1} ^{0}\circ \partial_r ^{0} (z) = 
- \pi_r \partial_r ^{0} \partial_s ^{0} (z) = 0.$ Hence, by 
induction we may write $z$ as a sum of elements in $D(n-1)$ and 
$C(n-1)^{0}$.

To prove that the sum is direct, let $r$ be the minimum such that there is a 
non-zero $z \in C(n-1)^{0}$ with $z = \sum_{i=1} ^r \pi_i w_i$ for some 
$w_i$. Since $\partial_r ^{0} = 0$ and $\partial_r ^{0} \circ 
\pi_r = 1$, by applying $\partial_r ^{0}$ to $z$, we obtain
\[
w_r = - \sum_{i=1} ^{r-1} \partial_r ^{0} \circ \pi_i w_i = 
- \sum_{i=1} ^{r-1} \pi_i \left( \partial_{r-1} ^{0} w_i \right),
\]
where for the second equation we used the cubical identity $\pi_{\nu} \circ 
\partial_{\mu-1} ^j=  \partial_{\mu} ^j \circ \pi_{\nu}$ for $\nu < \mu$. 
Hence, by plugging this back into the expression of $z$, we have
\[
z = \sum_{i=1} ^{r-1} \pi_i w_i + \pi_r w_r = \sum_{i=1} ^{r-1} \pi_i 
\left(w_i - \pi_{r-1} \circ \partial_{r-1}^{0} w_i \right),
\]
where for the second equation we used the cubical identity 
$\pi_{\nu} \circ \pi_{\mu -1} = \pi_{\mu} \circ \pi_{\nu}$ for $\nu < \mu$. 
This contradicts the minimality of $r$. Hence the sum is direct. This proves 
the proposition.
\end{proof}
\begin{thm}\label{thm:QIsum}
Let $X$ be a smooth projective variety. Then for the modulus condition
$M_{sum}$, the natural inclusion ${\TZ}^q_N(X, \bullet ;m) 
\inj {\TZ}^q(X, \bullet;m)$ of complexes is a quasi-isomorphism.
\end{thm}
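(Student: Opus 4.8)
The plan is to follow the classical strategy of Bloch and Levine for showing that the normalized subcomplex of a cubical complex is quasi-isomorphic to the full complex, adapting each step to make sure the modulus condition $M_{sum}$ is preserved throughout. First I would use Proposition~\ref{prop:direct} to replace the quotient complex $\bigoplus_q {\TZ}^q(X,\bullet;m)$ by its isomorphic copy $C(*)^{0} = \bigcap_{i} \ker(\partial^0_i)$, the subcomplex of cycles all of whose $y_i = 0$ faces vanish; this reduces the problem to showing that the further subcomplex obtained by also imposing $\partial^\infty_i = 0$ for $2 \le i \le n-1$ is quasi-isomorphic to $C(*)^{0}$. Working inside $C(*)^{0}$ lets me filter by the number of ``extra'' $y_i=\infty$ faces that are allowed to be nonzero, and the task becomes a standard acyclicity-of-the-quotient argument for each stage of the filtration.

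The key technical input is the homotopy operator built from the homotopy variety $W^X_n(Z)$ of ~\eqref{eqn:WX}. By Proposition~\ref{prop:addm}, $W^X_n$ carries admissible $M_{sum}$-cycles to admissible $M_{sum}$-cycles, and by Lemma~\ref{lem:intersection} it satisfies $\partial^0_i \circ W^X_n = {\rm id}$ for $i = n, n+1$ together with the face identities $W^X_n(Z)\cdot\{y_i=\epsilon\} = W^X_{n-1}(Z\cdot\{y_i=\epsilon\})$ for $1 \le i \le n-1$. The plan is to assemble, out of these $W^X_n$ operators together with the pullbacks $\pi_i$ along coordinate-forgetting projections, a chain homotopy $h$ on $C(*)^{0}$ between the identity and a projection onto the normalized subcomplex, exactly as in \cite[Theorem~4.4.2]{Bl3} and \cite{Le0}. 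Concretely, for a cycle $\alpha \in C(n-1)^{0}$ having $\partial^\infty_i(\alpha)=0$ for $i$ in some range, I would use $W^X_{n-1}$ (suitably indexed, and composed with a coordinate transposition) to produce a cycle in degree $n$ whose boundary is $\alpha$ minus a cycle with one more vanishing $\infty$-face; iterating this and taking signed sums over the relevant indices gives the homotopy. The face identities of Lemma~\ref{lem:intersection} guarantee that $h$ lands in $C(*)^{0}$ and that the homotopy relation $\partial h + h\partial = {\rm id} - (\text{normalization projector})$ holds on homology.

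The main obstacle I expect is bookkeeping: one must check that the homotopy $h$ actually preserves membership in $C(*)^{0} = \bigcap_i \ker(\partial^0_i)$ and that it genuinely improves the number of trivial $\infty$-faces at each step, which requires carefully matching the cubical identities $\partial^j_{\mu-1}\circ\partial^k_\nu = \partial^k_\nu\circ\partial^j_\mu$ and $\pi_\nu\circ\partial^j_{\mu-1} = \partial^j_\mu\circ\pi_\nu$ (already used in Proposition~\ref{prop:direct}) against the specific face formulas for $W^X_n(Z)$ in Lemma~\ref{lem:intersection}. The modulus issue is comparatively painless here: since every cycle appearing in the homotopy is either a face of $W^X_n(Z)$, a flat pullback, or an image under a coordinate automorphism of an admissible $M_{sum}$-cycle, admissibility is preserved by Proposition~\ref{prop:addm}, Theorem~\ref{thm:basic}, and the containment lemma Proposition~\ref{prop:restp}. (This is precisely why the theorem is stated only for $M_{sum}$: the analogue of Lemma~\ref{lem:H03} for $M_{ssup}$ would require the homotopy variety to preserve the strong sup condition, which is not available, explaining the restriction flagged in Question~\ref{ques:QI}.) Once the homotopy is in place, the spectral sequence of the filtration (or a direct induction on the filtration degree) collapses to give the quasi-isomorphism, and combining with Proposition~\ref{prop:direct} yields the statement.
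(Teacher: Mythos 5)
Your proposal follows essentially the same route as the paper's proof: reduce via Proposition~\ref{prop:direct} to the subcomplex $C(*)^{0}$, filter by the number of remaining nonvanishing faces, and build the chain homotopy from the operators $W^X_n$ using Proposition~\ref{prop:addm} and the face identities of Lemma~\ref{lem:intersection}, exactly as in the Bloch--Levine normalization argument. The bookkeeping you defer (the signed, permuted operators $H^n_i$ and the verification that each composite $\mathrm{Id}-(\partial H_i + H_i\partial)$ strictly improves the filtration degree) is precisely what the paper's proof carries out, and your diagnosis of why only $M_{sum}$ is covered matches the paper's reliance on Lemma~\ref{lem:H03}.
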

\begin{proof} In this proof, we again temporarily 
identify $\square$ with $\A^1$
via the isomorphism of ~\eqref{eqn:SA}. Recall that this gives the isomorphism 
$\left(\P^1, \{0, 1, \infty \}\right) \cong
\left(\P^1, \{1, \infty , 0 \}\right)$. 
Thus we need to show that the inclusion
$\left({\TZ}^q_N(X, \bullet ;m), \partial^N\right)
\inj \left({\TZ}^q(X, \bullet ;m), \partial \right)$ is a quasi-isomorphism, 
where $\partial = \sum_{i=1}^{n-1} (-1)^i (\partial^0_i - \partial^1_i)$.
We make the appropriate identification for $C(*)^0$ as well.

Using Proposition~\ref{prop:direct}, we only need to show that the
inclusion ${\TZ}^q_N(X, \bullet ;m) \inj C(*)^0$ is a quasi-isomorphism.
In particular, we can assume that for all cycles 
$\alpha \in {\TZ}^q(X, n+1 ;m)$,
one has $\partial^1_i (\alpha) = 0$ for $1 \le i \le n$.
For $i \ge 0$, let 
\[C(*)_{i} ^0 =
\{\alpha \in C(*)^0| \partial^0_j (\alpha) = 0 \ {\rm for } \ j \ge i+2\}.
\] 
Let $\tau_j \in {\rm \mathbb{P}erm}_n$ be the permutation such that
\[
\tau_j (i) =
\left\{\begin{array}{ll}
i \ \ \ \ \ \ \ \mbox{if $i < j$} \\
i-1 \ \ \mbox{if $i >j$} \\
n \ \ \ \ \ \ \ \mbox{if $i = j$}.
\end{array}
\right.
\]
For any $0 \le i \le n$ and admissible cycle $\alpha \in 
{\TZ}^q(X, n+1 ;m)$, let 
\[
H^n_i (\alpha) = (-1)^{n+1-i} {\tau}_{n+1-i} \cdot
W^X_n (\alpha). 
\]
By Proposition~\ref{prop:addm}, $W^X_n(\alpha) \in
{\TZ}^q(X, n+2 ;m)$. Since $\tau$ clearly preserves the admissibility, we 
get a homomorphism 
$H^n_i : {\TZ}^q(X, n+1 ;m) \to {\TZ}^q(X, n+2 ;m)$.
Now we use the computations of the boundaries of $W^X_n (\alpha)$ in
Lemma~\ref{lem:intersection} to see that $H^n_i$ restricts to a map
\begin{equation}\label{eqn:Homtp}
H^n_i : C(*)_n ^0\to C(*)_{n+1}^0.
\end{equation}
Define $\psi_0 : C(*)_n ^0\to C(*)_n^0$ by
$\psi_0 = {\rm Id} - (\partial \circ H_0 + H_0 \circ \partial)$ and we 
inductively
define $\psi_{i+1} =  
\left({\rm Id} - (\partial \circ H_n + H_n \circ 
\partial)\right) \circ \psi_i$.
In particular, we have  
\[
\psi: = \psi_n = \left({\rm Id} - (\partial \circ H_n + H_n 
\circ \partial)\right)
\circ \cdots \circ \left({\rm Id} - 
(\partial \circ H_0 + H_0 \circ \partial)\right),
\]   
where $\partial = \stackrel{\ }{\underset {\ } \sum} (-1)^i 
\partial_i ^{0}$.
Thus $\psi$ defines an endomorphism of $C(*)^0$ which is homotopic to identity.
Furthermore, Lemma~\ref{lem:intersection} implies that the restriction of
$\psi$ on $C(*)_{0}^0 = {\TZ}^q_N(X, n+1 ;m)$ is identity.
The proof of the theorem will now be complete from the following.\\
{\bf Claim : } \emph{
$\psi_i \left(C(*)_{n-i-1}^0\right) \subset C(*)_{n-i-2}^0$ for 
$0 \le i \le n-2$.}

We prove it for $\psi_0$ and other cases are exactly similar and can be proved
inductively. We have
\begin{eqnarray*}
H_0 \circ \partial & = & H^{n-1}_0 \circ \partial \\
& = & (-1)^n W^X_{n-1} \circ \partial \\
& = & (-1)^n  \sum_{i=1} ^{n-1} (-1)^i [W^X_{n-1} \circ
\partial^0_i] + (-1)^{2n} [W^X_{n-1} \circ \partial^0_n].
\end{eqnarray*}
On the other hand, we have
\[
\begin{array}{lll}
\partial \circ H_0 & = & \partial \circ H^n_0 \\
& = & (-1)^{n+1} \partial \circ W^X_n \\
& = & (-1)^{n+1} \stackrel{n+1}{\underset {i =1} \sum} (-1)^i \partial^0_i 
\circ W^X_n \\
& = & (-1)^{n+1}\stackrel{n-1}{\underset {i =1} \sum} (-1)^i [\partial^0_i 
\circ W^X_n] \\
& & + (-1)^{2n+1} \partial^0_i \circ W^X_n + (-1)^{2n+2}  
\partial^0_i \circ W^X_n \\ 
& = & (-1)^{n+1}\stackrel{n-1}{\underset {i =1} \sum} (-1)^i [W^X_{n-1} \circ
\partial^0_i] \\
& & + (-1)^{2n+1} id + (-1)^{2n+2} {\rm Id}\\
& = & (-1)^{n+1}\stackrel{n-1}{\underset {i =1} \sum} (-1)^i [W^X_{n-1} \circ
\partial^0_i], \\
\end{array}
\] 
where the fifth equality follows from Lemma~\ref{lem:intersection} and
~\eqref{eqn:WX0}.
Thus we get $H_0 \circ \partial + \partial \circ H_0 = 
W^X_{n-1} \circ \partial^0_n$. However, we see from 
Lemma~\ref{lem:intersection} again that $\partial^0_n \circ 
W^X_{n-1} \circ \partial^0_n = \partial^0_n$. This shows that
$\psi_0 \left(C(*)^0_{n-1}\right) \subset C(*)_{n-2}^0$. This proves the claim
and the theorem.
\end{proof}

Although we are unable to prove Theorem~\ref{thm:QIsum} for the modulus
conditions $M_{sup}$ and $M_{ssup}$ in this paper, the following result gives
a partial answer to Question~\ref{ques:QI} in these cases.
\begin{thm}\label{thm:QIP}
Let $X$ be a smooth projective variety over $k$. Then for any modulus condition
and for any $n, m \ge 1$, the natural map ${\TH}^q_N(X, n ;m) 
\to {\TH}^q(X, n;m)$ is injective. In particular, the map
${\TH}^n_N(k, n ;m) \to {\TH}^n(k, n;m)$ is an isomorphism.
\end{thm}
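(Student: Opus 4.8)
The plan is to use the homotopy operator $W^X_n$ and the permutations $\tau_j$ exactly as in the proof of Theorem~\ref{thm:QIsum}, but now observing that the \emph{construction} of the homotopies $H^n_i$ requires no quasi-isomorphism input — only the admissibility statement of Proposition~\ref{prop:addm}. The catch is that Proposition~\ref{prop:addm} (and Lemma~\ref{lem:H03}, on which it rests) was proved only for $M_{sum}$, so for a general modulus condition $M$ we must argue more carefully: if $Z$ satisfies $M$ (which implies $M_{sum}$ by \eqref{eqn:order}), then $W^X_n(Z)$ a priori satisfies only $M_{sum}$. Thus the homotopy operators $H^n_i$ do not obviously map ${\TZ}^q_M$ into itself. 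The point, however, is that we never need surjectivity of ${\TH}^q_N(X,n;m)\to {\TH}^q(X,n;m)$; we only need injectivity, and for that it suffices to exhibit, \emph{inside the larger complex for} $M_{sum}$, a chain homotopy $\psi$ between the identity and a projection onto the normalized subcomplex whose restriction to ${\TZ}^q_{N}(X,\bullet;m)_M$ is the identity.

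Concretely, first I would fix the identification of $\square$ with $\A^1$ as in \eqref{eqn:SA} and reproduce verbatim the decomposition $C(n-1)=D(n-1)\oplus C(n-1)^0$ of Proposition~\ref{prop:direct} (its proof is purely cubical and modulus-independent), so that for any modulus condition ${\TZ}^q(X,\bullet;m)_M$ is identified with a subcomplex $C(*)^0_M$ of $C(*)^0$. Second, I would observe that for $Z\in{\TZ}^q_N(X,n;m)_M$, all the faces $\partial^0_i(Z)$ and the lower $\partial^\infty_i(Z)$ vanish, so $Z$ is its own normalization data; the inclusion ${\TZ}^q_N(X,\bullet;m)_M\inj{\TZ}^q(X,\bullet;m)_M$ fits into a commutative square with the corresponding inclusion for $M_{sum}$. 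Third — and this is the heart of the matter — I would invoke Theorem~\ref{thm:QIsum}: the map ${\TZ}^q_N(X,\bullet;m)_{sum}\inj{\TZ}^q(X,\bullet;m)_{sum}$ is a quasi-isomorphism, realized by the explicit homotopy $\psi=\psi_n$ built from the $H^n_i=(-1)^{n+1-i}\tau_{n+1-i}\cdot W^X_n$. Now I claim that a class $\xi\in{\TH}^q_N(X,n;m)_M$ mapping to zero in ${\TH}^q(X,n;m)_M$ also maps to zero in ${\TH}^q(X,n;m)_{sum}$ (by functoriality of the inclusions \eqref{eqn:order1}), hence, by Theorem~\ref{thm:QIsum}, to zero in ${\TH}^q_N(X,n;m)_{sum}$; so it suffices to show ${\TH}^q_N(X,n;m)_M\to{\TH}^q_N(X,n;m)_{sum}$ is injective. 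For the \emph{normalized} complexes this is genuinely easier: on $C(*)^0$ the operator $\partial^0_n\circ W^X_{n-1}\circ\partial^0_n=\partial^0_n$ identity (Lemma~\ref{lem:intersection}), and the homotopies $H^n_i$ stay inside the part of the complex built from cycles already satisfying $M$, because the degeneration $W^X_n$ applied to a normalized cycle is, up to the permutation $\tau$, a cycle whose relevant faces reproduce the original $Z$ — so the modulus condition is inherited directly from $Z$ by the containment lemma, Proposition~\ref{prop:restp}, rather than from the delicate Lemma~\ref{lem:H03}.

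The main obstacle I anticipate is precisely verifying this last point: that when $W^X_n$ is applied to a \emph{normalized} admissible cycle $Z$ for the condition $M$ (not merely $M_{sum}$), the resulting cycle $W^X_n(Z)$, after the permutation $\tau_{n+1-i}$, again satisfies $M$ — so that the homotopy $\psi$ restricts to an endomorphism of ${\TZ}^q_N(X,\bullet;m)_M$ homotopic to the identity with image in $C(*)^0_{0,M}={\TZ}^q_N(X,\bullet;m)_M$. One expects this to follow because $Z$ sits as a closed subvariety of $Z\times\square_n\times\square_{n+1}$ intersected with the hypersurface $\{y=y_n+y_{n+1}-y_ny_{n+1}\}$, and since for a normalized cycle the variable $y$ is essentially pulled back from $Z$ while $y_n,y_{n+1}$ are ``free,'' the pullback of the divisor $F^1_{n+2,i}-(m+1)F_{n+2,0}$ to the normalization is dominated by the pullback of $F^1_{n+1,i}-(m+1)F_{n+1,0}$ to $\ov Z^N$ along the projection; one then applies Lemma~\ref{lem:surjm} exactly as in Proposition~\ref{prop:P0}. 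Once the homotopy is available, the final clause — that ${\TH}^n_N(k,n;m)\to{\TH}^n(k,n;m)$ is an isomorphism — follows immediately: for $X=\Spec k$ and $n$ arbitrary, one checks that ${\TZ}^n_N(k,\bullet;m)$ and ${\TZ}^n(k,\bullet;m)$ agree in the relevant low degrees (or cite that the normalization map is already known to be a quasi-isomorphism for zero-cycles from Theorem~\ref{thm:0-cycle} and Corollary~\ref{cor:0-cycle*}), so injectivity forces the isomorphism.
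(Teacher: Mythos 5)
Your opening diagnosis is right---Proposition~\ref{prop:addm} and Lemma~\ref{lem:H03} are only available for $M_{sum}$---but the fix you propose does not close the gap. Your reduction is logically valid but lands you on the statement that ${\TH}^q_N(X,n;m)_M \to {\TH}^q_N(X,n;m)_{sum}$ is injective for $M=M_{ssup}$ or $M_{sup}$, i.e.\ a comparison between \emph{different} modulus conditions. That is essentially a normalized form of the open Conjecture~\ref{conj:QI}, and the mechanism you sketch for it fails. The reason Lemma~\ref{lem:H03} is confined to $M_{sum}$ is the inequality ~\eqref{eqn:mod2}: the pullback of the last face divisor $F^{\infty}_{n+1,n}$ along ${\ov \pi}_n$ is dominated only by the \emph{sum} ${\ov i}_n^*[F^{\infty}_{n+2,n}+F^{\infty}_{n+2,n+1}]$, which is precisely what the single-index condition $M_{ssup}$ cannot absorb when the index witnessing the modulus for $Z$ is the last one. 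Normalization constrains the faces $\{y_i=0,\infty\}$ and says nothing about the divisors $\{y_i=1\}$ entering the modulus condition, so it does not remove this obstruction. Nor can Proposition~\ref{prop:restp} replace Lemma~\ref{lem:H03}: the containment lemma passes the modulus condition \emph{down} to components of an intersection with a variety already known to satisfy it, whereas $W^X_n(Z)$ has dimension one more than $Z$ and, once the coordinate $y$ is projected away by $p_n$, is not contained in anything visibly satisfying $M_{ssup}$.

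The paper's proof avoids comparing modulus conditions altogether, and uses no homotopy on the additive side. It forgets the modulus and embeds everything into Bloch's complex $z^q(X\times\G_m,\bullet)$: the square formed by ${\TZ}^q_N(X,\bullet;m)$, $z^q_N(X\times\G_m,\bullet)$, ${\TZ}^q(X,\bullet;m)$ and $z^q(X\times\G_m,\bullet)$ is Cartesian with all maps injective, giving a short exact sequence whose third term is the sum ${\ov \TZ}^q(X,\bullet;m)$ of the two subcomplexes inside $z^q(X\times\G_m,\bullet)$. Bloch's normalization theorem for ordinary higher Chow groups makes the composite $z^q_N \inj {\ov \TZ}^q \inj z^q$ a quasi-isomorphism, so $H_n(z^q_N)\to H_n({\ov \TZ}^q)$ is split injective, and the long exact sequence then gives the injectivity claim uniformly in the modulus condition. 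If you insist on an explicit homotopy you must first prove Lemma~\ref{lem:H03} for $M_{ssup}$, which is exactly what the authors cannot do (cf.\ Question~\ref{ques:QI}). Your final clause is essentially fine: for $X=\Spec k$ and $q=n$ a zero-cycle in good position meets no face, so ${\TZ}^n_N(k,n;m)={\TZ}^n(k,n;m)$, surjectivity on $H_n$ follows, and injectivity upgrades it to an isomorphism.
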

\begin{proof}
Let $X^{0} = X \times \G_m$. We have seen
before that forgetting the modulus condition, one has a natural inclusion
of cubical objects 
\[
\left(\un n \mapsto (\un{\TZ}^q(X, n; m), {\partial}^{\epsilon}_{i})
\right) \to 
\left(\un n \mapsto (\un{z}^q(X^{0}, n), {\partial}^{\epsilon}_{i})
\right),
\]
where the right side is the cubical object for the higher Chow group.
This induces a natural inclusion of chain complexes
$i_X : \un{\TZ}^q(X, \bullet ; m) \hookrightarrow \un{z}^q(X^{0}, \bullet)$
such that a cycle $z \in \un{\TZ}^q(X, \bullet ; m)$ is degenerate if and 
only if $i_X(z) \in \un{z}^q(X^{0}, \bullet)$ is so. 
Considering the normalized version of these complexes, we get a commutative
diagram
\begin{equation}\label{eqn:NonAd}
\xymatrix{
{\TZ}^q_N(X, n; m) \ar[r] \ar[d] & {z}^q_N(X^{0}, n-1) \ar[d] \\
{\TZ}^q(X, n; m) \ar[r] & {z}^q(X^{0}, n-1)}
\end{equation}
which is clearly Cartesian and all arrows are injective.
Let 
\[
{\ov \TZ}^q(X, n;m) = 
{\rm Image}\left({z}^q_N(X^{0}, n-1) \oplus {\TZ}^q(X, n; m) \to
{z}^q(X^{0}, n-1)\right).
\]
Then we get the exact sequence of complexes
\[
\xymatrix{ 
0 \to {\TZ}^q_N(X, \bullet; m) \ar[r] & {z}^q_N(X^{0}, \bullet) \oplus 
{\TZ}^q(X, \bullet; m) \ar[r] & {\ov \TZ}^q(X, \bullet; m) \to 0.} 
\]
The theorem would follow if we show that the map
$$H_n\left({z}^q_N(X^{0}, \bullet)\right) \to 
H_n\left({\ov \TZ}^q(X, \bullet; m)\right)$$ is injective for all $n \ge 1$.
For this, we consider the inclusions
\[
{z}^q_N(X^{0}, \bullet) \inj {\ov \TZ}^q(X, \bullet; m) \inj
{z}^q(X^{0}, \bullet).
\]
By \cite[Theorem~4.4.2]{Bl3}, the composite map is a quasi-isomorphism.
Hence the map $H_n\left({z}^q_N(X^{0}, \bullet)\right) \to  
H_n\left({\ov \TZ}^q(X, \bullet; m)\right)$ is in fact split injective. This 
finishes the proof of the theorem.

To prove the isomorphism of the additive higher Chow groups of zero cycles,
we simply note that the inclusion map ${\TZ}^n_N(k, n; m) \inj 
{\TZ}^n(k, n; m)$ is in fact an isomorphism and hence the map 
${\TH}^n_N(k, n; m) \to {\TH}^n(k, n; m)$ is surjective too.
\end{proof}
For our purposes, the following is the main application of
Theorems~\ref{thm:QIsum} and ~\ref{thm:QIP}.  
\begin{cor}\label{cor:alt0}
For the modulus condition $M_{sum}$, the map $\delta_{alt}$ defines a
homomorphism 
\[
\delta_{alt} : {\TH}^q(X, n; m) \to {\TH}^{q+1}(X, n+1; m)
\]
satisfying $\delta^2_{alt} = 0$.
\\
For the modulus condition $M_{ssup}$, $\delta_{alt}$ defines a 
homomorphism 
\[
\delta_{alt} : {\TH}^n(k, n; m) \to {\TH}^{n+1}(k, n+1; m)
\]
satisfying $\delta^2_{alt} = 0$.
\end{cor}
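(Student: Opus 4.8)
The plan is to deduce Corollary~\ref{cor:alt0} directly from the results already assembled: Lemma~\ref{reason for normalization}, Lemma~\ref{lem:del2}, and the quasi-isomorphism/injectivity statements for the normalized complexes (Theorems~\ref{thm:QIsum} and ~\ref{thm:QIP}). The point is that $\delta_{alt}$ has perfect anti-commutativity with the boundary map only on the \emph{normalized} complex, so the whole argument is to transport that good behavior back to the full additive higher Chow groups via the comparison theorems.

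First I would recall from Lemma~\ref{reason for normalization}(1) that each $\delta_i$, hence $\delta_{alt}=\sum_i(-1)^i\delta_i$, carries ${\TZ}^q_N(X, n; m)$ into ${\TZ}^{q+1}_N(X, n+1; m)$; combining this with Lemma~\ref{reason for normalization}(2), which gives $\delta_{alt}\partial^N+\partial^N\delta_{alt}=0$ on the normalized complex, we conclude that $\delta_{alt}$ is a chain map $\left({\TZ}^q_N(X, \bullet; m),\partial^N\right)\to\left({\TZ}^{q+1}_N(X, \bullet; m),\partial^N\right)$ of degree $+1$. By Lemma~\ref{lem:del2} it satisfies $\delta_{alt}^2=0$ already at the level of cycles, so passing to homology it induces
\[
\delta_{alt}:{\TH}^q_N(X, n; m)\to{\TH}^{q+1}_N(X, n+1; m),\qquad \delta_{alt}^2=0.
\]

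Next, for the modulus condition $M_{sum}$ I would invoke Theorem~\ref{thm:QIsum}: the inclusion ${\TZ}^q_N(X, \bullet; m)\inj{\TZ}^q(X, \bullet; m)$ is a quasi-isomorphism, so ${\TH}^q_N(X, n; m)\xrightarrow{\cong}{\TH}^q(X, n; m)$ for all $q,n$. Transporting $\delta_{alt}$ across these isomorphisms (in both degrees $q$ and $q+1$) gives the desired operator $\delta_{alt}:{\TH}^q(X, n; m)\to{\TH}^{q+1}(X, n+1; m)$ with $\delta_{alt}^2=0$. One should check that this definition is independent of the identification, i.e.\ that it agrees with the map induced by $\delta_{alt}$ on the full complex on homology classes represented by normalized cycles — but this is immediate since the inclusion is compatible with $\delta_{alt}$ by Lemma~\ref{reason for normalization}(1), so the square
\[
\xymatrix{
{\TH}^q_N(X, n; m)\ar[r]^{\delta_{alt}}\ar[d]_{\cong} & {\TH}^{q+1}_N(X, n+1; m)\ar[d]^{\cong}\\
{\TH}^q(X, n; m)\ar[r]^{\delta_{alt}} & {\TH}^{q+1}(X, n+1; m)}
\]
commutes and one simply reads off the bottom map. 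For the modulus condition $M_{ssup}$, Theorem~\ref{thm:QIsum} is not available, but Theorem~\ref{thm:QIP} tells us that the inclusion ${\TZ}^n_N(k, n; m)\inj{\TZ}^n(k, n; m)$ is already an isomorphism of complexes in the zero-cycle case (it is noted there that this map is an isomorphism on the nose), so ${\TH}^n_N(k, n; m)\xrightarrow{\cong}{\TH}^n(k, n; m)$; the same transport argument then yields $\delta_{alt}:{\TH}^n(k, n; m)\to{\TH}^{n+1}(k, n+1; m)$ with $\delta_{alt}^2=0$, which is exactly the restricted statement claimed. The only genuinely substantive input here is the quasi-isomorphism Theorem~\ref{thm:QIsum}, which has already been proved; given that, the corollary is formal, so I do not expect a real obstacle — the mild care needed is just to make the independence-of-presentation check explicit and to note why the $M_{ssup}$ case must be restricted to $X={\rm Spec}(k)$ and $q=n$ (namely, because Theorem~\ref{thm:QIsum} is only known for $M_{sum}$, while Theorem~\ref{thm:QIP} only gives an honest isomorphism for normalized versus full complexes in the zero-cycle range).
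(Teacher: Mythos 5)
Your proposal is correct and follows essentially the same route as the paper: the paper's proof likewise combines Lemma~\ref{lem:del2}, Lemma~\ref{lem:pdelta} (via Lemma~\ref{reason for normalization}), and Theorem~\ref{thm:QIsum} for the $M_{sum}$ case, substituting Theorem~\ref{thm:QIP} for the zero-cycle $M_{ssup}$ statement. Your added remarks on the commuting square and on why the $M_{ssup}$ case must be restricted to $X=\Spec(k)$, $q=n$ are exactly the implicit checks the paper leaves to the reader.
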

\begin{proof} This follows immediately by combining 
Lemmas~\ref{lem:del2}, ~\ref{lem:pdelta}, 
Lemma \ref{reason for normalization}, and Theorem~\ref{thm:QIsum}.
The second statement about the additive zero cycles follows in the same way,
where we now use Theorem~\ref{thm:QIP} in place of Theorem~\ref{thm:QIsum}.
\end{proof}
We complete our study of CDGA structures on the additive higher Chow groups
with the following main result of this section.
\begin{thm}\label{thm:DGAsum}
Let $X$ be a smooth projective variety over a field $k$.
Then for the modulus condition $M_{sum}$,
the additive higher Chow groups $\left(\TH(X), \wedge, \delta_{alt}\right)$
form a graded-commutative differential graded algebra, where $\delta_{alt}$ is
the graded derivation for the wedge product $\wedge$. This derivation
commutes with the pull-back and push-forward maps of additive higher Chow
groups. 
\end{thm}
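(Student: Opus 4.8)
The plan is to assemble Theorem~\ref{thm:DGAsum} from the three structural results already in hand: the graded-commutative algebra structure of Corollary~\ref{cor:wedge-product**}, the fact that $\delta_{alt}$ descends to a differential on homology with $\delta_{alt}^2 = 0$ (Corollary~\ref{cor:alt0}, via the quasi-isomorphism of Theorem~\ref{thm:QIsum} for the modulus condition $M_{sum}$), and the Leibniz-type identity of Proposition~\ref{prop:derivation}. The only genuinely new point is to upgrade Proposition~\ref{prop:derivation}, which is an identity at the level of cycle \emph{complexes} on $X \times X$ modulo boundaries of the cyclic shuffle product, to the statement that on $\TH(X)$ the operator $\delta_{alt}$ is a graded derivation for $\wedge_X = \Delta_X^* \circ \bar\wedge$.

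First I would recall that by Corollary~\ref{cor:alt0} and the naturality of $\delta$ and $\delta_{alt}$ with respect to pull-backs (each $\delta_i$ is visibly compatible with flat pull-back and with the Gysin maps of Corollary~\ref{cor:lci}, since it only inserts the coordinate $t^{-1}$ and permutes cube factors, operations that commute with the base-change and intersection constructions defining $f^*$), the operator $\delta_{alt}$ commutes with the diagonal pull-back $\Delta_X^*$ up to the obvious identification of cube coordinates. Then, given homology classes $\xi \in \TH^{q_1}(X, n_1; m)$ and $\eta \in \TH^{q_2}(X, n_2; m)$ represented by $\partial$-cycles, Proposition~\ref{prop:derivation} gives
\[
\delta_{alt}(\xi \bar\wedge \eta) - (\delta_{alt}\xi)\bar\wedge\eta - (-1)^{n_1-1}\xi\bar\wedge(\delta_{alt}\eta) = \partial\bigl(\xi\bar\wedge'\eta\bigr)
\]
in $\TZ^{q+1}(X\times X, n+1; m)$, since $\partial\xi = \partial\eta = 0$; here one uses that $\xi\bar\wedge'\eta$ is an admissible cycle on $X\times X$ by Proposition~\ref{prop:prod*0}. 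Hence the left side is a boundary, so $\delta_{alt}$ is a graded derivation for $\bar\wedge$ on homology classes on $X\times X$. Applying $\Delta_X^*$ — which is a well-defined map on homology by Theorem~\ref{thm:funct} and commutes with $\delta_{alt}$ and with $\partial$ — transports this to the identity
\[
\delta_{alt}(\xi\wedge_X\eta) = (\delta_{alt}\xi)\wedge_X\eta + (-1)^{n_1-1}\xi\wedge_X(\delta_{alt}\eta)
\]
in $\TH(X)$. Combined with $\delta_{alt}^2 = 0$ from Corollary~\ref{cor:alt0} and the associativity and graded-commutativity of $\wedge_X$ from Theorem~\ref{thm:wedge-product*} and Corollary~\ref{cor:wedge-product**}, this exhibits $\left(\TH(X), \wedge, \delta_{alt}\right)$ as a graded-commutative differential graded algebra. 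Finally, the compatibility of $\delta_{alt}$ with pull-back and push-forward follows from the corresponding compatibility of each $\delta_i$ (for push-forward one uses that $\delta_i$ is given by a flat map composed with an automorphism, and push-forward along a projective morphism commutes with flat pull-back in the relevant fibre squares, exactly as in Theorem~\ref{thm:preDGA}).

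The main obstacle, and the step deserving the most care, is the passage from the cycle-level identity of Proposition~\ref{prop:derivation} to a homology statement that is independent of the choice of representatives: one must check that $\xi\bar\wedge'\eta$ depends (up to boundaries) only on the classes of $\xi$ and $\eta$, i.e. that replacing $\xi$ by $\xi + \partial\alpha$ changes $\xi\bar\wedge'\eta$ by a boundary. This is where the first display in the proof of Proposition~\ref{prop:derivation}, expressing $\partial(\xi\times'\eta)$ in terms of $\partial\xi\times'\eta$, $\xi\times'\partial\eta$, and $\mu_*$-terms, must be invoked carefully, together with the observation that the auxiliary $\mu_*$-terms are themselves honest admissible cycles (Corollary~\ref{cor:wedge-AD}), so that all ambiguities live in $\partial$-boundaries of admissible cycles on $X\times X$. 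Once this bookkeeping is done, applying $\Delta_X^*$ and invoking Theorem~\ref{thm:QIsum} (to know we may compute everything in the normalized complex where $\delta_{alt}$ is a genuine chain map by Lemma~\ref{reason for normalization}) closes the argument.
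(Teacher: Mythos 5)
Your proposal is correct and takes essentially the same route as the paper, which simply assembles the theorem from Corollary~\ref{cor:wedge-product**}, Proposition~\ref{prop:derivation}, and Corollary~\ref{cor:alt0}, with the pull-back/push-forward compatibility of $\delta_{alt}$ deduced from that of $\delta$ in Theorem~\ref{thm:preDGA}. The extra care you take in passing the cycle-level Leibniz identity to homology (choosing $\partial$-closed representatives so the right side collapses to $\partial(\xi\bar\wedge'\eta)$, and working in the normalized complex via Theorem~\ref{thm:QIsum}) is exactly the content the paper leaves implicit.
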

\begin{proof}
This follows directly from Corollary~\ref{cor:wedge-product**},
Proposition~\ref{prop:derivation} and Corollary~\ref{cor:alt0}.
The commutativity property of $\delta_{alt}$ with the pull-back and 
push-forward maps follows from the similar property of $\delta$ in
Theorem~\ref{thm:preDGA}.
\end{proof}
\begin{remk}\label{remk:delalt}
It follows from the above results that for the modulus condition
$M_{sum}$, $\TH(X)$ has two differentials, namely, $\delta$ of 
Theorem \ref{thm:preDGA}  and $\delta_{alt}$ of Theorem \ref{thm:DGAsum}.
But one can in fact show that the latter is just a finitely many copies
of the former. To see this, one needs to know that each 
$\sigma \in {\rm \mathbb{P}erm}_n$ 
acts on $\TH^q(X, n+1, m)$ as $\sgn(\sigma) \cdot {\rm Id}$.
Using this, one can also see that while $\delta$ is analogous to the
exterior derivation of K{\"a}hler differentials, $\delta_{alt}$ 
corresponds to the exterior derivation of the Hochschild homology. These
differential operators are related by a kind of anti-symmetrizer maps defined
via permutations ({\sl cf.} \cite{Loday}).
\end{remk}
\begin{remk}\label{remk:delaltS}
We see from Corollary~\ref{cor:wedge-product**} and 
Proposition~\ref{prop:derivation} that 
$\left(\TH(X), \wedge, \delta_{alt}\right)$ is a 
differential graded algebra also for the modulus condition $M_{ssup}$
if the answer to Question~\ref{ques:QI} is affirmative. As we have already
remarked before, this is very much expected and a proof of this should be
available in a near future. For now, it does follow from 
Corollary~\ref{cor:alt0} that the groups 
$\left({\{{\TH}^n(k,n;m)\}}_{n \ge 1}, \wedge, \delta_{alt}\right)$ form
a CDGA.
\end{remk}

\subsection{Motivic cyclic homology}\label{subsection:MCH}
We end this section by showing how one can use the normalized additive
Chow groups and Theorem~\ref{thm:QIsum} to define a motivic version of
the cyclic homology of A. Connes. 
We see from Lemmas~\ref{lem:del2} and \ref{reason for normalization}
that there is a bicomplex
\begin{equation}\label{eqn:bicomplex}
\xymatrix{ 
\vdots \ar[d]_{\partial^N} & \vdots \ar[d]_{\partial^N} & \vdots 
\ar[d] _{\partial^N}  & \\
{\TZ}^{q+2}_N (n+2) \ar[d] _{\partial^N} & 
{\TZ}^{q+1}_N (n+1) \ar[l]_{\delta_{alt}} \ar[d]_{\partial^N} & 
{\TZ}^q_N (n) \ar[l] _{- \delta_{alt}} 
\ar[d]_{\partial^N} &\cdots \ar[l]_{\delta_{alt}} \\
{\TZ}^{q+2}_N (n+1) \ar[d] _{\partial^N} & 
{\TZ}^{q+1}_N (n) \ar[l] _{- \delta_{alt}} \ar[d] _{\partial^N}
& {\TZ}^q_N (n-1) \ar[l] _{\delta_{alt}} 
\ar[d] _{\partial^N} & \cdots \ar[l] _{- \delta_{alt}} \\
\vdots & \vdots & \vdots &
}
\end{equation}
where ${\TZ}^q_N (n) := {\TZ}^q_N (X, n;m)$. 
This allows us to propose a cyclic analogue of additive cycle complex, 
regarding the additive cycle complex as the motivic analogue of the 
Hochschild complex. Let ${\TZ}(n):=\bigoplus^q {\TZ}^q (n)$. 
Note that $\partial^N$ decreases only $n$ by $1$, while $\delta_{alt}$ 
increases both $q$ and $n$ by $+1$. The above bicomplex then reads,

\begin{equation}\label{eqn:firstmixedcomplex}
\xymatrix{
\vdots \ar[d] _{\partial^N} & \vdots \ar[d] _{\partial^N} & 
\vdots \ar[d]_{\partial^N} & \vdots \ar[d]_{\partial^N}  \\
{\TZ} (4) \ar[d] _{\partial^N} & {\TZ}(3) 
\ar[l] _{\delta_{alt}} \ar[d] _{\partial^N} & {\TZ}(2) 
\ar[l]_{- \delta_{alt}} \ar[d] _{\partial^N} & {\TZ} (1) 
\ar[l]_{\delta_{alt}} \\
{\TZ}(3) \ar[d] _{\partial^N} & {\TZ} (2) \ar[l]_{- \delta_{alt}} 
\ar[d]_{\partial^N} & {\TZ} (1) \ar[l]_{\delta_{alt}} & \\
{\TZ}(2) \ar[d] _{\partial^N} & {\TZ} (1)\ar[l]_{\delta_{alt}} & & \\
{\TZ}(1). & & &}
\end{equation}

Let $\sB \sZ$ be this bicomplex. This is a mixed complex in the sense of 
A. Connes ({\sl cf.} \cite[p. 79]{Loday}). We apply the usual formalism of 
mixed complexes to $\sB \sZ$. By definition, its homology $H_n(\sB \sZ)$
is the homology of the first column and this is just the additive higher 
Chow groups ${\TH}^*_N(X, n-1; m)$. 
Its {\sl cyclic homology} $HC_n(\sB \sZ)$ is the homology 
$H_n(Tot (\sB \sZ))$ of the total complex. Notice that the bicomplex 
\eqref{eqn:bicomplex} itself is not a mixed complex, but since 
$\sB \sZ$ is the direct sum of these, the groups $H_n(\sB \sZ)$ and 
$HC_n(\sB  \sZ)$ have natural decompositions.

We define the {\sl motivic cyclic homology} ${\CH}^* (X, n;m)$ as the cyclic 
homology $HC_n (\sB \sZ)$ of the bicomplex $\sB \sZ$. 
In this analogy, we could as well call our additive higher Chow groups as
{\sl motivic Hochschild homology}. The group 
${\CH}^q (X, n;m)$ is the direct summand of ${\CH}^* (X,n;m)$ that comes 
from the diagonal of \eqref{eqn:bicomplex} that contains ${\TZ}^q_N(n)$ in 
the first column. Note that, despite the double index $(q, n)$, the group 
${\CH}^q (X, n;m)$ contains cycles not just from ${\TZ}^q_N(n)$, but also from
\[
\bigoplus _{i \geq 0} ^{\min\left\{ q, \left\lfloor 
\frac{n-1}{2}\right\rfloor \right\}}{\TZ}^{q-i}_N (n-2i).
\]
Following the formalism of mixed complexes ({\sl cf.} \cite[2.5.3]{Loday}), 
we have the long exact sequence of complexes
\[
0 \to ({\TZ}(*), \partial^N) \overset{I}{\to} Tot (\sB \sZ ) 
\overset{S}{\to} Tot \left( \sB \sZ [1,1] \right) \to 0.
\]
Notice that $Tot \left( \sB \sZ [1,1] \right) = 
\left(Tot (\sB \sZ )\right) [2]$. Thus, we obtain its long exact sequence, 
which is the Connes' periodicity exact sequence, that decomposes as follows:
\begin{cor}\label{cor:Connesperiodicity}
Suppose the modulus condition is $M_{sum}$. Then there is a 
Connes' periodicity exact sequence involving ${\TH}$ and ${\CH}$:
\[
\cdots \overset{B}{\to} {\TH}^q (X, n;m) \overset{I}{\to} 
{\CH}^q (X,n;m) \overset{S}{\to} {\CH}^{q-1}(X,n-2;m) \overset{B}{\to} 
\]
\enlargethispage*{100pt}
\[
\hspace*{9cm} {\TH}^{q} (X,n-1;m) \overset{I}{\to} \cdots,
\]
where the maps $I, S, B$ have bidegrees $(0,0), (-1, -2), (+1, +1)$ in 
$(q,n)$ respectively.
\end{cor}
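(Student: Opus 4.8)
The plan is to derive the sequence as a purely formal consequence of the mixed-complex formalism of \cite{Loday}, the one substantial input having already been supplied by Theorem~\ref{thm:QIsum}. First I would record that, for the modulus condition $M_{sum}$, the triple $\bigl(\bigoplus_q \TZ^q_N(X,\bullet;m),\ \partial^N,\ \delta_{alt}\bigr)$ is a mixed complex in the sense of \cite[p.~79]{Loday}: one has $(\partial^N)^2=0$ by Definition~\ref{defn:NACC}, the operator $\delta_{alt}$ preserves the normalized subcomplex and squares to zero by Lemmas~\ref{reason for normalization}(1) and~\ref{lem:del2}, and $\partial^N\delta_{alt}+\delta_{alt}\partial^N=0$ by Lemma~\ref{reason for normalization}(2). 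Inserting the alternating signs as in~\eqref{eqn:bicomplex} converts this anticommuting pair into the genuine bicomplex $\sB\sZ$, and the direct sum over the codimension index $q$ is the bicomplex displayed in~\eqref{eqn:firstmixedcomplex}.

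Next I would feed $\sB\sZ$ into the standard machinery recalled immediately before the corollary. The short exact sequence of complexes
\[
0 \to (\TZ(*),\partial^N) \xrightarrow{I} \Tot(\sB\sZ) \xrightarrow{S} \Tot(\sB\sZ[1,1]) \to 0,
\]
combined with the identification $\Tot(\sB\sZ[1,1]) = (\Tot(\sB\sZ))[2]$, yields a long exact homology sequence
\[
\cdots \xrightarrow{B} H_n(\TZ(*),\partial^N) \xrightarrow{I} HC_n(\sB\sZ) \xrightarrow{S} HC_{n-2}(\sB\sZ) \xrightarrow{B} H_{n-1}(\TZ(*),\partial^N) \xrightarrow{I} \cdots.
\]
Here $HC_n(\sB\sZ) = \CH^*(X,n;m)$ by the very definition of motivic cyclic homology, while $H_*$ of the first column is, with the indexing conventions of~\eqref{eqn:firstmixedcomplex}, the normalized additive higher Chow group; and Theorem~\ref{thm:QIsum} identifies this with the ordinary additive higher Chow group $\TH^*(X,\bullet;m)$, this step being the only one where the hypothesis $M = M_{sum}$ is used. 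Substituting these identifications turns the above into the asserted sequence.

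Finally I would carry out the bookkeeping of the gradings. Because $\sB\sZ$ is the direct sum over $q$ of the bicomplexes~\eqref{eqn:bicomplex}, all three maps respect this decomposition, so one may fix $q$ and read off the bidegrees from the shape of the bicomplex: $I$ is the inclusion of the first column, hence of bidegree $(0,0)$; the degree-two shift defining $S$, together with the slope of the diagonal along which $\delta_{alt}$ moves, forces $S$ to have bidegree $(-1,-2)$; and then the connecting map $B$ is forced to have bidegree $(+1,+1)$, exactly as claimed. I expect the main (indeed only) obstacle to be this bookkeeping: pinning down the sign conventions in~\eqref{eqn:bicomplex} so that the mixed-complex formalism applies verbatim, and keeping track of the codimension grading together with the $\pm 2$ index shifts so that the long exact sequence splits cleanly into the stated $(q,n)$-indexed pieces. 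There is no conceptual difficulty remaining, since the genuinely nontrivial ingredient — that $\TZ^q_N \hookrightarrow \TZ^q$ is a quasi-isomorphism for $M_{sum}$, which is what makes the first-column terms the ordinary rather than the normalized additive higher Chow groups — is precisely the content of Theorem~\ref{thm:QIsum}.
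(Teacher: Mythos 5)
Your proposal is correct and follows essentially the same route as the paper: the corollary is presented there as an immediate consequence of the mixed-complex formalism set up in Section~\ref{subsection:MCH} (the short exact sequence $0 \to (\TZ(*),\partial^N) \to \Tot(\sB\sZ) \to \Tot(\sB\sZ)[2] \to 0$ and its long exact homology sequence), with Theorem~\ref{thm:QIsum} supplying the identification of the first-column homology $\TH^*_N$ with $\TH^*$ under $M_{sum}$. If anything, your write-up makes explicit the role of Theorem~\ref{thm:QIsum} and the bidegree bookkeeping, which the paper leaves implicit.
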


As a consequence, we have the following motivic interpretation of the top
Hodge piece $HC^{n-1}_{n-1}(k)$ of the cyclic homology $HC_{n-1}(k)$ of the 
ground field.
\begin{cor}\label{cor:CCH_0}
Assume that char$(k) \neq 2$. Then, for any modulus condition and for 
$n,m \ge 1$, there is an isomorphism 
\[
{\CH}^n (k,n;m) \xrightarrow{\cong}
{\W}_m\Omega_{k/\Z}^{n-1} / {d {\W}_m\Omega_{k/\Z} ^{n-2}}.
\]
\end{cor}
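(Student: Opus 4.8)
The plan is to compute the cyclic homology group $\CH^n(k,n;m) = HC_n(\sB\sZ)$ in its top Hodge piece by exploiting the Connes periodicity sequence of Corollary~\ref{cor:Connesperiodicity} together with the explicit knowledge of the zero-cycle part of the theory. First I would identify the relevant diagonal: the group $\CH^n(k,n;m)$ receives contributions from $\TZ^{n-i}_N(n-2i)$ for $0\le i\le \lfloor (n-1)/2\rfloor$, and the only one of these that contributes a nonzero group in the appropriate (co)homological degree is $\TZ^n_N(n)$ itself — i.e. the additive zero-cycle complex — since $\TH^{n-i}(k,n-2i;m)$ vanishes outside the "diagonal" range by dimension reasons (an integral closed subscheme of $\Spec k\times B_j$ of dimension $0+j-1$ admissible with the relevant codimension forces $q=j$). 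This reduces the problem to understanding the piece of the bicomplex living in the column $\TZ^n_N(k,\bullet;m)$ and the incoming differential $\delta_{alt}$ from $\TZ^{n-1}_N(k,\bullet;m)$.

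Next I would invoke the periodicity exact sequence of Corollary~\ref{cor:Connesperiodicity} in top Hodge degree:
\[
\CH^{n-1}(k,n-2;m) \overset{B}{\to} \TH^{n}(k,n-1;m) \overset{I}{\to} \CH^{n}(k,n;m) \overset{S}{\to} \CH^{n-1}(k,n-2;m).
\]
Here $\TH^{n}(k,n-1;m) = 0$ for degree reasons (the additive higher Chow group of $\Spec k$ vanishes below the diagonal: there is no admissible cycle of the right dimension), so $I$ is the zero map and $S$ becomes an isomorphism onto the part of $\CH^{n-1}(k,n-2;m)$... but this reindexing quickly spirals, so the cleaner route is direct: on the top diagonal the double complex degenerates to the two-term complex $\TZ^{n-1}_N(k,n-1;m) \xrightarrow{\delta_{alt}} \TZ^n_N(k,n;m)$ in the appropriate spot, and $HC_n$ of the diagonal piece is exactly the cokernel $\TH^n_N(k,n;m)/\delta_{alt}\big(\TH^{n-1}_N(k,n-1;m)\big)$, all higher terms of the total complex on this diagonal being zero by the same vanishing. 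By Theorem~\ref{thm:QIP} we have $\TH^n_N(k,n;m) \xrightarrow{\cong} \TH^n(k,n;m)$, and by Theorem~\ref{thm:0-cycle} (using $\Char(k)\ne 2$) this is $\W_m\Omega^{n-1}_{k/\Z}$; similarly $\TH^{n-1}_N(k,n-1;m)\cong\TH^{n-1}(k,n-1;m)\cong \W_m\Omega^{n-2}_{k/\Z}$.

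The remaining and main point is to check that the map $\delta_{alt}$ on these groups corresponds, under the regulator isomorphism $R^n_{0,m}$ of Theorem~\ref{thm:0-cycle}, to the de Rham–Witt differential $d\colon \W_m\Omega^{n-2}_{k/\Z}\to\W_m\Omega^{n-1}_{k/\Z}$. This is where I expect the real work: one must trace through R\"ulling's explicit description of the regulator (and the inverse map $S^n_{0,m}$ built from the universal property of the de Rham–Witt complex) and verify that our cycle-theoretic operator $\delta$ — defined by $(x,t,y_1,\dots,y_{n-1})\mapsto(x,t,t^{-1},y_1,\dots,y_{n-1})$ in $\eqref{eqn:diagonal}$ — is sent to the exterior derivative. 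Since $\W_m\Omega^\bullet_k$ is the initial Witt complex and $\big(\{\TH^n(k,n;m)\}, \wedge, \delta_{alt}\big)$ is a CDGA (Corollary~\ref{cor:alt0}, Remark~\ref{remk:delaltS}) which R\"ulling's ingredients show is also a Witt complex, the induced map $\W_m\Omega^\bullet_k \to \TH^{\bullet+1}(k,\bullet+1;m)$ is a morphism of Witt complexes, hence of differential graded algebras, so it intertwines $d$ and $\delta_{alt}$ automatically; combined with the isomorphism statement of Theorem~\ref{thm:0-cycle} this forces $\delta_{alt}$ to be identified with $d$. Therefore the cokernel is $\W_m\Omega^{n-1}_{k/\Z}/d\W_m\Omega^{n-2}_{k/\Z}$, which is the claimed formula, and the argument is independent of the modulus condition because on zero cycles $M_{sum}=M_{sup}=M_{ssup}$ (Corollary~\ref{cor:0-cycle*}) and $\TZ^n_N(k,n;m)=\TZ^n(k,n;m)$ by Theorem~\ref{thm:QIP}.
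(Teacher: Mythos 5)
Your proposal is correct and follows essentially the same route as the paper: reduce the diagonal of the bicomplex to the two-term quotient $\TZ^n_N(k,n)/(\partial^N\TZ^n_N(k,n+1)+\delta_{alt}\TZ^{n-1}_N(k,n-1))$ (the higher columns vanishing for dimension reasons), identify the numerator with $\W_m\Omega^{n-1}_{k/\Z}$ via Theorems~\ref{thm:0-cycle} and~\ref{thm:QIP}, and use the Witt-complex universality (Remark~\ref{remk:delaltS}) to see that $\delta_{alt}$ corresponds to the de Rham--Witt differential $d$, so its image consists of exact forms. The abandoned detour through the Connes periodicity sequence is harmless, and your final identification of $\delta_{alt}$ with $d$ via the morphism of Witt complexes is exactly the content the paper compresses into its last sentence.
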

\begin{proof} By definition, 
\[
{\CH}^n (k, n;m) = \frac{{\TZ}^n_N(k, n)}{ \partial^N {\TZ}^n_N (k, n+1) + 
\delta_{alt} {\TZ}^{n-1}_N (k, n-1)}.
\]
By Theorems~\ref{thm:0-cycle} and ~\ref{thm:QIP},
we have ${\TZ}^{n}_N (k,n) / \partial^N {\TZ}^n_N (k, n+1) \simeq 
{\W}_m\Omega_{k/\Z} ^{n-1}$. On the other hand, combining these two theorems 
with Theorem~\ref{thm:QIP} and Remark~\ref{remk:delaltS}, we see that
the elements of the group 
$\delta_{alt} {\TZ}^{n-1}_N(k, n-1)$ are exact de Rham-Witt forms. This 
finishes the proof.
\end{proof}
Further study of the above motivic cyclic homology using algebraic cycles
and its applications to additive higher Chow groups will be taken up in
a sequel.

\section{Remarks and computations}\label{section:remarks}
\subsection{Moving modulus conditions}
We saw that $M_{sum}$ and $M_{ssup}$ apparently have much better
structural behavior than the modulus condition $M_{sup}$
studied in \cite{KL, P1}, and this makes the former better suited for
being a motivic cohomology. 
On the other hand, in the main theorem of \cite{P1}, the regulators on 
$1$-cycles were defined with the modulus condition $M_{sup}$. 
Although we have seen that this regulator map does exist and has good
properties with the modulus condition $M_{ssup}$, its construction
doesn't automatically generalize to the groups with $M_{sum}$. 
So, one may ask the following :
\begin{ques}\label{ques:DML}
Given an $M_{sum}$-admissible cycle $\xi$ with $\partial \xi = 0$, can one 
find $M_{sup}$-admissible cycles $\eta$ and $\Gamma$ such that 
$\xi = \eta + \partial \Gamma$?
\end{ques}
A positive answer to this question will immediately solve one part of
Conjecture~\ref{conj:QI}.
This is a kind of {\sl deeper moving lemma} than we have proved in this
paper. This moving lemma allows one to move the modulus as well as the proper
intersection property when we
move a cycle. On the other hand, the moving lemma of this paper does not
allow changing the modulus conditions. We expect the answer to the above
question to be much harder.

\subsection{Examples}
\begin{exm}\label{exm:first example}
We give an example where the homotopy used in \cite{Bl1,Le} doesn't preserve 
the modulus conditions for additive Chow groups of quasi-projective 
varieties, even for the simplest possible cases. 

Take $X= \mathbb{A} ^1 _k$ and $n=1$, so, we are interested in admissible 
cycles in $X \times {\wt B}_1 = X \times \mathbb{A}^1 _k$. Admissible 
closed subvarieties $Z \subset X \times \mathbb{A}^1 _k$ are given by the 
condition $Z \cap (X \times \{0 \}) = \emptyset$. Let $\mathbb{G}_{a,k}= 
\mathbb{A}^1 _k$ act on $X$ by translation, and take its function field 
$K = k(s)$, $s$ transcendental over $k$. Take the line 
$\phi: \square^1 _K \to \mathbb{G}_{a,K}$ defined by 
$y \mapsto s y/(y-1)$ that sends $0$ to $0$ and $\infty$ to the $k$-generic 
point $s$ of $\mathbb{G}_{a,k}$, which is $K$-rational in 
$\mathbb{G}_{a, K}$.  

Take $Z$ given by the ideal $(xt + 1) \subset k[x,t]$, which is in 
${\TZ}^1 (\mathbb{A}^1, 1; m)$. Then, $Z_K$ is given by 
$(xt +1) \subset K[x,t]$ and ${{\rm pr}'}^* Z_K$ is given by 
$(xt +1) \subset K[x,t,y/({y-1})]$. Pulling back through $\mu_{\phi}$, we get 
$\left(x+s{y}/({y-1})\right)t + 1 = 0$. This is the equation for our 
homotopy variety $Z'$. Rewriting it as $1-y = t ((y-1)x + sy)$, we see that 
it doesn't satisfy any of the given modulus conditions 
$M_{sum}, M_{sup}, M_{ssup}$. 
For instance, for a given $m\geq 1$, we need $1-y$ to be divisible by at 
least $t^{1+m}$ where $m \geq 1$, which is obviously false in this case. 
Hence $Z' \not \in {\TZ} ^1 (\mathbb{A}^1 _K, 2; m)$.
\end{exm}

\begin{exm}
Recall from Remark~\ref{remk:n=1} that if $X$ is projective, then admissible 
cycles in $X \times {\wt B}_1 = X \times \mathbb{A}^1$ have a very simple 
description : an admissible irreducible closed subvariety $Z$ should be of 
the form $Y \times \{ * \} \subset X \times \mathbb{A}^1$ for some closed 
subvariety $Y \subset X$, and a closed point $\{ * \} \not = \{ 0 \}$ of 
$\mathbb{A}^1$. This variety obviously satisfies all of the modulus 
conditions. 

Note that the admissible variety $Z$ in Example~\ref{exm:first example} is 
not of the form $Y \times \{ * \}$: this happens because $X=\mathbb{A}^1 _k$ 
is not complete.

These two examples show that the additive higher Chow groups of 
quasi-projective varieties may have a bit more complicated structures than 
those of projective varieties. The authors don't know yet what 
extra-structures one can expect in general for this quasi-projective case.
\end{exm}

\subsection{A computation}
We finish the paper with a calculation of some additive higher Chow groups,
which the authors had worked out while working on this paper. The following 
extends \cite[Theorem 6.4, p. 153]{BE2} to affine spaces. 

\begin{thm}
Let $M$ be the modulus condition $M_{sum}$, $M_{sup}$, or $M_{ssup}$. Let 
$X = \mathbb{A} ^r _k$, and let $m=1$. Then, the additive higher Chow groups 
of zero-dimensional cycles of $X$ are the absolute K\"ahler differentials of 
$k$: 
\[
\TH^{r+n+1} (X, n; 1) \simeq \Omega_{k/\mathbb{Z}} ^{n-1}.
\]
\end{thm}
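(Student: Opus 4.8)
The plan is to reduce the computation to the known result of Bloch--Esnault for the ground field and then bootstrap from affine space to the point using the homotopy invariance already available in the $\log$-theory, or more directly using the projective bundle/blow-up machinery of \thmref{thm:PBF}. First I would recall that for $m=1$ all three modulus conditions coincide with $M_{sup}$ when $n\le 2$ and, more importantly, that \cite[Theorem~6.4]{BE2} (together with \thmref{thm:0-cycle} in the form $\TH^{n}(k,n;1)\cong\Omega^{n-1}_{k/\Z}$) gives the case $r=0$. Since $X=\A^r_k$ is not projective, one cannot directly quote the CDGA results, but one can still quote \thmref{thm:basic}: flat pull-back along the projection $p:\A^r_k\to\Spec k$ and the fact that $\A^r_k\to\Spec k$ is a vector bundle of rank $r$.

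The key step is to set up an induction on $r$ via the projective compactification. I would embed $\A^r_k$ into $\P^r_k$ and use the localization-type devissage: one has the open-closed decomposition $\A^r_k \hookrightarrow \P^r_k \hookleftarrow \P^{r-1}_k$, but since additive higher Chow groups of quasi-projective varieties do not satisfy localization (as emphasized in the introduction and Example~\ref{exm:first example}), I would instead work on the $\log$-version $\TZ^{\log}_\bullet$ of \thmref{thm:Ext}, which does satisfy a localization triangle (part (2)) and $\A^1$-homotopy invariance (part (3)). Concretely: by \thmref{thm:Ext}(3), $\TH^{\log}_s(\A^r_k,n;1)\cong\TH^{\log}_s(\Spec k,n;1)$ for all $s$, and by the natural transformation $\TZ^{\log}_r(-;m)\to\TZ^{\log}_r(-;m)$ together with the fact that $\A^r_k$ is smooth and quasi-projective, I would argue that on $\A^r_k$ (being smooth) the $\log$-complex agrees with the naive additive cycle complex in the relevant degrees. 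Translating dimensions to codimensions, $\TH^{r+n+1}(\A^r_k,n;1)=\TH_{-n-?}(\A^r_k,n;1)$ — here I would carefully match the indexing convention $\un{\TZ}^q(X,n;m)=\un{\TZ}_{d+1-q}(X,n;m)$ with $d=r$, so that codimension $r+n+1$ corresponds to dimension $r+1-(r+n+1)=-n$, i.e. this is the group of zero-dimensional-fiber cycles — and match it with $\TH^{n+1}(\Spec k,n;1)$, wait: for $X=\Spec k$, $d=0$, codimension $n+1$ means dimension $-n$, and the additive higher Chow group of zero cycles is $\TH^{n}(k,n;1)$ in the paper's normalization where cycles in $k\times B_n$ have dimension $n-1$, i.e. codimension... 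I would pin down that the claimed group $\TH^{r+n+1}(\A^r,n;1)$ is, after the shift $d\mapsto d-r$ of homotopy invariance, exactly $\TH^{n+1}(k,n;1)$ which by \cite[Theorem~6.4]{BE2} equals $\Omega^{n-1}_{k/\Z}$.

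The main obstacle will be the precise bookkeeping of codimension versus dimension under the flat pull-back $p^*:\TZ_s(\Spec k,\bullet;m)\to\TZ_s(\A^r_k,\bullet;m)$ (which preserves \emph{dimension} $s$ of cycles, hence shifts codimension by $r$), combined with verifying that for the smooth affine variety $\A^r_k$ the $\log$-additive complex of \thmref{thm:Ext} computes the same groups as the plain additive complex in these degrees — the natural transformation $\TZ^{\log}_r(-;m)\to\TZ_r(-;m)$ is an isomorphism on $\SmProj/k$ by construction, but for the non-projective $\A^r_k$ one needs the Guill\'en--Navarro Aznar descent (i.e. that $\A^r_k$ has a smooth compactification $\P^r_k$ with $\P^{r-1}_k$ as boundary, and the blow-up/localization triangles of \thmref{thm:Ext} let one compute inductively). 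A cleaner alternative avoiding $\log$-theory altogether: prove directly that $p^*$ is a quasi-isomorphism $\TZ^{n+1}(k,\bullet;1)\xrightarrow{\sim}\TZ^{r+n+1}(\A^r_k,\bullet;1)$ by exhibiting, for each admissible cycle $Z$ on $\A^r_k\times B_n$, that the projection to $B_n$ lands in $\TZ_s(k,n;1)$ (as in Remark~\ref{remk:n=1} and \propref{prop:P0}, noting $\A^r$ is \emph{not} projective so one must use the $M$-condition more carefully to control the fiber dimension) and constructing an explicit linear homotopy on $\A^r$ contracting to the origin; the hard part there is that, as Example~\ref{exm:first example} shows, the naive linear homotopy on $\A^r$ destroys the modulus condition, so one must instead use the homotopy $\Psi$ coming from the $W^X_n$-construction of \secref{section:NACom} or a variant adapted to the affine translation action, and check the modulus is preserved via \propref{prop:restp} (the containment lemma). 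I would therefore present the $\log$-theory argument as the primary route, since \thmref{thm:Ext}(3) does all the work once the comparison on smooth $X$ is in hand, and relegate the explicit-homotopy approach to a remark.
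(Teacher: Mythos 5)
Your primary route has a genuine gap that I do not think can be repaired. Theorem~\ref{thm:Ext}(3) gives $\A^1$-homotopy invariance only for the \emph{log}-theory $\TZ^{\log}$, which is produced by Guill\'en--Navarro Aznar descent from $\SmProj/k$; the paper supplies a natural transformation $\TZ^{\log}_r(-;m)\to\TZ_r(-;m)$ but nowhere asserts, and cannot assert, that it is an equivalence on a non-projective smooth variety such as $\A^r_k$. Indeed, if it were an equivalence on all of $\Sm/k$ then the plain additive complexes would inherit localization and homotopy invariance from the log-theory, which the introduction explicitly says they fail to satisfy (and which Example~\ref{exm:first example} illustrates). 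So the comparison step on which your whole first argument rests is missing and is in fact the crux of the difficulty; the descent triangles of \thmref{thm:Ext} compute $\TZ^{\log}(\A^r)$, not $\TZ(\A^r)$.

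Your ``cleaner alternative'' is pointed in the right direction but both overshoots and overcomplicates. The paper does not prove that $p^*$ is a quasi-isomorphism of complexes (that would be a form of homotopy invariance it deliberately avoids claiming); it only computes the single group of zero-cycles, by the same explicit method as \cite[Theorem~6.4]{BE2}. Concretely: one defines $\psi(p)=\Tr_{k(p)/k}\bigl(\tfrac{1}{t}\tfrac{dy_1}{y_1}\wedge\cdots\wedge\tfrac{dy_{n-1}}{y_{n-1}}\bigr)(p)$ on admissible closed points, ignoring the $\A^r$-coordinates; $\psi\circ\partial=0$ is \cite[Proposition~6.2]{BE2}. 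The only new input is that two admissible closed points $p=(a,b,s_1,\dots,s_{n-1})$ and $p'=(a',b,s_1,\dots,s_{n-1})$ differing only in their $\A^r$-coordinates are connected by the explicit $1$-cycle
\[
C=\Bigl\{\bigl(a\tfrac{y}{y-1}+a'(1-\tfrac{y}{y-1}),\,b,\,y,\,s_1,\dots,s_{n-1}\bigr)\;\big|\;y\in\square^1\Bigr\},
\]
whose $t$-coordinate is the constant $b\neq 0$ and whose $y_i$-coordinates are constants $\neq 1,0,\infty$; hence its closure never meets $\{t=0\}$ and every modulus condition $M_{sum},M_{sup},M_{ssup}$ holds vacuously. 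The pathology of Example~\ref{exm:first example} arises because there the homotopy parameter enters a nontrivial defining equation of the cycle; for point cycles this simply does not happen, so neither the containment lemma nor the $W^X_n$-construction is needed. With Claim~(2) in hand, \cite[Proposition~6.3]{BE2} and the remaining arguments of \cite[Theorem~6.4]{BE2} give both surjectivity and injectivity of $\psi$, finishing the proof. I would encourage you to drop the log-theoretic reduction entirely and work out this elementary argument.
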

\begin{remk}
Note that, although it looks similar, this theorem does not imply that 
additive higher Chow groups have $\mathbb{A}^1$-homotopy invariance. 
For the structure morphism $\mathbb{A}^r _k \to {\rm Spec} (k)$, the 
pull-backs of 0-cycles on ${\rm Spec} (k) \times {\wt B}_n$ to 
$X \times \widetilde{B}_n$ are $r$-cycles,
not $0$-cycles.
\end{remk}
\begin{proof}
The proof is very similar to that of \cite[Theorem 6.4, p. 153]{BE2}. For a 
closed point $p \in X \times {\wt B}_n$ that does not intersect the faces and 
the divisor $\{ t= 0 \}$, we define a homomorphism by setting
\[
\psi (p) := {\rm Tr}_{k(p)/k} \left( \frac{1}{t} \frac{dy_1}{y_1} \wedge 
\cdots \wedge \frac{dy_{n-1}}{y_{n-1}}\right)(p) \in 
\Omega_{k/\mathbb{Z}} ^{n-1}.
\]
In other words, we ignore the coordinate of $X$. This defines a homomorphism 
$\psi : {\TZ}^{r+n+1} (X, n ; 1) \to \Omega_{k/\mathbb{Z}} ^{n-1}$.

{\bf Claim (1):} \emph{The composition 
\[
\psi \circ \partial : {\TZ}^{r+n+1} (X, n+1; 1) \overset{\partial}{\to} 
{\TZ}^{r+n+1} (X, n; 1) \overset{\psi}{\to} \Omega_{k/\mathbb{Z}} ^{n-1}
\]
is zero.}
\\
It just follows from \cite[Proposition 6.2, p. 150]{BE2}. 

{\bf Claim (2):} \emph{ Any two closed admissible points 
$p, p' \in X \times {\wt B_n}$ for which 
only the coordinates of $X$ differ are equivalent as additive higher Chow 
cycles.}
\\
Abusing notations, write 
$p = (a, b, s_1, \cdots, s_{n-1}), p' = (a', b, s_1, \cdots, s_{n-1})$, where 
$a, a'$ are closed points of $X$, where $b \not = 0$, $s_i \not = 0, \infty$. 
Consider a parametrized line 
\[
C=\left\{ \left( a\frac{y}{y-1} + 
a' \left( 1 - \frac{y}{y-1} \right), b, y, s_1, \cdots, 
s_{n-1} \right) \in X \times {\wt B}_{n+1} | \ \ y \in \square^1\right\}.
\]
This $1$-cycle satisfies all the modulus conditions 
$M_{sum}, M_{sup}, M_{ssup}$ having $b \not = 0$, and it intersects all faces 
properly having constant $y_i$-coordinate values $s_i$.

By direct calculations, $\partial_1 ^0 (C) = p', 
\partial _1 ^{\infty} (C) = p$, and $\partial_i ^\epsilon(C) = 0$ for 
$i \geq 2$ 
and $\epsilon \in \{ 0, \infty \}$. Hence, $\partial (C) = p' - p$ proving 
Claim (2).

Given Claim (2), by \cite[Proposition 6.3]{BE2} and the rest of the arguments 
of  \cite[Theorem 6.4]{BE2}, this theorem follows.
\end{proof}

We remark that the same arguments work for any variety $X$ as long as we can 
prove Claim (2). In particular, for any connected union of affine spaces, 
irreducible or not, we can conclude the same results.

\begin{ack}
The authors would like to thank Spencer Bloch and Marc Levine for some
valuable comments. JP would like to thank TIFR and KAIST for their 
hospitality and reduced teaching loads, and Juya for supports throughout 
this work.

For this research, JP was partially supported by Basic Science Research 
Program through the National Research Foundation of Korea (NRF) funded by the 
Ministry of Education, Science and Technology (2009-0063180).
\end{ack}

\end{document}